\newtheorem{note}{Note}
\newtheorem{theorem}{Theorem}
\theoremstyle{plain}
\newtheorem{corollary}{Corollary}
\newtheorem{definition}{Definition}
\newtheorem{example}{Example}
\newtheorem{lemma}{Lemma}
\newtheorem{proposition}{Proposition}
\newtheorem{remark}{Remark}
\numberwithin{equation}{section}
\begin{document}
\title{Quantum invariants of knotoids}

\author{Neslihan G{\"u}g{\"u}mc{\"u}}
\author{Louis H.Kauffman}

\address{Neslihan G{\"u}g{\"u}mc{\"u}:  Department of Mathematics, Izmir Institute of Technology, G\"ulbah\c ce Campus, Izmir 35430, Turkey and Mathematics Institute, Georg-August Universitat G\"ottingen, Bunsenstrasse 3-7, G\"ottingen 37073, Germany}
\address{Louis H.Kauffman:Department of Mathematics, Statistics and Computer
Science, University of Illinois at Chicago, 851 South Morgan St., Chicago
IL 60607-7045, U.S.A. and
Department of Mechanics and Mathematics
Novosibirsk State University
Novosibirsk
Russia}
\email{neslihangugumcu@iyte.edu.tr; neslihan.gueguemcue@math.uni-goettingen.de} \email{kauffman@math.uic.edu}

\begin{abstract}
In this paper, we construct quantum invariants for knotoid diagrams in $\mathbb{R}^2$. The diagrams are arranged with respect to a given direction in the plane ({\it Morse knotoids}). A Morse knotoid diagram can be decomposed into basic elementary diagrams each of which is associated to a matrix that yields solutions of the quantum Yang-Baxter equation. We recover the bracket polynomial, and define the rotational bracket polynomial, the binary bracket polynomial, the Alexander polynomial, the generalized Alexander polynomial and an infinity of specializations of the Homflypt polynomial for Morse knotoids via quantum state sum models. \end{abstract}
\maketitle
\section{Introduction}

A \textit{Morse knotoid diagram}  is a knotoid diagram in the plane arranged with respect to the bottom to top vertical direction of the plane so that every horizontal line meets the diagram in at most one minimum, maximum or a crossing point. The plane where a knotoid diagram lies, can be interpreted as the spacetime plane with time the vertical axis and space the horizontal. Then, a Morse knotoid diagram can be interpreted as a vacuum to vacuum process that includes creations of particles (\textit{cup}s), a finite number of interactions among the particles (\textit{crossing}s) and annihilations of the particles (\textit{cap}s). These events are associated with matrices with indices on diagrams (which can be interpreted as \textit{spins} on particles)  and  the probability amplitude of the vacuum to vacuum process  is obtained by summing the products of the amplitudes of each internal configuration over all indices. In order to have a topological amplitude we make constraints so that the amplitude remains invariant under the regular isotopy moves including the braiding moves. Note that we use the word amplitude for motivation, as these amplitudes are not necesarily physical and are not constrained to take values in the complex numbers.

Many knot/link invariants associated with quantum amplitudes have been constructed so far. Prior to such knot and link invariants, C.N. Yang used this interpretation, in terms of creations, annihilations and interactions, in a miniature model of quantum field theory. He created an equation, now called the \textit{Yang-Baxter equation}, so that the amplitudes of interacting particles would be the same for patterns with equivalent permutations. In the braiding context the Yang-Baxter equation corresponds to the braiding relation. Thus a matrix equation originating in a quantum physical model is useful for doing topology.

In our formulation of quantum invariants we use a given direction in the diagram plane to stand for time so that the cups and caps and interactions can be seen to occur in a temporal order. This can be formalized by a Morse function on the diagram, hence the term Morse knotoids or Morse diagrams. It can also be formalized by the notion of a category, and then time's arrow becomes the arrow directions for the morphims in the category. From a categorical point of view, knot, link and knotoid diagrams become morphisms in a braided category. All of these points of view are useful and we will move through all of them in our constructions.

Let us now give an outline of this paper. In Section \ref{sec:Morseknotoids} we introduce Morse knotoid diagrams and the topological moves for them. In Section \ref{sec:rotation} we define a numerical invariant for Morse knotoids that we call the \textit{rotation number} and in Section \ref{sec:rotbracket} we define a rotational extension of the Kauffman bracket polynomial for Morse knotoids.  In Section \ref{sec:Cat} we study Morse knotoid diagrams via category theoretical descriptions. In Section \ref{sec:quantum} we construct unoriented invariants for Morse knotoids  in the form of a partition function, such as the bracket partition function and the binary bracket polynomial. We show that both invariants admit a state sum model that yields solutions to the Yang-Baxter equation. In Section \ref{sec:orientedquantum}, we introduce a general schema to generate oriented quantum invariants for Morse knotoids. By specializing this schema, we define the Alexander polynomial, the generalized Alexander polynomial and an infinity of analogs of specializations of the classical Homflypt polynomial.



\section{Morse knotoids}\label{sec:Morseknotoids}
\begin{definition} \cite{Tu}\normalfont
A \textit{knotoid diagram} in $\mathbb{R}^2$ or in $S^2$, namely a \textit{planar knotoid diagram} and a \textit{spherical knotoid diagram}, respectively, is a generic immersion of the unit interval $[0,1]$ into $\mathbb{R}^2$ or $S^2$, with the assumption that it contains only transversal double points endowed with under/over information, and two distinct endpoints the \textit{leg} and the \textit{head} as the images of $0$ and $1$, respectively. A knotoid diagram is endowed with a natural orientation from its leg to its head.
\end{definition}

In this section, we introduce a special class of planar knotoid diagrams, called \textit{Morse knotoid diagrams}. 

\begin{definition} \rm 
A {\it Morse knotoid diagram} is a knotoid diagram in $\mathbb{R}^2$, equipped with a height function, such that every horizontal line meets the knotoid diagram transversally with at most one critical point. A \textit{critical point} of the height function can be one of the following: a \textit{crossing point} of a classical crossing of the Morse knotoid diagram, a \textit{minimum} or a \textit{maximum} of the curve with respect to  the height function, or an \textit{endpoint} of the knotoid. We further assume that the arcs containing the endpoints are vertical arcs in a neighborhood of the endpoints with respect to the height function. See \ref{fig:morseknotoiddi} for an example. %
\end{definition}
\begin{figure}[H]
  \includegraphics[scale=.2]{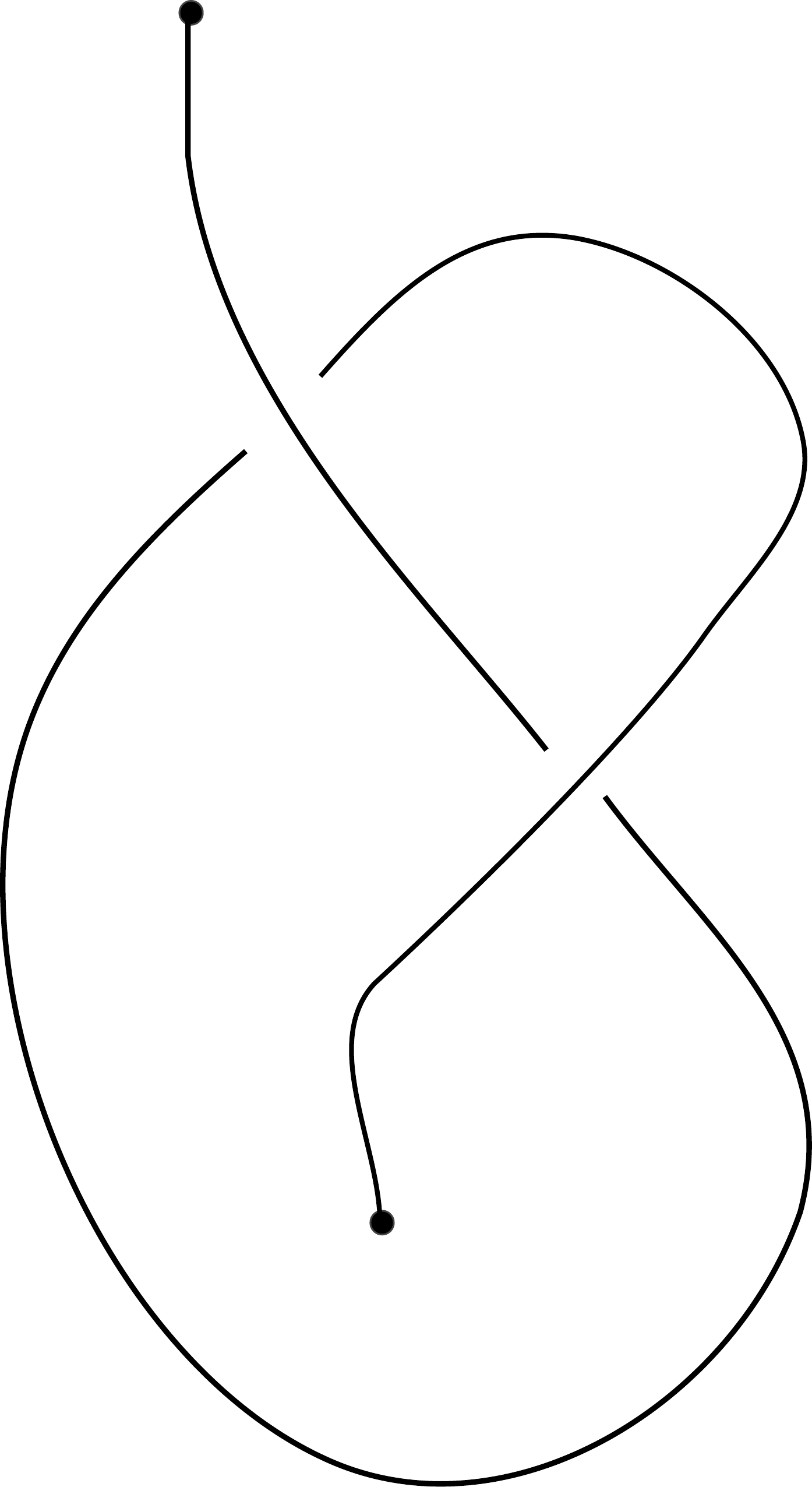}
\caption{A Morse knotoid diagram}
  \label{fig:morseknotoiddi}
\end{figure}%

A \textit{multi-knotoid diagram} in $\mathbb{R}^2$ or in $S^2$ is a union of a knotoid diagram with a number of knots \cite{Tu} and the definition of a Morse knotoid diagram extends to Morse multi-knotoid diagram directly. 

\subsection{Isotopy moves on knotoid diagrams }

Knotoid diagrams in $\mathbb{R}^2$ are classified up to the isotopy relation generated by the \textit{knotoid Reidemeister moves} and plane isotopy. Knotoid Reidemeister moves are classical Reidemeister moves that take place in local disks free of the endpoints of knotoid diagrams, see Figure \ref{fig:reidemistermoves} for the move list. The endpoints of a knotoid diagram may be displaced by the knotoid isotopy but It is forbidden to move an endpoint over or under a strand. A \textit{knotoid} in $\mathbb{R}^2$ (or a \textit{planar knotoid}) is an equivalence class of knotoid diagrams in $\mathbb{R}^2$, considered up to the induced isotopy relation \cite{Tu}. 
\begin{definition}\label{def:defone}\normalfont
A knotoid diagram in $\mathbb{R}^2$ or in $S^2$ is called a knot-type knotoid diagram if its endpoints lie in the same local region of the plane determined by the graph underlying the knotoid diagram.
\end{definition}
We will call a planar or spherical knotoid a \textit{knot-type knotoid} if it admits at least one representative diagram with endpoints lying in the same region, otherwise it will be called a \textit{proper knotoid}. Note that, in \cite{Tu}, the term knot-type is suggested for spherical knotoids since there is a one-to-one correspondence between the set of knots in $\mathbb{R}^3$ and the set of spherical knotoids whose endpoints can be brought to the same region determined by diagrams. For planar knotoids, this correspondence is no longer a one-to-one correspondence, see \cite{Tu, GK1}. With Definition \ref{def:defone} we extend the use of the term.

We assume an isotopy relation for Morse knotoid diagrams generated by the moves shown in Figure \ref{fig:moveset} that comprise cancellation/insertion of sequential local maxima and minima on an arc called \textit{min-max moves}, swinging of an arc with respect to a local maximum or minimum called the \textit{slide moves},  the vertical Reidemeister~II  type moves and the Reidemeister~III type braiding moves, plus planar isotopies that displaces the endpoints vertically or horizontally, see Figure \ref{fig:verticalmoves}. Note that horizontal Reidemeister~II type moves can be generated by the vertical Reidemeister ~II type move plus the slide moves.  
\begin{figure}[H]
  \includegraphics[scale=.4]{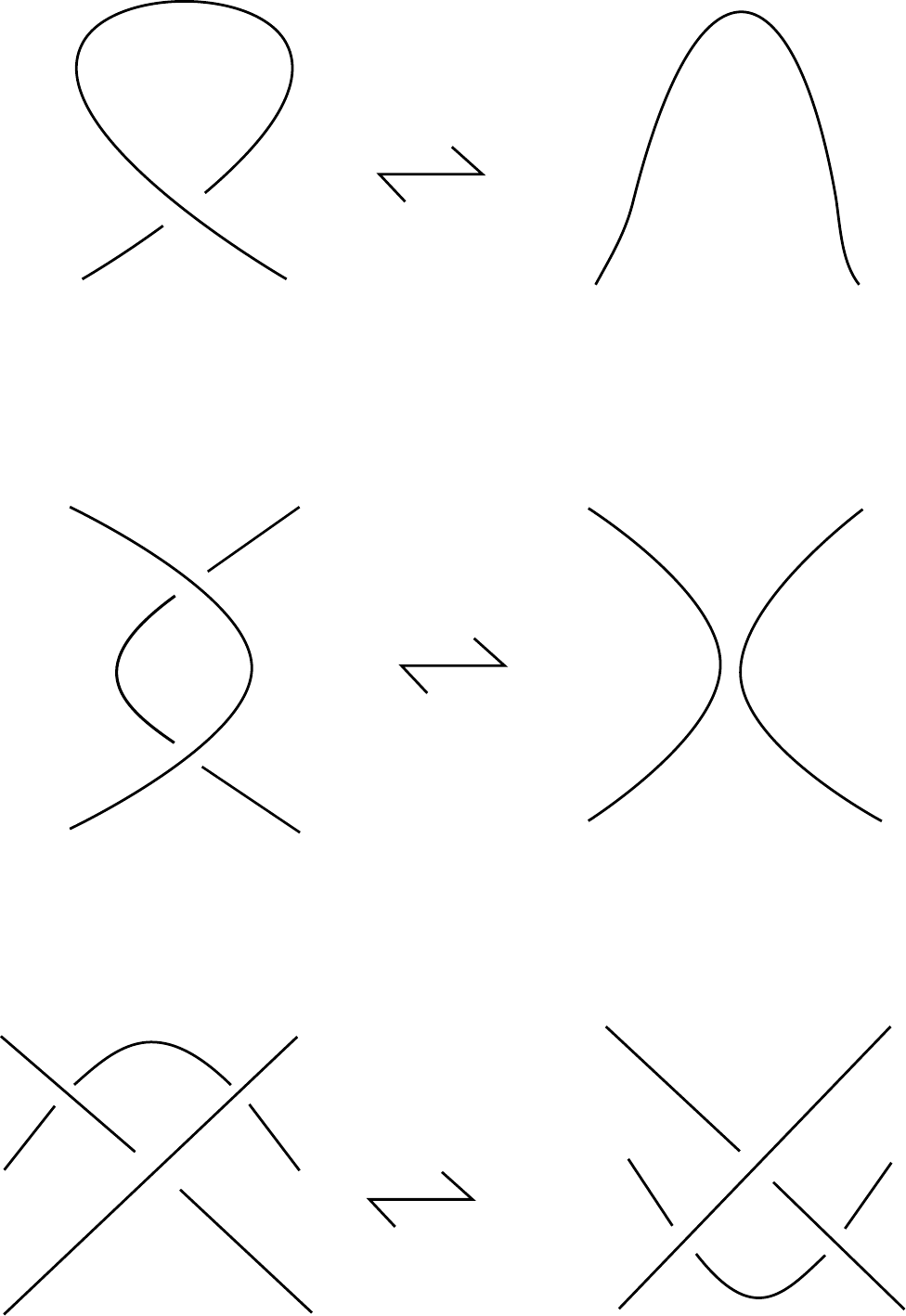}
\caption{Classical Reidemeister moves}
  \label{fig:reidemistermoves}
\end{figure}%
\begin{figure}[H]
\centering
\includegraphics[width=.65\textwidth]{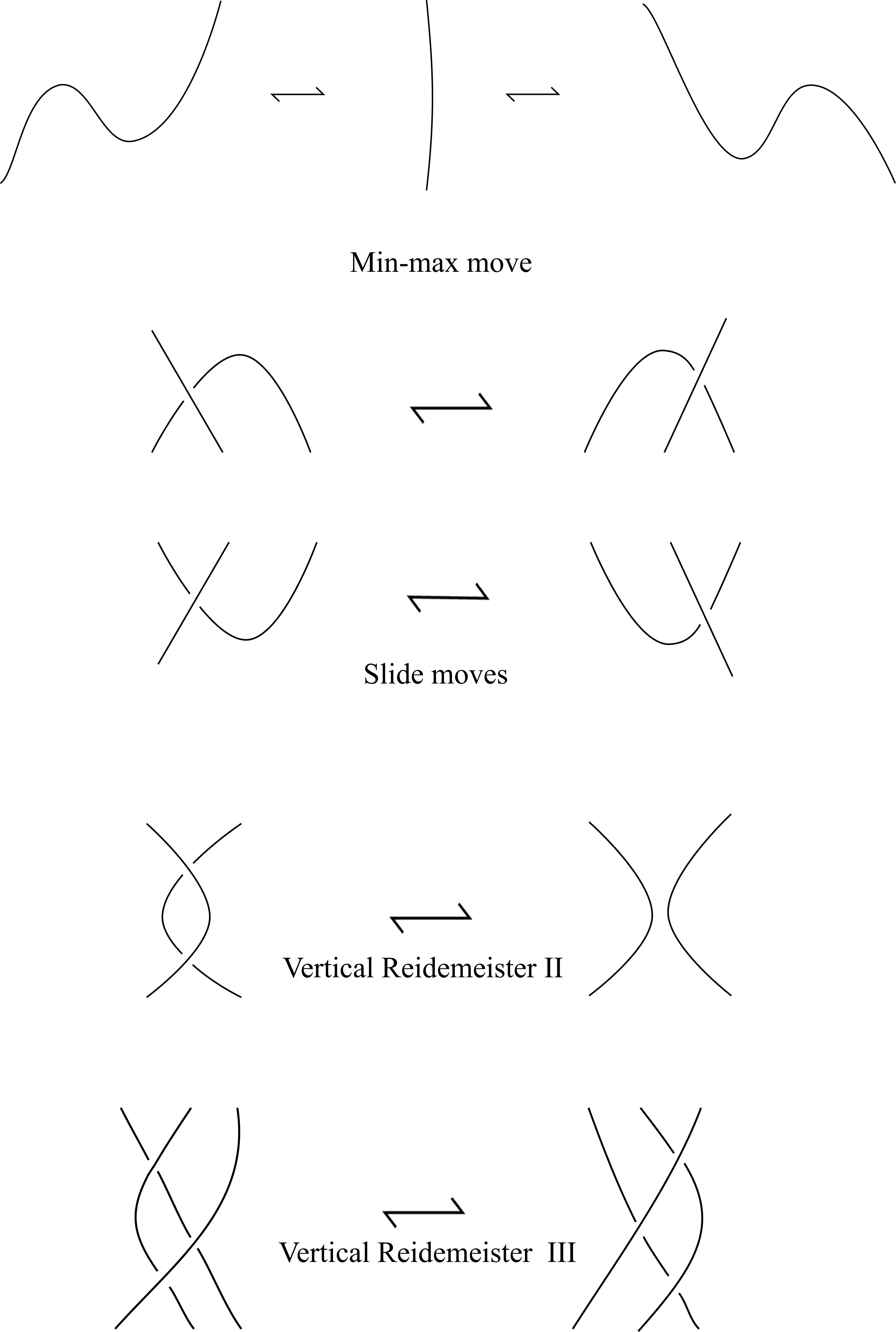}
\caption{The Morse isotopy moves}
\label{fig:moveset}
\end{figure}
Similar to knotoids in $\mathbb{R}^2$, it is forbidden to pull an endpoint of a Morse knotoid diagram over or under a transversal strand, so as to avoid unknotting of a Morse knotoid diagram.  Furthermore, we do not allow planar rotations of endpoints, in analogy to having the ends of a tangle fixed. As a result, the  directions of the tangent vectors are preserved. We call this restricted version of regular isotopy of planar knotoid diagrams {\it Morse isotopy}  of knotoids and the corresponding isotopy classes of Morse knotoid diagrams \textit{Morse knotoids}.


\begin{figure}[H]
\centering
\includegraphics[width=.35\textwidth]{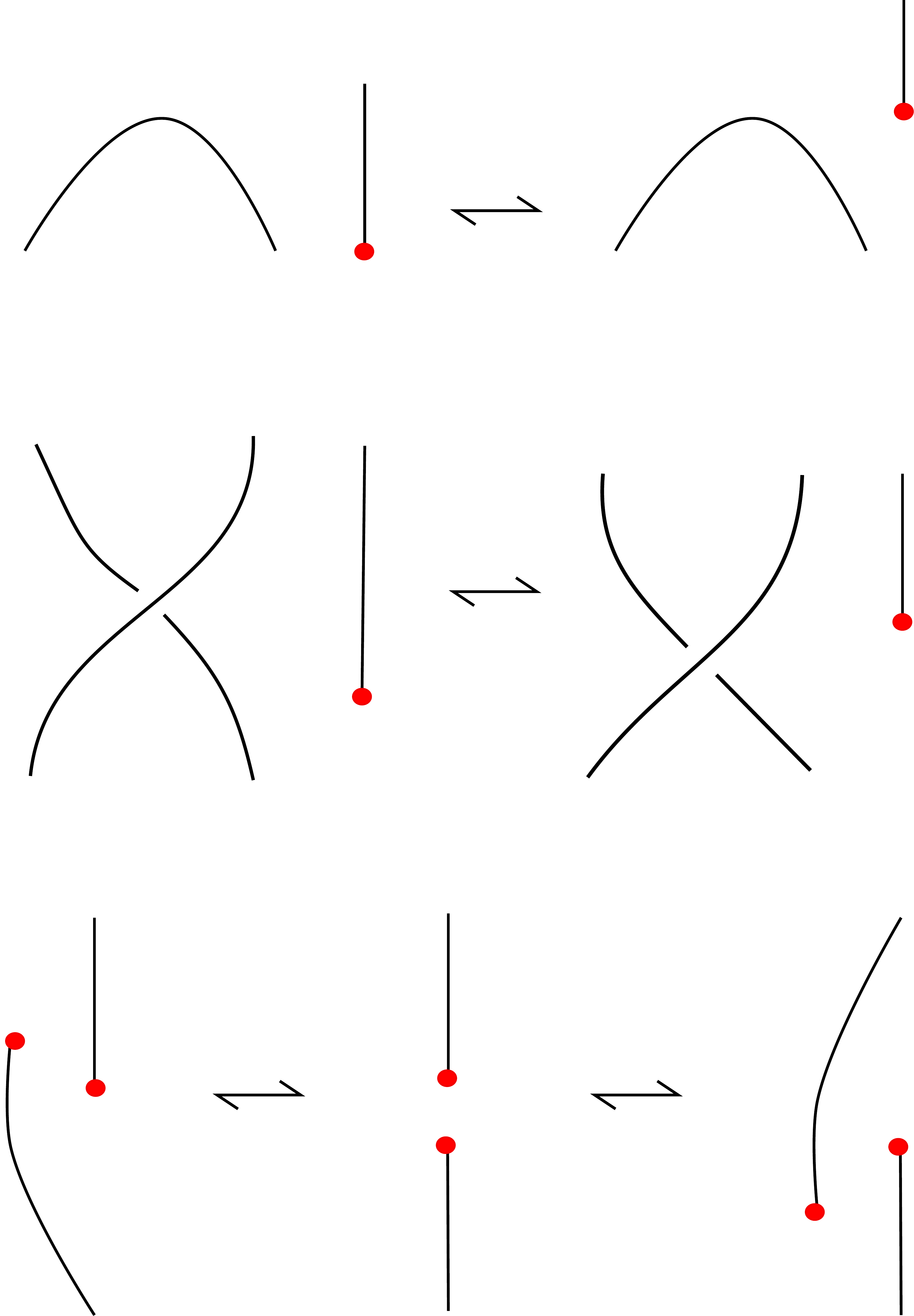}
\caption{Vertical and horizontal shifting moves of the endpoints}
\label{fig:verticalmoves}
\end{figure}

Any knotoid diagram in $\mathbb{R}^2$ can be transformed into a Morse form via regular isotopy that is generated by the second and third Reidemeister moves, vertical shifting and rotation of the endpoints. We call a knotoid diagram in Morse form \textit{standard} if the tangent vectors at its endpoints are both directed upwards with respect to the bottom to top vertical direction of the plane. The Morse knotoid diagram given in Figure \ref{fig:morseknotoiddi} is a standard Morse knotoid diagram.

\begin{proposition}
 Two knotoid diagrams are equivalent via regular isotopy if and only if their standard Morse diagrams are equivalent in the Morse category via Morse isotopy. 
 \end{proposition}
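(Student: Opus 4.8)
The statement is an equivalence whose two directions are of quite different character. For the ``if'' direction, suppose that standard Morse forms $M_1$ and $M_2$ of knotoid diagrams $K_1$ and $K_2$ are Morse isotopic. Every generating Morse isotopy move is realized within regular isotopy: the min-max moves, the slide moves and the vertical and horizontal displacements of endpoints are plane isotopies, the vertical Reidemeister~II move is a Reidemeister~II move, and the braiding Reidemeister~III move is a Reidemeister~III move, and none of these is a first Reidemeister move. Hence $M_1$ and $M_2$ are regular isotopic. Since every knotoid diagram is regular isotopic to its standard Morse form, $K_i$ is regular isotopic to $M_i$ for $i=1,2$, and therefore $K_1$ is regular isotopic to $K_2$.

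For the ``only if'' direction, the key step is to prove that any two standard Morse diagrams obtained from a single knotoid diagram $D$, by plane isotopy together with rotations of the endpoints, are Morse isotopic. Given two such diagrams $M$ and $M'$, I would take a plane isotopy from $M$ to $M'$ and perturb it, relative to small neighborhoods of the two endpoints in which the diagram is held vertical with tangents pointing upward, so that the height function restricted to the moving diagram is Morse for all but finitely many values of the parameter and undergoes a single codimension-one degeneration at each exceptional value. The possible degenerations are the classical ones: birth or death of a cancelling maximum--minimum pair on an arc; an exchange of heights of two critical points, where the critical points may be maxima, minima, crossings, or an endpoint; and a crossing passing a neighboring maximum or minimum. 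One then checks that each such event is generated by the Morse move set: a birth or death by a min-max move, an exchange of vertically separated features on disjoint strands by the plane isotopy built into the stacking of elementary diagrams, an exchange involving a maximum, minimum or endpoint by a slide move or an endpoint displacement, and, after sliding any horizontal second or third Reidemeister configuration that appears into vertical respectively braiding position as noted above, by a vertical Reidemeister~II or a Reidemeister~III move.

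With this uniqueness statement in hand, the rest follows by concatenation. Write the given regular isotopy as a chain $D=D_0\to D_1\to\cdots\to D_n=D'$ of second and third Reidemeister moves, vertical and horizontal shifts and rotations of endpoints, and plane isotopies, and replace each $D_i$ by a standard Morse form $M_i$, with $M_0$ and $M_n$ the two given standard diagrams (legitimate by the uniqueness statement). A plane-isotopy step gives $M_i$ Morse isotopic to $M_{i+1}$ directly. A Reidemeister~II or III step is supported in a disk and, after sliding the relevant strands into vertical position, becomes a vertical Reidemeister~II or braiding Reidemeister~III move. An endpoint shift is already a Morse move. An endpoint rotation alters $D_i$ only in a neighborhood of one endpoint and is washed out when the endpoint tangent is straightened to the upward direction, so it leaves the standard Morse form unchanged up to Morse isotopy by the uniqueness statement. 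Concatenating, $M_0$ and $M_n$ are Morse isotopic, which is the claim.

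The main obstacle is the uniqueness statement: the Cerf-type genericity argument, and in particular the verification that the finite list of codimension-one events is complete and that the perturbation can be carried out while keeping the arcs near the endpoints vertical and their tangents fixed upward. This is the knotoid analogue of the classical identification of regular isotopy of tangle diagrams with isotopy in the Morse, equivalently braided monoidal, setting, so the overall scheme is standard; the additional care is entirely at the endpoints, where one must confirm that every event involving an endpoint is produced by the endpoint shift moves together with the slide and min-max moves, and that straightening an endpoint to the upward direction never introduces an irremovable curl.
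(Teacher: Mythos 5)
Your proposal is correct and follows essentially the same route as the paper, which simply defers to the argument of Yetter \cite{Yet}: that argument is precisely the general-position (Cerf-type) analysis of the height function on a moving diagram, together with the check that each codimension-one event is generated by the Morse move set, that you sketch. You have in effect written out the outline the paper leaves implicit, with the correct extra care at the endpoints.
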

 \begin{proof}
 The proof proceeds in the same fashion with the proof in \cite{Yet}. 
 \end{proof}
 
\begin{remark}\normalfont
Quantum invariants as we shall define them, are often not invariant under the first Reidemeister move (just as the Kauffman bracket polynomial is not so invariant).  In doing a theory of quantum invariants, we exclude the first Reidemeister move and work with regular isotopy with the restriction on the endpoints when we work with open ended objects. To obtain invariants for knotoids, we can often normalize an invariant of regular isotopy with a term induced by the first Reidemeister move as we do for the bracket polynomial.
\end{remark}



A Morse knotoid diagram can be  also endowed with an orientation from its leg to its head. Oriented Morse isotopy on oriented Morse knotoid diagrams are generated by the oriented versions of the Morse isotopy moves. In the sequel we will work with both oriented and non-oriented Morse knotoids.

\subsubsection{Virtual closure}
The theory of virtual knots was introduced by the second author in \cite{Ka1,Ka2}. A \textit{virtual knot diagram} in $\mathbb{R}^2$ is a knot diagram in $\mathbb{R}^2$ with classical and \textit{virtual} crossings represented by circles placed around transversal intersection points of the diagram. Virtual knot diagrams are considered up to the \textit{virtual isotopy} relation that is generated by the classical Reidemeister moves and the virtual Reidemeister moves $VRI, VRII, VRIII$ and the mixed virtual move, given in Figure \ref{fig:vr}.  A \textit{virtual knot} is an isotopy class of virtual knot diagrams up to the virtual isotopy.

\begin{figure}[H]
  \includegraphics[width=.4\linewidth]{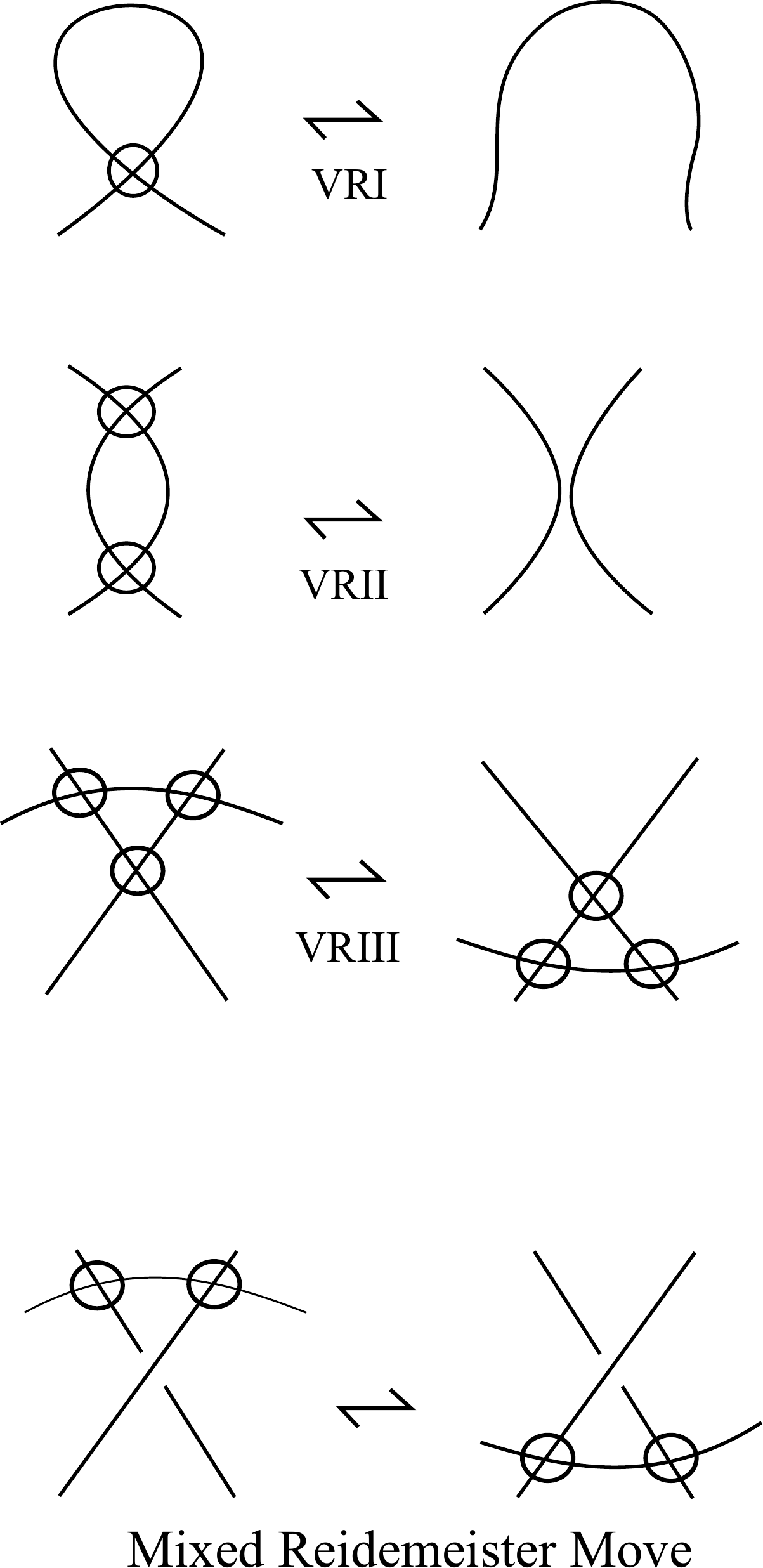}
\caption{The virtual Reidemeister moves}
\label{fig:vr}
\end{figure}


The endpoints of a knotoid diagram can be connected with an arc by declaring each intersection of the arc with the diagram as a virtual crossing.  By doing so, we obtain a virtual knot diagram (or a virtual link in the case of a multi-knotoid diagram). In fact, we have a well-defined mapping, named as the \textit{virtual closure}, from the set of multi-knotoids to the set of (oriented) virtual links \cite{Tu, GK1}.  The virtual closure can be naturally applied for Morse multi-knotoid diagrams to obtain virtual link diagrams in the Morse form.

\begin{proposition}
The virtual closure of a knotoid diagram in $\mathbb{R}^2$ gives the same virtual knot as the virtual closure of its Morse knotoid representation up to virtual isotopy.

\end{proposition}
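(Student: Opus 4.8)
The plan is to reduce the claim to the already-cited fact \cite{Tu, GK1} that the virtual closure descends to a well-defined map from the set of planar knotoids (knotoid diagrams modulo the knotoid Reidemeister moves and plane isotopy, with the endpoints never pushed over or under a strand) to the set of virtual knots modulo virtual isotopy. Given a knotoid diagram $K$, recall that a Morse representation $K_M$ is obtained from $K$ by a finite sequence of second and third Reidemeister moves together with plane isotopies that shift and rotate small neighborhoods of the two endpoints. Each such move is an instance of knotoid isotopy in $\mathbb{R}^2$, so $K$ and $K_M$ represent the same planar knotoid, and the statement follows at once from well-definedness of the virtual closure. What I would actually write out is the diagrammatic verification that this agrees with the explicit closure construction.

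For that I would fix a closure arc $\alpha$ for $K$, embedded and transverse to the rest of $K$, with every intersection of $\alpha$ with $K$ declared a virtual crossing, yielding the virtual diagram $\overline{K}$. I then carry $\alpha$ along during the isotopy that turns $K$ into $K_M$, always keeping it transverse to the diagram and keeping all of its crossings with the diagram virtual. A Reidemeister~II or~III move takes place in a disk disjoint from the endpoints, so $\alpha$ can first be isotoped out of that disk and the move is realized as the corresponding classical Reidemeister move on $\overline{K}$. Under a plane isotopy or a shift or rotation of an endpoint, the arc $\alpha$ is dragged with the endpoints; whenever $\alpha$ is forced across a strand of the diagram a virtual crossing is created or destroyed, and, since the endpoints themselves are never pushed through a strand, every such event is one of the virtual Reidemeister moves $VRI$, $VRII$, $VRIII$ or the mixed move, i.e.\ a detour move of $\alpha$. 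Hence $\overline{K}$ and the closure of $K_M$ taken along the transported arc are connected by a finite sequence of classical and virtual Reidemeister moves, so they represent the same virtual knot.

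It then remains to replace the transported arc by a closure arc that puts $K_M$ together with its closure into Morse form; since any two closure arcs for the same (multi-)knotoid diagram yield virtually isotopic virtual diagrams \cite{Tu, GK1}, this does not change the virtual knot type, and we conclude $\overline{K} \simeq \overline{K_M}$. The argument is identical for Morse multi-knotoid diagrams. The step needing the most care is the bookkeeping during the endpoint motions: one must check that, because an endpoint may be displaced anywhere in the plane but may never cross a strand, the trailing arc $\alpha$ interacts with the rest of the diagram only through genuine detour moves and never acquires a classical crossing. This is exactly where the endpoint restriction built into the definitions of knotoid and Morse isotopy is used, and it is why the proposition is not a mere triviality about the definition of the virtual closure.
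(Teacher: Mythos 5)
Your first paragraph is exactly the paper's argument: the Morse representative is knotoid-isotopic to the original diagram, and the virtual closure is well defined on isotopy classes of (multi-)knotoids, so the two closures agree. The remaining paragraphs merely unpack the detour-move verification behind that well-definedness, which the paper delegates to \cite{Tu, GK1}, so the proposal is correct and essentially identical in approach.
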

\begin{proof}
The standard Morse knotoid representation of a knotoid $K$ in $\mathbb{R}^2$ is equivalent to $K$ via knotoid isotopy, and the virtual closure is a well-defined mapping on the set of isotopy classes of multi-knotoids. Then the statement follows.
\end{proof}
It is often the case that an invariant of virtual knots and links can be reformulated for knotoids in $\mathbb{R}^2$ by examining the closure relationship. In this paper, quantum invariants we construct for Morse knotoids and knotoids in $\mathbb{R}^2$, can be formulated as \textit{rotational invariants} of rotational virtual knots and links. In virtual knot theory, \textit{rotational equivalence} is the isotopy relation on virtual knot diagrams that is generated by the classical and virtual Reidemeister moves except the first virtual move \cite{Karot}, and rotational virtual knots are considered up to this relation. In rotational virtual knot theory, a virtual curl can not be directly simplified but two opposite virtual curls can be created or destroyed by using the Whitney Trick, shown in Figure \ref{fig:whitney} where self-intersections of the curve are regarded as virtual crossings. 

\begin{figure}[H]
\centering
\includegraphics[width=.8\textwidth]{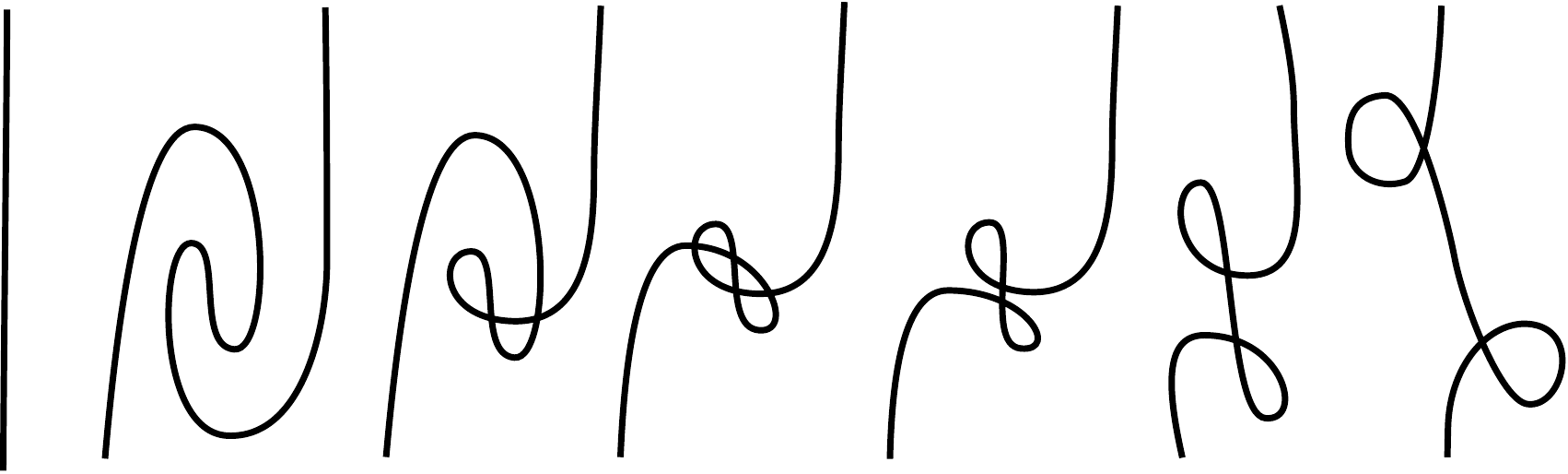}
\caption{Whitney trick for an immersed curve}
\label{fig:whitney}
\end{figure}


\subsection{Rotation number for oriented Morse knotoid diagrams}\label{sec:rotation}
Let $K$ be an oriented Morse knotoid diagram. The \textit{rotation number} for $K$, $rot(K)$ is a real number that is defined as the half of the total number of counterclockwise oriented cups and caps minus the total number of clockwise oriented cups and caps on $K$. In other words, cups and caps of $K$ are endowed with a sign induced by the orientation: The right-pointed maxima and left-pointed minima are signed with $-1$, and left-pointed maxima and right-pointed minima are signed with $+1$, as shown in Figure \ref{fig:sign}. Then, the \textit{rotation number} of $K$, $rot(K)$ is the half of the sum of the signs on the the maxima and minima of $K$.

\begin{theorem} \label{thm:rotationn}
The rotation number is a Morse isotopy invariant.
\end{theorem}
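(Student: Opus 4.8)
The plan is to check that $rot$ is unchanged under each generating move of (oriented) Morse isotopy --- the min-max moves, the slide moves, the vertical Reidemeister~II type move, and the Reidemeister~III type braiding moves of Figure~\ref{fig:moveset}, together with the endpoint shifting moves of Figure~\ref{fig:verticalmoves} --- since these generate the equivalence relation. First I would record the trivial cases. Note that $rot(K)$ sees only the maxima and minima of $K$ and their signs; the two endpoints, having vertical adjacent arcs by definition, are critical points of the height function but are neither maxima nor minima, so they never enter the count. The braiding moves and the vertical Reidemeister~II type move involve only crossings and height-monotone strands, so they create, destroy, and re-orient no extrema, and $rot$ is visibly unchanged; the horizontal Reidemeister~II type move is then handled via the reduction (vertical~RII plus slide moves) noted after Figure~\ref{fig:moveset}. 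The endpoint shifting moves are planar isotopies that touch neither the set of critical points nor the tangent directions --- recall planar rotation of endpoints is disallowed --- so they too leave $rot$ fixed.

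Next I would treat the slide moves. Here a single maximum or minimum is swung but not created or destroyed, so the only thing to check is that its sign is preserved. Since the sign of a critical point depends solely on the direction in which the orientation points there (left-pointed versus right-pointed, Figure~\ref{fig:sign}), and a slide move does not rotate the tangent vector at that critical point, the sign --- and hence the signed sum --- is unchanged.

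The heart of the argument is the min-max move, which inserts or deletes an adjacent maximum--minimum pair forming a zig-zag that straightens to a height-monotone arc. The point I would need to establish is that the two extrema in such a cancelling pair are necessarily co-oriented (both left-pointed or both right-pointed): tracing the orientation along the snake-shaped arc, the strand enters the maximum from one side, runs down, and then enters the minimum along the continuation of that same side, so the tangent directions at the two critical points must agree --- any mismatch would obstruct straightening the zig-zag. Given this, the sign convention assigns a co-oriented maximum--minimum pair the signs $-1$ and $+1$ (in either pointing), so the pair contributes $0$ to the signed sum; creating or deleting it therefore changes neither $\sum$(signs) nor $rot=\tfrac12\sum$(signs). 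The orientation-reversed variants of every move above are handled by the same reasoning, and the extension to Morse multi-knotoid diagrams is immediate since the extra knot components only add further co-oriented pairs. Putting the cases together gives the invariance. I expect this last point --- verifying that ``straightenable to a monotone arc'' genuinely forces the two extrema of a min-max pair to be co-oriented --- to be the only step demanding any care; the rest follows from the locality of the sign assignment.
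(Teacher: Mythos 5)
Your proposal is correct and follows essentially the same route as the paper's proof: one checks each generating move, observes that all moves except the min-max move neither create nor destroy extrema nor change their pointing directions, and that the min-max move inserts or deletes a sequential maximum--minimum pair whose signs cancel. Your extra care in verifying that the cancelling pair is co-oriented (hence of opposite sign under the convention of Figure~\ref{fig:sign}) simply makes explicit what the paper states in one line.
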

\begin{proof}

The rotation number remains invariant under the Morse isotopy moves since they do not add or delete any cups or caps, except the min-max moves, in which a sequential minimum and maximum with opposite signs are added/deleted. It is clear that the total contribution to the rotation number of these moves is zero.
\end{proof}

 \begin{figure}[H]
\centering
\includegraphics[width=.5\textwidth]{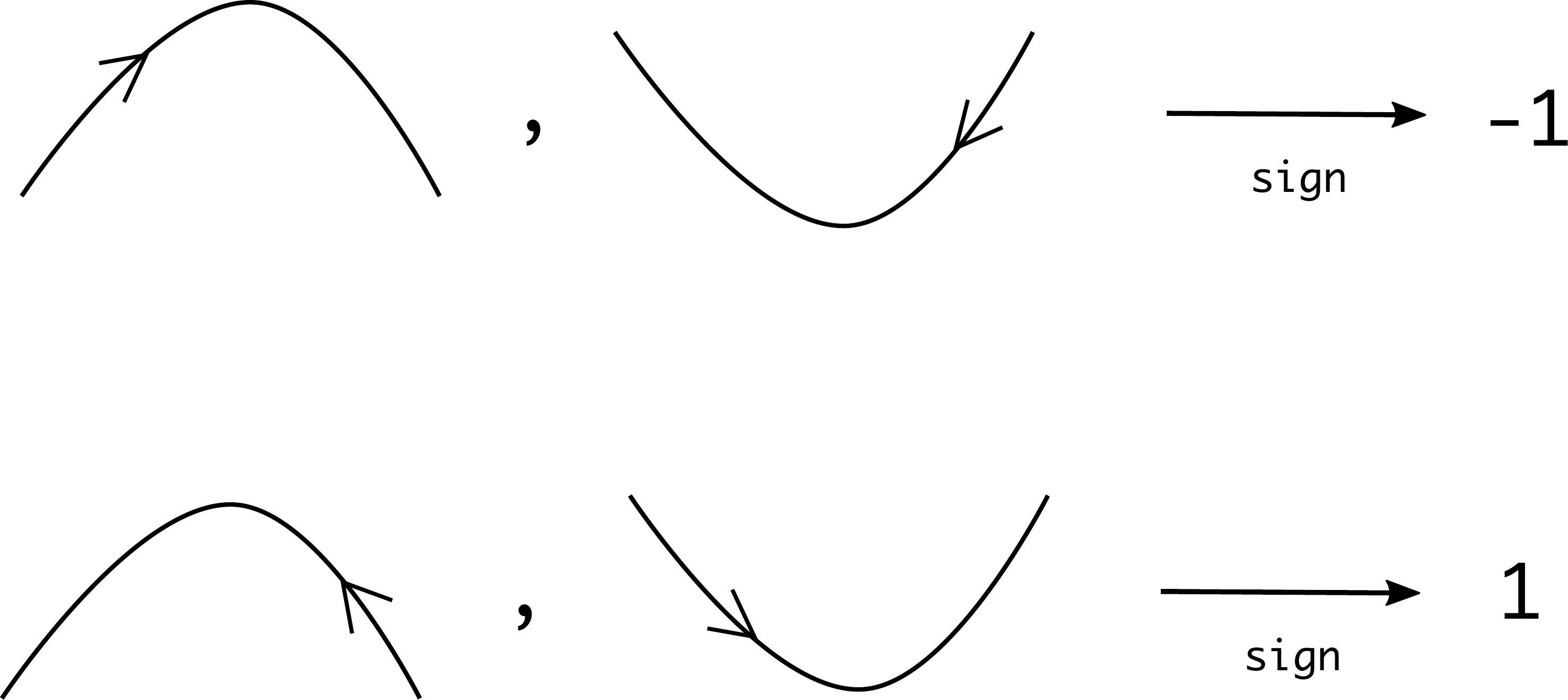}
\caption{Signs of cups and caps}
\label{fig:sign}
\end{figure}


\subsubsection{A rotational bracket polynomial for Morse knotoids}\label{sec:rotbracket}
A rotational extension of the Kauffman bracket polynomial can be defined for Morse knotoids based on the rotation number defined for open-ended state components. Let $K$ be a Morse knotoid diagram. Each crossing of $K$ is smoothed in two ways as in the bracket case to obtain the collection of bracket states. In each state, there exists  exactly one open-ended component that is a simple arc containing the two endpoints of $K$ and a number of circular components. We assume an orientation for the open-ended state components from the leg of $K$ to its head. The {\it rotation number} for an oriented open-ended state component is defined as follows.
\begin{definition}\normalfont
Let $\lambda$ be an oriented open-ended state component in a bracket state of $K$. The \textit{rotation number} of $\lambda$, $rot(\lambda)$ is the half of the sum of the signs of cups and caps on $\lambda$.
\end{definition}

\begin{definition}[Equivalent Definition] \normalfont
The \textit{rotation number} of $\lambda$ is equal to the total turn of the tangent vector along  $\lambda$. 
\end{definition}

It is clear that the rotation number of an oriented open-ended state component remains invariant under Morse isotopy moves.

\begin{definition} \normalfont
A \textit{spiral Morse knotoid diagram} with rotation number $n$ is the Morse knotoid diagram with only $n$ cups and $n$ caps, oriented in the same direction. 
\end{definition}
 \begin{figure}[H]
\centering
\includegraphics[width=.25\textwidth]{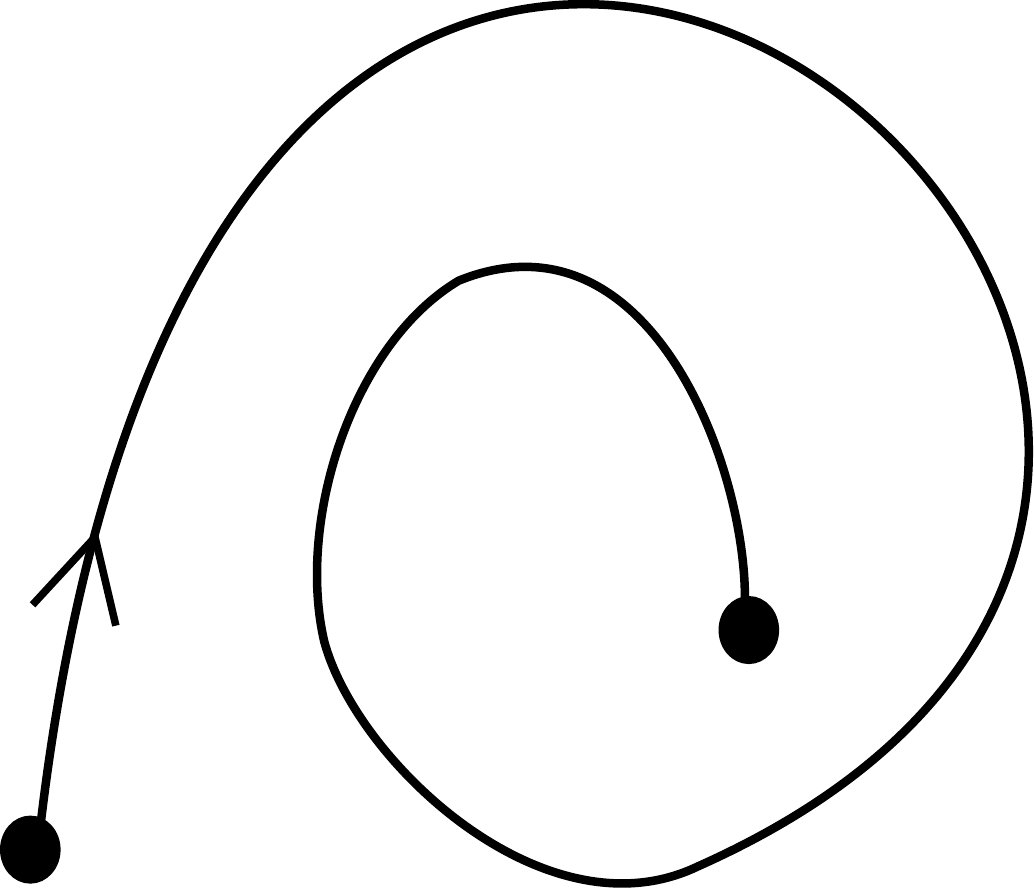}
\caption{An oriented spiral diagram with rotation number $\frac{-3}{2}$.}
\label{fig:trick}
\end{figure}

\begin{lemma}\label{lem:spi}\normalfont
Two oriented spiral Morse knotoid diagrams $S_1, S_2$ are Morse isotopic if and only if they have the same tangent directions at their endpoints and the same rotation number.
\end{lemma}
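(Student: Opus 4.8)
The plan is to treat the two implications separately. The forward implication is immediate from what is already established: if $S_1$ and $S_2$ are Morse isotopic then Theorem~\ref{thm:rotationn} forces $rot(S_1)=rot(S_2)$, and since Morse isotopy never rotates an endpoint, the tangent directions at the leg and at the head are preserved as well.

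For the converse I would exploit that a spiral diagram carries \emph{no crossings}, so the only moves in play are the min-max moves, the slide moves, and planar isotopy; no Reidemeister move is ever needed. I would show that each $S_i$ can be brought by such moves to a canonical \emph{tautly wound} form determined solely by $rot(S_i)$ and the two endpoint directions. Concretely, along the arc from leg to head the critical points strictly alternate between maxima and minima, and each contributes a turn of $\pm\pi$ to the tangent vector according to its sign; whenever the running total of these turns returns to a value already attained one finds a maximum and a minimum bounding a locally straightenable $S$- or $Z$-shaped arc, which a single min-max move removes. Iterating, $S_i$ is reduced until the number of critical points equals the minimum forced by the total turn $2\pi\, rot(S_i)$ together with the prescribed endpoint tangents; the slide moves then place the surviving maxima and minima into the standard nested spiral pattern. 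This reduced diagram depends only on $rot(S_i)$ and the endpoint directions, so if these data agree for $S_1$ and $S_2$ the two reductions reach the same target and $S_1$ is Morse isotopic to $S_2$.

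The conceptual content behind the reduction step is the Whitney--Graustein principle for immersed arcs: with the endpoint tangent directions fixed, the rotation number is a complete invariant, and for \emph{embedded} arcs (which spirals are) every value is realised and the regular homotopy can be carried out without introducing crossings, i.e.\ by a Morse isotopy. I expect the main obstacle to be exactly this last point --- checking that the cancellations and rearrangements can always be performed while keeping every intermediate diagram in Morse position (at most one critical point per horizontal level) and, above all, without ever rotating a free endpoint or creating a curl. The hypothesis $rot(S_1)=rot(S_2)$ is precisely what guarantees that no residual winding is left which could only be undone by such a forbidden move, so the reduction always terminates within Morse isotopy; the pair of opposite curls needed to rearrange a winding, when one is needed, is supplied by the Whitney trick of Figure~\ref{fig:whitney} rather than by an illegal rotation of an end.
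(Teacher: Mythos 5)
Your proof is correct and follows essentially the same route as the paper's: the forward direction is the invariance of the rotation number and of the endpoint tangent directions under Morse isotopy, and the converse reduces both diagrams to a common canonical spiral form using only planar isotopy (slide and shifting moves), with no crossings ever involved. The only remark worth making is that your min-max cancellation step is vacuous here --- by definition a spiral diagram has all of its cups and caps signed the same way, so there is never a cancelling pair; that cancellation machinery (together with the S-move version of the Whitney trick) is exactly what the paper deploys later, in the proof of Theorem \ref{thm:rota}, to reduce a general open-ended state component to a spiral before invoking this lemma, while the paper's proof of the lemma itself simply observes that a spiral is either in-going or out-going and that these two forms are interchanged by the planar isotopy of Figure \ref{fig:schange}.
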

\begin{proof}
Assume first that $S_1$, $S_2$ are two Morse isotopic spiral Morse knotoid diagrams.  By definition, Morse isotopy moves preserve the tangent directions at the endpoints and they preserve the rotation number. Therefore, we deduce that $S_1$ and $S_2$ have the same rotation number.

Now, let $S_1$, $S_2$ be two oriented spiral Morse knotoid diagrams whose rotation numbers are the same, say  $n > 0$. (For $n=0$, the statement clearly follows). 
Since the two spiral diagrams have the same rotation number, by definition, they both have $n$ cups and $n$ caps oriented in the same direction. The spiral diagrams $S_1$ and $S_2$ can be found in exactly two forms: they turn either inwards or outwards, starting from the leg to the head. Planar isotopies involving the shifting of the strands and the endpoints horizontally and vertically turns one spiral to the other one, by preserving the rotation number. See Figure \ref{fig:schange} that illustrates such transformation. The assumption on the endpoint directions is clearly a necessary condition for the Morse isotopy relation between $S_1$ and $S_2$.

 \end{proof}
  \begin{figure}[H]
\centering
\includegraphics[width=1\textwidth]{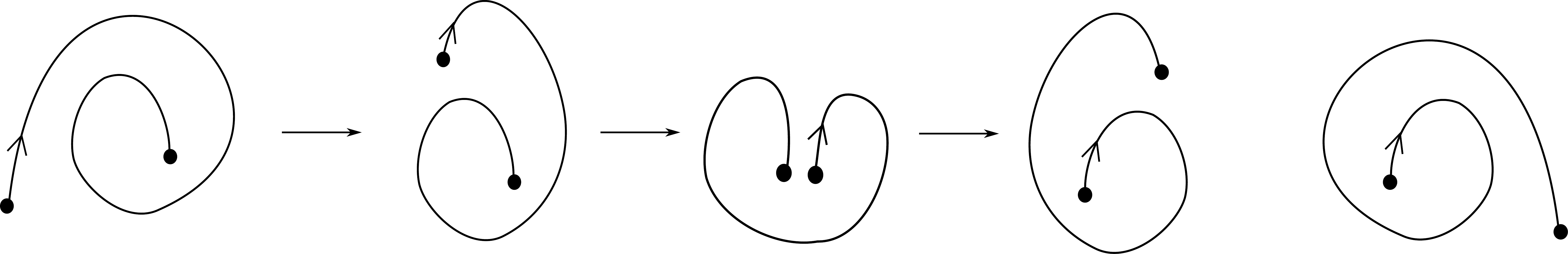}
\caption{Changing a spiral diagram that turns inward into a spiral diagram that turns outward}
\label{fig:schange}
\end{figure}

\begin{definition}\normalfont
A spiral Morse knotoid diagram is an \textit{in-going spiral diagram} if it turns inwards from its leg to its head. 
\end{definition}

\begin{theorem}\label{thm:rota}
Two open-ended oriented state components  whose endpoints have the same tangent directions are Morse isotopic if and only if their rotation numbers are equal.
\end{theorem}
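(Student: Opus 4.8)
The plan is to prove the two implications separately. The ``only if'' implication is routine: every Morse isotopy move, together with the endpoint shifting moves of Figure~\ref{fig:moveset}, preserves the tangent directions at the two endpoints, and it preserves the rotation number --- all moves except the min--max move leave the cups and caps untouched, while the min--max move inserts or deletes one maximum and one minimum of opposite sign, whose contributions cancel (the same bookkeeping as in the proof of Theorem~\ref{thm:rotationn}). Hence Morse isotopic state components have equal endpoint tangents and equal rotation numbers.

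For the ``if'' implication the strategy is to show that every open-ended state component $\lambda$ is Morse isotopic to the in-going spiral with rotation number $rot(\lambda)$ and the same endpoint tangent directions; the theorem then follows from Lemma~\ref{lem:spi}, since two components with equal data are then both Morse isotopic to the same spiral. To carry this out I would first record that a state component is an \emph{embedded} Morse arc, because smoothing all the crossings of a knotoid diagram is a local planar operation that creates no new crossings. I would then induct on the number $c(\lambda)$ of critical points of the height function on $\lambda$. The maxima and minima alternate along $\lambda$, and between two consecutive ones the arc is monotone. If some maximum is immediately followed along $\lambda$ by a minimum of the \emph{opposite} sign, I would use slide moves and planar isotopy (including horizontal shifts of the endpoints, which are permitted) to bring the monotone sub-arc joining them into the standard position of a min--max move, cancel the pair, and apply the inductive hypothesis to the resulting $\lambda'$, which has $c(\lambda')=c(\lambda)-2$, the same endpoint tangents, and $rot(\lambda')=rot(\lambda)$. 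If no such cancellation is available --- that is, every maximum is followed along $\lambda$ by a minimum of the \emph{same} sign, or by an endpoint --- I would argue that $\lambda$ is already Morse isotopic, via slide moves and endpoint shifts, to the in-going spiral with rotation number $rot(\lambda)$: its cups and caps all turn in a common direction, so $\lambda$ winds monotonically, and matching them against the definition of $rot(\lambda)$ identifies $\lambda$ with that spiral.

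The step I expect to be the main obstacle is this ``stuck'' case: showing that an embedded Morse arc in which no opposite-sign consecutive maximum--minimum pair can be cancelled is necessarily a spiral. One must rule out configurations in which such a pair occurs along $\lambda$ but is ``blocked'' from cancellation by another portion of the arc, and one must verify that the slide moves and endpoint shifts realize the needed planar isotopy without ever pulling an endpoint over or under a strand; this is where embeddedness, an innermost-region argument, and the precise list of Morse moves have to be used with care. An alternative is to recast the claim as a Whitney--Graustein theorem for arcs with fixed endpoint position and tangent direction --- using the equivalent ``total turn of the tangent vector'' description of the rotation number --- and to check that the codimension-one transitions of a generic regular homotopy of arcs are exactly the min--max, slide, and vertical Reidemeister~II type moves of Figure~\ref{fig:moveset}; but this route must still confront the same endpoint restrictions.
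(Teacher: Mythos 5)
Your proposal follows essentially the same route as the paper's proof: repeatedly cancel consecutive maximum--minimum pairs of opposite sign via min--max moves (with auxiliary slides and endpoint shifts), reduce $\lambda$ to a spiral, and conclude by Lemma~\ref{lem:spi}. The ``stuck case'' you flag as the main obstacle is precisely the step the paper also passes over lightly (it simply asserts that a diagram free of cancelling pairs is a spiral), so your added induction on the number of critical points and your attention to embeddedness are refinements of, not departures from, the published argument.
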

\begin{proof}
 The rotation number is invariant under the Morse isotopy restricted to the open-ended state components with the assumed conventions on their endpoints.\\
Now let us show that if the rotation numbers of two open-ended state components with the coinciding tangent directions at their endpoints  are equal then the state components are Morse isotopic. We do this by proving the following claim.

{\bf Claim:}
An open-ended oriented state component $\lambda$ is Morse isotopic to the in-going spiral Morse knotoid diagram with the same rotation number and the same tangent direction at the endpoints. Specifically,  $\lambda$ has rotation number $0$ if and only if  $\lambda$ is Morse isotopic to a vertical strand with the tangent vectors  at its ends are both directed up.

{\bf Proof of the claim:}\\
For proving this claim, we use two types of Morse isotopy moves: One type is the planar isotopy move called the min-max move  that cancels a pair of sequential maximum and minimum with opposite signs. We illustrate the non-oriented version of the min-max move in Figure \ref{fig:moveset}.  The other type of planar isotopy moves that will be used is what we call \textit{Whitney trick for simple arcs} or the \textit{S-moves}, that is a vertical shifting of the endpoints followed by a max-min move. See Figure \ref{fig:trick}. 
 \begin{figure}[H]
\centering
\includegraphics[width=.75\textwidth]{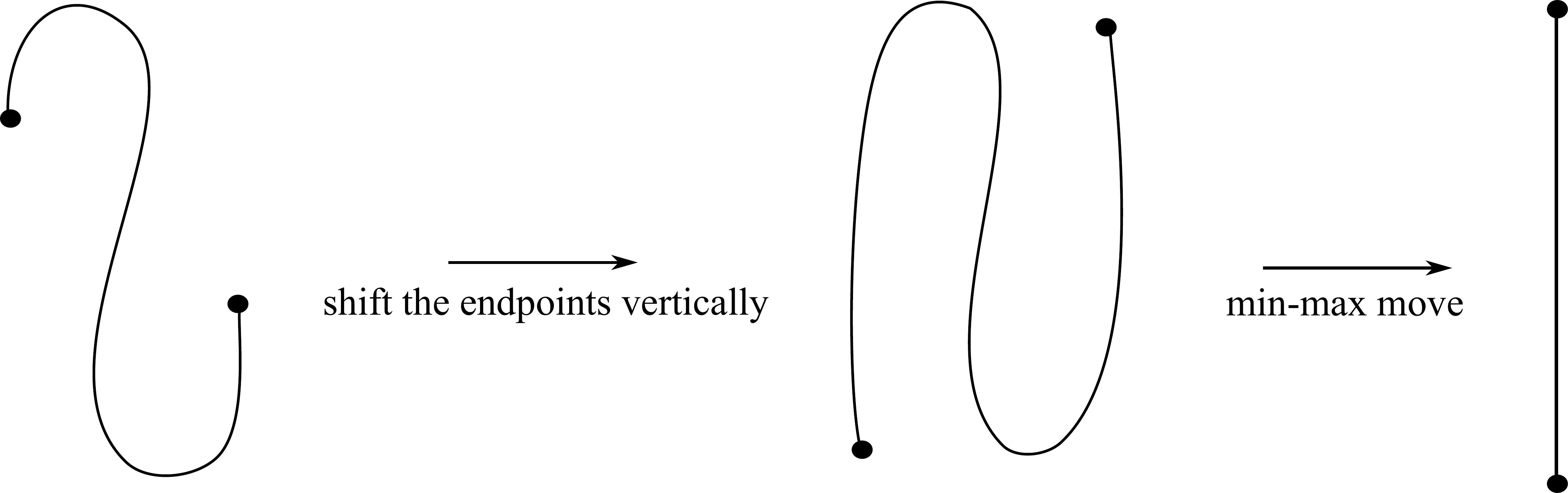}
\caption{The Whitney trick for open-ended state components }
\label{fig:trick}
\end{figure}

Let $\lambda$ be an oriented open-ended state component with rotation number $n \neq 0$. 

As we travel along $\lambda$ starting from its leg, it is clear that the total number of cups and caps along $\lambda$ is at least $|2n|$ since the rotation number is $n$. We first eliminate each pair of sequential maximum and minimum that have opposite signs by the min-max moves. We may require to shift the strands and the endpoints vertically or horizontally  to be able to apply min-max moves without creating any crossings. 
After the elimination, new pairs of  sequential cups and caps with opposite signs might be created in the resulting diagram, which will be eliminated by the min-max moves. 
Clearly, eliminating the canceling pairs does not have any effect on the rotation number. Thus, the final diagram is an oriented diagram free of any pairs of sequential cups and caps and it has rotation number $n$. Thus, we transformed $\lambda$ into a \textit{spiral} diagram. By Lemma \ref{lem:spi}, the statement follows.

If $\lambda$ has rotation number $0$ then $\lambda$ has necessarily up-up directed endpoints, since otherwise the rotation number would be a non-zero half integer. Let the up directed endpoint be the leg of $\lambda$. Since $\lambda$ has trivial rotation number, any cups and caps of $\lambda$ can be brought to be paired by planar isotopy including vertical shiftings of the endpoints so that they are canceled pairwise. By the vertical shifting moves, that pull one endpoint up and one endpoint down, $\lambda$ can be transformed into a vertical strand (trivial strand). This proves the claim.

By proving the claim, we can deduce the required statement.


\end{proof}

\begin{remark}\normalfont
Theorem \ref{thm:rota} can be viewed as a specialization of the Whitney-Graustein theorem \cite{W} for immersed curves in Morse form with endpoint directions specified as we have done.  In the next section we formulate the Morse category for such diagrams. Two such curves are regularly homotopic (in the Whitney-Graustein sense) exactly when one can be obtained from the other by max-min cancellations, and flat versions of the Morse isotopy equivalences. One then uses this equivalence relation to obtain a Morse category proof of the fully generalized Whitney-Graustein Theorem. We omit the details of this proof, since we only need the argument here for curves without crossings.
\end{remark}

 \begin{definition}\normalfont
Let $K$ be a Morse knotoid diagram. The {\it  rotational bracket polynomial} of $K$ is defined as
$$(K)_{rot} = \sum_{\sigma} < K | \sigma > {\delta}^ {m} \lambda^{n},$$
where $\sigma$ is a state,  $< K | \sigma >$ is the product of the coefficients of the smoothings in $\sigma$,  $\delta= -A^2 - A^{-2}$ and $m
$ is the number of circular components in $\sigma$, and  $\lambda$ is the variable assigned to the open-ended state component in $\sigma$ endowed with orientation and  $n$ is the  rotation number of the open-ended state component.
\end{definition}
\begin{theorem}
The rotational bracket polynomial is an invariant of Morse knotoids.

\end{theorem}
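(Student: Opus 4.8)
The plan is to adapt the classical proof that the Kauffman bracket is a regular isotopy invariant, carrying along in addition the power of $\lambda$ attached to the open-ended state component. Since Morse isotopy is generated by the min-max moves, the slide moves, the vertical Reidemeister~II move, the Reidemeister~III braiding move, and the vertical and horizontal endpoint shifts, it suffices to check that $(K)_{rot}$ is left unchanged by each of these. Throughout I will use that every state $\sigma$ of a Morse knotoid diagram has exactly one open-ended component, so that the monomial $\delta^{m}\lambda^{n}$ attached to $\sigma$ is well defined, and that $rot$ of an open-ended state component is invariant under Morse isotopy (cf. Theorem~\ref{thm:rota}).

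I would first dispose of the moves not involving crossings. A min-max move inserts or deletes, on some arc of the diagram away from all crossings, a consecutive maximum--minimum pair whose signs are opposite; a slide move swings an arc across an extremum without creating or destroying any cup, cap, or crossing; and an endpoint shift only lengthens or bends the vertical arc near an endpoint, preserving the tangent directions. In each case there is an evident bijection between the states before the move and the states after it, obtained by performing the same local modification in every state (the modification meets no crossing, hence disturbs no smoothing). Under this bijection $\langle K\mid\sigma\rangle$ is unchanged since the crossings and their smoothings are the same, $m$ is unchanged since no circular component is born or killed, and $n$ is unchanged: a min-max move on the open-ended component contributes two cancelling signs, while slide moves and endpoint shifts preserve $rot$. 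Hence every summand, and so $(K)_{rot}$ itself, is preserved.

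For the vertical Reidemeister~II move I would fix a state of the part of the diagram lying outside the move disk and expand the bracket at the two crossings inside it. As in Kauffman's computation, exactly one of the four resulting local states restores the two parallel strands inside the disk, and it does so with coefficient $1$ and without any new circular component, so it contributes the same monomial as the corresponding state of the simplified diagram (the open-ended components differ only by a zero-turning wiggle, hence have the same $n$). The other three local states all present the ``turnback'' configuration inside the disk, with coefficients $A^{2}$, $A^{-2}$ and $1$, where the last of these carries one additional circular component; since these three states display the disk identically, they carry a common open-ended component and hence a common exponent $n$, so even after the factor $\lambda^{n}$ is inserted their total weight is $\bigl(A^{2}+A^{-2}+\delta\bigr)\lambda^{n}=0$, the extra circular component contributing only a power of $\delta$. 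Summing over the outside states gives invariance under the move.

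Invariance under the Reidemeister~III braiding move then follows in the usual manner: expand a single crossing on each of the two sides, invoke the rotational Reidemeister~II invariance just established to remove the bigon appearing in one of the two resulting diagrams, and observe that what remains on the two sides agrees in the Morse category; because each individual smoothing is a zero-turning local reconnection, the open-ended component and the exponent $n$ are the same on the two sides throughout. The step I expect to require the most care is precisely this bookkeeping of the rotation number across the Reidemeister~II (and hence~III) expansions: one must be sure that the turnbacks and the extra circular components introduced by the smoothings do not break the cancellation $A^{2}+A^{-2}+\delta=0$ once the powers of $\lambda$ are present. As indicated, this works out because all the states carrying a turnback share one and the same open-ended component, and circular components never carry a power of $\lambda$.
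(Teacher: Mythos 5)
Your proof is correct and follows essentially the same route as the paper, which simply observes that the result combines the invariance argument for the usual bracket polynomial with the invariance of the rotation number of the open-ended state component. Your write-up supplies the details the paper leaves implicit, in particular the correct observation that the three turnback states in the Reidemeister~II expansion share a common open-ended component, so the cancellation $A^{2}+A^{-2}+\delta=0$ survives the insertion of the factor $\lambda^{n}$.
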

\begin{proof}
The proof is based on the invariance of the usual bracket polynomial combined with the invariance of the rotation number. 
\end{proof}
In Figure \ref{fig:rotbrac} we compute the rotational bracket polynomial of an example of a Morse knotoid diagram. Explicit calculation shows $(K)_
{rot}=  \lambda(1 - A^{-4}) + A^2 \lambda^{-1}$.

\begin{figure}[H]
\centering 
\includegraphics[width=1\textwidth]{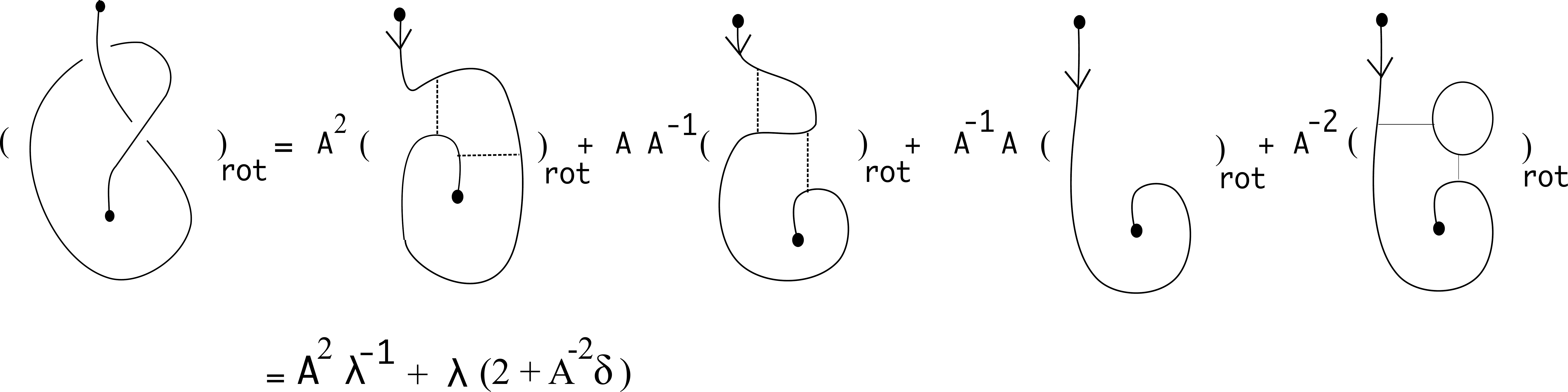}
\caption{Evaluation of the rotational bracket polynomial}
\label{fig:rotbrac}
\end{figure}

\section{Categories for Morse knotoids} \label{sec:Cat}
\subsection{An extended tangle category}
We set a category where Morse knotoids appear as morphisms, by extending the category of tangles in the Morse form. The \textit{objects} of this extended category of tangles in the Morse form, \textbf{Tan} are $[0]$,$[1$],$[2]$,...,$[n]$,  where $n$ is a natural number, and the morphisms of this category are generated by the following elementary tangles: a single vertical strand, right and left-handed crossings, a cap, a cup, and two vertical strands initiating or terminating with a dot, respectively. See Figure \ref{fig:elttangles}. 
 \begin{figure}[H]
\centering
\includegraphics[width=1\textwidth]{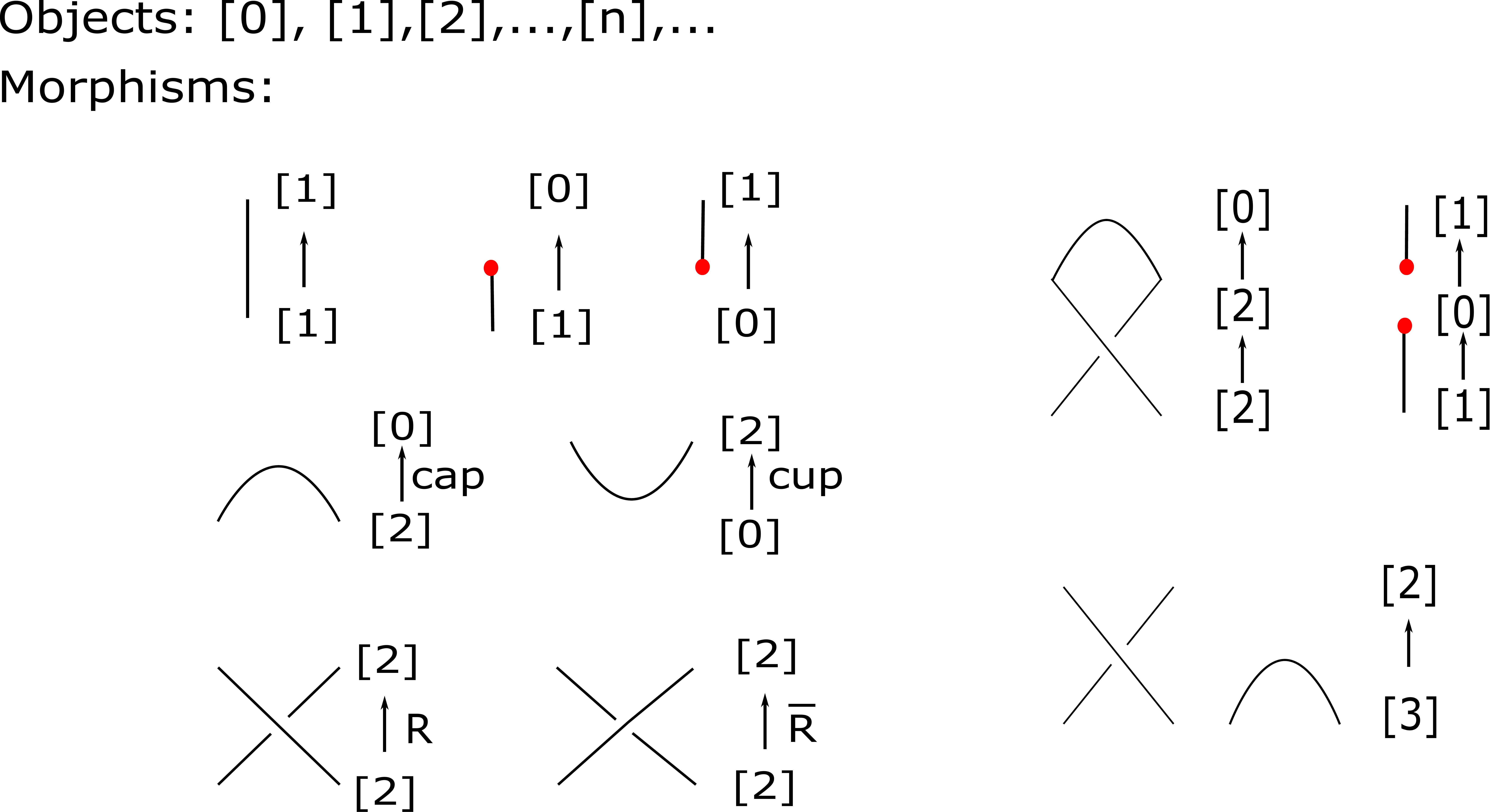}
\caption{The objects and morphisms of the category of Morse tangles}
\label{fig:elttangles}
\end{figure}
For a tangle morphism  $[n] \rightarrow [m]$, $[n]$ and $[m]$ denote the number of free ends of the tangle lying on bottom and top lines, respectively. In particular, the vertical strand $I$ is a morphism from $[1]$ to $[1]$, the crossing morphisms denoted by $R, \overline{R}$ are morphisms from $[2]$ to $[2]$, a cup tangle is a morphism $\cup: [0] \rightarrow [2]$ and the cap tangle is a morphism $\cap: [2] \rightarrow [0]$. A vertical strand initiating with a dot 
\setlength{\unitlength}{.75ex}
\begin{picture}(1.5,1.5)(0,1.8)
        \put(1,0.5){\line(0,1){4}}
        \put(1,0.5){\circle*{1}}
\end{picture} is a morphism from  $[0]$ to $[1]$, a vertical strand terminating with a dot \setlength{\unitlength}{.75ex} \begin{picture}(2,2)(0,2.5)
      \put(1,4){\circle*{1}}
        \put(1,0.5){\line(0,1){4}}
  \end{picture}  is a morphism from  $[1]$ to $[0]$. In this sense the endpoints of a Morse knotoid diagram are not free ends. They are morphisms beginning or terminating in the vacuum $[0]$.

The composition $\mu_2\mu_1$ of two morphisms $\mu_1: [n] \rightarrow [m]$ and $\mu_2: [m] \rightarrow [k]$ where $m \neq 0$, is formed by placing the tangle corresponding to $\mu_2$ on top of the tangle corresponding to $\mu_1$ and joining the output free ends of $\mu_1$ with the corresponding input free ends of $\mu_2$. See Figure \ref{fig:elttangles}. It is clear that the composition operation defined is associative.

 The composition of the morphisms \setlength{\unitlength}{.75ex}
\begin{picture}(1.5,1.5)(0,1)
        \put(1,0.5){\line(0,1){4}}
        \put(1,0.5){\circle*{1}}
\end{picture}: $[1] \rightarrow [0]$ and \setlength{\unitlength}{.75ex} \begin{picture}(2,2)(0,1)
      \put(1,4){\circle*{1}}
        \put(1,0.5){\line(0,1){4}}
  \end{picture}: $[0] \rightarrow [1]$ is a morphism from $[1]$ to $[1]$ given by the disjoint union as shown in Figure \ref{fig:elttangles}.

A tensor structure on {\bf Tan} is given by stacking the elementary tangles side by side from left to right. This gives the tensor product $\otimes$, defined as $[n] \otimes [m] = [n+m]$, where $[n], [m]$ are objects of the category. Note here that $[0] \otimes [n] = [n] = [n] \otimes [0]$. The crossing morphisms $R, \overline{R}$ extend to the \textit{braiding maps},\\
 
  $\sigma_i = I^{\otimes i-1} \otimes R \otimes I^{\otimes k-1}  : [i + k] \rightarrow [i +
 k]$\\
 
  $\overline{\sigma_i}= I^{\otimes i-1} \otimes \overline{R} \otimes I^{\otimes k-1}  : [i + k] \rightarrow [i + k]$,\\
 for any $i, k \geq 1$. This makes the category {\bf Tan}  a braided monoidal category with identity $[0]$, in the sense of Joyal and Street \cite{JS}. Finally, for {\bf Tan} to be a topological category, we assume the identities that are generated by the Morse isotopy moves on the composition and tensor product of the morphisms of {\bf Tan}. 

We call a morphism in {\bf Tan} a \textit{Morse diagram}. Classical and virtual knots, tangles in the Morse form and Morse knotoid diagrams are examples of Morse diagrams.
 A Morse knotoid diagram does not contain any free ends. For this reason any Morse knotoid diagram $K$ can be viewed in {\bf Tan} as a morphism from $[0]$ to $[0]$ that is decomposed into a finite number of tensor products of the elementary morphisms. Figure \ref{fig:comp} illustrates an example. 

\begin{figure}[H]
\centering
\includegraphics[width=.4\textwidth]{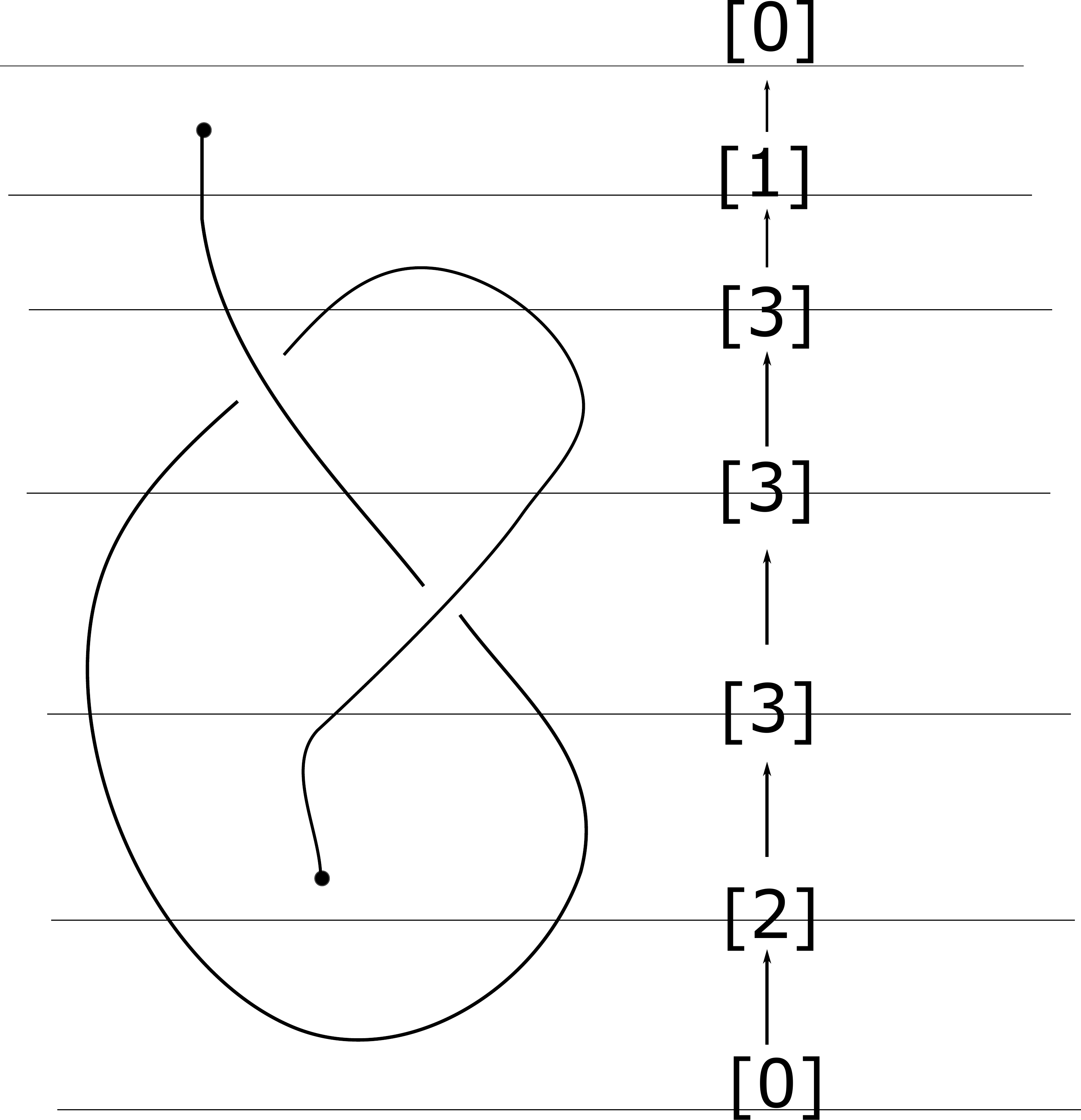}
\caption{A Morse knotoid diagram as composition of morphisms in {\bf Tan}}
\label{fig:comp}
\end{figure}

\subsection{An abstract tensor category}
In this section, we define an abstract tensor category. The reader will see that this category is almost the same as the tangle category {\bf Tan} that we discuss above. The abstract tensor category is designed so that there can be a direct functor from this category to matrix algebra, and so that it parallels the structure of partition functions in physics. In this category each morphism is notated with dots at its free ends,  as illustrated in Figure \ref{fig:t1}. These dots are mapped by a matrix algebra functor to matrix indices. We give the details of the matrix representation in Section \ref{sec:quantumone}. Let us now give a precise description for the abstract tensor category.

The objects of the abstract tensor category {\bf ATC}, are generated by the objects $k$ and $V$, where $V$ can be considered as a  free module and $k$ as the ground ring of $V$. The tensor product of $V$ with itself gives rise to distinct objects\\
 $\underbrace{V\otimes V\otimes... \otimes V }_{n} = V^{\otimes n}$ for every $n\geq 1$. 
 The following identities hold on the product of the objects. \\
i.) $(V\otimes V) \otimes V = V \otimes (V\otimes V),$\\
ii.) $k \otimes V= V = V \otimes k$.

 The morphisms of this category are generated by the \textit{identity} operator $I: V \rightarrow V$, the cup and cap operators, $\cup: k \rightarrow V \otimes V$, $\cap: V \otimes V  \rightarrow k$, \\
 two braiding operators $R,  \overline{R} : V \otimes V \rightarrow V \otimes V,$\\
 and the operators $\eta: k \rightarrow V$, $\epsilon: V\rightarrow k$.
  
 The tensor product  $A \otimes B$ of two operators $A: V^{\otimes n} \rightarrow V^{\otimes m}$  and $B: V^{\otimes i }\rightarrow V^{\otimes j}$ is defined as $A \otimes B : V^{\otimes n+i} \rightarrow V^{\otimes m+j}$. Then, a Morse knotoid diagram $K$ can be seen as a composition of the tensor products of operators, as exempflied in Figure \ref{fig:t1}.
    \begin{figure}[H]
    \centering
    \includegraphics[width=.5\textwidth]{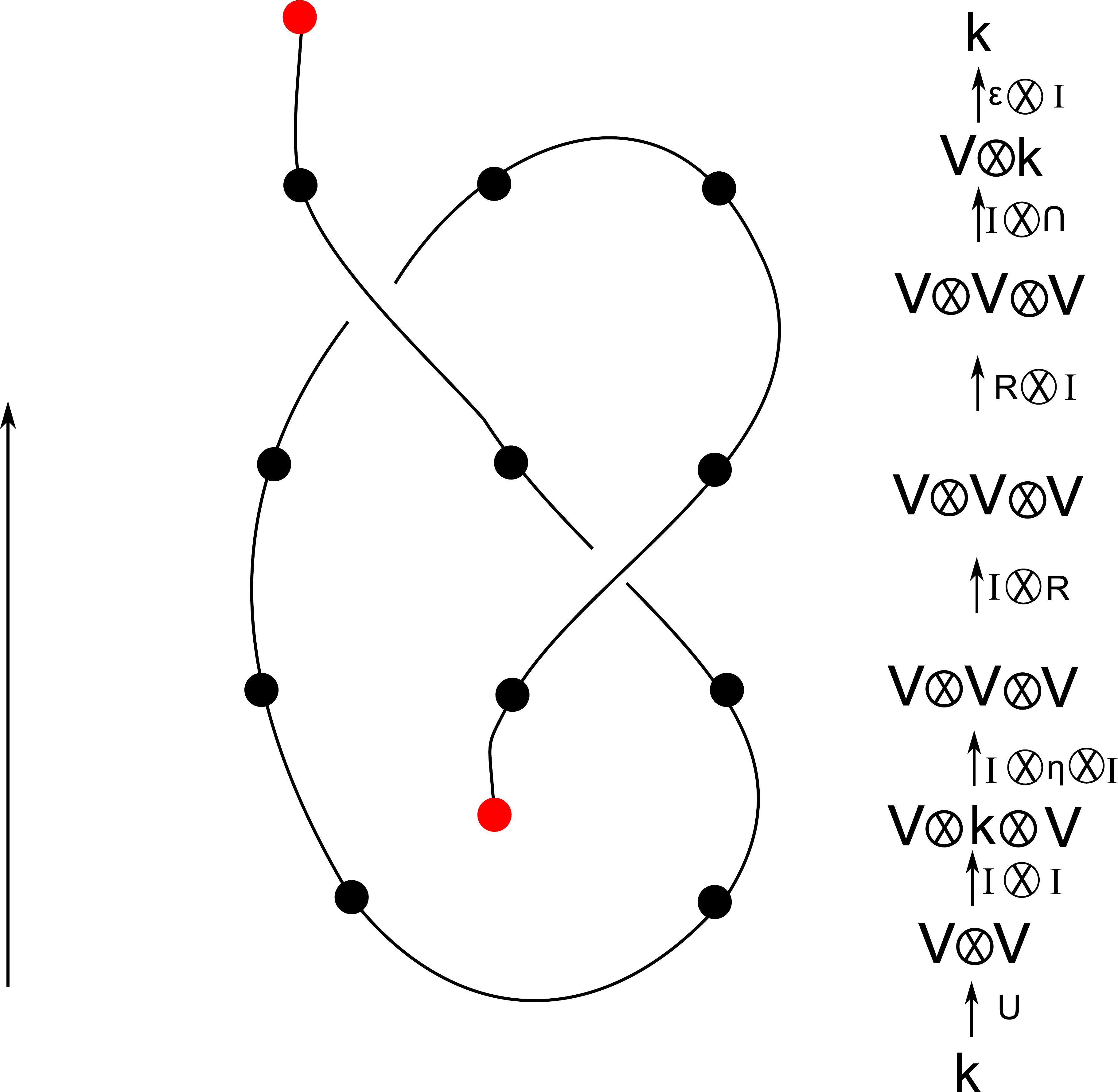}
    \caption{A Morse knotoid diagram as composition of morphisms in {\bf ATC}}
    \label{fig:t1}
    \end{figure}

We assume the morphisms $R, \overline{R}$ to be characterized only by the identities motivated by the type II and type III Morse isotopy moves.
$$R\overline{R}=I^{\otimes 2}= \overline{R}R,$$
$$(I \otimes R)(R \otimes I)(I \otimes R) = (R \otimes I)(I \otimes R)(R\otimes I)$$
$$(I \otimes \overline{R})(\overline{R} \otimes I)(I \otimes \overline{R}) = (\overline{R} \otimes I)(I \otimes \overline{R})(\overline{R}\otimes I)$$

Furthermore, the following identities that are the categorical descriptions of vertical  and horizontal shiftings in the plane,
$$(I^{\otimes n} \otimes B)(A\otimes I^{\otimes i})=(A\otimes I^{\otimes j})(I^{\otimes m} \otimes B) = A \otimes B,$$
where $A: V^{\otimes m} \rightarrow V^{\otimes n}$, $B: V^ {\otimes i } \rightarrow V^{\otimes j}$, and 

$$A \otimes B = AB = B \otimes A,$$
where $A : k \rightarrow V^{\otimes n}$, $B: V^{\otimes m} \rightarrow k$,  are  assumed for the tensor product of morphisms.
See Figure \ref{fig:tensor1} for illustrations of these identities, respectively.


It is clear that {\bf ATC} is a braided monoidal category with braiding maps generated by $R, \overline{R}$. Furthermore, there is a functor from {\bf Tan} to {\bf ATC} sending $[0]$ to the ground ring $k$,  $[n]$ to $V^{ \otimes n}$, for any $n >0$, and the cup morphism to $\cup$ operator, the cap morphism to $\cap$ operator, the crossing morpisms to $R, \overline{R}$, respectively and  \setlength{\unitlength}{.75ex} \begin{picture}(2,2)(0,2.5)
      \put(1,4){\circle*{1}}
        \put(1,0.5){\line(0,1){4}}
  \end{picture}  to $\epsilon$, \setlength{\unitlength}{.75ex}
\begin{picture}(1.5,1.5)(0,1.8)
        \put(1,0.5){\line(0,1){4}}
        \put(1,0.5){\circle*{1}}
\end{picture} to $\eta$.

 \begin{figure}[H]
    \centering
    \includegraphics[width=1\textwidth]{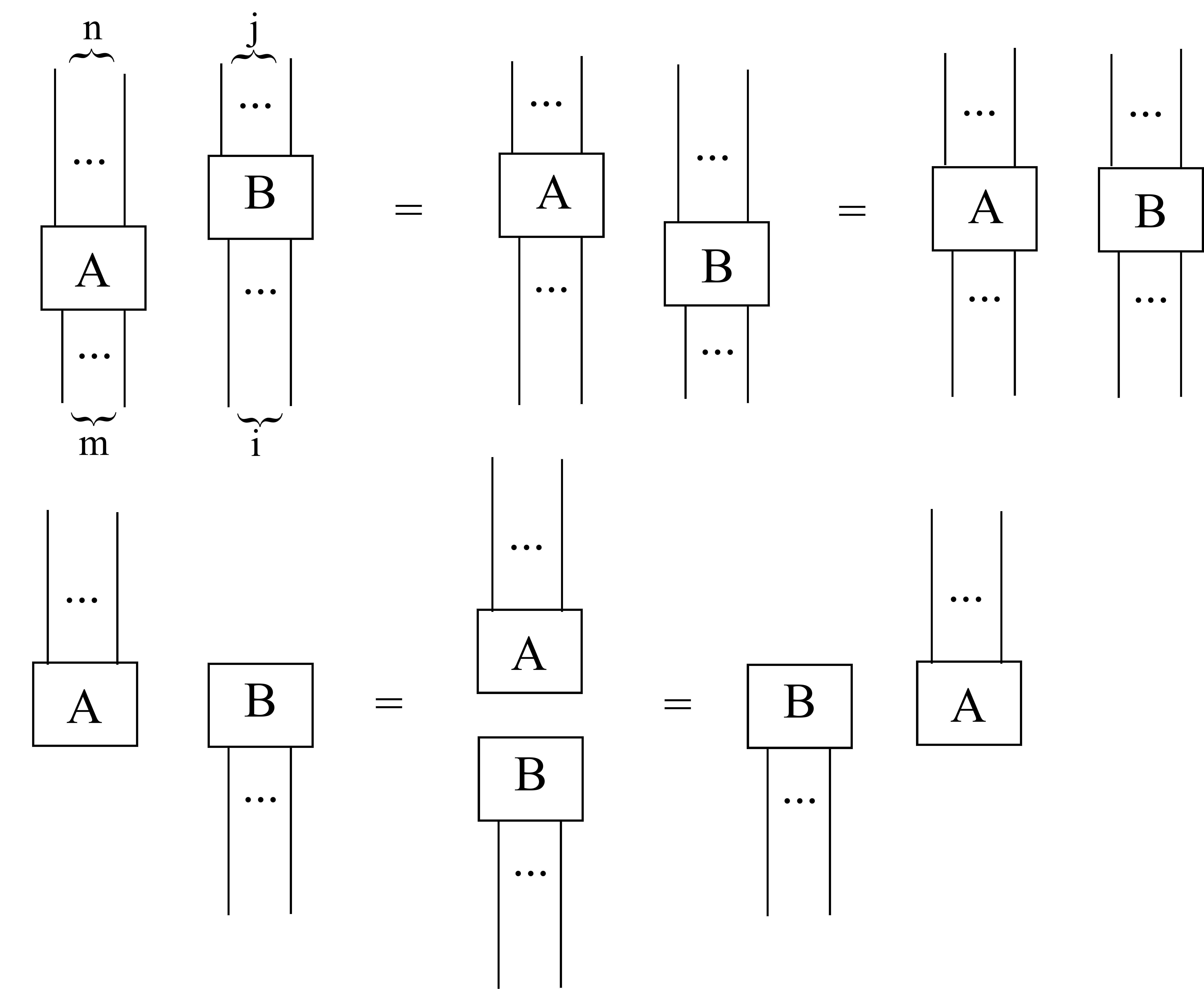}
    \caption{Vertical and horizontal shiftings}
    \label{fig:tensor1}
    \end{figure}

\section{Quantum invariants of Morse knotoids}\label{sec:quantum}
In the previous section, we have discussed how Morse knotoids can be seen as morphisms of a category. In this section we show explicitly how such morphisms become invariants of  knotoids by using an appropriate functor to module categories. In fact, we shall begin this section with a specific description of a {\it partition function} for knotoids, where we associate directly matrices to the cups, caps and crossings of a Morse knotoid diagram. The partition function is then a generalized matrix product, as we detail below.

\subsection{A matrix of partition functions of Morse knotoids}\label{sec:quantumone}
In this subsection we give a method of assigning invariants to the knotoid diagrams in the Morse form that is analogous to partition functions in statistical mechanics. We show in Section \ref{sec:cattwo} below how this 'tensor network' definition can be formulated in terms of a functor on the category.

Let $\mathcal{I}= \{1,2,...,n\}$ denote an index set for some $n \in \mathbb{Z}^+$ and ${\bf k}$ denote a commutative ring. We label each free end of the elementary morphisms in the Morse category {\bf Tan} with an index $i \in \mathcal{I}$. In this way, each elementary morphism will be assigned to a matrix. In particular, a vertical strand whose input and output free ends are labeled with $i$ and $j$, respectively, is identified with the Kronecker delta $\delta_{i}^{j}$. See Figure \ref{fig:krnckr}. Thus, the $n \times n$ identity matrix with the entries $\delta_{i}^{j}$, $i, j \in \mathcal{I}$ is associated to a vertical strand.

  \begin{figure}[H]
    \centering
    \includegraphics[width=1\textwidth]{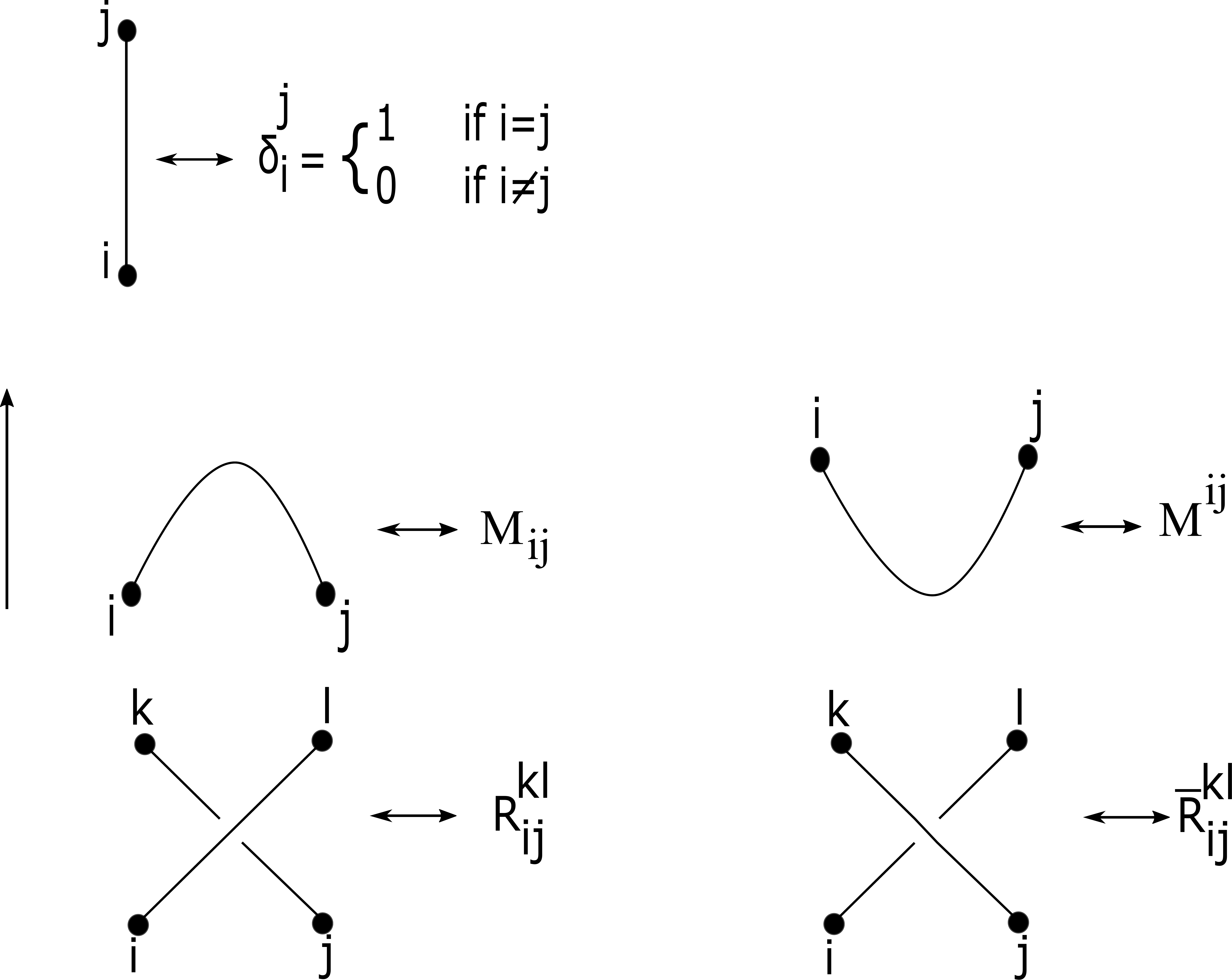}
    \caption{Elementary tangles as matrix elements}
    \label{fig:krnckr}
    \end{figure}

In a similar manner, to a cup and a cap, square matrices $[M^{ij}], [M_{ij}] \in M_{n\times n} (k)$ with entries $M^{ij}, M_{ij}$ are associated, respectively. To a right-handed and left-handed crossing, square matrices $R= [R^{kl}_{ij}],  \overline{R}=[\overline{R}^{kl}_{ij}] \in M_{2n \times 2n} (k)$ are associated. 

\begin{remark}\normalfont
 Note that in the tensor network formulation a vertical segment with top or bottom endpoint labeled with $i$, always behaves as a Kronecker delta with respect to any other index on the line.
 \end{remark}





%
 By labeling each input and output of the elementary tangles with indices from the chosen index set, a Morse diagram can be assigned to the  product of the matrix entries corresponding to the elementary tangles and the Kronecker deltas corresponding to the vertical vertical strands at each level, with labels over them.  
 
\begin{definition}\normalfont
A \textit{partition function} of a Morse diagram in the Morse category {\bf Tan} is defined as the matrix obtained by taking the sum of the products of the scalar matrix entries including the 
Kronecker deltas, over the repeated indices on the internal configurations (cups, caps, crossings or vertical strands). 
\end{definition}

Clearly, a partition function of a closed knotted diagram in the Morse form takes values in the ground ring $k$. For instance, the value of a partition function of a circle is a scalar given by  
  $ \sum_{a,b \in \{1,...,n\}}M_{ab} M^{ab}$. 
  
 If a Morse diagram is a tangle or a knotoid diagram, each choice of indices on its free ends or on its endpoints, respectively,  yields a partition function. Then, we have a matrix of partition functions for the tangle or knotoid diagram. We call this matrix the \textit{partition function matrix} of the Morse diagram. In particular, if $K$ is a Morse knotoid diagram, there are $n$ choices for labeling each of its endpoints with indices from the set $\mathcal{I}$.  For each choice of labelings at the endpoints, a partition function is assigned to $K$. We denote the partition function of the Morse knotoid diagram $K$ with its endpoints labeled with $a,b \in \mathcal{I}$, by $Z_{a}^{b}$, where $a$ holds for the label at the endpoint of $K$ with lower height and $b$ holds for the label at endpoint of $K$ at the higher height. See Figure \ref{fig:par}.
  Then, the partition function matrix of $K$ is an $n \times n$ matrix that is given with entries $Z_{a}^{b} \in k$, and is denoted by $Z_K= [Z_{a}^{b}]$.

For example, the partition function of a vertical strand with fixed labels on its input and output ends is given by the concatenation of two Kronecker deltas and the summation over the internal index $i$, $\sum_{i \in \mathcal{I}} \delta_{a}^{i} \delta_{i}^{b} = \delta_{a}^{b}$. Thus the partition function of a vertical strand with fixed input output indices $a, b$ is equivalent to the Kronecker delta. This expression can be further reduced by labeling the vertical strand with only one index at an interior point. See Figure \ref{fig:reduced}. Assuming one label for each vertical strand results in a Morse diagram with one label between any two critical points.  Figure \ref{fig:par} illustrates the Morse knotoid diagram given in Figure \ref{fig:t1} with the reduced number of labels on it. We call this labeled Morse diagram the \textit{reduced diagram}. 
Reducing the number of labels on a Morse diagram in this way induces a Kronecker delta-free definition for the partition function for Morse diagrams. 
\begin{figure}[H]
  \centering
  \includegraphics[width=.2\textwidth]{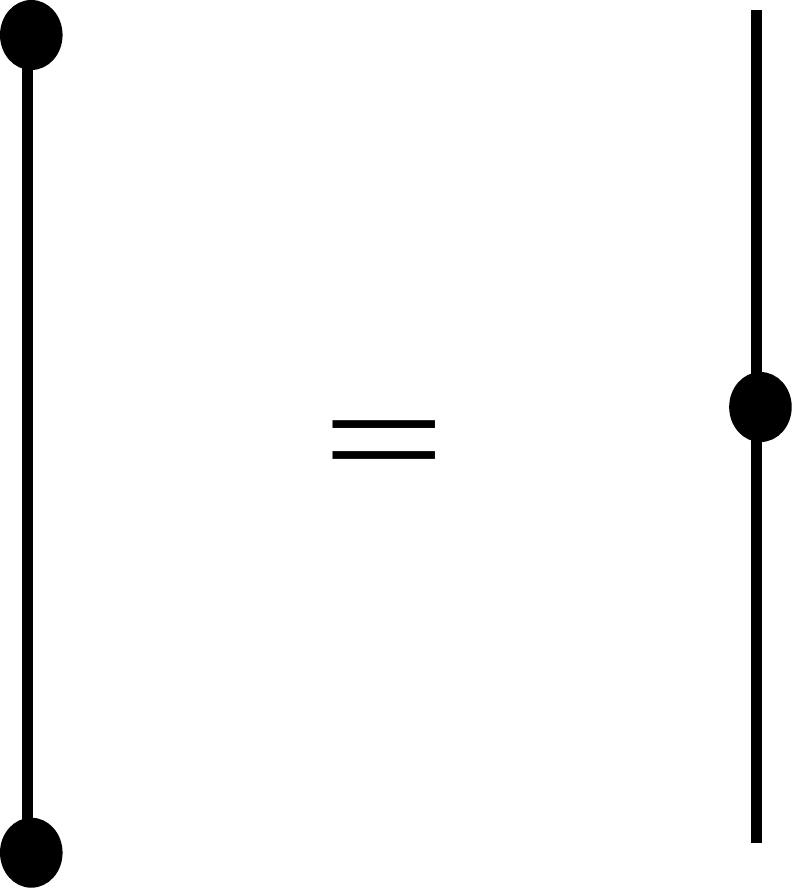}
  \caption{Reduction of labelling on a vertical strand}
  \label{fig:reduced}
  \end{figure}
  \begin{figure}[H]
 \centering
 \includegraphics[scale=.25]{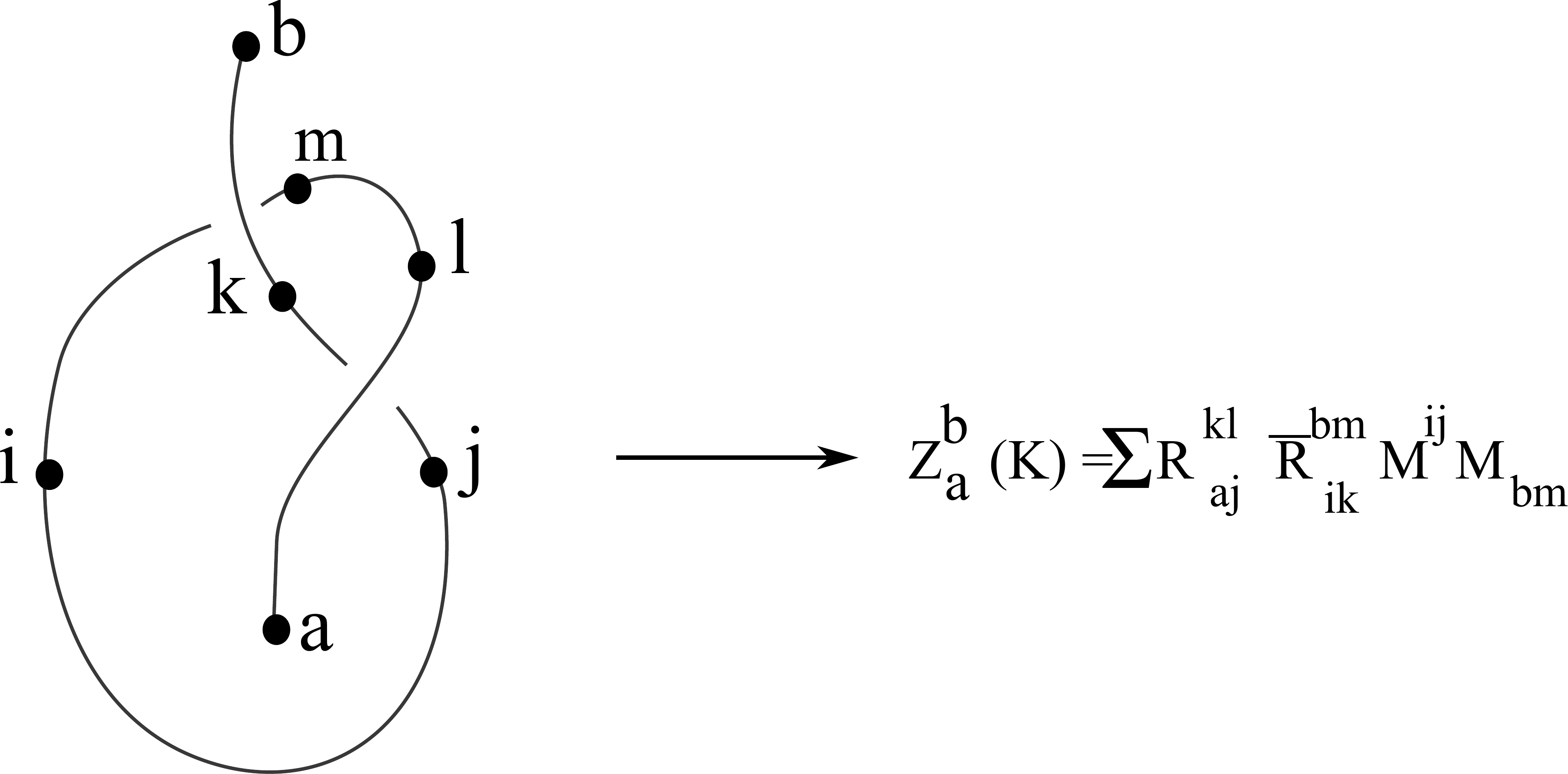}
 \caption{A reduced Morse knotoid diagram}
  \label{fig:par}
  \end{figure}

\begin{definition}\normalfont
The {\it reduced partition function} of a Morse diagram is the sum of the products of the scalar matrix entries, over the repeated indices on the internal configurations of the reduced Morse diagram. \end{definition}

\begin{lemma}
The partition function matrix and the reduced partition function matrix of a Morse knotoid diagram are equal to one another.\end{lemma}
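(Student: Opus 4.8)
The plan is to compare the two partition functions term by term in their defining summations, using that the reduced diagram differs from the fully labelled diagram only along the vertical strands running between critical points, with no change at the cups, caps and crossings where the scalar matrix entries sit. So first I would fix a Morse knotoid diagram $K$ and a choice of labels $a,b\in\mathcal I$ at its leg and head, and decompose $K$ into a stack of levels, each level being a tensor product of elementary tangles with at most one critical point (a maximum/cup, a minimum/cap, a crossing, or an endpoint) and the remaining strands vertical. Labelling every free end of every elementary tangle produces $Z_a^b$ as a sum, over all internal indices, of a product of factors: one scalar factor ($M^{ij}$, $M_{ij}$, $R^{kl}_{ij}$ or $\overline{R}^{kl}_{ij}$) for each critical point, and one Kronecker delta $\delta^{\,\cdot}_{\,\cdot}$ for each occurrence of a through-going vertical strand in a level, as in Figure~\ref{fig:krnckr}.

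Next I would group the Kronecker deltas by \emph{columns}, where a column is a maximal sub-arc of $K$ joining two consecutive critical points without a critical point in its interior; since such a sub-arc contains no maxima, minima or crossings internally, it is strictly monotone in height, and it is exactly the portion of $K$ that is cut by the level decomposition into a chain of vertical-strand tangles. If a column spans $\ell$ levels, its Kronecker deltas form a chain $\delta^{\,i_1}_{\,c}\,\delta^{\,i_2}_{\,i_1}\cdots\delta^{\,c'}_{\,i_{\ell-1}}$ along the indices $i_1,\dots,i_{\ell-1}$ shared between successive levels, where $c$ and $c'$ are the labels at the two ends of the column abutting its bounding critical points (for the two columns incident to the leg and the head, one of $c,c'$ is the external label $a$ or $b$). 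Summing over $i_1,\dots,i_{\ell-1}$ telescopes this chain to the single factor $\delta^{\,c'}_{\,c}$. After doing this for every column, $Z_a^b$ becomes a sum, over the labels at the ends of columns and at the critical points, of the product of the scalar matrix entries times one factor $\delta^{\,c'}_{\,c}$ per column.

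Finally I would use each remaining $\delta^{\,c'}_{\,c}$ to eliminate one summation variable: summing over $c'$ forces $c'=c$ and deletes the delta, which is exactly the operation of assigning a single label to the whole column. Performing this for all columns leaves a sum over one index per column of $K$, whose summand is the product of the scalar matrix entries at the crossings, cups and caps of $K$; this is, verbatim, the reduced partition function $Z_a^b$ of the reduced diagram, whose labelling has one index per column by construction. Since $a,b$ were arbitrary, $Z_K=[Z_a^b]$ is the same matrix computed either way.

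The main point requiring care is the telescoping step: justifying that the Kronecker deltas along a column really assemble into a single chain that collapses, which rests on the facts that a through-going vertical strand in a level is assigned the identity matrix $\delta^{\,j}_{\,i}$ and that composing two levels identifies the output index of a strand with its input index on the level above. Beyond that, the only things to verify are the harmless boundary cases — a column that starts at a cup or ends at a cap, and the two columns meeting the leg and head — and in each of these the telescoping and the subsequent single-label identification go through unchanged.
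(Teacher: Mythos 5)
Your proposal is correct and follows essentially the same route as the paper: the paper's proof simply invokes the preceding observation that $\sum_{i}\delta_a^i\delta_i^b=\delta_a^b$ along a vertical strand, which is exactly the telescoping of Kronecker-delta chains along columns that you carry out in detail. Your version just makes explicit the grouping into columns and the boundary cases, which the paper leaves implicit.
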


\begin{proof}
By the discussion above, it is explicit that reducing the labels on the internal vertical strands of a Morse knotoid diagram will not affect the partition function. Therefore, the partition function matrix remains the same.
\end{proof}
We will be working with reduced labeled diagrams and reduced partition functions as it is most efficient in  writing.

 In order for the partition function $Z^{b}_{a}$ to turn out to be a topological invariant for the isotopy class of a Morse knotoid diagram $K^{a}_{b}$ whose endpoints are labeled with $a$ and $b$ from $\mathcal{I}$, we assume the identities on the cup, cap and crossing matrices given in Figure \ref{fig:zero} and their variations induced by variations of the Morse isotopy moves.

\begin{figure}[H]
\centering \includegraphics[scale=.25]{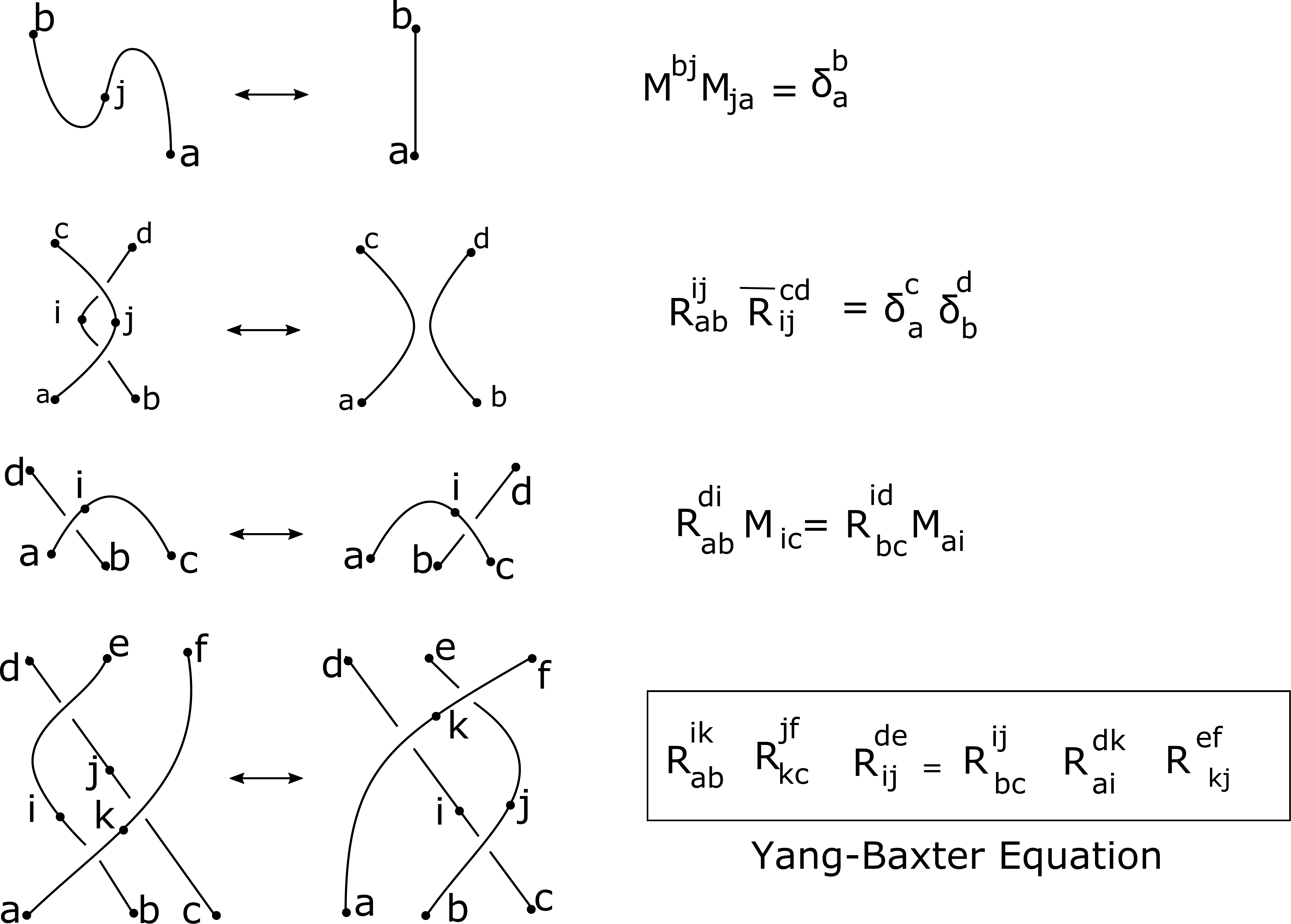}
\caption{The identities for the matrices}
\label{fig:zero}
\end{figure}

 As we see from the figure, the invariance under the vertical type II Morse isotopy move imposes that the matrix $\overline{R}$ is the inverse of the matrix $R$. The invariance under the given vertical type III Morse isotopy move and its mirror image impose the following identities on the matrices $R$ and $\overline{R}$, respectively. These identities are known as the \textit{Yang-Baxter equation}.  
$$R^{ij}_{ab}R^{kf}_{jc}R^{de}_{ik} = R^{ij}_{bc} R^{dk}_{ai} R^{ef}_{kj},$$
$$\overline{R}^{ij}_{ab}\overline{R}^{kf}_{jc} \overline{R}^{de}_{ik} = \overline{R}^{ij}_{bc}\overline{R}^{dk}_{ai} \overline{R}^{ef}_{kj}.$$
We have the following theorem.

\begin{theorem}\label{thm:relations}
Suppose that matrices $R$ and $\overline{R}$ are inverses, and satisfy the Yang-Baxter equation and the interrelation with $M^{ij}$ and $M_{ij}$ given by the slide moves. Suppose that $[M_{ij}]$ and $[M^{ij}]$ matrices are inverses.  Let  $Z_{b}^{a}$ be the $n \times n$ matrix whose entries are the values of the partition functions of Morse knotoid diagrams $K^{b}_{a}$ whose endpoints are labeled with $a, b$ as $a, b$ runs over $\mathcal{I}$. Then $Z_{b}^{a}$ is a Morse knotoid invariant.
\end{theorem}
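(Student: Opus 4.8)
The plan is to reinterpret the assignment $K\mapsto Z_K$ as a braided monoidal functor from the Morse tangle category $\textbf{Tan}$ of Section~\ref{sec:Cat} to the category of matrices over ${\bf k}$, and to observe that the four hypotheses on $R,\overline{R},[M^{ij}],[M_{ij}]$ are precisely what forces this functor to be well defined on isotopy classes.

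First I would note that $Z$ is already well defined at the level of \emph{diagrams}. Slicing a Morse knotoid diagram $K$ by horizontal lines expresses it as a composition of tensor products of elementary tangles, and the reduced partition function is obtained by contracting the associated matrices: matrix multiplication along each line, Kronecker product across a line. Since index contraction is associative and satisfies the interchange law $(A\otimes B)(C\otimes D)=(AC)\otimes(BD)$, the resulting matrix is independent of the chosen slicing; this is exactly the statement that $Z$ is a monoidal functor on the free braided monoidal category generated by the elementary tangles, and it uses nothing beyond linear algebra (the preceding Lemma lets us pass freely between the labelled and reduced pictures). Leaving the two endpoint indices free yields the $n\times n$ matrix $Z_K=[Z^b_a]$.

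Next, because $\textbf{Tan}$ is that free category modulo the \emph{local} relations generated by the Morse isotopy moves, it suffices to check that each move, performed inside a disk, produces equal matrices for the two local tangles; functoriality then tensors this local identity with the unchanged ``rest of the diagram'' and propagates it to $Z_K$. I would then run through Figure~\ref{fig:zero} and its variants. The min-max move (cancellation of a sequential maximum and minimum, a zig-zag) corresponds to $\sum_j M^{ij}M_{jk}=\delta^i_k$ and its mirror $\sum_j M_{ij}M^{jk}=\delta_i^k$, which is the hypothesis that $[M^{ij}]$ and $[M_{ij}]$ are mutually inverse. The vertical Reidemeister~II move corresponds to $R\overline{R}=\overline{R}R=I^{\otimes 2}$. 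The Reidemeister~III braiding move and its mirror correspond to the Yang--Baxter equations for $R$ and for $\overline{R}$ displayed just above the theorem. The slide moves correspond to the assumed interrelations of $R,\overline{R}$ with $M^{ij},M_{ij}$; horizontal Reidemeister~II is then automatic, being a composite of a vertical Reidemeister~II and slide moves as noted in Section~\ref{sec:Morseknotoids}. Finally the planar isotopies that shift strands and endpoints vertically or horizontally correspond to the tensor/composition identities of Figure~\ref{fig:tensor1} (the interchange law), which hold for matrix contraction without further hypotheses. Only finitely many variants of each move occur (differing crossing signs, the side on which a critical point sits, the shape of a zig-zag), and each is covered by one of the listed identities or their stated variations, so every Morse isotopy move preserves $Z_K$; hence $Z^b_a$, and therefore the matrix $Z_K$, is a Morse knotoid invariant.

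The one genuinely non-formal point --- the main obstacle --- is this globalization step: one must know that Morse knotoid equivalence is generated by moves that are local inside disks and that $Z$ respects side-by-side composition, so that a local matrix identity may be tensored with the complementary part of the diagram. This is exactly what the categorical framework of Section~\ref{sec:Cat} supplies, and it is where the preparatory work of the earlier sections is used; the remaining verifications are the routine index contractions indicated in Figure~\ref{fig:zero}.
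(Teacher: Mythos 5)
Your proposal is correct and follows exactly the route the paper intends: the paper in fact omits a formal proof of this theorem, relying on the preceding discussion that the identities of Figure \ref{fig:zero} (inverse $M$-matrices for min-max, $R\overline{R}=I^{\otimes 2}$ for vertical RII, Yang--Baxter for RIII, and the slide-move interrelations) are imposed precisely so that each local Morse isotopy move preserves the partition function, with Lemma \ref{lem:composible} and the categorical framework of Section \ref{sec:Cat} supplying the globalization you describe. Your writeup simply makes explicit the functoriality and locality arguments that the paper leaves implicit.
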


Partition functions of Morse knotoids can be given in the form of a state sum, like in the case of the bracket polynomial of classical knots and links \cite{Ka5}. We will discuss this in the subsequent sections. 

Our partition functions are constructed in analogy with the partition functions in statistical mechanics where they are defined on a graph corresponding to a physical system as generalized matrix products, and the matrices assigned to the nodes of the graph are called the vertex weights. See \cite{Jones1, Ka4}. Before giving examples of Morse knotoid invariants that can be modeled as partition functions, we would like to highlight the relation between the category of Morse knotoids {\bf Tan}, the category of abstract tensors {\bf ATC} and the category of matrices {\bf MAT}. 

\subsubsection{An overview of the interrelations of the categories}\label{sec:cattwo}
 A linear operator can be assigned to each generating morphism (or elementary tangle) in {\bf Tan} as follows.
 Let $V$ be a module of finite rank over a commutative ring ${\bf k}$ with basis $\{e_i\}_{i \in \mathcal{I}}$ where $\mathcal{I} = \{1,...,n\}$. Let $\{e_{ij}\}_{i,j \in \mathcal{I}}= \{e_i \otimes e_j\}_{i, j \in \mathcal{I}}$ denote the tensor basis for $V \otimes V $. 
 
Each index in $\mathcal{I}$ represents a basis element of $V$ in a unique way with the identification $a \leftrightarrow e_a$. Then, a vertical strand can be assigned to the identity operator $I~:~V\rightarrow V$ given by\\
 $$I (e_a) = \sum_{b \in \mathcal{I}}\delta^{b}_{a} e_a,$$
where $\delta^{b}_{a}$ is the Kronecker delta. \\

The operator associated with the cup tangle $\cup: k \rightarrow V \otimes V$ is given by 
$$\cup(1) = \sum_{a, b \in \mathcal{I}} M^{ab} e_{ab},$$ 
and the operator associated with the cap tangle $\cap: V\otimes V \rightarrow k$ is given by 
$$\cap(e_{ab}) = M_{ab},$$ 
where $M^{ab}$ and $M_{ab}$ are the matrix entries associated to the labeled cup and cap.

The right and the left-handed crossing operators $R, \overline{R} : V \otimes V \rightarrow V \otimes V$  are determined with the $R$ and $\overline{R}$ matrices: 
$$R(e_{ij})= \sum_{k, l \in I} R^{kl}_{ij} e_{kl},$$
$$\overline{R}(e_{ij})= \sum_{k, l \in I} \overline{R^{kl}_{ij}} e_{kl}. $$

Finally, a vertical strand initiating with an endpoint labeled with $i$ is associated to an operator $\eta_i:  {\bf k} \rightarrow V$  such that $\eta_i(1)=e_i$, and a vertical strand terminating with an endpoint labeled with $j$ is associated to an operator $\epsilon_j: V \rightarrow {\bf k}$ such that $\epsilon_j(e_k)=1$ if $j=k$, $0$ otherwise.\\

For instance, a circle decomposes into a cup and a cap operator and takes value that is equal to the value of its partition function as shown below.\\
$$\cap \circ \cup(1)= \cap ( \sum_{a, b \in \mathcal{I}} M^{ab} e_{ab})= \sum_{a, b \in \mathcal{I}} M^{ab} \cap(e_{ab})=  \sum_{a,b \in \mathcal{I}}M_{ab} M^{ab}. $$

This observation generalizes to the following lemma.

\begin{lemma}\label{lem:composible} 

Let $S$ and $T$ be composible morphisms (that is, the number of output free ends of $T$ is equal to the number of input free ends of $S$) in the Morse category {\bf Tan}. 
 Let $[S], [T]$ be the matrices of the morphisms $S$, $T$ that determine the corresponding linear operators, respectively. Let  the composition $S \circ T$ be denoted by $ST$ and $Z(ST)$ denote the partition function matrix of $ST$. Then we have,
 \begin{center}
   $ Z (ST) = Z(S)Z(T) = [S][T] = $ Matrix of the morphism $ST$.  
   \end{center}
\end{lemma}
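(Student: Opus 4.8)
The plan is to verify the identity $Z(ST) = [S][T]$ by unwinding the definitions of the partition function and of matrix multiplication, showing that both are computed by exactly the same sum over internal indices. I would begin by fixing labels on the external (free) ends: say the inputs of $T$ are labeled by a multi-index $\underline{a}$ and the outputs of $S$ by a multi-index $\underline{c}$. The composite $ST$ is obtained by stacking $S$ on top of $T$ and joining the $m$ output ends of $T$ to the $m$ input ends of $S$; call the common labels on these glued ends $\underline{b}$. By the definition of the reduced partition function, $Z(ST)^{\underline c}_{\underline a}$ is the sum, over all internal indices of the diagram $ST$, of the product of the scalar weights of all elementary tangles. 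Since $ST$ has no elementary tangles other than those of $S$ and of $T$ (the gluing introduces no new vertices, only identifies ends), this internal-index set is precisely the disjoint union of the internal indices of $S$, the internal indices of $T$, and the shared middle indices $\underline b$.

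Next I would split that sum: summing first over the internal indices private to $S$ with $\underline b$ held fixed gives, by definition, $Z(S)^{\underline c}_{\underline b} = [S]^{\underline c}_{\underline b}$; likewise summing over the internal indices private to $T$ gives $Z(T)^{\underline b}_{\underline a} = [T]^{\underline b}_{\underline a}$; and the weights of $S$ involve no index of $T$ and vice versa, so the big product factors as a product of these two. Summing the remaining variable $\underline b$ over $\mathcal I^{\,m}$ then yields
$$Z(ST)^{\underline c}_{\underline a} = \sum_{\underline b} [S]^{\underline c}_{\underline b}\,[T]^{\underline b}_{\underline a},$$
which is exactly the $(\underline a,\underline c)$ entry of the matrix product $[S][T]$. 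Here I would invoke the remark in Section~\ref{sec:quantumone} that a vertical strand always acts as a Kronecker delta, so that passing to reduced diagrams does not change the value; this justifies using the reduced partition function throughout and makes the index bookkeeping clean. That $[S][T]$ is itself the matrix of the composite linear operator $S\circ T$ is just the standard fact that composition of linear maps corresponds to matrix multiplication in the chosen bases $\{e_i\}$, $\{e_{ij}\}$, etc., described at the start of Section~\ref{sec:cattwo}.

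The only real subtlety — and the step I expect to require the most care — is the bookkeeping that the set of internal (summed) indices of the stacked diagram $ST$ decomposes cleanly into "those of $S$", "those of $T$", and "the middle layer $\underline b$", with no interaction between the $S$-side and $T$-side weights. This is geometrically obvious from the picture (nothing connects an elementary tangle of $S$ to one of $T$ except through the middle line), but stating it precisely requires being careful about the convention that each arc between two consecutive critical points carries a single label, and that the middle line between $S$ and $T$ consists of $m$ such arcs. Once this decomposition is granted, the Fubini-type interchange of the finite sums and the factorization of the product are routine. I would also note that the argument applies verbatim when some of the external or middle object sizes are $0$ (e.g. if $T: [0]\to[m]$, as for a Morse knotoid built from cups), using the conventions $[0]\otimes[n]=[n]$ and $V^{\otimes 0}=k$; in that degenerate case one of the matrices is a row or column vector and the statement reduces to the evaluation already carried out for the circle.
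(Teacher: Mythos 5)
Your proposal is correct and follows essentially the same route as the paper's proof: both unwind the partition function of $ST$ as a sum over internal indices, isolate the sum over the shared middle labels as a matrix product $\sum_{\underline b}[S]^{\underline c}_{\underline b}[T]^{\underline b}_{\underline a}$, and then identify $[S][T]$ with the matrix of the composite linear operator by the standard basis computation. Your extra care with the degenerate case $[0]\leftrightarrow k$ (row/column vectors, the circle example) is a welcome refinement but not a different argument.
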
 
\begin{proof}
By definition, the partition function matrix $Z(S)$ of a morphism $S: V ^{\otimes k} \rightarrow V^{\otimes m}$ where $k, m \in \mathbb{Z}^{+} $, is the matrix of partition function evaluations $Z^{j_{1}...j_{m}}_{{i_1}...{i_k}}(S)$ over the labelings of the diagram of the morphism. That is, $Z(S)=  [Z^{j_{1}...j_{m}}_{{i_1}...{i_k}}(S)]$. Clearly, $Z(S) = [S]$. Similarly, $Z(T)= [Z^{j_{1} j_{2}...j_{k}}_{j_{1}...j_{n}}(T)]$ and so $Z(T) = [T]$ for a morphism $T: V^{\otimes n} \rightarrow V^{\otimes k}$, $n \in \mathbb{Z}^{+}$.

 The partition function of the composition $ST$ is obtained 
 by summing  the products of the matrix entries of $Z(S)$ and the matrix entries of $Z(T)$ over all index assignments to the common indices of $S$ and $T$. This is just a description of the matrix product of the matrix $[S]$ and the matrix $[T]$. 
 
The composed morphism $ST: V^{\otimes n} \rightarrow V^{\otimes m}$ is given by the following.  \\

 $S T (e_{i_{1}...i_{n}}) = S (\sum_ {\substack{a_1, ..., a_k \in I}} T^{a_1...a_k}_{i_1...i_n} e_{a_{1}...a_{k}}) = \sum_ {\substack{a_1, ..., a_k \in I}} T^{a_1...a_k}_{i_1...i_n} S(e_{a_1...a_{k}})$\\
 
 \hspace{2cm} $ = \sum_ {\substack{a_1, ..., a_k \in I}} T^{a_1...a_k}_{i_1...i_n} S^{i_{1} ...i_{m}}_{a_{1}...a
 _{k}} e_{i_{1}... i_{m}},$\\
 
 where $e_{i_{1}...i_{n}}= e_{i_1} \otimes ...\otimes e_{i_n}$, for $i_{k} \in \mathcal{I}$. \\
 
 Thus, the matrix of the morphism $ST$ is equal to the partition function matrix of $ST$. \\

\end{proof}

  
\begin{note} \normalfont
The reader can verify that the matrix of the composition of the morphisms $(R\otimes I)(I \otimes R)(R \otimes I)$ in the Yang-Baxter equation is given by $\sum_{i,j,k \in I}  R^{ij}_{ab}R^{kf}_{jc}R^{de}_{ik}$ that is equal to the partition function of the corresponding morphism with input and output labels  $a,b,c$ and $d,e,f$, respectively.

\end{note}

We have the following theorem for Morse knotoid diagrams.
\begin{theorem}
Let $K$ be a Morse knotoid diagram. The matrix whose entries are the values of the partition functions $Z^{a}_{b}$ assigned to $K^{a}_{b}$, for all $a,b \in \mathcal{I}$, is equal to the matrix of the linear morphisms associated to $K$.
\end{theorem}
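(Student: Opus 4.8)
The plan is to leverage Lemma \ref{lem:composible} together with the compositional structure of Morse knotoid diagrams as morphisms in {\bf Tan}. First I would recall that any Morse knotoid diagram $K$, viewed as a morphism $[0] \to [0]$ in {\bf Tan}, admits a decomposition as a composition $K = \mu_r \circ \mu_{r-1} \circ \cdots \circ \mu_1$, where each $\mu_i$ is a tensor product of elementary morphisms (identity strands, cups, caps, crossings, and the dotted strands $\eta$, $\epsilon$) read off level-by-level between consecutive critical points; this is exactly the decomposition displayed in Figures \ref{fig:comp} and \ref{fig:t1}. Under the matrix-algebra functor described in Section \ref{sec:cattwo}, each elementary morphism is sent to its associated matrix ($\delta_i^j$ for a strand, $[M^{ij}]$ and $[M_{ij}]$ for cups and caps, $R$ and $\overline{R}$ for the crossings, $\eta_i$ and $\epsilon_j$ for the dotted strands), and the matrix of $\mu_i$ is the tensor (Kronecker) product of these, since the functor is monoidal by construction of {\bf ATC} and the identities assumed there.

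Next I would observe that the partition function construction is, by its very definition, nothing other than this functor applied level-by-level and then composed: labeling each internal edge of the reduced diagram with an index from $\mathcal{I}$ and summing the product of the scalar weights over all such labelings is precisely the iterated matrix multiplication $[\mu_r][\mu_{r-1}]\cdots[\mu_1]$ with the shared (internal) indices being summed and the free indices at the bottom and top (here the two endpoint labels $a$ and $b$) left open. Formally, one applies Lemma \ref{lem:composible} inductively: $Z(\mu_r \circ \cdots \circ \mu_1) = Z(\mu_r) Z(\mu_{r-1} \circ \cdots \circ \mu_1) = \cdots = [\mu_r][\mu_{r-1}]\cdots[\mu_1]$, and at each stage $Z(\mu_i) = [\mu_i]$ equals the matrix of $\mu_i$ (which is handled by the base case of the lemma for elementary morphisms, plus the monoidality of the functor for the tensor products within a single level). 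The resulting $n \times n$ matrix with entries indexed by the two endpoint labels $a,b$ is by definition $Z_K = [Z^b_a]$, and by the chain of equalities it coincides with the matrix of the linear morphism that the functor assigns to $K$.

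The one point requiring a little care — and the step I expect to be the main obstacle — is handling the endpoints correctly. The endpoints of $K$ are not free ends in the tangle-theoretic sense: in {\bf Tan} they are the morphisms $\eta \colon [0]\to[1]$ and $\epsilon \colon [1]\to[0]$ (the dotted strands), so strictly $K$ is a morphism $[0]\to[0]$ whose associated linear map $k \to k$ would be a single scalar, not a matrix. The resolution, which I would spell out explicitly, is that the partition function matrix $Z_K$ arises from the diagram $K$ with the two dotted caps/cups \emph{removed}, i.e.\ from the knotoid regarded with its two endpoint edges left as free (open) edges carrying indices $a$ and $b$; equivalently, $Z^b_a$ is the composite obtained by precomposing with $\eta_a$ at the bottom endpoint and postcomposing with $\epsilon_b$ at the top, so that $Z^b_a = \epsilon_b \circ (\text{linear map of the open diagram}) \circ \eta_a$, which simply extracts the $(a,b)$-entry of the matrix of the open diagram. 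Once this bookkeeping between the dotted (vacuum-terminating) morphisms and the matrix indices is made precise — and it is precisely the content of the functor's action on $\eta$ and $\epsilon$ — the theorem follows from Lemma \ref{lem:composible} with no further computation.
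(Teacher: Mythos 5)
Your proposal is correct and follows essentially the same route as the paper: slice $K$ horizontally into levels, each a tensor product of elementary morphisms, and apply Lemma \ref{lem:composible} inductively to identify the iterated matrix product with the partition function. The paper's own proof is just a terser version of this; your extra care with the endpoint morphisms $\eta$ and $\epsilon$ (which the paper leaves implicit) is a welcome clarification but not a different argument.
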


\begin{proof}
By definition of a Morse knotoid diagram, $K$ can be sliced horizontally so that each horizontal strip contains at most one critical point. Therefore, $K$ can be regarded as a composition of a finite number of linear morphisms associated to elementary tangles stacked with vertical strands forming $K$.  Then, by Lemma \ref{lem:composible} the statement follows.

 \end{proof}


 \subsection{The bracket partition function}\label{sec:bracpartition}
The bracket polynomial is a regular isotopy invariant of knotoids  in $S^2$ and $\mathbb{R}^2$ \cite{Tu}. In this section, we show that it can be defined as a Morse isotopy invariant of Morse knotoids in the form of a partition function.  

Let $K$ be a Morse knotoid diagram and $V$ be a free module of rank $2$ over $\mathbb{C}[A , A^{-1}]$, the ring  of Laurent polynomials with complex coefficients. Let $\{e_1, e_2\}$ be a basis for $V$.   We label the endpoints of $K$ with $a,b \in\{1,2\}$. We will consider the cases where the tangent directions at the endpoints differ.  We denote the Morse knotoid diagram $K$ with labels $a,b$ at its endpoints by $K_{b}^a$ if the tangent vectors at the endpoints of $K$ are directed upwards, $K_{ab}$ or $K^{ab}$ if the endpoints of $K$ are directed upwards and downwards or downwards and upwards , respectively, with respect to the bottom to top vertical direction of the plane.
 
Let the cups and caps of $K$ are both assigned to the matrix  $M$, where
 $$ M= \begin{bmatrix}
0& iA \\
-iA^{-1} & 0
\end{bmatrix}.
$$
That is, $M^{ij}=M_{ij}$ for each $i,j \in \mathcal{I}$, and let the right-handed and left-handed crossings are assigned to the matrices $R, \overline{R}$, where\\

$R =  \begin{bmatrix}
A &0 & 0& 0\\
0&0& A^{-1}& 0\\
0 & A^{-1}  &A- A^{-3} &0\\
0& 0& 0& A \end{bmatrix}, 
\overline{R} =  \begin{bmatrix}
A^{-1} &0 & 0& 0\\
0& A^{-1}-A^3&A& 0\\
0 & A  &0 &0\\
0& 0& 0& A^{-1} \end{bmatrix}$.
\\
\begin{lemma}\label{lem:id}
The entries of the crossing matrices $R$ and $\overline{R}$ satisfy the following identities.
 \begin{equation}
R^{ij}_{kl}= A \delta^{i}_{k}\delta^{j}_{l}  + A^{-1} M^{ij}M_{kl}, \label{eqn:id}
\end{equation}

\begin{equation}\label{eqn:id2}
\overline{R}^{ij}_{kl} =  A ^{-1} \delta^{i}_{k}\delta^{j}_{l}  + A M^{ij}M_{kl}.
\end{equation}
\end{lemma}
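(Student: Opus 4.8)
The plan is to prove Lemma~\ref{lem:id} by direct verification, since each of the two asserted identities is a finite system of scalar equations over $\mathbb{C}[A,A^{-1}]$. Before computing I would record the conceptual content, which organizes the check: the tensor $M^{ij}M_{kl}$ is exactly the matrix of the composite operator $\cup\circ\cap\colon V\otimes V\to V\otimes V$ (apply $\cap$ to obtain $M_{kl}$, then $\cup$ to obtain $M^{ij}$), and $\delta^{i}_{k}\delta^{j}_{l}$ is the matrix of $I^{\otimes 2}$. Hence \eqref{eqn:id} and \eqref{eqn:id2} are the matrix incarnations of the Kauffman bracket skein expansions $R=A\,I^{\otimes 2}+A^{-1}(\cup\cap)$ and $\overline{R}=A^{-1}\,I^{\otimes 2}+A\,(\cup\cap)$, and proving the lemma amounts to checking that the specific matrices $R,\overline{R},M$ displayed above realize these relations.

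The computation itself runs as follows. First I would note that $M^{ij}=M_{ij}$, so the only nonzero entries of $M$ are $M^{12}=iA$ and $M^{21}=-iA^{-1}$; consequently $M^{ij}M_{kl}$ vanishes unless $(i,j),(k,l)\in\{(1,2),(2,1)\}$, and on those four index choices it takes the values $M^{12}M_{12}=-A^{2}$, $M^{21}M_{21}=-A^{-2}$, and $M^{12}M_{21}=M^{21}M_{12}=1$. Substituting these into $A\,\delta^{i}_{k}\delta^{j}_{l}+A^{-1}M^{ij}M_{kl}$ and reading off the entry in row $(i,j)$, column $(k,l)$ with the basis of $V\otimes V$ ordered $e_{11},e_{12},e_{21},e_{22}$, one obtains $A$ in the $(11,11)$ and $(22,22)$ slots, $A+A^{-1}(-A^{2})=0$ in the $(12,12)$ slot, $A+A^{-1}(-A^{-2})=A-A^{-3}$ in the $(21,21)$ slot, $A^{-1}$ in the $(12,21)$ and $(21,12)$ slots, and $0$ everywhere else — precisely the matrix $R$. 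The analogous substitution into $A^{-1}\,\delta^{i}_{k}\delta^{j}_{l}+A\,M^{ij}M_{kl}$ reproduces $\overline{R}$, now with $A^{-1}+A(-A^{2})=A^{-1}-A^{3}$ in the $(12,12)$ slot, $A^{-1}+A(-A^{-2})=0$ in the $(21,21)$ slot, and $A$ in the off-diagonal $(12,21)$ and $(21,12)$ slots.

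Finally I would observe that the entries involving an index pair equal to $(1,1)$ or $(2,2)$ need no separate treatment: on the left side such an entry of $R$ (resp.\ $\overline{R}$) is nonzero only on the diagonal, and on the right side the $M$-term vanishes there while the term $A\,\delta^{i}_{k}\delta^{j}_{l}$ (resp.\ $A^{-1}\,\delta^{i}_{k}\delta^{j}_{l}$) supplies exactly the diagonal value. There is no real obstacle in this lemma; the only points requiring care are the ordering convention for the tensor basis of $V\otimes V$ and the placement of the factors of $\sqrt{-1}$ in $M$, which is what makes the ``parallel'' products $M^{12}M_{12},M^{21}M_{21}$ negative and the ``crossed'' products $M^{12}M_{21},M^{21}M_{12}$ equal to $+1$ — exactly the signs that produce the cancellations above. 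I would therefore present the verification compactly, as the short list of values of $M^{ij}M_{kl}$ together with the two resulting matrices, rather than as a case-by-case expansion over all sixteen index quadruples.
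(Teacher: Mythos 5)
Your proof is correct and follows the same route as the paper, which simply states that the identities are verified directly by substituting the values of the Kronecker deltas and the entries of $M$ into the crossing matrices; you carry out that substitution explicitly and the resulting entries match $R$ and $\overline{R}$. The added remark identifying $M^{ij}M_{kl}$ with the matrix of $\cup\circ\cap$ is a nice touch but does not change the argument.
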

\begin{proof}
The identities can be verified directly by substituting the values of the Kronecker deltas and entries of the $M$ matrix for the corresponding crossing matrix entries.
\end{proof}


\normalfont
With this choice of the $M$ and $R$ matrices, we have two possible ways to construct the bracket polynomial as a partition function. The first construction is of a more direct nature, given as follows. 
\begin{definition}\normalfont
The \textit{bracket partition function} $<K_{b}^a>$ of a Morse knotoid diagram $K^{a}_{b}$ with fixed indices $a$ and $b$ at the endpoints of $K$ is the sum of products of the entries $M$ and $R$ matrices corresponding to the internal configurations of $K_{b}^a$ labeled with indices from $\mathcal{I}$, and the sum is taken over $I$. \\
See Figure \ref{fig:bracket} for a computational example.
\end{definition}
Note that the bracket partition functions $<K_{ab}>$, $<K^{ab}>$ of the Morse knotoid diagrams $K_{ab}$ and $K^{ab}$ with fixed labels $a,b$ and up-down, down-up  tangent directions at the endpoints, respectively, are defined as above.

\begin{proposition}\label{prop:bracket}
The bracket partition function of a Morse knotoid diagram with fixed labels on its endpoints is invariant under the Morse isotopy moves. 
\end{proposition}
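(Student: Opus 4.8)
The plan is to verify invariance under each of the Morse isotopy moves listed in Figure \ref{fig:moveset} (together with the endpoint-shifting moves of Figure \ref{fig:verticalmoves}), working at the level of the local matrix identities already established, so that the global partition function — being a sum of products of these local weights over internal indices — is unchanged by each local replacement. The key organizing point is that $<K_b^a>$ is a "tensor network" contraction: any Morse isotopy move replaces one local tangle by another, both having the same set of free ends, and it suffices to show the two local tangles contract to the same matrix (as a function of the free-end labels). This reduces the proposition to a finite list of local checks.

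First I would handle the vertical Reidemeister~II move: invariance here is precisely the statement that $R$ and $\overline{R}$ are inverse matrices, $\overline{R}^{ij}_{kl}$ contracted with $R^{kl}_{mn}$ over $k,l$ giving $\delta^i_m\delta^j_n$; this can be checked directly from the displayed matrices, or more slickly by substituting the identities \eqref{eqn:id} and \eqref{eqn:id2} of Lemma \ref{lem:id} and using $M^{ij}M_{ij}$-type contractions (recalling $\sum_{a,b} M_{ab}M^{ab}$ and $\sum_b M^{ib}M_{jb}$ evaluate to the scalars $-A^2-A^{-2}$ and $-\delta^i_j$ respectively for this $M$). Next, the Reidemeister~III braiding move is exactly the Yang--Baxter equation for $R$ (and its mirror for $\overline{R}$); one can either verify the cubic matrix identity directly, or derive it from Lemma \ref{lem:id} by expanding each $R$ as $A\,\mathrm{I} + A^{-1}(\text{cup-cap})$ and checking the resulting state-sum expansion agrees on both sides — the $2^3$ terms organize into the usual bracket skein identities. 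Then I would treat the min-max moves and the slide moves: for the min-max move one needs that a cap stacked on a cup along one strand contracts to the identity on the through-strand, i.e. $\sum_b M^{ib}M_{jb} = \pm\delta^i_j$, which holds because $[M_{ij}]$ and $[M^{ij}]$ are mutually inverse; for the slide moves one needs the compatibility of $M$ with $R$ and $\overline{R}$ (a cup or cap sliding past a crossing), which is the interrelation hypothesis recorded in Figure \ref{fig:zero} and again follows by direct substitution using Lemma \ref{lem:id}. Finally, the endpoint shifting moves of Figure \ref{fig:verticalmoves} are pure planar isotopies of the reduced diagram and change neither the index carried along an endpoint arc nor any contraction, so they leave $<K_b^a>$, $<K_{ab}>$, $<K^{ab}>$ untouched; the same remark covers the remaining planar (flat) rearrangements.

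I would close by invoking Theorem~\ref{thm:relations}: since $R,\overline{R}$ are inverse, satisfy Yang--Baxter, and interrelate correctly with $[M^{ij}],[M_{ij}]$ (which are themselves inverse), the hypotheses of that theorem are met for this particular choice of matrices, and the conclusion is exactly that the partition function matrix — hence each entry $<K_b^a>$ with fixed endpoint labels and fixed endpoint tangent directions — is a Morse knotoid invariant.

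I expect the main obstacle to be bookkeeping rather than conceptual: there are several variants of each move (different strand orientations, different cup/cap sign conventions, mirror images for $\overline{R}$), and one must make sure the sign choices in $M = \bigl(\begin{smallmatrix}0 & iA\\ -iA^{-1} & 0\end{smallmatrix}\bigr)$ are consistent across all of them so that the loop value $\delta = -A^2-A^{-2}$ and the "curl-free" cancellations come out with the correct signs. The cleanest way to manage this is to do everything through Lemma \ref{lem:id}, reducing every move to an identity between bracket-skein state sums, where the needed cancellations are the familiar ones from the classical Kauffman bracket; the only genuinely new ingredient beyond the classical case is checking that the slide moves past a crossing are respected, and that is immediate once $R$ is written in the form \eqref{eqn:id}.
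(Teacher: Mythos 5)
Your proposal follows essentially the same route as the paper: the paper's proof simply observes that the Morse moves do not disturb the endpoint labels and that it suffices to check that the chosen $M$, $R$, $\overline{R}$ satisfy the hypotheses of Theorem \ref{thm:relations} (inverses, Yang--Baxter, slide compatibility, $[M^{ij}]$ and $[M_{ij}]$ mutually inverse), leaving the verification to the reader, while you spell that verification out move by move and then invoke the same theorem. One index slip worth fixing: the min-max move requires the matrix-product contraction $\sum_b M^{ib}M_{bj}=\delta^i_j$ (i.e.\ $M^2=I$, the mutual-inverse condition), whereas the quantity you wrote, $\sum_b M^{ib}M_{jb}=(MM^T)^i_{\ j}=\mathrm{diag}(-A^2,-A^{-2})$, is not $\pm\delta^i_j$ for this $M$ and is instead the contraction that appears in the spiral/rotation-number computation of Proposition \ref{prop:open}.
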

\begin{proof}
It is clear that the Morse isotopy moves do not change the labeling on the endpoints.
We need to check whether the $M$ and $R$ matrices chosen for the bracket partition function, satisfy the relations given in Theorem \ref{thm:relations}. The verification is straightforward and left to the reader. 
 \end{proof}
  \begin{proposition}
The bracket partition function of $K_{b}^a$ (and of $K_{ab}$, $K^{ab}$) induces a $2 \times 2$ matrix  $[<K_{b}^a>]$, as the indices at the endpoints $a, b$ run over $\{1,2\}$. The induced matrix $[<K_{b}^a>]$ is a matrix invariant of Morse knotoids.
\end{proposition}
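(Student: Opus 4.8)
The plan is to assemble this proposition from pieces that are already in place. First I would observe that the statement has two assertions: (1) that for each of the three endpoint-direction configurations, letting $a,b$ range over $\{1,2\}$ produces a $2\times 2$ matrix of Laurent polynomials, and (2) that these matrices are invariants of Morse knotoids. Assertion (1) is immediate from the definition of the bracket partition function: for a fixed pair $(a,b)$ the value $\langle K_b^a\rangle$ (resp.\ $\langle K_{ab}\rangle$, $\langle K^{ab}\rangle$) is a single element of $\mathbb{C}[A,A^{-1}]$ obtained by summing products of entries of the $M$ and $R$ matrices over the internal indices, so arranging these four values according to $(a,b)\in\{1,2\}^2$ gives a well-defined $2\times 2$ matrix $[\langle K_b^a\rangle]$.

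For assertion (2), the work is to show that each entry is unchanged by the Morse isotopy moves and that the assembled matrix does not depend on the choice of Morse diagram representing a given Morse knotoid. The entrywise invariance is exactly Proposition \ref{prop:bracket}: the $M$ and $R$ matrices chosen here were checked there to satisfy the hypotheses of Theorem \ref{thm:relations} ($R,\overline R$ inverse, Yang--Baxter, the slide-move interrelations with $M^{ij},M_{ij}$, and $[M_{ij}],[M^{ij}]$ inverse), so each partition function $\langle K_b^a\rangle$ with fixed endpoint labels is a Morse isotopy invariant. Since a Morse isotopy move acts within a local disk disjoint from the endpoints, it neither changes which endpoint carries which label nor changes the tangent directions at the endpoints; hence it fixes each of the four entries simultaneously, and therefore fixes the whole matrix $[\langle K_b^a\rangle]$ (and likewise $[\langle K_{ab}\rangle]$, $[\langle K^{ab}\rangle]$). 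I would phrase this as: the matrix invariant is nothing but the partition function matrix $Z_K$ of Theorem \ref{thm:relations} specialized to this $(M,R)$ data, so the conclusion is inherited from that theorem.

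The one point that deserves a sentence of care — and the place I expect the only genuine subtlety — is the passage from ``invariant under the Morse isotopy moves'' to ``invariant of Morse knotoids,'' i.e.\ well-definedness on isotopy classes. By the Proposition preceding the Remark on the first Reidemeister move, two knotoid diagrams are equivalent via regular isotopy iff their standard Morse diagrams are Morse isotopic; and any Morse knotoid diagram is connected to a standard one by vertical/horizontal shifts of the endpoints, which are among the Morse isotopy moves (Figure \ref{fig:verticalmoves}) and hence already covered. Thus two Morse knotoid diagrams representing the same Morse knotoid are related by a finite sequence of Morse isotopy moves, and since each move fixes all four partition-function values, the matrix is the same for both. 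I would also remark that the hypotheses of Theorem \ref{thm:relations} were verified only for the particular oriented/unoriented move list; the endpoint-direction label ($K_b^a$ versus $K_{ab}$ versus $K^{ab}$) is itself a Morse isotopy invariant (the tangent directions at endpoints are preserved), so no move mixes the three cases and each yields its own well-defined $2\times2$ matrix invariant. Assembling these observations gives the proof; no new computation is required beyond what Proposition \ref{prop:bracket} already supplies.
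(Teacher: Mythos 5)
Your proposal is correct and follows essentially the same route as the paper: the paper's proof simply notes that each of the four entries $\langle K_j^i\rangle$ is a Morse isotopy invariant by Proposition \ref{prop:bracket}, hence the assembled $2\times 2$ matrix is invariant. Your additional remarks on well-definedness over isotopy classes and on the preservation of endpoint tangent directions are sound elaborations of what the paper leaves implicit, but they do not change the argument.
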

\begin{proof}
Since there are two indices to label each endpoint of $K$, four partition functions are assigned to $K$ with specific choices of indices at its endpoints. Let the  $ij^{th}$ entry of the $2\times 2$ matrix $[<K_{b}^a>]$  be the partition bracket function $<K_{j}^i>$ assigned to $K_{j}^{i}$, for $i, j\in \{1,2\}$.  $<K_{j}^i>$ is a Morse isotopy invariant for every $a,b \in \{1, 2\}$, by Proposition \ref{prop:bracket}. Then, the $2\times 2$ matrix $[<K_{b}^a>]$ is invariant under the Morse isotopy. 
\end{proof}
\begin{figure}[H]
\centering
\includegraphics[width=.75\textwidth]{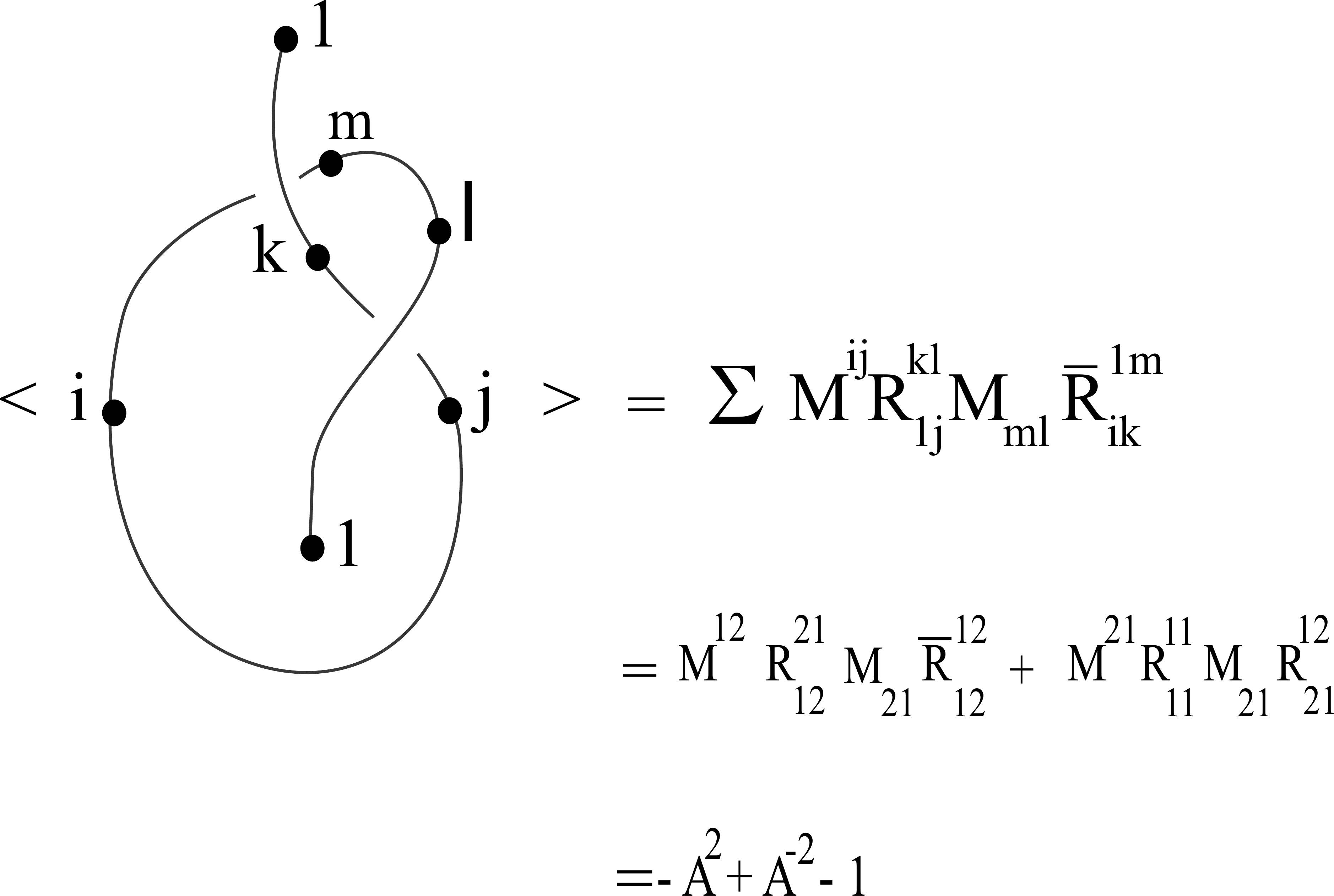}
\caption{The evaluation of $<K_1^1>$}
\label{fig:bracket}
\end{figure}

\subsubsection{The bracket partition function via state sum}

As the reader will notice, the identities \ref{eqn:id} and \ref{eqn:id2} in Lemma \ref{lem:id} are in accordance with the bracket state expansion at a crossing given in \cite{Tu}. This gives the idea of defining the bracket partition function as a summation over the states obtained by this expansion. Each crossing of the Morse knotoid diagram $K_{b}^a$ with fixed labels $a,b$ at its endpoints, is smoothed in two possible ways as in the usual bracket case. From each possible combination of smoothing, we  obtain a finite number of circular  state components and exactly one open-ended state component containing the two endpoints labeled with indices $a$ and $b$. Since the partition function value of a circle is $\sum_{\substack{i,j \in \{1,2\}}} M^{ij}M_{ij} = (M^{12})^2 + (M_{21})^2 = -A^2 -A^{-2}$, where $M$ is the matrix we have assumed above  for the cups and caps, each circular state component contributes to the bracket partition function with the usual bracket value, see Figure \ref{fig:circ}.  
\begin{figure}[H]
\centering
\includegraphics[width=.35\textwidth]{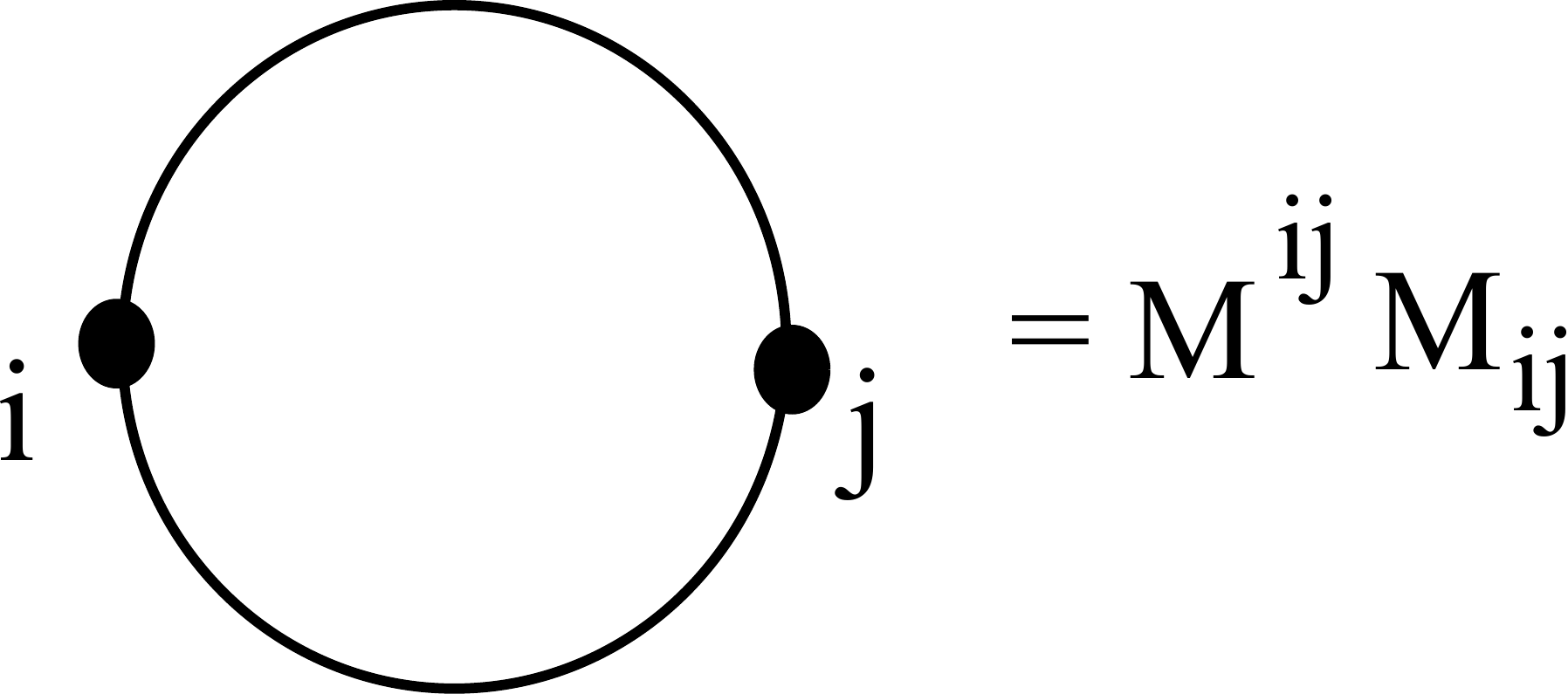}
\caption{The circle value}
\label{fig:circ}
\end{figure}
Open-ended state components have fixed indices $a , b$ at the endpoints and are formed by a number of cups and caps. Let  $\lambda_{b}^a$ denote an open-ended state component. The tangent directions at its endpoints are inherited from $K^{a}_{b}$ and so are both directed upwards . %

 An open-ended state component $\lambda_{b}^a$ contributes to the bracket polynomial with  the partition function  value $<\lambda_{b}^a>$ that is equal to the sum of all products of the entries of the corresponding cup and cap matrices and the sum is taken over all possible labelings of the internal configurations with indices from $\mathcal{I}$. 
\begin{definition}\normalfont
The \textit{bracket partition function} of $K^a_{b}$ is defined as:\\
$$<K_{b}^a> = \sum_{\sigma} <K |\sigma> (-A^2 - A^{-2} )^{n-1} <\lambda_{b}^a>,$$
where $\sigma$ denotes a state, $<K|\sigma>$ denotes the product of crossing weights (that is, the product of $A$'s and $A^{-1}$'s in the expansion of crossings) of $\sigma$, $n$ is the number of circular components and $<\lambda_{b}^a>$ is the partition function assigned to the open-ended state component in $\sigma$. 

 $<K_{ab}>$ and $<K^{ab}>$ are defined in the same way as above, only the notations for the open-ended state components of the Morse knotoid diagrams $K_{ab}$ and $K^{ab}$ differ.  An open-ended state component of a  Morse knotoid diagram is denoted by $\lambda_{ab}$ if its endpoints are directed upwards and downwards, and by $\lambda^{ab}$ if its tangent directions at the endpoints are downwards and upwards, with respect to the bottom to top vertical direction of the plane.
\end{definition}
 \begin{example} \normalfont 
 By using the state sum definition, the reader can easily compute the bracket partition functions $<K_{2}^2>$, $<K_{2}^{1}>$, $<K_{1}^{2}>$ of the Morse knotoid diagram given in Figure \ref{fig:bracket} with the corresponding labeling at its endpoints,  and verify that the induced partition function matrix $[<K_{b}^a>]$ is the following diagonal matrix. 
  $$ \begin{bmatrix}
-A^2 + A^{-2} +1& 0 \\
0 & -A^4 -A^{-2} +A^{-6}
\end{bmatrix}
$$
\end{example}
\begin{proposition}\label{prop:open}
 The bracket partition function matrix $[<\lambda_{b}^a>]$ of an open-ended state component $\lambda_{b}^{a}$, where $a,b \in I$, is one of the following matrices, where $n$ denotes the absolute value of the rotation number of $\lambda_{b}^a$\\
 
$$ M_1= \begin{bmatrix}
(-A^2)^n& 0 \\
0 & (-A^{-2})^n
\end{bmatrix},
\hspace{5mm}
M_2 = \begin{bmatrix}
(-A^{-2})^n& 0 \\
0 & (-A^{2})^n
\end{bmatrix}.
$$

\end{proposition}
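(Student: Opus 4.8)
The plan is to reduce the computation to the case of a spiral diagram and then use multiplicativity of the partition function under composition. First I would invoke the Claim established inside the proof of Theorem~\ref{thm:rota}: an open-ended oriented state component $\lambda^a_b$ with up-up endpoint directions and rotation number $m$ is Morse isotopic to the in-going spiral diagram $S_m$ (with $S_0$ the trivial vertical strand, and $m$ an integer since both endpoint tangents point up). Since $\lambda^a_b$ is a Morse knotoid diagram without crossings and the matrices $M^{ij}=M_{ij}$ satisfy the hypotheses of Theorem~\ref{thm:relations} (in particular $M^2=I$, so $[M^{ij}]$ and $[M_{ij}]$ are mutually inverse, cf. Proposition~\ref{prop:bracket}), the bracket partition function is a Morse isotopy invariant, whence $[<\lambda^a_b>]=[<S_m>]$. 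So the statement reduces to computing the $2\times 2$ matrix $[<S_m>]$ for each $m\in\mathbb{Z}$, where $n=|m|$ is the quantity in the statement.

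Next I would record that $m\mapsto[<S_m>]$ is essentially a homomorphism $\mathbb{Z}\to GL_2(\mathbb{C}[A,A^{-1}])$. Viewing $S_m$ as a $[1]\to[1]$ tangle (delete small disks around the two endpoints, reattaching $\eta_a$ and $\epsilon_b$ afterwards), stacking $S_m$ on top of $S_{m'}$ joins two vertical up-pointing strands, so the total turn of the tangent is additive and $S_m\circ S_{m'}$ is an open-ended state component with up-up endpoints and rotation number $m+m'$; by Theorem~\ref{thm:rota} it is Morse isotopic to $S_{m+m'}$. Since $<S_m>$ is a partition function, Lemma~\ref{lem:composible} turns composition into matrix multiplication, giving $[<S_{m+m'}>]=[<S_m>]\,[<S_{m'}>]$. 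With $[<S_0>]=[\delta^a_b]=I$ and $[<S_1>][<S_{-1}>]=I$ this forces $[<S_m>]=C^{m}$ for all $m\in\mathbb{Z}$, where $C:=[<S_1>]$.

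It then remains to compute the single matrix $C=[<S_1>]$. I would slice $S_1$ by horizontal lines; it has exactly one minimum and one maximum, so it decomposes in \textbf{Tan} (reading bottom to top) as a cup, then the creation $\eta_a$ of the leg-strand, then the annihilation $\epsilon_b$ of the head-strand, then a cap, where the left/right placement of $\eta_a$ among the cup's two outputs and the identity of the capped-off strand are read off from an explicit embedded picture of the spiral. Propagating the indices through this composition collapses $<S^a_{1,b}>$ to a sum of the form $\sum_i M^{ib}M_{ia}$, which equals $(M^{\mathsf{T}}M)_{ba}$; substituting the explicit $M$ gives $M^{\mathsf{T}}M=\mathrm{diag}(-A^{-2},-A^{2})$, so $C=\mathrm{diag}(-A^{-2},-A^{2})$ (the mirror orientation of the spiral giving $C^{-1}=\mathrm{diag}(-A^{2},-A^{-2})$). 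Finally, since $(-A^{-2})(-A^{2})=1$, raising to the $m$-th power yields $[<\lambda^a_b>]=C^{m}$, which is the matrix $M_2$ when $m\ge 0$ and $M_1$ when $m<0$, with $n=|m|$ in both cases.

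The main obstacle is the bookkeeping in the last step: a careless wiring of $\eta_a$, $\epsilon_b$ and the cup/cap instead produces the ``zig-zag'' identity $\sum_i M^{ib}M_{ai}=(M^2)_{ab}=\delta^b_a$, which is the correct answer for rotation number $0$, not $\pm 1$. Since $M$ is not symmetric, the two expressions genuinely differ, so one must fix the combinatorics of the decomposition of $S_1$ carefully (by drawing an honest embedded spiral) or else cross-check the outcome against the already-computed rotational bracket example via the substitution $\lambda^{n}\mapsto\mathrm{diag}\big((-A^{-2})^{n},(-A^{2})^{n}\big)$. Everything else — the reduction through Theorem~\ref{thm:rota}, the homomorphism property, and the final exponentiation — is routine.
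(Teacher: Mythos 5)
Your proof is correct and lands on the same two matrices as the paper, but the computational half takes a genuinely different route. Both arguments begin identically, by invoking Theorem~\ref{thm:rota} to replace $\lambda_{b}^{a}$ with an in-going spiral. From there the paper simply writes out the full partition function of the $n$-turn spiral as a single $2n$-fold product of cup and cap entries and regroups it by hand into $(MM^{T})^{n}$ (counterclockwise case) or $(M^{T}M)^{n}$ (clockwise case). You instead cut the spiral into $n$ copies of the one-turn spiral $S_{1}$, use Lemma~\ref{lem:composible} to convert stacking into matrix multiplication, and observe that rotation numbers add under composition, so that $[\langle S_{m}\rangle]=C^{m}$ with $C=[\langle S_{1}\rangle]$; only the single matrix $C$ then needs to be computed explicitly. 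What your route buys is a cleaner separation of the combinatorial bookkeeping (isolated entirely in the computation of $C$) from the exponentiation, plus a built-in consistency check via $[\langle S_{1}\rangle][\langle S_{-1}\rangle]=I$, which holds here because $M=M^{-1}$ gives $(MM^{T})^{-1}=M^{T}M$; what it costs is the extra (easy but necessary) observations that $S_{m}\circ S_{m'}$ is Morse isotopic to $S_{m+m'}$ and that the crossing-free partition function is invariant under the min-max move. Your flagged pitfall is real and well diagnosed: a one-cup--one-cap arc can be either a zig-zag (rotation number $0$, value $(M^{2})^{b}_{a}=\delta^{b}_{a}$) or a full turn (rotation number $\pm1$, value $(M^{T}M)^{b}_{a}$ or $(MM^{T})^{a}_{b}$), and since $M$ is not symmetric these differ; your resolution $C=M^{T}M=\mathrm{diag}(-A^{-2},-A^{2})$ agrees with the paper's clockwise case, and $C^{m}$ reproduces $M_{2}$ for $m\ge0$ and $M_{1}$ for $m<0$ with $n=|m|$, exactly as in Proposition~\ref{prop:open}.
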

\begin{proof}
By Theorem \ref{thm:rota}, $\lambda_{b}^a$ is Morse isotopic to the in-going spiral Morse knotoid diagram whose endpoints are labeled with $a$ and $b$ either in counter-clockwise or clockwise direction. Suppose that $\lambda_{b}^a$ turns inwards in the counter-clock wise direction. Since the rotation number of $\lambda_{b}^a$ is $n$, one encounters $n$ pairs of maxima and minima sequentially while traversing $\lambda_{b}^a$. Then, the formula for the partition function $<\lambda_{b}^a>$ is given by the formula below.


$<\lambda_{b}^a> = \sum_ {\substack{\{i_k\}_{k=1,...,2n-1} \in I\\ a,b fixed}} M^{ai_1} M^{ i_{2} i_{3}}~...~M^{i_{2n-2} i_{2n-1}}M_{bi_{2n-1}}M_{i_{2n-2}i_{2n-3}}~...~M_{i_2 i_1}$\\


$=\sum_{\substack{\{i_k\}_{k=1,...,2n-1} \in I\\ a,b fixed}}  M^{ai_1} M^{ i_{2} i_{3}}~...~M^{i_{2n-2} i_{2n-1}} (M^T)_{i_{2n-1}b} (M^T)_{i_{2n-3} i_{2n-2}}~...~(M^T)_{i_{1}i_{2}}$ \\

$=\sum_{\substack{\{i_k\}_{k=1,...,2n-1} \in I\\ a,b fixed}} (MM^T)^{a}_{i_2} (MM^T)^{i_2}_{i_4}~...~(MM^T)^ {i_{2n-4}}_{i_{2n-2}} (MM^T)^{i_{2n-2}}_b.$

 It is easy to see 
$$[MM^T]= \begin{bmatrix}
-A^2 & 0 \\
0 & -A^{-2}
\end{bmatrix}.
$$
Therefore the matrix determined by  $<\lambda_{b}^a>$ is the following matrix.

$$ \begin{bmatrix}
(-A^2)^n & 0 \\
0 & (-A^{-2})^n
\end{bmatrix}.
$$

When $\lambda_{b}^a$ is Morse isotopic to the in-going spiral Morse knotoid diagram that turns in the clockwise direction, it can be shown that  $<\lambda_{b}^a> $ is assigned to the following matrix that is the $n^{th}$ power of $[M^TM]$. The verification of this is left to the reader.

 
$$\begin{bmatrix}
(-A^ {-2})^n & 0 \\
0 & (-A^{2})^n
\end{bmatrix}.
$$

\end{proof}
\begin{corollary}
The bracket partition function of a Morse knotoid diagram $K$ with upwards directed endpoints, and labels on its endpoints induces a $2 \times 2$ diagonal matrix with entries $<K_{b}^a>$.
\end{corollary}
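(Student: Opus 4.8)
The plan is to assemble the corollary from two results already in hand: the proposition just established that $[\langle\lambda_b^a\rangle]$ is one of the two diagonal matrices $M_1$ or $M_2$, and the state sum definition of the bracket partition function together with the circle value computation. First I would fix a Morse knotoid diagram $K$ with both endpoints directed upwards, labeled with $a,b\in\{1,2\}$. By the state sum definition,
$$\langle K_b^a\rangle = \sum_\sigma \langle K|\sigma\rangle (-A^2-A^{-2})^{n-1}\langle \lambda_b^a\rangle,$$
where for each state $\sigma$ the factor $\langle K|\sigma\rangle$ is a scalar (a product of $A^{\pm 1}$'s independent of the endpoint labels), $n=n(\sigma)$ counts the circular components, and $\langle\lambda_b^a\rangle$ is the partition function of the unique open-ended state component of $\sigma$ carrying the labels $a,b$.

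The key observation is that each summand, viewed as a function of the pair $(a,b)$, is a scalar multiple of the matrix $[\langle\lambda_b^a\rangle]$ associated to that state $\sigma$. By Proposition \ref{prop:open} that matrix is diagonal — either $M_1$ or $M_2$ in the notation there — so in particular $\langle\lambda_b^a\rangle=0$ whenever $a\neq b$. Hence every summand vanishes off the diagonal, and therefore $\langle K_j^i\rangle=0$ for $i\neq j$; the induced $2\times 2$ matrix $[\langle K_b^a\rangle]$ is diagonal. Its diagonal entries $\langle K_1^1\rangle$ and $\langle K_2^2\rangle$ are the corresponding sums of scalars times $(-A^2)^{n}$ or $(-A^{-2})^{n}$ according to the shape of each open-ended component, which are well-defined Laurent polynomials. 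That this $2\times 2$ matrix is a Morse knotoid invariant is already recorded in the earlier proposition on $[\langle K_b^a\rangle]$ (via Theorem \ref{thm:relations} / Proposition \ref{prop:bracket}); so nothing new is needed for invariance.

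There is essentially no hard step here — the content is bookkeeping — but the one point to be careful about is the claim that for a fixed state $\sigma$ the open-ended component $\lambda_b^a$ is, as a labeled diagram, exactly a spiral-type open-ended state component of the kind analyzed in Proposition \ref{prop:open}, so that its partition function matrix is genuinely one of $M_1,M_2$ rather than merely Morse-isotopic to such a diagram. This is where I would invoke Theorem \ref{thm:rota} (and Lemma \ref{lem:spi}) to reduce $\lambda_b^a$ to an in-going spiral with the same rotation number and the same up-up endpoint directions, noting that the partition function value is itself a Morse isotopy invariant by Proposition \ref{prop:bracket}, so the reduction does not change $\langle\lambda_b^a\rangle$. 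Once that is said, the diagonality of each summand — and hence of the total — follows immediately, completing the proof.
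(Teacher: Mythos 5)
Your argument is correct and follows the paper's own proof: the paper likewise observes that $\langle K_b^a\rangle$ is $\langle\lambda_b^a\rangle$ times a polynomial coefficient in $A,A^{-1}$ and then cites Proposition \ref{prop:open} for diagonality. Your version merely spells out the bookkeeping (summing over states, each contributing a scalar multiple of a diagonal matrix) and the reduction to spiral form via Theorem \ref{thm:rota}, which the paper leaves implicit.
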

\begin{proof}
The bracket partition function $<K_{b}^a>$ is given by $<\lambda_{b}^a>$ multiplied with a polynomial coefficient in $A$ and $A^{-1}$. The statement follows from this fact and Proposition \ref{prop:open}.
\end{proof}

\begin{proposition}\label{prop:open2}

The bracket partition function matrices of open-ended state components $\lambda_{ab}$ and $\lambda^{ab}$ that are determined by $<\lambda_{ab}>$ and $<\lambda^{ab}>$, are one of the following matrices,
$$M_1= \begin{bmatrix}
0& (-A^2)^{\frac{n}{2}} \\
 (-A^{-2})^ {\frac{n}{2}} & 0
\end{bmatrix}, 
\hspace{5mm}
M_2 = \begin{bmatrix}
0& (-A^{-2})^{\frac{n}{2}} \\
(-A^{2})^{\frac{n}{2}} & 0 
\end{bmatrix},
$$
where $\frac{n}{2}$ is the absolute rotation number of $\lambda_{ab}$ and of $\lambda^{ab}$ for some $n>0$.
\end{proposition}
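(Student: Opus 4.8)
The plan is to mirror the proof of Proposition~\ref{prop:open}, replacing the spiral with parallel endpoint directions by one with opposite endpoint directions. First I would apply Theorem~\ref{thm:rota}: the open-ended state component $\lambda_{ab}$ (respectively $\lambda^{ab}$) is Morse isotopic to the in-going spiral Morse knotoid diagram carrying the same pair of endpoint tangent directions and the same rotation number $\pm\frac{n}{2}$. Since the two endpoint tangents now point in opposite directions, the total turn of the tangent vector along $\lambda_{ab}$ is an odd multiple of $\pi$; hence the rotation number is a genuine half-integer, $n$ is odd, and after the max--min cancellations used in the proof of Theorem~\ref{thm:rota} the spiral reduces to a chain of $\frac{n+1}{2}$ caps and $\frac{n-1}{2}$ cups (or the mirror chain, with caps and cups interchanged), all turning the same way.

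Next I would write the reduced partition function $<\lambda_{ab}>$ as the sum over the internal indices of the product of the $M$-matrix entries attached to the cups and caps of this spiral, exactly as in the displayed computation in the proof of Proposition~\ref{prop:open}. Using $M^{ij}=M_{ij}$ together with the substitution $M_{ij}=(M^{T})_{ji}$ to line up the indices along the chain, the product telescopes; the only difference from the integer case is that, with an \emph{odd} number of extrema, the telescoped expression is $(MM^{T})^{(n-1)/2}M$ when the spiral turns counterclockwise and $(M^{T}M)^{(n-1)/2}M^{T}$ when it turns clockwise, rather than a pure power of $MM^{T}$ or $M^{T}M$.

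Finally I would substitute the explicit matrices. From the proof of Proposition~\ref{prop:open} we already have $MM^{T}=\mathrm{diag}(-A^{2},-A^{-2})$, and likewise $M^{T}M=\mathrm{diag}(-A^{-2},-A^{2})$, while $M$ and $M^{T}$ are themselves antidiagonal. The product of a diagonal matrix with an antidiagonal one is antidiagonal, so the partition function matrix is antidiagonal; a direct computation of its two nonzero entries gives $(-A^{2})^{n/2}$ and $(-A^{-2})^{n/2}$, where $(-A^{2})^{1/2}$ is interpreted as the entry $iA$ of $M$ (so that $\bigl((-A^{2})^{1/2}\bigr)^{n}$ is literally the product appearing in the telescoped expression). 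The counterclockwise spiral then yields $M_{1}$ and the clockwise spiral yields $M_{2}$; running the identical argument directly on the down--up in-going spiral shows that $<\lambda^{ab}>$ likewise determines one of $M_{1}$, $M_{2}$.

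The step I expect to be the main obstacle is the index bookkeeping in the second paragraph: one must track carefully which extrema of the spiral contribute $M$ and which contribute $M^{T}$ --- this depends both on the handedness of the spiral and on which endpoint points upward --- and confirm that the single matrix left over from the telescoping is $M$ (respectively $M^{T}$) and is not transposed or placed on the wrong side, since an incorrect placement or transposition already produces the wrong entries for $n\ge 3$. A minor, purely notational point is to fix a branch of the square root so that the half-integer powers $(-A^{2})^{n/2}$ and $(-A^{-2})^{n/2}$ in the statement are unambiguous; the convention above does this.
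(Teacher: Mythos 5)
Your proposal is correct and follows essentially the same route as the paper: reduce to the in-going spiral via Theorem \ref{thm:rota}, note that an odd number of extrema forces a genuine half-integer rotation number, telescope the index sum into a matrix product, and evaluate using $MM^{T}=\mathrm{diag}(-A^{2},-A^{-2})$. Your telescoped expression $(MM^{T})^{(n-1)/2}M$ is literally equal to the paper's $[(M^{T}M)^{(n-1)/2}M^{T}]^{T}$, and your explicit branch convention for the half-integer powers is a small clarification the paper leaves implicit.
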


\begin{proof}
With the argument in the proof of Proposition \ref{prop:open}, assume first $\lambda_{ab}$ is Morse isotopic to the in-going spiral Morse knotoid diagram that turns in the counter clockwise direction. We first observe that to have a connected component with the endpoints pointing up and down, respectively, the total number of cups and caps forming $\lambda_{ab}$ is necessarily an odd number.

The bracket partition function of $\lambda_{ab}$ is given as follows.\\
 
$<\lambda_{ab}> = \sum_ {\substack{\{i_k\}_{k=1,...,2n-1} \in I\\ a,b fixed}}  M^{i_{1} i_{2}} ~...~M^{i_{n-2}i_{n-1}}M_{i_{n-2}b} M_{i_{n-4}i_{n-1}}~...~M_{i_{1} i_{4}}M_{{a}i_{2}}$\\

$=\sum_ {\substack{\{i_k\}_{k=1,...,2n-1} \in I\\ a,b fixed}}  (M^T)^{i_{2} i_{1}}~ ... ~(M^T)^{i_{n-1}i_{n-2}} M_{i_{n-2} b} M_{i_{n-4}i_{n-1}}~...~ M_{i_{1} i_{4}}M_{{a}i_{2}}$
\\

This sum is the $ab^{th}$ entry of the matrix  $[[(M^TM)^{\frac{n-1}{2}}M^T]^T] $.
Direct computation shows that  $<\lambda_{ab}>$ yields the matrix $M_1$:
$$M_1= \begin{bmatrix}
0& (-A^2)^{\frac{n}{2}} \\
 (-A^{-2})^ {\frac{n}{2}} & 0
\end{bmatrix}.
$$
\\
Suppose now $\lambda_{ab}$ is Morse isotopic to the spiral Morse knotoid diagram that turns in the clockwise direction. It is left to the reader to verify that the matrix $M_2$ that is assigned to $<\lambda_{ab}>$, is the transpose of the matrix $M_1$. That is, 

$$M_2 = \begin{bmatrix}
0& (-A^{-2})^{\frac{n}{2}} \\
 (-A^{2})^ {\frac{n}{2}} & 0
\end{bmatrix}.
$$

If the endpoints of the state component is directed down and up, respectively, it follows from a similar argument above and  $[M_{ab}] = [(M^T)^{ab}]$, where [$M_{ab}]$ and $[(M^{ab}]$ are matrices for the cap and the cup respectively, that  $<\lambda^{ab}>$ yields the transpose of the matrix $M_1$ or $M_2$ depending on the turning of the spiral Morse knotoid diagram that $\lambda^{ab}$ is Morse isotopic. 
\end{proof}
\begin{corollary}
The bracket partition function of a Morse knotoid diagram $K$ with up-down or down-up tangent directions at its endpoints induces a $2\times 2$ non-diagonal matrix with entries $<K_{ab}>$ or $<K^{ab}>$, respectively. 
\end{corollary}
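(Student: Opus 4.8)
The plan is to mimic the proof of the preceding corollary, substituting Proposition~\ref{prop:open2} for Proposition~\ref{prop:open}. The starting point is the state sum formula for the bracket partition function: for a Morse knotoid diagram $K$ with up--down tangent directions at its endpoints,
$$<K_{ab}> = \sum_{\sigma} <K|\sigma>\,(-A^2 - A^{-2})^{n-1}\,<\lambda_{ab}>,$$
where $\sigma$ ranges over the bracket states of $K$, the factors $<K|\sigma>$ and $(-A^2-A^{-2})^{n-1}$ lie in $\mathbb{C}[A,A^{-1}]$, and $\lambda_{ab}$ is the unique open-ended component of $\sigma$ carrying the endpoint labels.

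First I would note that smoothing the crossings of $K$ does not touch a neighbourhood of the two endpoints, so in every state the open-ended component $\lambda_{ab}$ has the same pair of endpoint tangent directions as $K$, namely one upward and one downward. Hence Proposition~\ref{prop:open2} applies to each $\lambda_{ab}$ and tells us that the induced $2\times 2$ matrix $[<\lambda_{ab}>]$ is one of the anti-diagonal matrices $M_1$ or $M_2$ displayed there; in particular its $(1,1)$ and $(2,2)$ entries are zero. Different states may yield open-ended components of different rotation numbers, and hence different matrix entries, but all of these matrices share the anti-diagonal shape, which is the only feature we need.

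Next I would pass to induced matrices: since the $ij$-entry of a sum of matrices is the sum of the $ij$-entries,
$$[<K_{ab}>] = \sum_{\sigma} <K|\sigma>\,(-A^2-A^{-2})^{n-1}\,[<\lambda_{ab}>]$$
is a $\mathbb{C}[A,A^{-1}]$-linear combination of anti-diagonal matrices, hence itself anti-diagonal; its diagonal entries vanish, so $[<K_{ab}>]$ is non-diagonal. For $K^{ab}$ the argument is identical, now invoking the relation $[M_{ab}]=[(M^T)^{ab}]$ used in Proposition~\ref{prop:open2} to conclude that each $\lambda^{ab}$ likewise has an anti-diagonal bracket partition matrix, so that $[<K^{ab}>]$ is non-diagonal for the same reason.

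The main obstacle, such as it is, is the step verifying that the endpoint tangent directions of each open-ended state component coincide with those of $K$ --- this is exactly what licenses the use of Proposition~\ref{prop:open2} for every summand. It is geometrically obvious, since the endpoints sit on vertical arcs that are unaffected by smoothing crossings, but it should be recorded explicitly; everything else follows from Proposition~\ref{prop:open2} and the linearity of the state sum over the scalar weights.
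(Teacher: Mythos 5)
Your proposal is correct and follows essentially the same route as the paper's own proof: both reduce the claim to Proposition~\ref{prop:open2} via the state-sum expression of $<K_{ab}>$ (respectively $<K^{ab}>$) as polynomial multiples of the open-ended component values $<\lambda_{ab}>$ (respectively $<\lambda^{ab}>$). You merely make explicit two points the paper leaves tacit --- that smoothing preserves the endpoint tangent directions of the open-ended components, and that an $\mathbb{C}[A,A^{-1}]$-linear combination of anti-diagonal matrices is anti-diagonal --- which is a reasonable elaboration rather than a different argument.
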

\begin{proof}
It follows from the fact that $<K_{ab}>$ and $<K^{ab}>$) are determined by $<\lambda_{ab}>$,  $<\lambda^{ab}>$, respectively, multiplied with a polynomial coefficient in $A$ and $A^{-1}$ and by Proposition \ref{prop:open2}.
\end{proof}


\subsection{The Binary Bracket Polynomial}\label{sec:binarybracket}
The binary bracket polynomial of virtual links was defined by the second author \cite{Kauf} as a modification of the bracket polynomial. In this section, we study the binary bracket polynomial for Morse multi-knotoids and construct it via a solution to the Yang-Baxter equation. 

Let $K$ be a Morse multi- knotoid diagram. The binary bracket polynomial is based on a coloring of the bracket state components with elements from the set $\{0, 1\}$.  The coloring rule is  as follows. The colors appearing at a smoothing site, that is, on the two pieces of strands of $K$ obtained by smoothing a crossing, must be different. In Figure \ref{fig:binary}, we illustrate possible coloring configurations at the smoothing sites where different colors meet  A dark line at a smoothing site indicates that the two local components at the smoothing site must colored differently.  We call a bracket state of $K$ whose components can be all colored in this way a \textit{properly colored state}. The binary bracket of $K$ is evaluated as the total contribution of all properly colored states of $K$. It can happen that there is no such coloring possible for a state. Such states will have a zero evaluation for the binary bracket polynomial.

\begin{figure}[H]
\centering
\includegraphics[width=.7\textwidth]{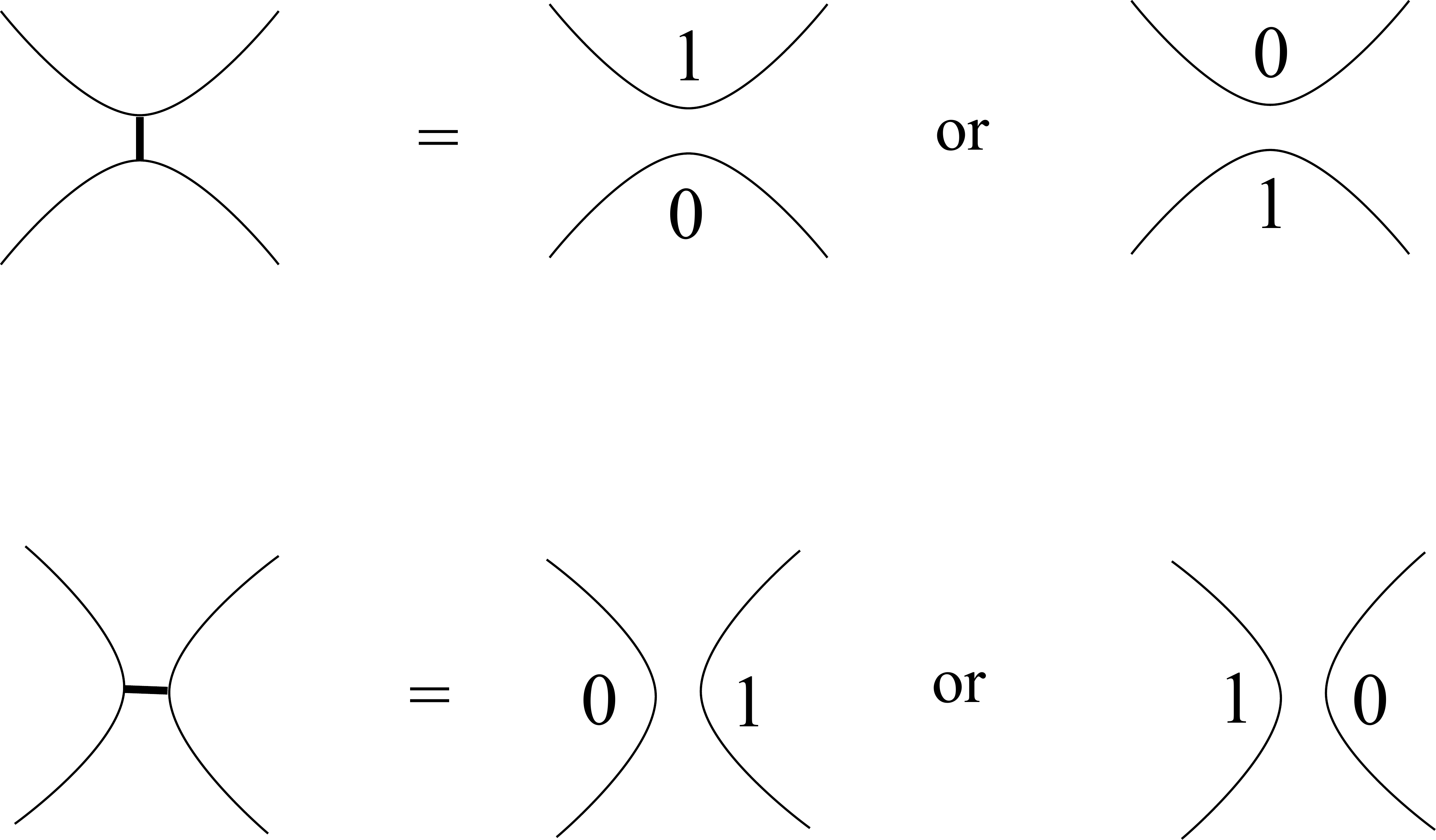}
\caption{The coloring at smoothing sites}
\label{fig:binary}
\end{figure}
The binary bracket polynomial of a Morse multi-knotoid diagram can be computed recursively by the relations given in Figure \ref{fig:binrel}. It is clear that the trivial knot diagram has two properly colored states, colored with $0$ and $1$. Thus, the trivial knot diagram gets the value $2$ with the binary bracket polynomial. Likewise, the trivial knotoid diagram has two properly colored states, but we assume only one of the the colored states for the trivial knotoid diagram. Let this state be the state colored with $0$. With this assumption, the trivial knotoid diagram is assigned to $1$ with the binary bracket polynomial, see the third relation. The second relation shows that each disjoint unknot component multiplies the binary bracket polynomial of a Morse multi-knotoid diagram $K$ by two. Notice that if the unknot component is connected to $K$ with a dark band  then the binary bracket polynomial of $K$ remains the same. In the first relation, we see that the coefficient contributions coming from smoothing sites of a crossing are the same with the usual bracket polynomial case.
 \begin{figure}[H]
\centering
\includegraphics[width=.7\textwidth]{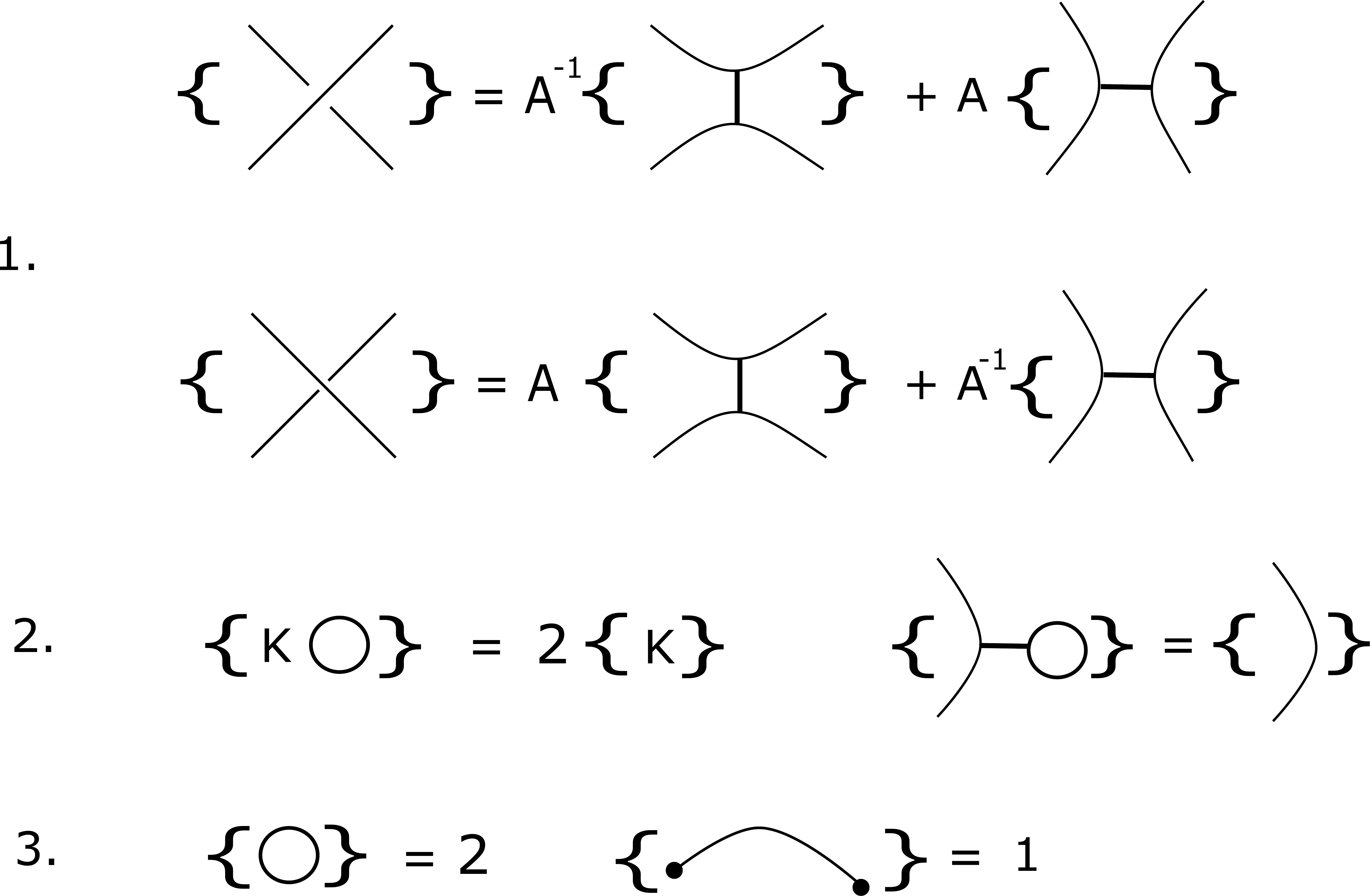}
\caption{The binary bracket relations}
\label{fig:binrel}
\end{figure}
The closed summation formula for the binary bracket polynomial of a Morse multi-knotoid diagram is as follows.
\begin{definition} \normalfont
Let $K$ be a Morse multi-knotoid diagram. The \textit{binary bracket polynomial} of $K$ is defined as,
$$\{K\}(A) = \sum_{\substack{S \in \text{Properly colored states}}}  < K | S > ,$$ 
where $< K | S>$ is the product of the contributions of the smoothing sites in a properly colored state $S$.
\end{definition}

Figure \ref{fig:com} shows the whole set of bracket states of the given Morse knotoid diagram $K$. The first three bracket states of $K$ do not contribute to the binary bracket polynomial since they do not admit a proper coloring. As the figure suggests, different colors at the smoothing sites of these states would yield an incompatible coloring on the open-ended segment components. The last state admits only one coloring of its components such that the open-ended state component is colored with $0$ and the closed state component with $1$, and these components are connected with a band. By the second relation, the contribution of this state is as the value of a single trivial knotoid diagram multiplied with the smoothing site coefficient that is $A^{-2}$. Therefore $\{K\} = A^{-2}$.  
\begin{figure}[H]
\centering
\includegraphics[width=1\textwidth]{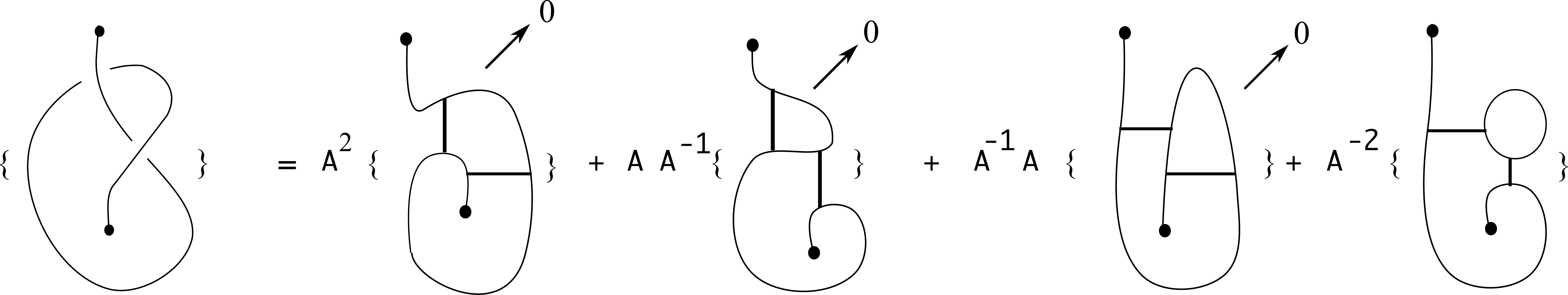}
\caption{Computing the binary bracket of a knotoid}
\label{fig:com}
\end{figure}

\begin{proposition}
The binary bracket polynomial is a Morse isotopy invariant.
\end{proposition}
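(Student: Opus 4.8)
The plan is to verify invariance one Morse isotopy move at a time. Since $\{K\}(A)$ is the sum over properly colored states $S$ of the weight $\langle K|S\rangle$ --- the product of the $A^{\pm1}$'s at the smoothing sites, with each free closed loop of $S$ already contributing a factor of $2$ and the unique open-ended component constrained to be colored $0$ --- it is enough to exhibit, for each move, a weight-preserving correspondence between the properly colored states of the two diagrams; for the moves involving crossings, a correspondence between the bundles of such states that agree outside the disk where the move is performed. The moves involving no crossings --- the min-max moves, the slide moves, and the vertical and horizontal shiftings of the endpoints --- are immediate: each merely deforms an arc, creating no smoothing site and altering neither the connected-component structure in any state, nor the number of closed loops, nor the identity of the open-ended component. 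Hence the coloring rule and the constraint on the open component transport verbatim, and one obtains a term-by-term weight-preserving bijection, so $\{K\}(A)$ is unchanged.

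The substance of the proof is the vertical Reidemeister~II move, which I would treat locally: I fix a state of $K$ outside the disk $D$ that contains the bigon and compare the total contribution of the properly colored states --- summed over the four smoothings of the two crossings of the bigon --- with the contribution of the bigon-free picture. As in the ordinary bracket, the smoothing that keeps both strands running through carries net weight $A\cdot A^{-1}=1$; the two mixed smoothings carry weights $A^{2}$ and $A^{-2}$; and the remaining smoothing carries weight $1$ and creates one extra closed loop. In the ordinary bracket the $A^{\pm2}$ terms are annihilated by the loop value $\delta=-A^{2}-A^{-2}$ coming from that loop; in the binary bracket the same cancellation is forced by the coloring rule, because in each mixed state the identity smoothing at one crossing requires two arcs to be colored differently while the turnback at the other crossing has already merged those arcs into a single component, leaving no proper coloring at all. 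What survives is the run-through smoothing --- which has the same component pattern as the bigon-free picture but adds the constraint that the two through-strands be colored differently --- together with the double-turnback smoothing --- whose extra loop is forced to be colored oppositely to the two outer arcs, so it contributes no free factor of $2$, and which joins the four boundary points of $D$ in the complementary pairing. A short case analysis on how the fixed outside state links the four boundary points of $D$ then shows that these two contributions sum to the bigon-free one; this local identity is exactly what the binary bracket relations of Figure~\ref{fig:binrel} record. The same check is then run for the sign and over/under variants of the move.

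For the Reidemeister~III braiding move I would not re-enumerate but use the standard reduction: apply the binary bracket skein relation at the top crossing of the braiding tangle to obtain two diagrams, each with one fewer crossing inside the tangle, and observe that each of these is carried across the move by the Reidemeister~II invariance already established; since smoothing a crossing and then applying Reidemeister~II moves leaves the coloring conditions at the remaining crossings unchanged, this correspondence respects proper colorability and the constraint on the open component. Alternatively, once the index-$\{0,1\}$ matrices $M^{ij}$, $M_{ij}$, $R$, $\overline{R}$ realizing $\{K\}(A)$ as a partition function are written down later in this section, it suffices to check that $R$ and $\overline{R}$ are mutually inverse, satisfy the Yang--Baxter equation, and are compatible with the cup and cap matrices under the slide moves, and then to invoke Theorem~\ref{thm:relations}. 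The step I expect to be the main obstacle is the Reidemeister~II local computation: one must confirm, in every case of how the outside state connects the boundary of $D$, that the forced color of the extra loop together with the forced equality or inequality of the colors of the boundary arcs reproduces, term for term, the $\delta$-cancellation of the ordinary bracket.
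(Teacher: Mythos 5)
Your proposal is correct and takes essentially the same route as the paper, whose proof consists of the remark that the verification is the move-by-move check carried out for virtual knots and links in \cite{Kauf}; what you have written is precisely that check, transported to the Morse knotoid setting. In particular your treatment of the vertical Reidemeister~II move --- the two $A^{\pm 2}$ states killed because a through-site band would force a single merged component to differ from itself, the middle circle of the double-turnback state having its color forced (hence no factor of $2$) and imposing $u=l$ on the outer arcs, and the concluding case analysis on how the exterior pairs the four boundary points --- is exactly the content of the cited verification, and your reduction of the braiding move to the skein relation plus R~II invariance (or, alternatively, to the Yang--Baxter and slide-move conditions of Theorem~\ref{thm:relations} for the identity cup/cap matrices and the $R$-matrix given later in the section) is the standard completion.
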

\begin{proof}
The verification that the binary bracket polynomial remains invariant under the Morse isotopy moves follows the same as in the case of virtual knots and links. The reader is referred to \cite{Kauf} for details of the verification.
\end{proof}
\begin{proposition}\label{prop:invariant}
The binary bracket polynomial becomes invariant under the knotoid isotopy when it is normalized with the term  $A^{-w(K)}$, where $w(K)$ denotes the writhe of a planar multi-knotoid diagram $K$.
\end{proposition}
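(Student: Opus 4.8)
The plan is to leverage what is already in hand. By the previous proposition the binary bracket polynomial $\{K\}$ is invariant under the Morse isotopy moves, that is, under the second and third Reidemeister moves and under planar deformations of the diagram (these do not alter the crossings, the bracket states, or the $\{0,1\}$-colourings, so $\{K\}$ cannot change; in particular it is unaffected by rotations of an endpoint). Moreover the writhe $w(K)$ is unchanged by Reidemeister moves of type II and III and by planar isotopy, so $A^{-w(K)}\{K\}$ is already invariant under every generator of knotoid isotopy except the insertion/deletion of a curl. Hence the entire content of the proposition is the behaviour of $A^{-w(K)}\{K\}$ under the first Reidemeister move, and the plan is to compute directly how $\{K\}$ transforms under a curl and check that the $A^{-w(K)}$ factor exactly compensates.

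The key step is to evaluate the binary bracket of a curl using the skein relation (the first relation of Figure \ref{fig:binrel}). Resolving the single crossing of the curl in the two possible ways: one smoothing reabsorbs the loop into the strand and produces a state in which a dark band joins a state component to \emph{itself}; such a state admits no proper $\{0,1\}$-colouring, so it contributes $0$. The other smoothing produces the curl-free diagram $K'$ together with a small circle attached to it by a single dark band; by the band remark following Figure \ref{fig:binrel}, attaching a circle to a diagram by one dark band leaves the binary bracket unchanged, so this state contributes $\{K'\}$ weighted by the smoothing-site coefficient, which is $A$ or $A^{-1}$ according to the sign of the crossing. Thus a curl is multiplied through by a single monomial: a curl contributing $+1$ to the writhe multiplies $\{K\}$ by $A$, and a curl contributing $-1$ multiplies it by $A^{-1}$.

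It then only remains to put the pieces together. If $K$ and $K'$ differ by one curl, then $w(K)=w(K')\pm 1$ and $\{K\}=A^{\pm 1}\{K'\}$ with matching signs, whence
$$A^{-w(K)}\{K\}=A^{-w(K')\mp 1}\cdot A^{\pm 1}\{K'\}=A^{-w(K')}\{K'\}.$$
Combined with the invariance under the remaining generators of knotoid isotopy noted in the first paragraph, this shows $A^{-w(K)}\{K\}$ descends to a well-defined invariant of (multi-)knotoids in $\mathbb{R}^2$.

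The point that needs the most care is the bookkeeping of the conventions: one must verify that it is genuinely the $A^{-1}$-weighted smoothing of a \emph{positive} curl that yields the self-banded (hence vanishing) state, and the $A$-weighted smoothing that yields the banded circle, so that the surviving coefficient is $A$ for a positive curl and $A^{-1}$ for a negative one — this is exactly what forces the normalizing factor to be $A^{-w(K)}$ and not $A^{+w(K)}$. It is also worth remarking, to forestall confusion with the classical case, that no $(-A^{\pm 3})$ appears here: in the ordinary bracket the curl factor $-A^{\pm 3}$ arises because one smoothing contributes the loop value $\delta=-A^2-A^{-2}$; in the binary bracket that smoothing instead contributes $0$, and the circle-by-a-band contributes $1$, which is precisely why the writhe correction for the binary bracket is a plain monomial $A^{-w(K)}$.
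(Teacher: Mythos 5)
Your proposal is correct and follows essentially the same route as the paper: the paper's proof simply observes (via its Figure of the Reidemeister~I computation) that a curl multiplies $\{K\}$ by $A^{\pm 1}$ and that $A^{-w(K)}$ compensates, which is exactly your argument. Your extra derivation of the curl factor from the skein relation — the self-banded smoothing vanishing for lack of a proper colouring and the banded circle contributing a factor of $1$ — is a correct unpacking of what the paper leaves to the figure.
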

\begin{proof}
As shown in Figure \ref{fig:change}, a Reidemeister I move multiplies the binary bracket polynomial with $A$ or $A^{-1}$ depending on the type of the curl the move adds. Then, the invariance can be provided by multiplying the binary bracket polynomial with the term  $A^{-w(K)}$.
\begin{figure}[H]
\centering
\includegraphics[width=.7\textwidth]{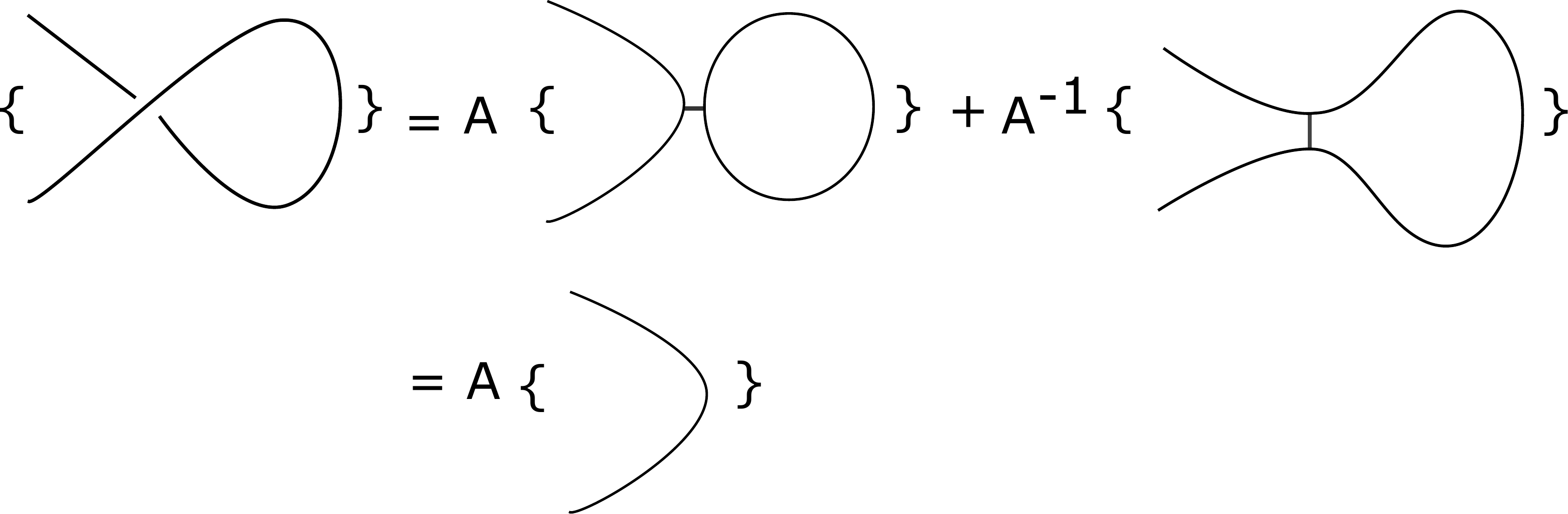}
\caption{The change under a Reidemeister I move}
\label{fig:change}
\end{figure}
\end{proof}
\begin{corollary}\normalfont
Since any planar knotoid admits a unique standard Morse diagram, the normalized binary bracket polynomial is an invariant of knotoids in $\mathbb{R}^2$.
\end{corollary}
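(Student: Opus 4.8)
The plan is to promote the \emph{normalized} binary bracket $N(K):=A^{-w(K)}\{K\}(A)$ from a function on Morse (multi\nobreakdash-)knotoid diagrams to a function on knotoid-isotopy classes, and then to observe that a knotoid in $\mathbb{R}^2$ supplies a canonical diagram — its standard Morse form — on which to evaluate $N$. First I would record that $N(K)$ is meaningful for \emph{any} multi-knotoid diagram $K$: the writhe $w(K)$ is read off from the crossing signs (using the leg-to-head orientation, together with fixed orientations on the closed components) and $\{K\}(A)=\sum_{S}\langle K\mid S\rangle$ is summed over the properly colored states, and neither construction refers to a height function.

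The core of the argument is that $N$ is constant on knotoid-isotopy classes. Knotoid isotopy is generated by the Reidemeister~I, II and III moves, plane isotopy, and the admissible endpoint displacements. Under everything except Reidemeister~I the writhe is unchanged; and the binary bracket is unchanged because, after bringing the diagrams into standard Morse form (always possible, as discussed after Figure~\ref{fig:verticalmoves}), two diagrams related by such moves have Morse-isotopic standard Morse forms (the proposition of Section~\ref{sec:Morseknotoids} identifying regular isotopy with Morse isotopy of standard forms), while $\{\,\cdot\,\}$ is a Morse isotopy invariant (established just above Proposition~\ref{prop:invariant}). Under a Reidemeister~I move, Proposition~\ref{prop:invariant} (cf.\ Figure~\ref{fig:change}) shows that $\{K\}$ is multiplied by $A^{\pm1}$ while $w(K)$ shifts by $\pm1$ with the matching sign, so $N(K)$ is again preserved. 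Hence $N$ takes a single value on each knotoid-isotopy class.

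It then remains only to match this with the phrasing of the corollary: every planar knotoid is represented by a standard Morse diagram, unique up to Morse isotopy and the insertion/deletion of curls (Reidemeister~I), and by the previous paragraph $N$ is unaffected by exactly these operations; so the value of $N$ on that diagram is an invariant of the knotoid, i.e.\ $A^{-w}\{\,\cdot\,\}$ is a well-defined invariant of knotoids in $\mathbb{R}^2$. The only step needing any care is the passage to standard Morse form in the middle paragraph — so that the Morse-isotopy invariance of $\{\,\cdot\,\}$ can be invoked for diagrams that are only \emph{a priori} regular-isotopic in the plane — but this rests entirely on results already proved and presents no genuine obstacle.
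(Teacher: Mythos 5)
Your proposal is correct and follows the same route the paper intends: the corollary is stated without proof precisely because it is the immediate combination of the Morse-isotopy invariance of $\{\cdot\}$, Proposition \ref{prop:invariant} for the Reidemeister~I correction via $A^{-w(K)}$, and the earlier proposition identifying regular isotopy of knotoid diagrams with Morse isotopy of their standard Morse forms. Your additional observation that the properly-colored-state sum never references the height function is a nice (and accurate) simplification, but it does not change the substance of the argument.
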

\subsubsection{A closer look}
It is understood that the coloring condition restricts the collection of bracket states of a knotoid diagram $K$  to a small collection. By a closer look at smoothing sites of $K$, we observe that the coloring condition implies an alternating coloring on $K$.  Let a strand that connects two vertices of the underlying flat diagram of $K$ be named as an \textit{edge}. The coloring condition at a smoothing site implies that any two edges at the corresponding vertex that are not adjacent with respect to a cyclic order receive different colors. See Figure \ref{fig:binarycoloring} for an illustration. This requirement is satisfied when the edges of the underlying flat diagram of $K$ is colored with $0$ or $1$ in such a way that the colors on the edges alternate as we travel around the diagram with the orientation from the leg to the head of $K$. There exists exactly two such colorings for the flat diagram of $K$ depending on the labeling of the first edge that is adjacent to the tail of $K$ either with $0$ or $1$. Since each vertex except the endpoints is visited twice, any flat knotoid diagram can be colored in this way. When we assume to color  the initial edge incident to the leg with $0$, $K$ has only one properly colored state, since there is exactly one way to smooth each crossing of $K$ equipped with such coloring.

\begin{figure}[H]
\centering
\includegraphics[width=.5\textwidth]{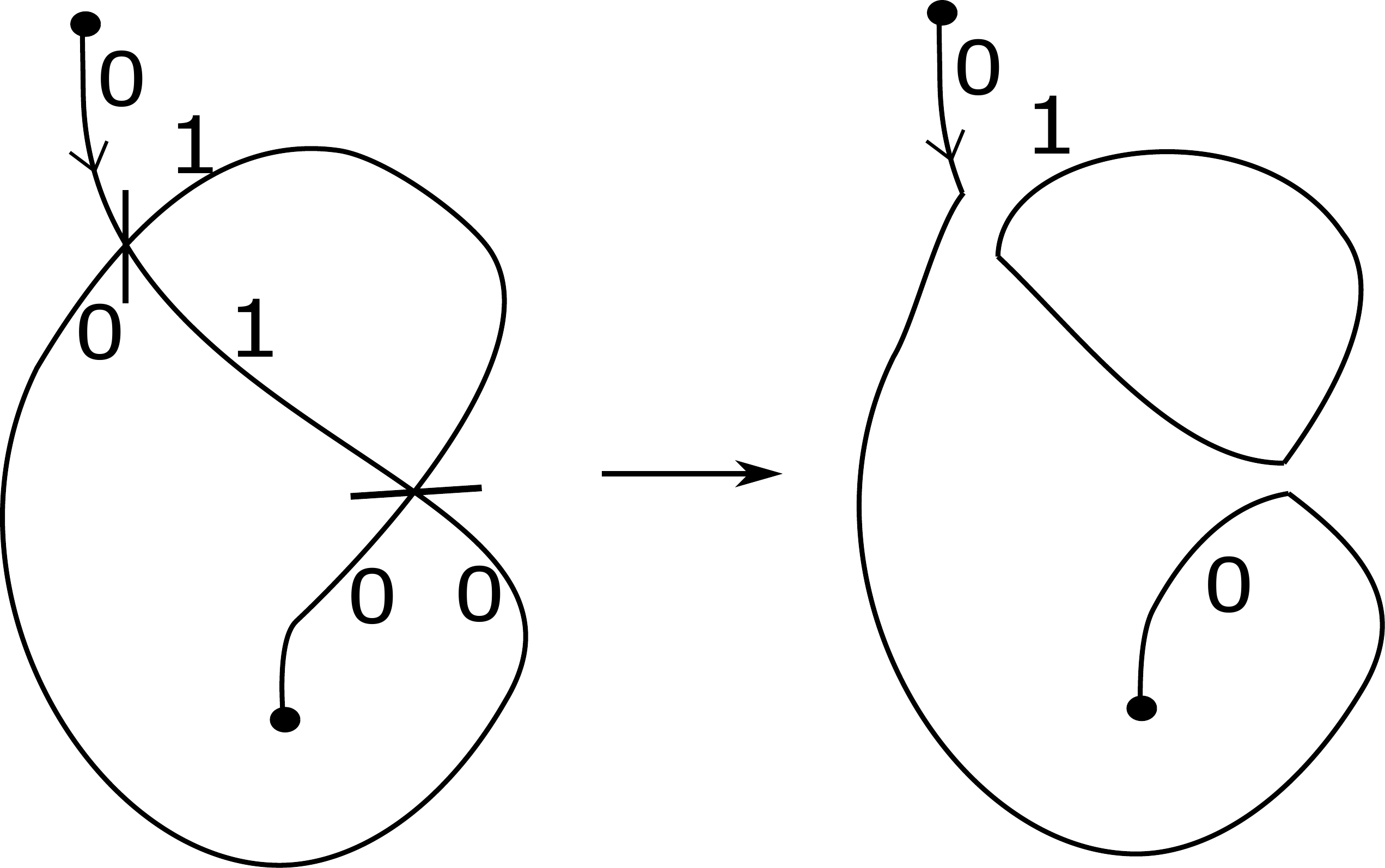}
\caption{}
\label{fig:binarycoloring}
\end{figure}

\begin{proposition}\label{prop:binaryprop}
If $\kappa$ is a Morse knotoid that admits a diagram whose endpoints lie in the same local region of $\mathbb{R}^2$, then $\{\kappa\} =  A ^{w(\kappa)}$, where $w(K)$ is the writhe of $\kappa$.
\end{proposition}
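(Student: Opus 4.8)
The plan is to evaluate the binary bracket on a cleverly chosen diagram and then identify the one surviving colored state with the oriented (Seifert) state. Since $A^{-w(K)}\{K\}$ is an invariant of planar knotoids (Proposition~\ref{prop:invariant} and the corollary following it), it is enough to exhibit a single diagram $D$ of $\kappa$ with $\{D\}=A^{w(D)}$. By hypothesis $\kappa$ has a diagram $D$ whose two endpoints lie in a common region of the complement of the underlying flat graph; I would fix this $D$, so that $w(D)=w(\kappa)$, and compute $\{D\}$ on it.

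Next I would invoke the discussion preceding this proposition: once the edge incident to the leg is colored $0$, the alternating $2$-coloring of the flat diagram of $D$ forces, at each crossing, exactly one of the two smoothings, so $D$ has a unique properly colored state $S_0$ and $\{D\}=\langle D\,|\,S_0\rangle=\prod_x A^{\epsilon_x}$ with $\epsilon_x=\pm1$ according to whether $S_0$ uses the $A$- or the $A^{-1}$-smoothing at the crossing $x$. Because the Seifert (orientation-respecting) smoothing contributes the factor $A$ at a positive crossing and $A^{-1}$ at a negative one, the Seifert state evaluates to $A^{\,n_+-n_-}=A^{w(D)}$; so the whole claim reduces to showing that $S_0$ \emph{is} the Seifert state of $D$.

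To do this I would pass to the closure. Join the head of $D$ to its leg by a short arc $\gamma$ lying in the common region; as $\gamma$ meets $D$ in no crossings, $\widehat D:=D\cup\gamma$ is a classical knot diagram with $w(\widehat D)=w(D)$, and the alternating coloring of $D$ extends to one of $\widehat D$ (the head-edge and the leg-edge of $D$ carry the same color, so they and $\gamma$ amalgamate into one edge of $\widehat D$). Hence $\langle D\,|\,S_0\rangle=\langle\widehat D\,|\,\widehat S_0\rangle$ for the induced properly colored state $\widehat S_0$ of $\widehat D$. For a \emph{closed} diagram, though, the alternating $2$-coloring of the edges is unique up to swapping colors and coincides with the coloring ``depth of the Seifert circle mod $2$'': traversing $\widehat D$, two consecutive edges lie on two distinct Seifert circles whose nesting depths differ by exactly $1$, since the Seifert smoothing sends the two strands at a crossing to locally parallel arcs and locally parallel disjoint circles are nested. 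The smoothing compatible with the depth coloring is precisely the Seifert smoothing, so $\widehat S_0$ is the Seifert state of $\widehat D$, and therefore $\{D\}=\langle\widehat D\,|\,\widehat S_0\rangle=A^{w(\widehat D)}=A^{w(\kappa)}$.

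The real content I expect to be the step just described: for a closed diagram, identifying the properly colored state with the Seifert state, i.e.\ the planarity input that the Seifert graph of a closed diagram is bipartite with the depth function as a bipartition. An alternative route to the same step is Gauss's parity condition (between the two passages through any crossing of a closed generic planar curve there is an even number of crossings), followed by a short case analysis on the four local arcs at a crossing showing that the coloring-compatible smoothing agrees with the oriented one; this also makes transparent where the hypothesis enters, since only when the endpoints share a region does the closing arc contribute no crossings, so that the parity statement applies to the crossings of $D$ itself --- for a proper knotoid the Seifert state may instead perform a self-smoothing along the open arc and fail to be colorable at all. The remaining bookkeeping (the sign conventions relating Seifert smoothings, the $A^{\pm1}$ coefficients, and crossing signs) is routine.
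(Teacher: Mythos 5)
Your overall reduction is sound and matches the paper's in spirit: there is a unique properly colored state, and the point is to identify it with the oriented (Seifert) state, whose weight is $A^{n_+-n_-}=A^{w}$. Your ``alternative route'' via Gauss parity is in fact the paper's proof: the paper argues directly on the knotoid diagram that every crossing is even (because the endpoints share a region), hence the two passages through each crossing receive opposite colors in the alternating edge coloring, which forces the orientation-respecting smoothing at every crossing. Your primary route --- closing up to a classical diagram and identifying the properly colored state with the Seifert state via bipartiteness of the Seifert graph --- is a legitimately different and attractive packaging of the same planarity input.

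However, the justification you give for the key step is wrong as stated. It is not true that ``locally parallel disjoint circles are nested,'' and nesting depth modulo $2$ does \emph{not} bipartition the Seifert graph. Counterexample: the figure-eight immersed curve (a one-crossing unknot diagram whose underlying curve looks like the numeral $8$). Its oriented smoothing produces two Seifert circles, the two lobes, which lie side by side --- neither is contained in the other, both have nesting depth $0$ --- yet they are joined by the crossing and must receive different colors. The reason is that the two parallel arcs at a smoothed crossing belong to one counterclockwise and one clockwise circle in this example, and two disjoint circles with coherently oriented adjacent arcs are nested only when their global orientations agree. The correct bipartition is by the parity of the winding number of the whole diagram about a point just to the left of each Seifert circle (the Alexander numbering of the complementary regions): crossing a Seifert circle from its right to its left raises this index by exactly $1$, and the thin region between the two parallel arcs at a crossing shows that adjacent Seifert circles have indices differing by exactly $1$. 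With ``depth'' replaced by this index your argument goes through; alternatively, drop the closure entirely and run the Gauss-parity count on the knotoid diagram itself, which is precisely what the paper does (and which also makes visible, as you note, where the hypothesis on the endpoints is used).
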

\begin{proof}
Let $K$ denote a Morse knotoid diagram of $\kappa$ satisfying the given condition on the endpoints. 

From the above discussion, we first note that $K$ admits a unique binary coloring with the assumption on its initial edge incident to its tail colored with $0$, and as a result, there is only one properly colored state of $K$.  
 
Since $K$ has its endpoints in the same planar region, any crossing of $K$ has even parity, so is an even crossing \cite{GK1}. This means that there is an even number labels between the two labels representing a crossing of $K$ in the Gauss code of $K$. This is equivalent to say that there is an even number of intersections between any loop based at a crossing of  of $K$ (consider the crossing as flat) and the rest of the diagram. (Note that we do not count the intersection at the base point of the loop.)

 Figure \ref{fig:crossing parity} illustrates an even crossing.
\begin{figure}[H]
\centering
\includegraphics[width=.25\textwidth]{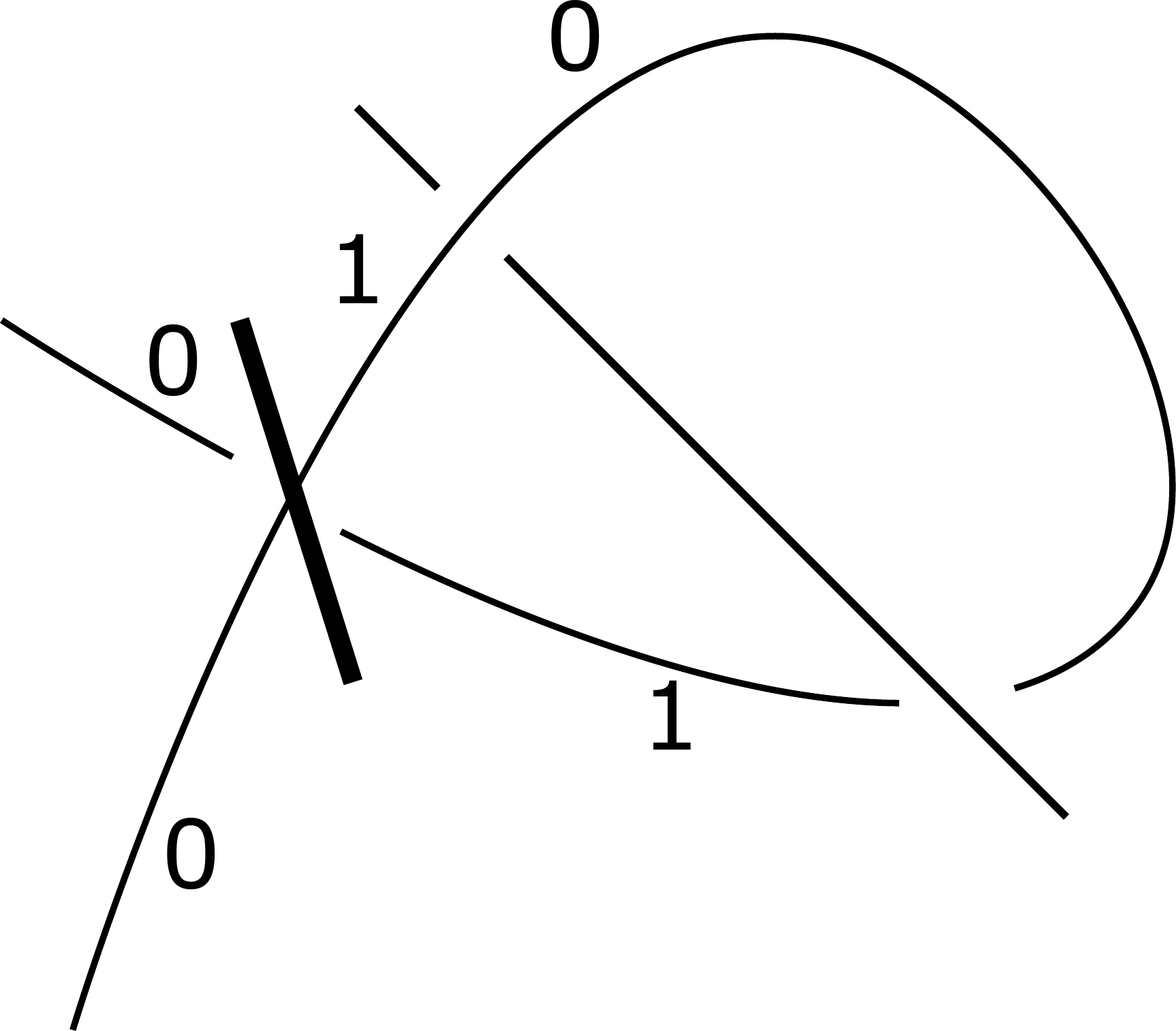}
\caption{An even crossing}
\label{fig:crossing parity}
\end{figure}
Assume now that $K$ is given an orientation from its tail to its head. Let $c$ denote a crossing of $K$ and $2n$, $n \in \mathbb{N}$ be the number of intersections of the loop at (flat) $c$ with the rest of the diagram. Each intersection permutes the colors appearing on the loop at $c$ so that its initial and the last edge incident to the flat $c$ receive the same color as follows. There are in total $2n+2$ edges on the loop, except from the initial edge going inwards to the flat crossing $c$. Let $\lambda \in \{0,1\}$ be the color on the initial edge. Then the ${k}^{th}$ edge is colored with $\lambda+k$.  Therefore, the last edge incident to the crossing that is going outwards from the crossing, recieves the color $\lambda + 2n +2 \equiv \lambda \pmod 2$.

To obtain the unique properly colored state of $K$ all crossings of $K$ are required to be smoothed so that the initial and the last edges incident to crossings  remain on the same side of the smoothing sites. This corresponds to smoothing crossings of $K$ agreeably with the orientation on $K$ which results in each crossing $c$ of $K$ contributing to the binary bracket polynomial with the value $A^{sign(c)}$, see Figure \ref{fig:signn}. Therefore, the total contribution from the oriented smoothing the crossings of $K$ is $A^{w(K)}$ where $w(K)$ is the writhe of $K$. The writhe is clearly a Morse isotopy invariant so $w(K)=w(\kappa)$.  The statement follows.
\begin{figure}[H]
\centering
\includegraphics[width=.4\textwidth]{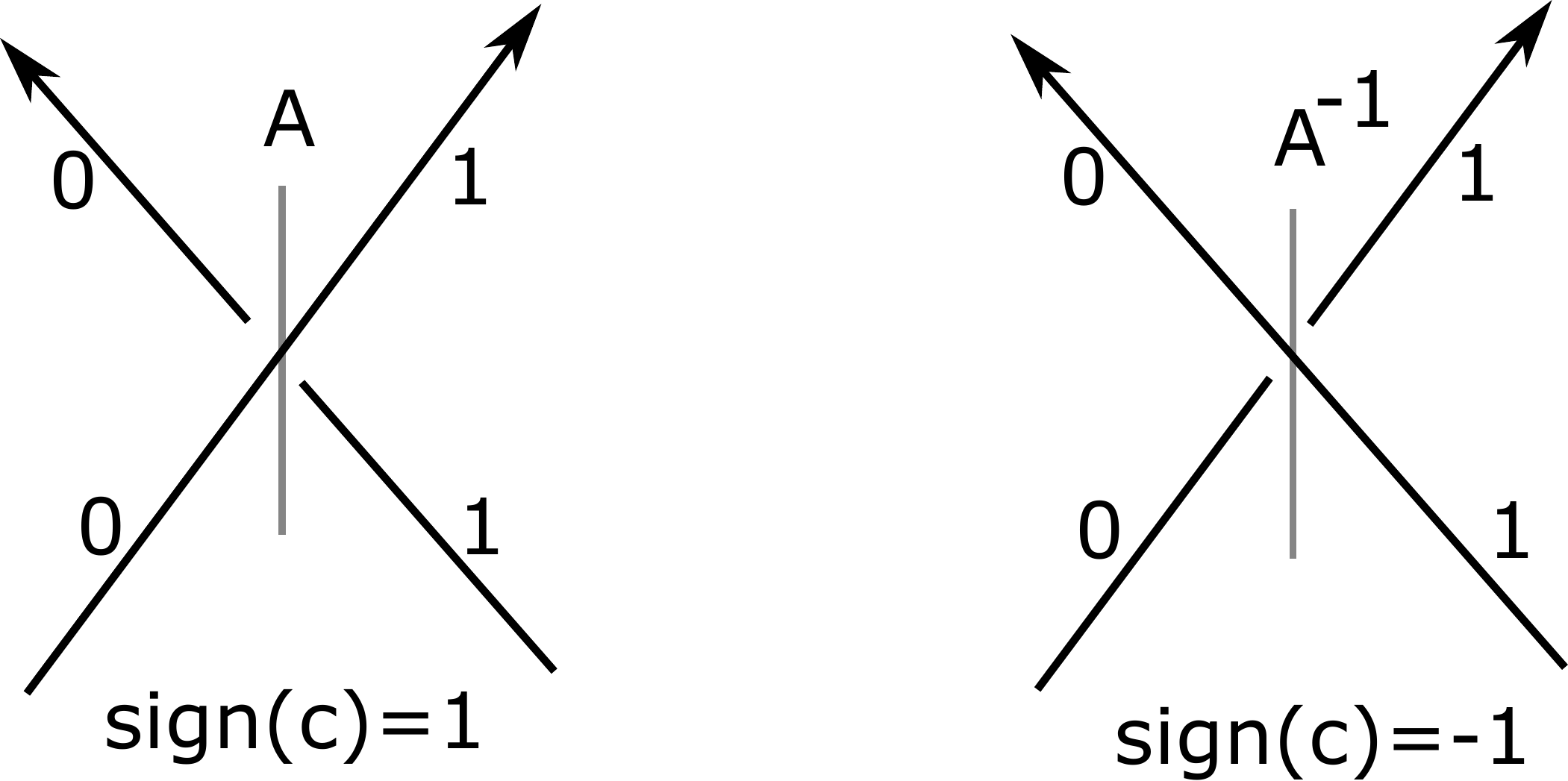}
\caption{Oriented smoothing of crossings}
\label{fig:signn}
\end{figure}
\end{proof}

By Proposition \ref{prop:binaryprop}, we have the following corollary. 

\begin{corollary}
The normalized binary bracket polynomial is trivial for knot-type knotoids.
\end{corollary}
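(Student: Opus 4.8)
The plan is to combine Proposition~\ref{prop:binaryprop} with the normalization established in Proposition~\ref{prop:invariant}. First I would unwind the definition of a knot-type knotoid: by definition $\kappa$ admits at least one Morse representative diagram $K$ whose two endpoints lie in the same local region of $\mathbb{R}^2$. For such a diagram Proposition~\ref{prop:binaryprop} applies directly and yields $\{K\} = A^{w(K)}$, where $w(K)$ is the writhe of $K$; since the writhe is a Morse isotopy invariant, $w(K) = w(\kappa)$.

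Next I would evaluate the normalized invariant on this representative. By Proposition~\ref{prop:invariant}, multiplying $\{K\}$ by $A^{-w(K)}$ produces a quantity invariant under the knotoid Reidemeister moves, and, because every planar knotoid has a unique standard Morse diagram, this normalized binary bracket polynomial is a genuine invariant of knotoids in $\mathbb{R}^2$. Plugging in the value of $\{K\}$ from the previous step gives $A^{-w(K)}\{K\} = A^{-w(K)}A^{w(K)} = 1$. Since the normalized binary bracket is independent of the chosen diagram, this computes its value on all of $\kappa$, so it equals $1$ identically; that is, it is trivial.

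There is no real obstacle to this argument. The only point that deserves to be stated explicitly is that Proposition~\ref{prop:binaryprop} is a statement about diagrams whose endpoints lie in the same region, so one must first select such a diagram — which is exactly what the hypothesis "knot-type" provides — and then invoke the isotopy invariance of the normalized polynomial to transport the value $1$ to an arbitrary diagram of $\kappa$.
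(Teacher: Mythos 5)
Your argument is correct and is essentially the paper's own: the corollary is stated as an immediate consequence of Proposition~\ref{prop:binaryprop}, with the normalization $A^{-w(K)}\{K\}=A^{-w(K)}A^{w(K)}=1$ and the invariance from Proposition~\ref{prop:invariant} doing exactly the work you describe. Your only addition is to make explicit the selection of a representative diagram with endpoints in the same region and the transport of the value $1$ by isotopy invariance, which the paper leaves implicit.
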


In a knot-type knotoid diagram any crossing is even with respect to to the Gaussian parity but in a proper knotoid diagram, there is at least one odd crossing (a crossing admitting odd parity with respect to the Gaussian paritty) cite{GK1}. For a proper knotoid diagram $K$, we can define \textit{odd writhe} as the sum of the signs of the odd crossings in $K$. The odd writhe is a knotoid invariant \cite{GK1}. We can also define the odd writhe for Morse knotoids directly.

\begin{proposition}
If $\kappa$ is a Morse knotoid with any of its representative diagrams having its endpoints in different local regions determined by the diagram in $\mathbb{R}^2$, then $\{ \kappa \} = A^{-2J(\kappa)+ w(\kappa)}$, where $J(\kappa)$ is the odd writhe of $\kappa$ and $w(\kappa)$ is the writhe of $\kappa$.
\end{proposition}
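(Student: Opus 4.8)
The plan is to mimic the proof of Proposition~\ref{prop:binaryprop}, tracking carefully how the coloring constraint interacts with the Gaussian parity of each crossing when the endpoints of $\kappa$ lie in different regions. First I would fix a Morse knotoid diagram $K$ of $\kappa$ and give it the orientation from its leg to its head. As established in the ``closer look'' discussion, $K$ admits a unique properly colored state once we decree that the edge incident to the leg is colored $0$; every crossing of $K$ is smoothed in exactly one way, and the whole computation of $\{K\}$ reduces to recording, crossing by crossing, whether this forced smoothing is the oriented smoothing (contributing $A^{\pm 1}$ according to sign, as in Figure~\ref{fig:signn}) or the ``disoriented'' smoothing (contributing $A^{\mp 1}$).

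The key step is the parity count. For a crossing $c$, consider the loop based at the flat version of $c$ and let it meet the rest of the diagram in $m_c$ transversal points. Repeating the color-propagation argument from Proposition~\ref{prop:binaryprop}: as one traverses this loop, each intersection shifts the color by $1 \pmod 2$, so the edge leaving $c$ carries the color of the incoming edge shifted by $m_c + 2 \equiv m_c \pmod 2$. Thus when $c$ is \emph{even} ($m_c$ even) the two edges at $c$ on the outgoing side carry the same color as their counterparts on the incoming side, forcing the oriented smoothing and a contribution $A^{\operatorname{sign}(c)}$; when $c$ is \emph{odd} ($m_c$ odd) the colors are swapped, forcing the disoriented smoothing and a contribution $A^{-\operatorname{sign}(c)}$. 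The one subtlety peculiar to the proper-knotoid case is that now the open-ended state component runs between two \emph{different} regions, so one must check that the color-propagation argument is unaffected by the presence of the two endpoints on the loop or on the strand being coloured; since the endpoints are not crossings and each non-endpoint vertex of the underlying flat diagram is still visited exactly twice, the alternating coloring exists and the count goes through verbatim. Summing the exponents gives
\[
\{K\} \;=\; \prod_{c \text{ even}} A^{\operatorname{sign}(c)} \cdot \prod_{c \text{ odd}} A^{-\operatorname{sign}(c)} \;=\; A^{\,w(K) - 2J(K)},
\]
because $w(K) = \sum_{c \text{ even}} \operatorname{sign}(c) + \sum_{c \text{ odd}} \operatorname{sign}(c)$ and $J(K) = \sum_{c \text{ odd}} \operatorname{sign}(c)$, so subtracting twice the odd part flips exactly the odd signs.

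Finally I would invoke Morse-isotopy invariance of the writhe and of the odd writhe (the latter being a knotoid invariant by \cite{GK1}, and defined directly for Morse knotoids as noted above) to replace $w(K)$ and $J(K)$ by $w(\kappa)$ and $J(\kappa)$, yielding $\{\kappa\} = A^{-2J(\kappa) + w(\kappa)}$ as claimed. The main obstacle I anticipate is purely bookkeeping rather than conceptual: being careful that the ``forced'' smoothing at a crossing is genuinely the oriented one precisely when the crossing is even, i.e.\ correctly matching the two local coloring patterns of Figure~\ref{fig:binary} against the two oriented/disoriented smoothings of Figure~\ref{fig:signn} and against the parity shift $m_c + 2$; a sign or parity slip here would swap the roles of even and odd crossings and spoil the formula. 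Everything else follows the template of Proposition~\ref{prop:binaryprop}.
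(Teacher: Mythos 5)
Your argument is correct and follows essentially the same route as the paper's proof: the unique properly colored state, the color-propagation/parity count showing that even crossings force the oriented smoothing (contributing $A^{\operatorname{sign}(c)}$) while odd crossings force the disoriented one (contributing $A^{-\operatorname{sign}(c)}$), and the identity $w(K)=J(K)+E(K)$ to assemble the exponent $-2J(K)+w(K)$. The only cosmetic difference is that the paper phrases the bookkeeping via $E(K)$, the sum of signs of even crossings, rather than your direct product over the two parity classes.
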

\begin{proof}
Let $K$ be a representative diagrams of $\kappa$. By the former discussions we know that $K$ admits a unique properly colored state, and since it has its endpoints in different planar regions,  $K$ has at least one odd crossing. That is, there is at least one crossing $c$ of $K$ such that there is an odd number of labels between the two incidences of the label representing $c$ in the Gauss code of $K$. This requires that the loop at $c$ encloses one of the endpoints of $K$. See Figure \ref{fig:odd}.
\begin{figure}
\centering
\includegraphics[width=.25\textwidth]{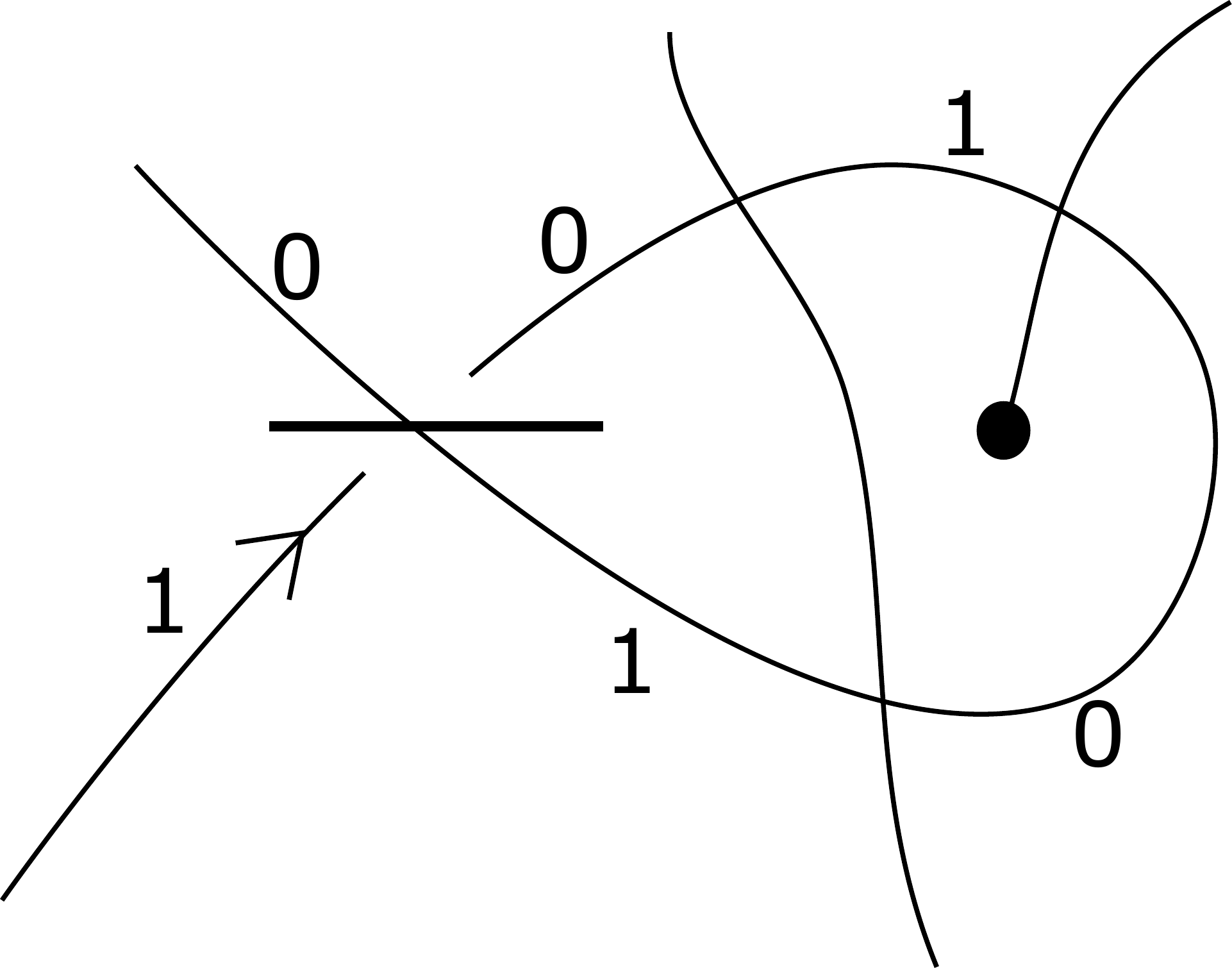}
\caption{An odd crossing}
\label{fig:odd}
\end{figure}
Assume $K$ is oriented from its tail to its head. Let the loop formed by an (flat) odd crossing has $2n+1$, $n \in \mathbb{N}$ intersections with the rest of the diagram.  Let $\lambda \in \{0,1\}$ be the color of the initial edge incident to the flat crossing that goes inwards into the crossing. Then the last edge incident to the crossing that goes outwards, is colored with $\lambda+ 2n + 3 \equiv \lambda + 1 \pmod 2$. That is, the initial edge incident to the vertex of the crossing receives a different color than the last edge.

The only way to smooth an odd crossing according to the coloring condition is to smooth it so that the edges that lie on same one side of the crossing connect with the edges that lie on the other side of the crossing. This corresponds to the $A-type$ smoothing for a negative crossing, as depicted in Figure \ref{fig:odd}. Thus, the contribution of smoothing of a negative crossing is $A$ that is  $A^{-sign(c)}$. If the odd crossing is a positive crossing, then the smoothing would correspond to the $B$- type smoothing and the contribution to the binary bracket polynomial would be $A^{-sign(c)} = A^{-1}$. 

Let $E(K)$ denote the total sum of the signs of even crossings of $K$. It is clear that
$$w(K) = J(K) + E(K),$$  
where $w(K)$ is the writhe of $K$.

From Proposition \ref{prop:binaryprop} and the observation above, it follows that the total contribution made from smoothing all crossings of $K$ in order to obtain the properly colored state of $K$ is equal to $A^{-J(K) + E(K)}$. Substitute $E(K) = w(K) - J(K)$, we find that the contribution of the properly colored state is  $A^{-2J(K)+ w(K)}$.
\end{proof}
\begin{corollary}
The normalized binary bracket polynomial of a proper knotoid $\kappa$ is $A^{-2J(\kappa)}$, where $J(\kappa)$ is the odd writhe of $\kappa$.
\end{corollary}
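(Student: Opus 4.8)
The plan is to read this corollary off directly from the preceding proposition together with the normalization result of Proposition~\ref{prop:invariant}. The first step is to observe that the hypothesis of the preceding proposition is automatically satisfied for a proper knotoid: by definition, a proper knotoid admits \emph{no} representative diagram with its two endpoints in the same local region of $\mathbb{R}^2$, so every diagram of $\kappa$ — in particular its (unique) standard Morse diagram, through which the binary bracket of a knotoid in $\mathbb{R}^2$ is computed — has its endpoints in distinct regions. Hence the preceding proposition applies verbatim and yields $\{\kappa\} = A^{-2J(\kappa)+w(\kappa)}$.

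The second step is to apply the normalization. By Proposition~\ref{prop:invariant}, the normalized binary bracket polynomial is $A^{-w(K)}\{K\}$, and this quantity is a genuine invariant of knotoids in $\mathbb{R}^2$ because $w$ is a Morse isotopy invariant (so $w(K)=w(\kappa)$ is unambiguous) and each planar knotoid has a unique standard Morse diagram. Multiplying, the normalized value of $\kappa$ is $A^{-w(\kappa)}\cdot A^{-2J(\kappa)+w(\kappa)} = A^{-2J(\kappa)}$, the two $w(\kappa)$ exponents cancelling, which is the claimed formula.

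I do not expect any real obstacle here: the substantive work is entirely contained in the preceding proposition, and what remains is the bookkeeping just described — verifying that ``proper'' forces the endpoint-region hypothesis, that $J$ and $w$ are honest knotoid invariants so that the expression is diagram-independent, and that the normalization exponent is exactly $-w$ so that it annihilates the $+w(\kappa)$ in the unnormalized formula. It is worth noting in passing that the corollary specializes correctly: a knot-type knotoid has $J(\kappa)=0$, whence $A^{-2J(\kappa)}=1$, recovering the earlier statement that the normalized binary bracket polynomial is trivial for knot-type knotoids.
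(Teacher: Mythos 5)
Your proposal is correct and matches the paper's (implicit) argument exactly: the corollary is obtained by applying the preceding proposition — whose hypothesis is precisely the definition of a proper knotoid — and then cancelling the writhe against the normalization factor $A^{-w(K)}$ from Proposition~\ref{prop:invariant}. The bookkeeping about diagram-independence and the consistency check with the knot-type case are fine but not needed beyond what the paper already establishes.
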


If $K$ is a multi-knotoid diagram then $K$ does not necessarily admit such binary coloring and so may have no properly colored state at all. See the flat multi-knotoid diagram given on the left hand side of  Figure \ref{fig:binarycoloringii} where a coloring on the knot component results in a contradiction ($0 =1$). We will examine uncolorable multi-knotoid diagrams below in more detail. If $K$ is colorable multi-knotoid diagram then each of its knot components admits two colorings by swapping $0$'s to $1$'s  or vice versa on its edges. This implies that $K$ has $2^{m}$ properly colored states, where $m$ is the number of knot components of $K$. It is not hard to see that a multi-knotoid diagram whose flat diagram is seen on the right hand side of Figure \ref{fig:binarycoloringii} has $2$ properly colored state obtained by swapping the given colors on the unknot component. 

\begin{figure}[H]
\centering
\includegraphics[width=.7\textwidth]{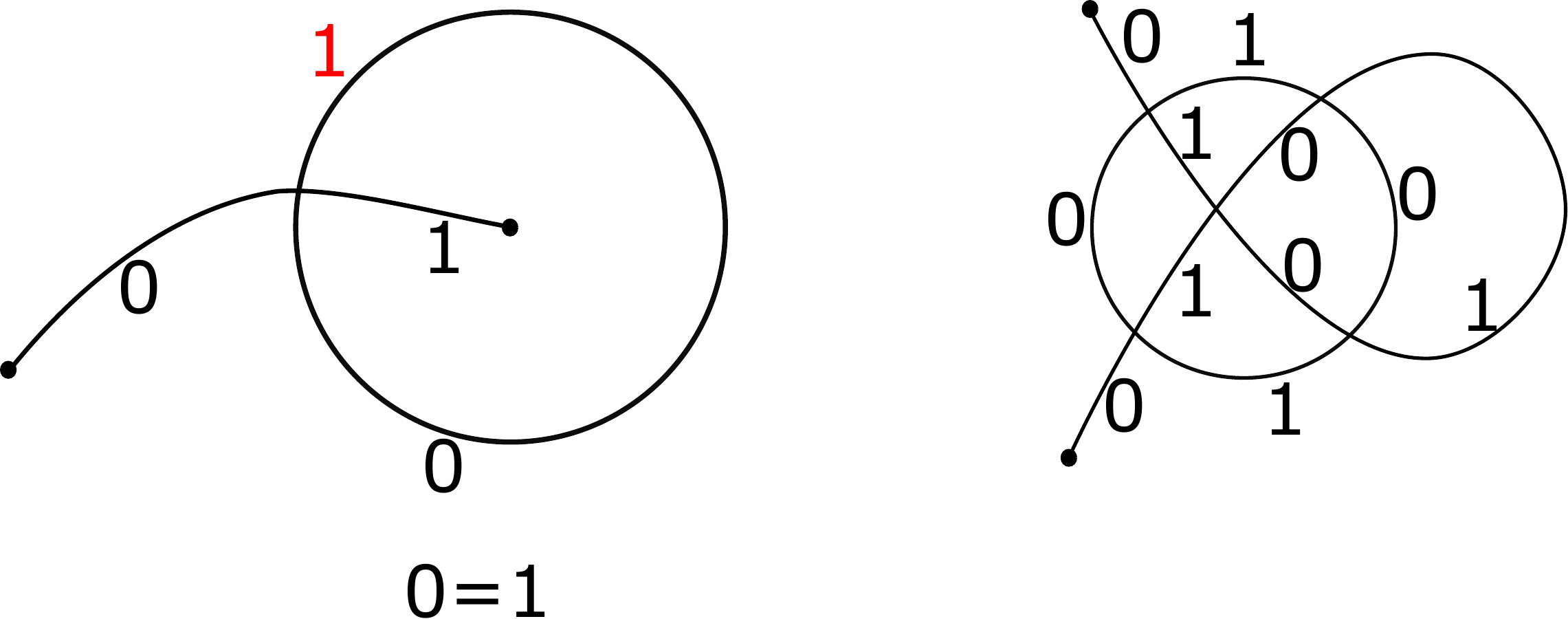}
\caption{A uncolorable and a colorable flat multi-knotoid diagram}
\label{fig:binarycoloringii}
\end{figure}

\begin{proposition}
Let $K$ be a multi-knotoid diagram. If the number of crossings shared by the knotoid component of $K$ and one of the knot components is odd, then $K$ cannot be colored properly.
\end{proposition}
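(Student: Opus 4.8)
The plan is to argue by contradiction, combining the description of proper colorings in terms of the underlying flat diagram developed just above with the planar fact that a closed curve meets any other closed curve in an even number of points. Suppose $K$ admits a proper coloring, and let $C$ be the knot component of $K$ that shares an odd number of crossings with the knotoid component. As recalled in the preceding discussion, a proper coloring of $K$ is the same thing as a $2$-coloring of the edges of the flat diagram of $K$ whose colors alternate along each component: the running color switches every time one passes straight through a crossing.

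First I would traverse the closed curve $C$ once and keep track of these switches. Each passage through a crossing flips the running color, so for the coloring to be consistent when one returns to the starting edge the total number of passages along $C$ must be \emph{even}. Now sort the passages by the type of crossing at which they occur: a self-crossing of $C$ is met twice during the traversal of $C$ and hence contributes $2$, while a crossing shared by $C$ and another component of $K$ is met exactly once and hence contributes $1$. Writing $s$ for the number of self-crossings of $C$ and $d$ for the number of crossings that $C$ shares with the remaining components of $K$, the total passage count equals $2s+d$, so $d$ must be even. The remaining components of $K$ are the knotoid component and the other knot components, so $d = p + \sum_{C'} q_{C'}$, where $p$ is the number of crossings between $C$ and the knotoid component and $q_{C'}$ is the number of crossings between $C$ and the other knot component $C'$. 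Since each $C'$ is a closed curve in the plane, the Jordan curve theorem gives that $q_{C'}$ is even for every $C'$ (equivalently, a closed curve in $\mathbb{R}^2$ is null-homologous mod $2$, so it has trivial mod $2$ intersection number with any other closed curve). Hence $d \equiv p \pmod 2$, and since $p$ is odd by hypothesis, $d$ is odd — contradicting the previous line. Therefore $K$ admits no proper coloring.

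I expect the only step carrying genuine content to be the passage from $d$ to $p$ modulo $2$: a priori the odd interaction between $C$ and the knotoid component could be compensated in parity by an odd interaction between $C$ and some other knot component, and it is exactly the planar statement ``two closed curves cross an even number of times'' that forbids this — this is where the argument uses that the ambient surface is $\mathbb{R}^2$ (or $S^2$) and why a knotoid component, being an arc, behaves differently from a knot component, which is a loop. The rest is bookkeeping: the passage count, and the observation that an alternating $2$-coloring cannot close up around a cycle of odd length, both of which follow immediately from the characterization of proper colorings established above, applied verbatim to the flat diagram of a multi-knotoid.
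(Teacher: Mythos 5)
Your proof is correct and follows essentially the same route as the paper: traverse the knot component, observe that the alternating color must flip an odd number of times because the odd interaction with the knotoid arc cannot be cancelled by the (necessarily even, via the Jordan curve theorem) interactions with other closed components, and derive a contradiction. You are in fact slightly more explicit than the paper in accounting for self-crossings (each contributing $2$ passages) and in separating the sum over the other knot components, but the underlying argument is the same.
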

\begin{proof}
First observe that any two knot components of $K$ share an even number of crossings since any strand entering `inside'  a region enclosed by a knot diagram must go out of the region, as a result of the Jordan curve theorem. Thus, an odd number of crossing can be shared only between the knotoid component and a knot component of $K$. Let $\mu$ be a knot component of $K$ sharing $2n+1$, $n \in \mathbb{N}$ crossings with the knotoid component of $K$. Let $\lambda \in \{0,1\}$ be the color on an edge of $\mu$.  Each crossing alternates the color $\lambda$ by $1$ as we traverse the component and so the edge that is colored initially with $\lambda$ is required to receive the color $\lambda + 2n +1 \equiv \lambda+1 \pmod 2$ when we are back to the same edge. This results in a not well-defined coloring of the knot component because one of the edges on it receives two different colors. We exemplify this in Figure \ref{fig:binarycoloringii}. 
\end{proof}

In Figure \ref{fig:multibin}, we illustrate the two proper colorings of a multi-knotoid diagram whose underlying flat diagram is given in \ref{fig:binarycoloringii}. We also indicate the smoothing types to   obtain its properly colored states. The reader can verify easily that the binary bracket polynomial of the multi-knotoid diagram is $A^{3} + A^{-5}$. and its writhe is $-5$. Therefore,  the normalized binary bracket polynomial is $A^8 +1$, and this shows that the multi-knotoid is not isotopic to the multi-knotoid diagram with two split trivial components with the normalized binary bracket polynomial equal to $2$. 

In Figure \ref{fig:white}, we also illustrate the two proper colorings of a multi-knotoid diagram. The reader can verify that the linking number of the multi-knotoid, that is the half of the total sum of the signs of the shared crossings, equals $0$. But we find that its normalized binary bracket polynomial is $A (A^{3} + A^{-5})= A^4 + A^{-4}$ which shows that the nontriviality of this multi-knotoid.  
\begin{figure}[H]
\centering
\includegraphics[width=.75\textwidth]{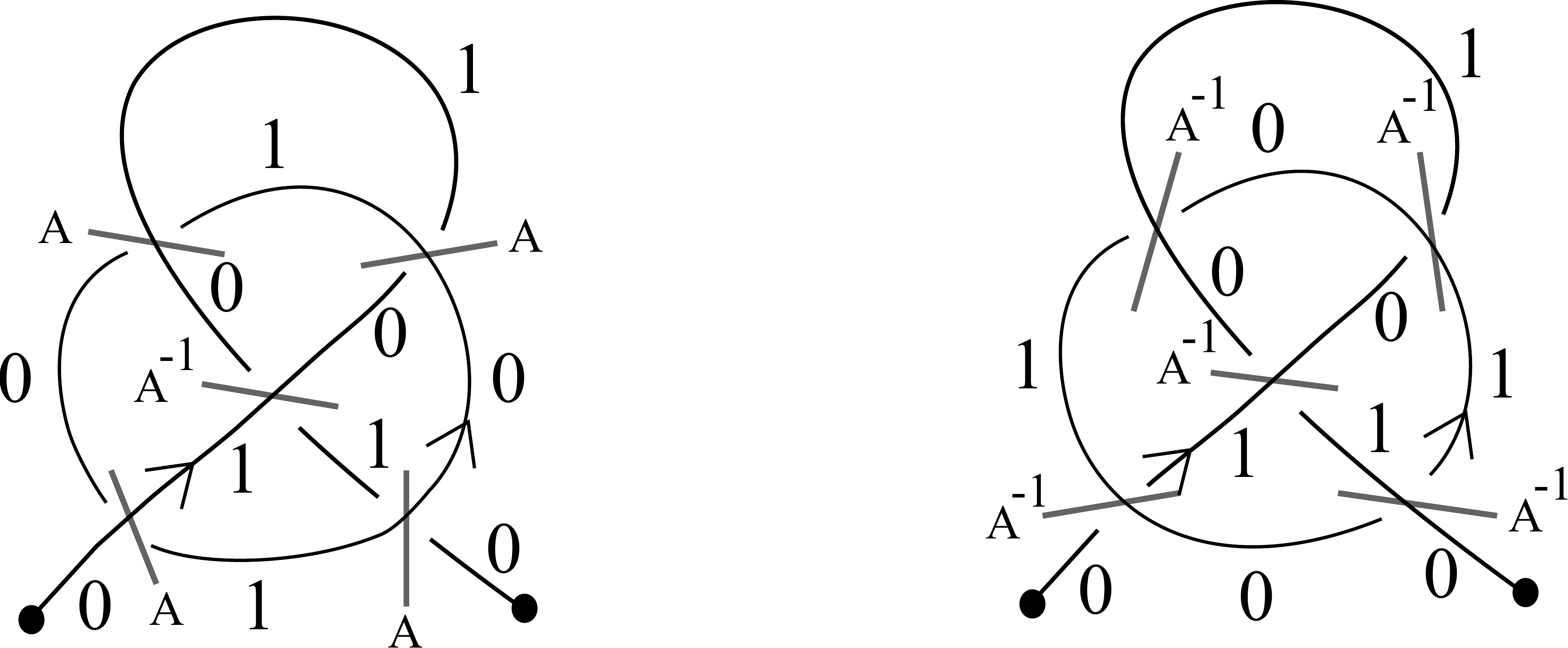}
\caption{}
\label{fig:multibin}
\end{figure}

\begin{figure}[H]
\centering
\includegraphics[width=.75\textwidth]{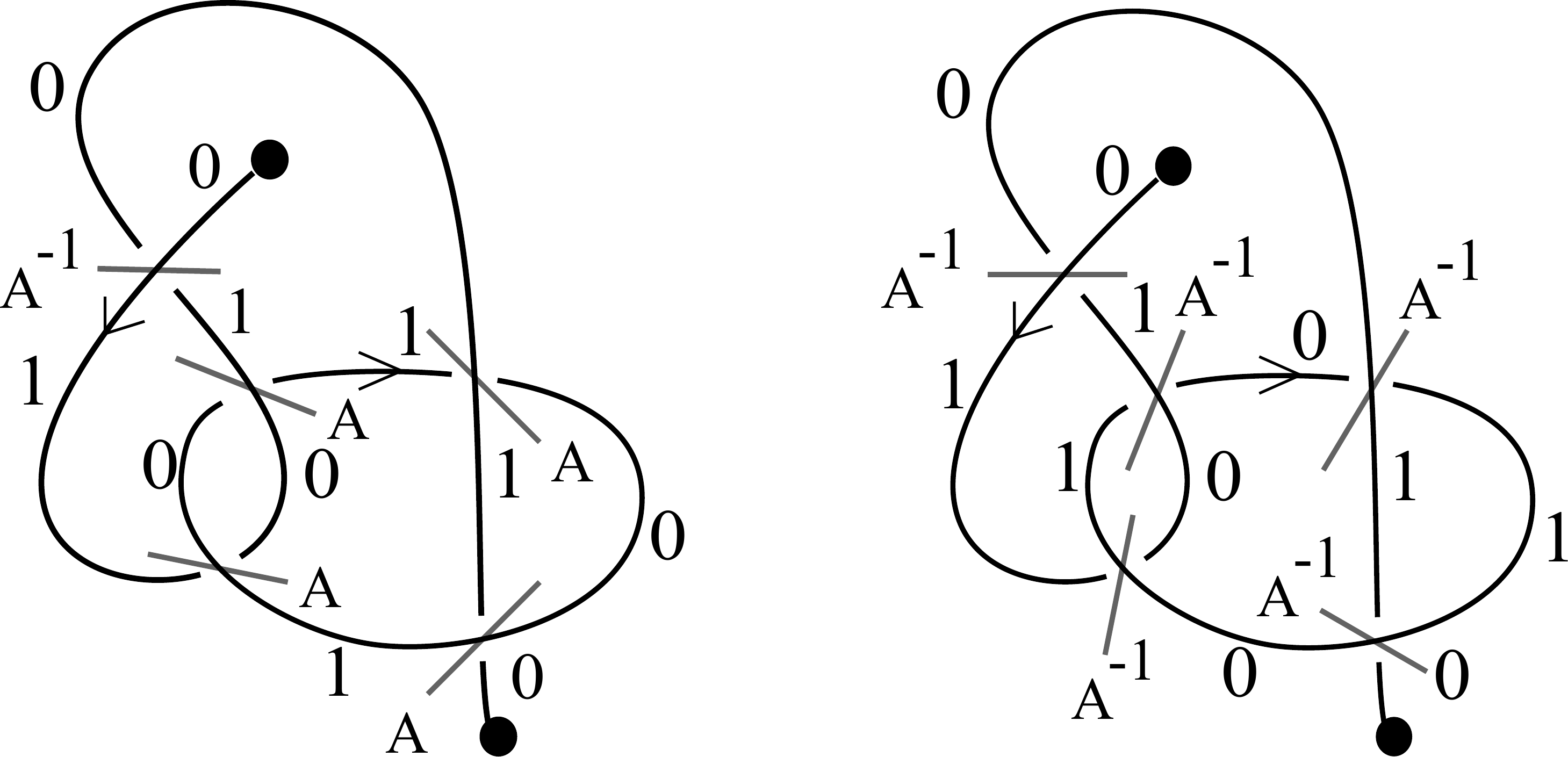}
\caption{}
\label{fig:white}
\end{figure}


\subsubsection{A quantum model for the binary bracket polynomial}


A quantum model for the binary bracket of virtual knots and links is given in \cite{Kauf}. This model can be applied as a quantum model for the binary bracket polynomial for Morse knotoids and multi-knotoids by assigning specific matrices to cups, caps and crossings of diagrams.

We assign the $2\times 2$ identity matrix to cups and caps, and the following $R, R^{-1}$ matrices to the right-handed and left-handed crossings of a knotoid diagram. 
\vspace{1.5cm}
\\
\begin{center} $R = \begin{bmatrix}
0 & 0 & 0 & A^{-1}\\
0 & A  & 0 &0 \\
0 & 0& A& 0\\
A^{-1}  & 0 & 0 & 0
\end{bmatrix},
 \hspace{1cm}
R^{-1} = \begin{bmatrix}
0 & 0 & 0 & A\\
0 & A^{-1}  & 0 &0 \\
0 & 0& A^{-1} & 0\\
A  & 0 & 0 & 0
\end{bmatrix}.
$
\end{center}
\vspace{1cm}

Notice that the $R$ matrix given can be derived from the state expansion of the binary bracket polynomial at the right-handed crossing, see Figure \ref{fig:R} that illustrates the entries $R_{01}^{01}$ and $R_{11}^{00}$. The $R$ matrix is unitary when $A$ is on the unit circle in the complex plane  \cite{Dye}, and, in principle, can be used as a quantum gate for the design of a topological quantum computer. This indicates the possibility for applying quantum invariants of knotoids in quantum computing, a possibility that we shall pursue in subsequent work.
\begin{figure}[H]
\centering
\includegraphics[width=.75\textwidth]{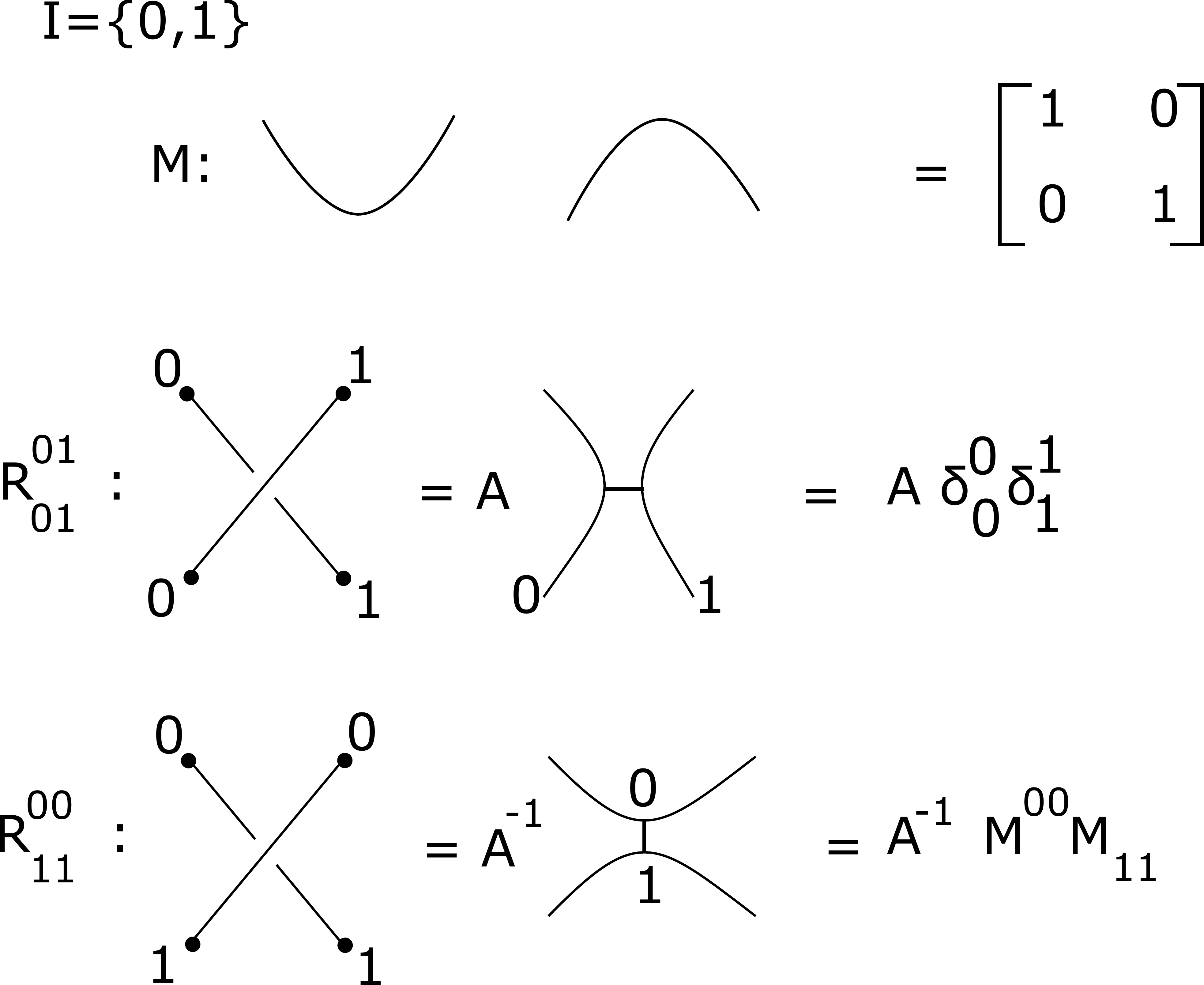}
\caption{The $R$-matrix entries}
\label{fig:R}
\end{figure}

With these matrix assignments, the binary bracket polynomial can be given as a partition function where a single unknot component gets the value $\delta_{00} + \delta_{11} = 2$ and the trivial knotoid diagram gets the value $\delta_{0}^{0} = 1$. 

\begin{figure}[H]
\centering
\includegraphics[width=.5\textwidth]{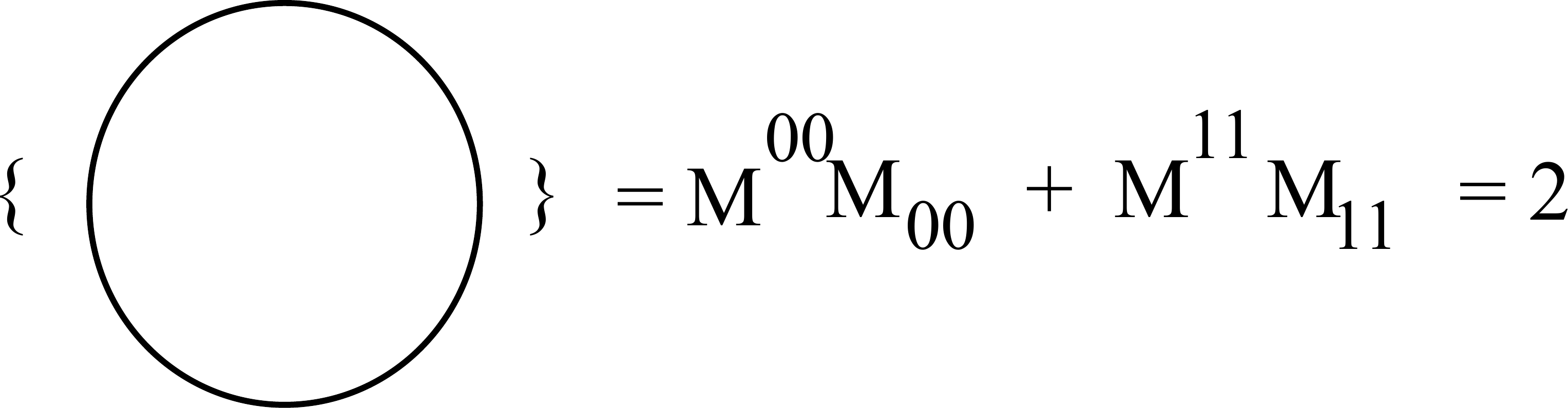}
\caption{}
\label{fig:bincircle}
\end{figure}

\begin{remark}\normalfont

Since cups and caps are assigned to the identity matrix, this model for the binary bracket polynomial remains invariant under the rotations of the arcs. Such model where the cups and caps are assigned to the identity matrix and the crossings are assigned to the $R$ matrices given by the crossing expansions
can be given also for the bracket polynomial \cite{Ka5}.  
\end{remark}


\section{Oriented Quantum Invariants}\label{sec:orientedquantum}
\subsection{General Schema }
In this section we present a general schema for oriented quantum models for invariants of oriented Morse knotoids. For this, we introduce \textit{right} and \textit{left oriented cups} and \textit{caps} that are assigned to multiples of the Kronecker delta, as in \ref{fig:oriented}, where $\mu$ is a constant and $\delta_{ab}$ is the Kronocker delta for some $a,b$ in an index set.  The topological invariance under the min-max move demands that the matrices assigned to a pair of right oriented or left oriented cup and cap to be inverses to each other.  


\begin{figure}[H]
\centering
\includegraphics[width=.8\textwidth]{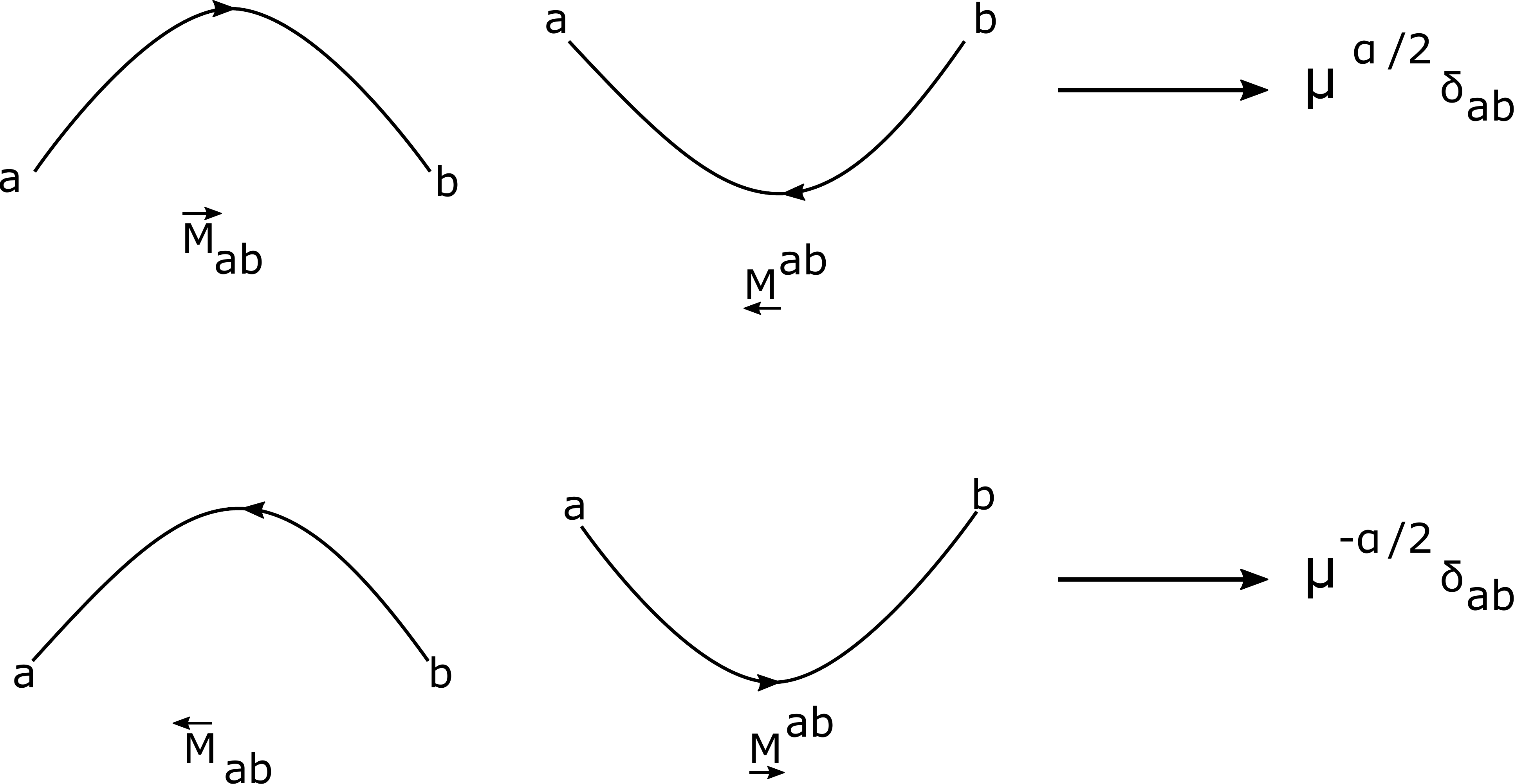}
\caption{Oriented cups and caps}
\label{fig:oriented}
\end{figure}

Crossings may appear as endowed with two types of local orientations, \textit{mixed} and \textit{parallel} type. In a paralel oriented crossing, the arcs at the crossing are oriented in the same way directing both up or down with respect to the bottom to top direction of the plane. In a mixed oriented crossing, the arrows on arcs of the crossing point to different directions. A mixed oriented crossing can always be converted to a parallel oriented crossing by regular isotopy as we show in Figure \ref{fig:mixed}. 

\begin{figure}[H]
\centering
\includegraphics[width=.92\textwidth]{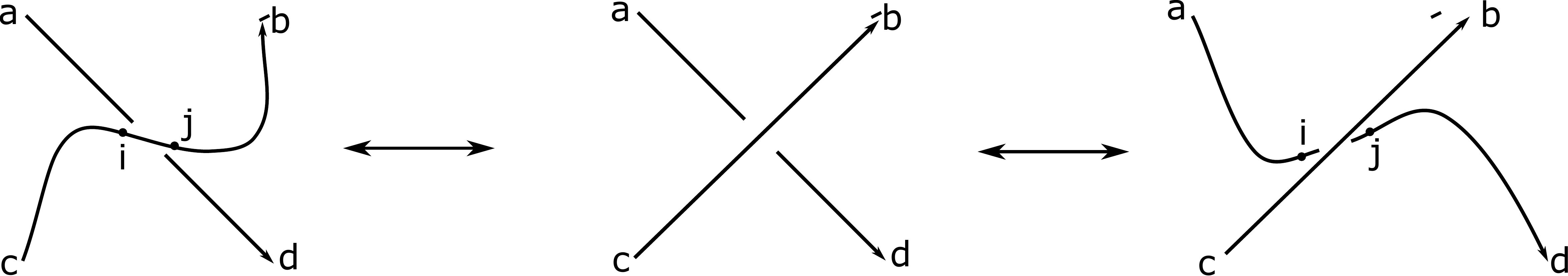}
\caption{Twist at a mixed negative crossing} 
\label{fig:mixed}
\end{figure}

 We assign $R, \overline{R}$ matrices to positive and negative crossings, respectively, that are both solutions of the Yang-Baxter equation so that oriented Morse isotopy type III moves are satisfied. To satisfy the parallel type Morse isotopy type II move (that is, the strands involved in the move are oriented in the same way), we assume $\overline{R}= R^{-1}$. 
 We also impose the following identity on cups, caps and $R, R^{-1}$ matrices induced by the anti-parallel Reidemeister II moves.
\begin{equation}\label{eqn:two}
\overrightarrow{M}_{jb}R^{tj}_{ia}\overrightarrow{M}^{si}\overleftarrow{M}^{sk} \overline{R}_{ck}^{lt} \overleftarrow{M}_{ld} = \delta^{a}_{c}\delta^{b}_{d},
\end{equation}

 where in an entry $R^{cd}_{ab}$, $a,b \in \mathcal{I}$ hold for the indices on the arcs going inward to the corresponding crossing and $c, d \in \mathcal{I}$ hold for the indices on the arcs emanating from the crossing, following the orientation.

Furthermore, the conversion of a mixed negative crossing to a parallel crossing enforces the following identity involving the cups and caps matrices and the $R^{-1}$ matrix assigned to negative crossing. Note that we have the variations of this identity involving a positive crossing so the $R$ matrix and we leave it to the reader to investigate these identities.
$$ \overrightarrow{M}_{ci} \overline{R}^{dj}_{ia}\underrightarrow{M}^{jb} = \underrightarrow{M}^{ai} \overline{R}^{jb}_{ci} \overrightarrow{M}_{jd}.$$

When we insert the values for cups and caps into this equation we find,
\begin{equation}\label{eqn:one}
\mu^{\frac{c}{2}} \overline{R}^{db}_{ca} \mu^{\frac{-b}{2}} =\mu^{\frac{-a}{2}}\overline{R}^{db}_{ca}\mu^{\frac{d}{2}}.
\end{equation}

\begin{definition}
An $R$ matrix solution of the Yang-Baxter equation is called \textit{spin-preserving} if $R^{ab}_
{cd}$ is zero whenever $a+b \neq c+d$. 
\end{definition}
We can deduce from the Equation \ref{eqn:one} that any $R$ solution of the Yang-Baxter equation matrix satisfying the conversion identity is spin-preserving. We also conclude that
 a partition function obtained by the products of the matrices $M$ and $R$ satisfying the above equations will give us an invariant  of a Morse knotoid diagram with fixed indices on its endpoints.

\subsection{The Alexander Polynomial for knotoids via a quantum state sum model}

In this section we construct the Alexander-Conway polynomial for Morse multi-knotoids. We do this by adapting the state sum model given in \cite{Ka5} that yields a solution of the Yang-Baxter equation. The importance of having the Yang-Baxter state sum model of the Alexander polynomial for Morse multi-knotoids lies in the fact that not every ascending or decending multi-knotoid diagram is Morse isotopic to the trivial knotoid diagram. This is equivalent to say that there exist multi-knotoid diagrams that admit no unknotting sequence. See Figure \ref{fig:obstructions} for some basic examples. This fact obstructs the Conway type skein identity (see the next section)  to be applied in a recursive computation of the Alexander polynomial since one encounters a multi- knotoid diagram that admits no unknotting sequence in a step of the recursion. 
\begin{figure}[H]
\centering
\includegraphics[width=.7\textwidth]{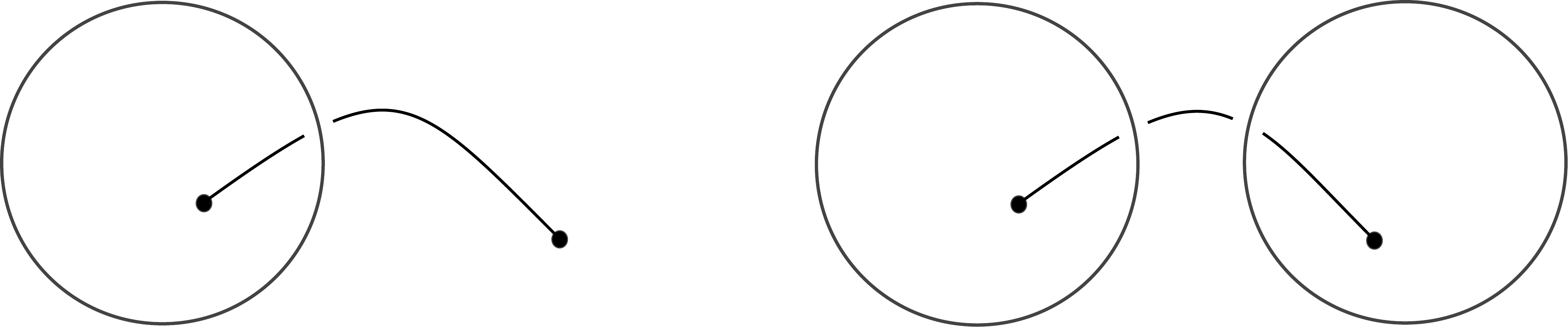}
\caption{Some of the irreducible bits avoiding the recursive computation}
\label{fig:obstructions}
\end{figure}

\subsubsection{A small review of the Alexander-Conway Polynomial}
Recall that the one-variable Alexander polynomial $\nabla_{K}(z)=\nabla_{K}$ of an oriented classical link $K$ is the unique polynomial determined by the following three properties \cite{Conway}:\\
\begin{enumerate}
\item $\nabla_{K} = \nabla_{K'}$ if $K$ is ambient isotopic to $K'$. 
\item $\nabla_K = 1$ if $K$ is the unknot.
\item $\nabla_{L_+} -\nabla_{L_-} = z\nabla_{L_0}$ where $L_{+}, L_{-} , L_{0}$ are links differing from each other at only one crossing, as shown in Figure \ref{fig:skein}.
\end{enumerate}
\begin{figure}[H]
\centering
\includegraphics[width=.6\textwidth]{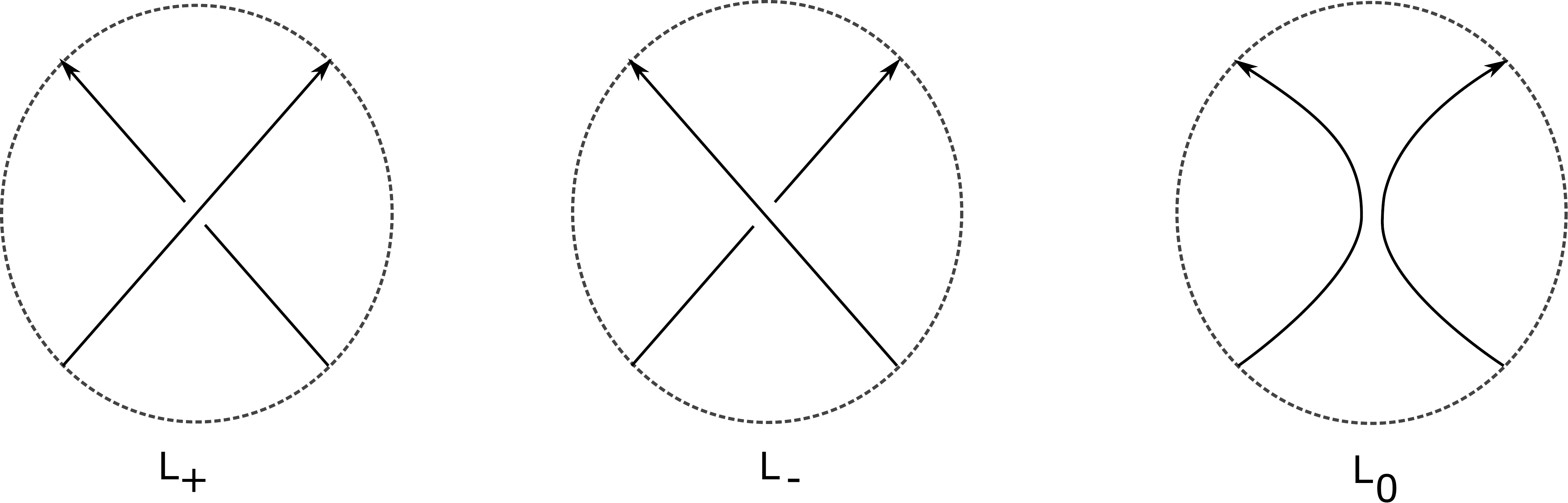}
\caption{The links in the skein relation of Alexander polynomial}
\label{fig:skein}
\end{figure}
The Conway skein identity (the third property given in the above list) causes that any split knot/link has vanishing Alexander polynomial \cite{Ka5}. Precisely, any oriented split link $L$ can be represented abstractly as the diagram $L_0$ given in Figure \ref{fig:split}. The links $L{+}$ and $L_{-}$ related to $L_0$ in the skein identity are clealry ambient isotopic. By the Property $1$, and the skein identity we have that $\nabla_{L_{0}}= \nabla_{L} = 0$.
\begin{figure}[H]
\centering
\includegraphics[width=.5\textwidth]{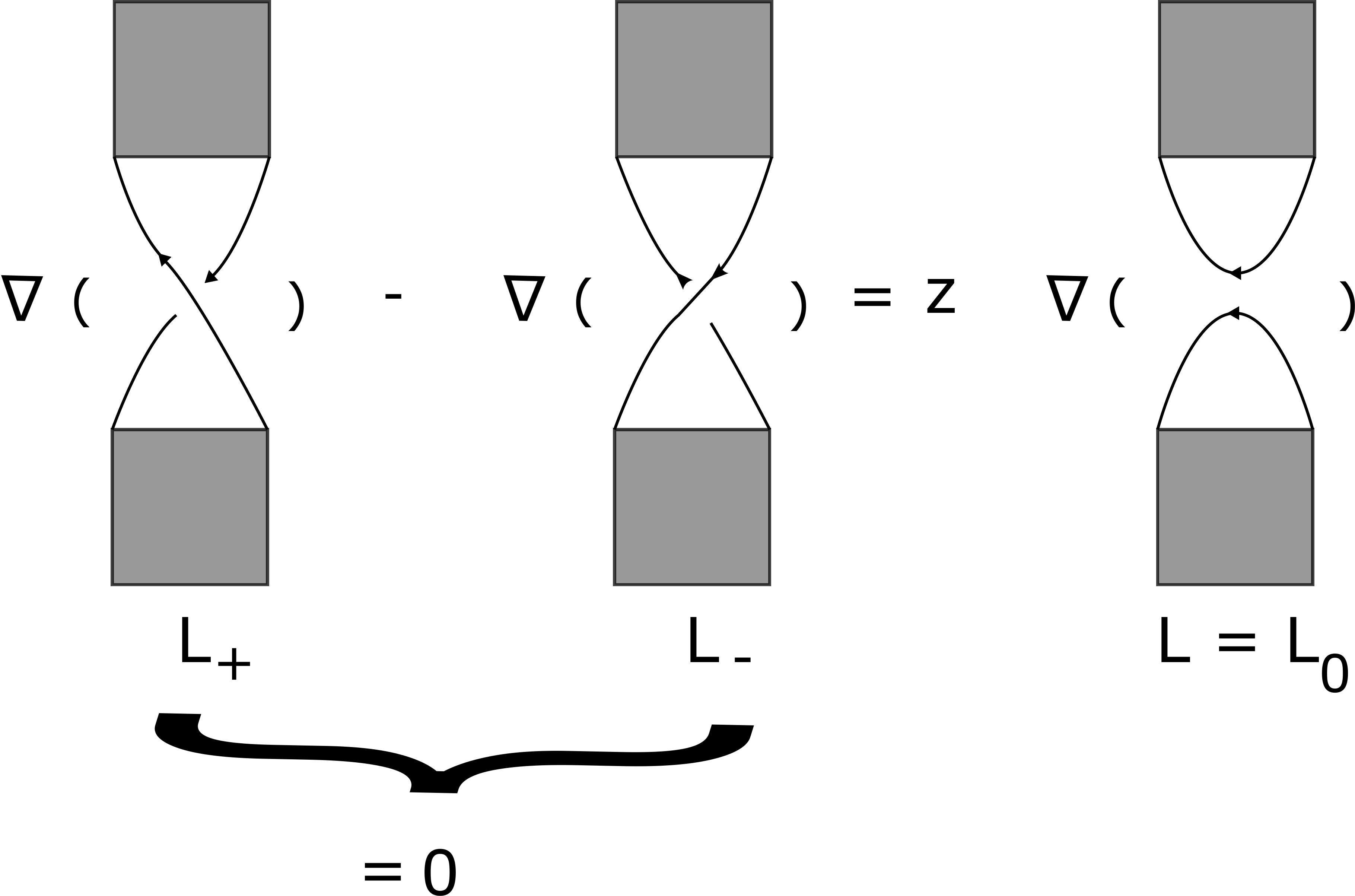}
\caption{The Alexander polynomial of split links vanishes.}
\label{fig:split}
\end{figure}

A state sum model  given for the Alexander polynomial that is of the form  $\sum_{\sigma} <K | \sigma> \delta^{\sigma}$, where $\delta$ is the value for state components, satisfies the following  split property. 
$$\nabla_{(O K)} = \nabla_{O}~\nabla_{K}.$$
Then with the observation above, we deduce that any state sum model for the Alexander polynomial vanishes for any oriented classical links. We can get over this obstruction by utilizing $(1,1)$-tangles for the construction of the Alexander polynomial for oriented classical links \cite{Ka5}. In the state sum model of the Alexander polynomial for $(1,1)$-tangles, the value of the trivial $(1,1)$-tangle is assumed to be $1$ and the value of the unknot is $0$. See Figure \ref{fig:alexx}. 
 \begin{figure}[H]
\centering
\includegraphics[width=.55\textwidth]{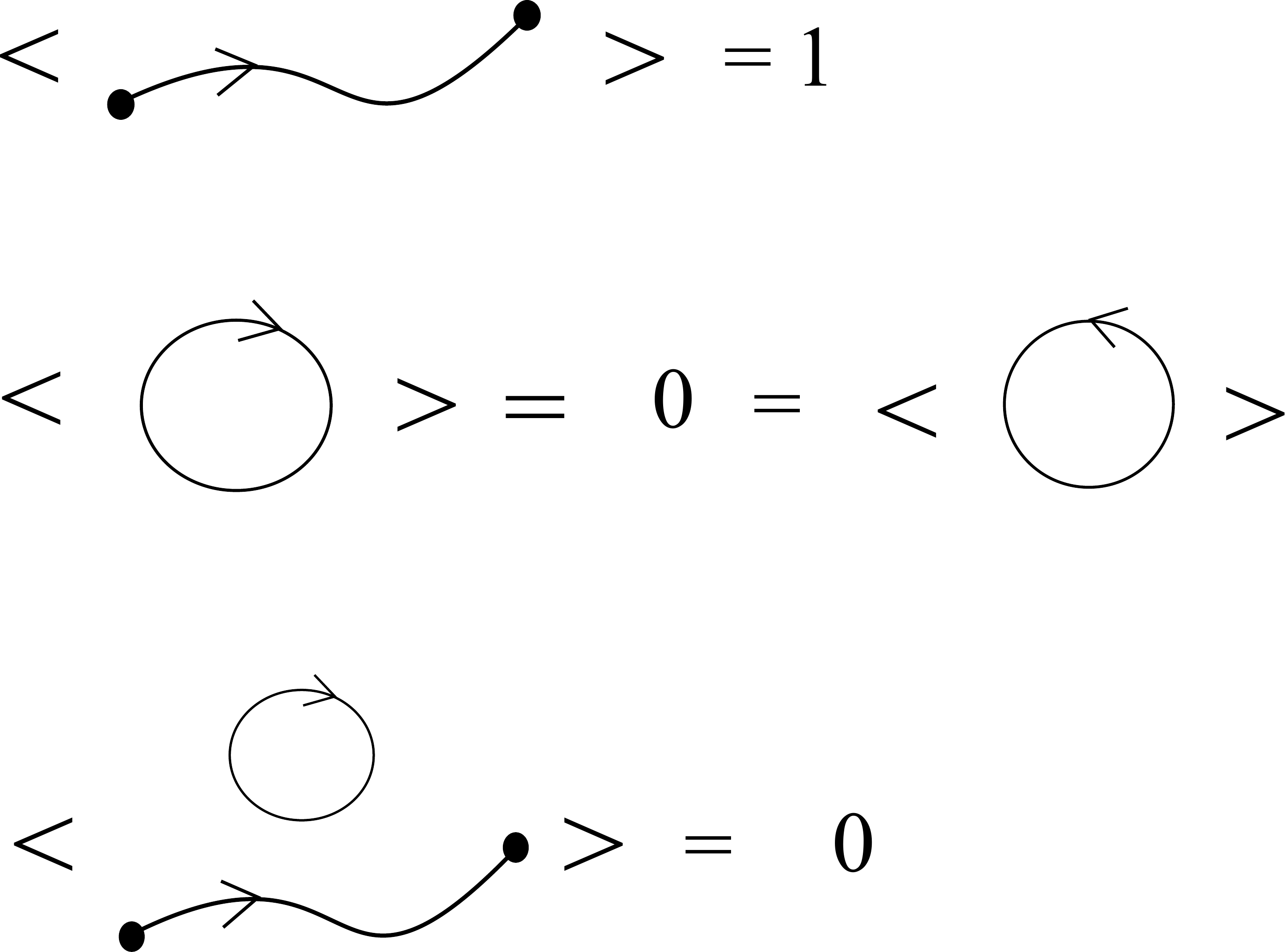}
\caption{The value of the trivial components}
\label{fig:alexx}
\end{figure}
The states of this model are obtained by expanding each crossing of an oriented $(1,1)$-tangle, as given in Figure \ref{fig:expansion}. The strands at the smoothing sites are labeled with indices from the index set $\mathcal{I}~=~\{0,1\}$. 
\begin{figure}[H]
\centering
\includegraphics[width=.8\textwidth]{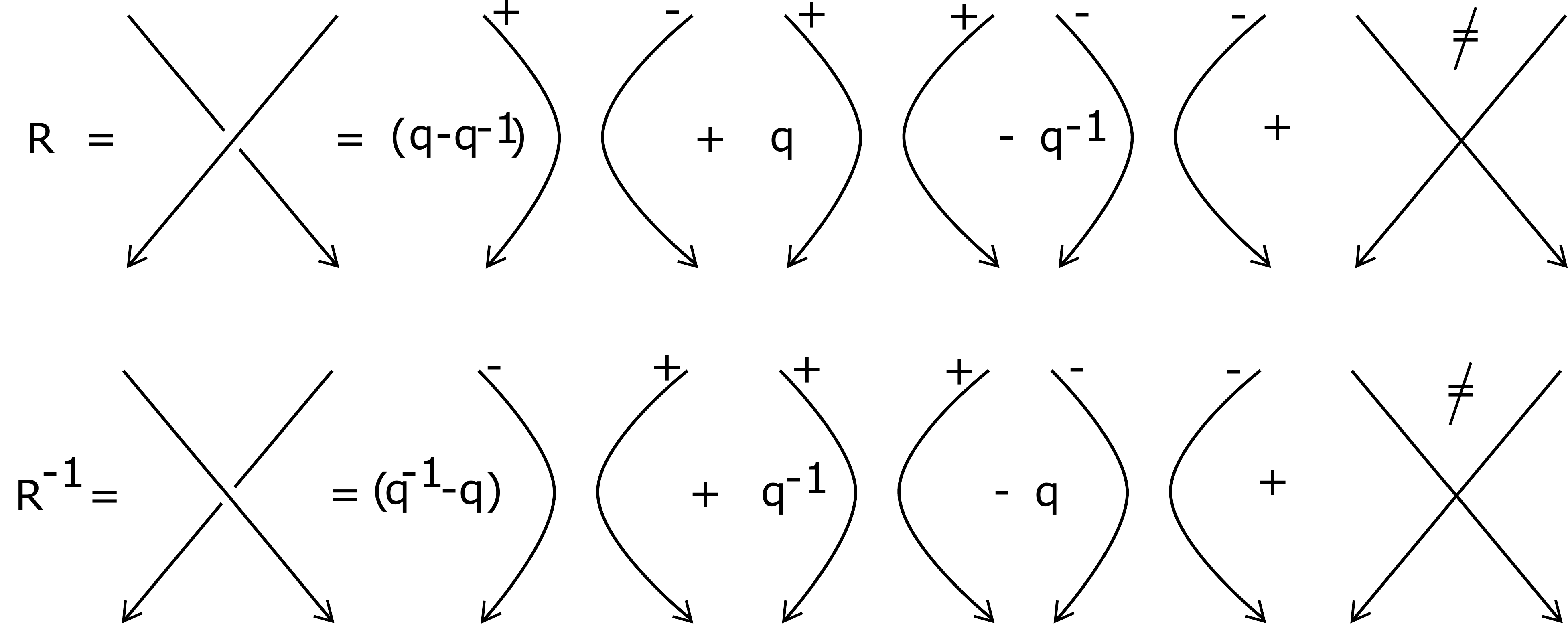}
\caption{The expansions at crossings}
\label{fig:expansion}
\end{figure}

This expansion utilizes the following $R$ and $R^{-1}$ matrices for the positive and the negative crossing, respectively.
$$R= \begin{bmatrix}
q & 0 &0 &0 \\
 0 & q-q^{-1} & 1& 0 \\
 0&1& 0 &0\\
 0&0&0&-q^{-1}\\
\end{bmatrix}, 
\hspace{.7cm}
R^{-1} = \begin{bmatrix}
q^{-1} & 0 &0 &0 \\
 0 & 0 & 1& 0 \\
 0&1&q^{-1} - q&0\\
 0&0&0&-q\\
\end{bmatrix}.$$

It is verified in \cite{Ka5} that the $R$ matrix (and its inverse $R^{-1}$) is a spin-preserving solution of the Yang-Baxter equation. 

Now notice that an oriented unknot has two states with labels $+$ and $-$, shown in Figure \ref{fig:alexcups}. Each state of the unknot is evaluated as $i^{-1}$ and $i$, respectively, that makes the total sum equals $0$. Accordingly, $i^{rot(\sigma)label(\sigma)}$ is assigned to a signed oriented circular state component $\sigma$.  Similarly, a trivial $(1,1)$ tangle has two states with labels $+$ and $-$ but its rotation number is counted to be $0$.  Therefore each of its states is assigned to $\frac{1}{2}$, making the sum equal to $1$.

\begin{figure}[H]
\centering
\includegraphics[width=.6\textwidth]{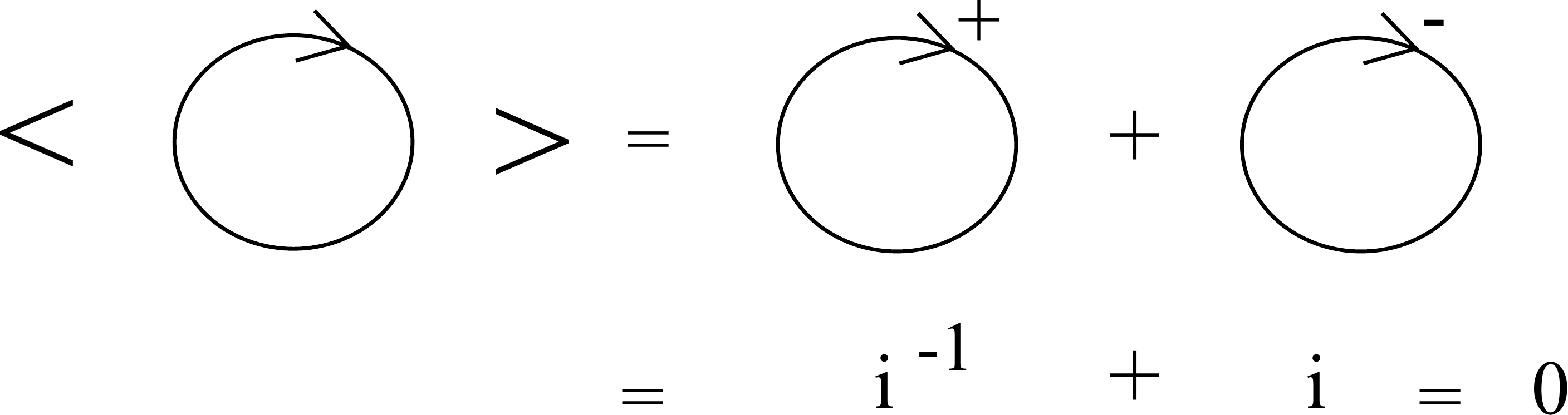}
\caption{The state sum evaluation of an oriented unknot.}
\label{fig:alexcups}
\end{figure}

It is then straightforward to see the matrices assigned to an oriented cup and a cap with a $+$, $-$ label, are the following matrices. See also Figure \ref{fig:alexorient} for an illustration.
  \begin{figure}[H]
\centering
\includegraphics[width=0.5\textwidth]{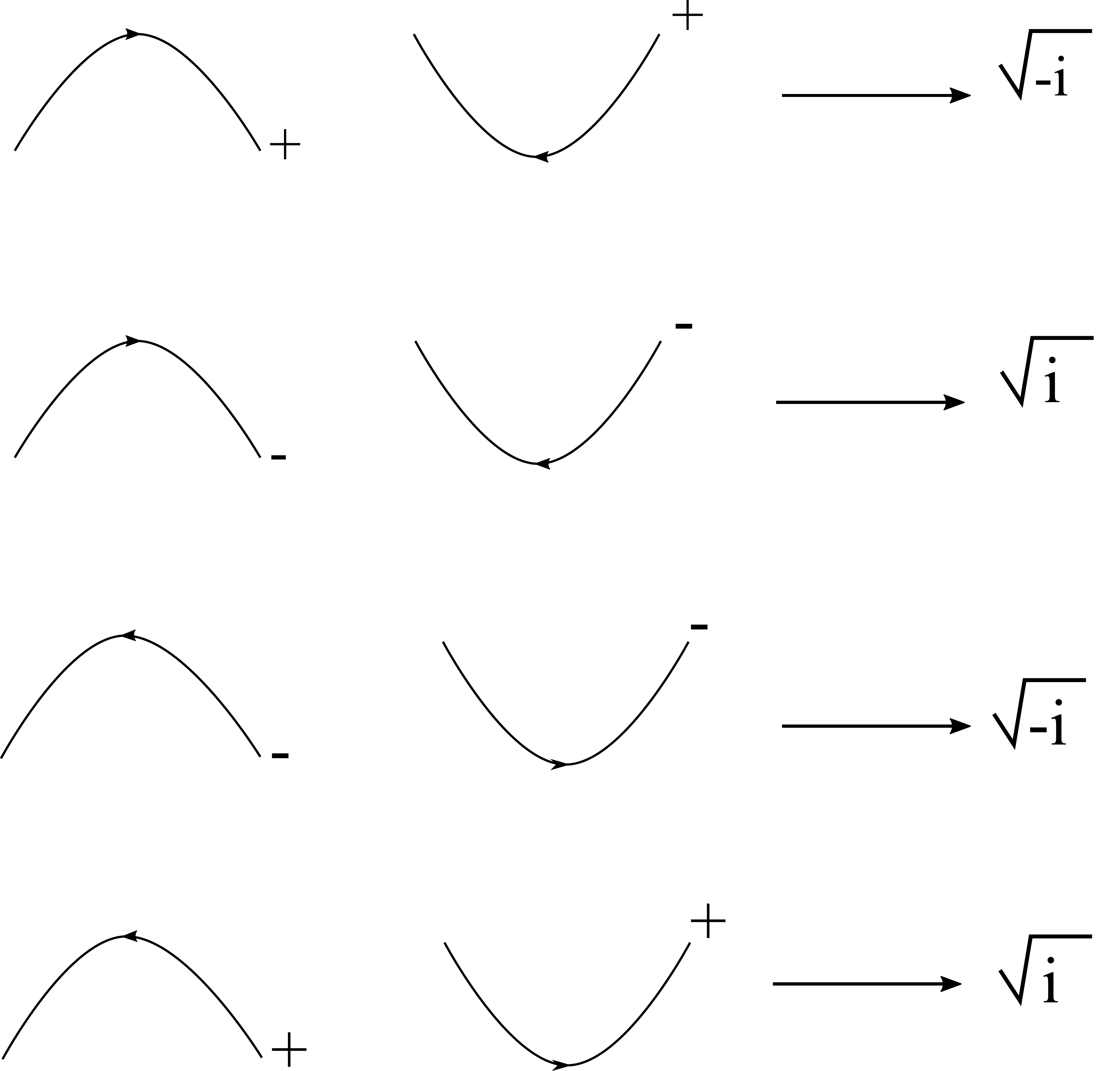}
\caption{Cups and caps assignment}
\label{fig:alexorient}
\end{figure}

$$ \overrightarrow{M_{ab}}
 = \underleftarrow{M^{ab}} = \begin{bmatrix}
\sqrt{-i} & 0 \\
 0 & \sqrt{i}  \\
\end{bmatrix},$$
$$\overleftarrow{M_{ab}} = \underrightarrow{M^{ab}} = \begin{bmatrix}
\sqrt{i} & 0 \\
 0 & \sqrt{-i}  \\
\end{bmatrix}.$$

The closed state sum formula for the Alexander polynomial of an oriented $(1,1)$-tangle is given as follows.

\begin{definition} \normalfont \cite{Ka5}
Let $K$ be an oriented tangle diagram representing an oriented $(1,1)$- tangle. The state sum polynomial of $K$, $\nabla(K)$ is defined as,
 $$\nabla_{K} = (iq^{-1})^{-rot(K)} \sum _{\sigma} <K | \sigma>\frac{1}{2}( i ^{||\sigma||}),$$
 where $rot(K)$ is the rotation number of $K$, that is the total sum of the half of the signs of the oriented cups and caps forming $K$,  $< K | \sigma>$ is the product of the coefficients at the smoothing sites of the state $\sigma$,  and the norm of $\sigma$, $||\sigma||$ is the sum of indices on circular closed components of the state $\sigma$, each multiplied by the rotation number of the corresponding state component.
 \end{definition}

\begin{proposition}\cite{Ka5}
The given state sum model satisfies the Conway skein indentity.
\end{proposition}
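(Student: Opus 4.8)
The plan is to reduce the skein identity to a single linear-algebraic identity between the matrices $R$ and $R^{-1}$, namely $R - R^{-1} = (q - q^{-1})\,I^{\otimes 2}$, and then to exploit that the state sum $\nabla$ is multilinear in the vertex weight attached to any one crossing while all its remaining data is untouched when we pass among $L_+$, $L_-$ and $L_0$.

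First I would fix the local picture. The diagrams $L_+, L_-, L_0$ of Figure \ref{fig:skein} coincide outside a small disk $D$, inside which they carry a positive crossing, a negative crossing, and the oriented smoothing, respectively. If that crossing is mixed-oriented I would first convert it inside $D$ to a parallel (both strands directed upward) crossing by the move of Figure \ref{fig:mixed}; invariance of $\nabla$ under this conversion is precisely what Equation \ref{eqn:one} and its positive-crossing analogue are imposed to guarantee, so no generality is lost. With the crossing in parallel form the content of $D$ is two upward strands meeting at a crossing (for $L_\pm$) or two disjoint upward strands (for $L_0$); in particular $D$ contains no cup, cap, or extremum in any of the three diagrams, so $rot(L_+) = rot(L_-) = rot(L_0)$ and the prefactor $(iq^{-1})^{-rot}$ is common to the three state sums.

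Next I would carry out the calculation. In the difference $\nabla_{L_+} - \nabla_{L_-}$ every ingredient of the state sum agrees term by term except the matrix attached to the crossing in $D$, which is $R$ for $L_+$ and $R^{-1}$ for $L_-$. Distributing the product $\langle K\mid\sigma\rangle$ over the summation on the four indices incident to that crossing gives
$$\nabla_{L_+} - \nabla_{L_-} = (iq^{-1})^{-rot}\sum_{a,b,c,d}\bigl(R - R^{-1}\bigr)^{ab}_{cd}\,W(a,b,c,d),$$
where $W(a,b,c,d)$ collects all remaining state data — the weights of the other crossings, of the oriented cups and caps, and the factor $\tfrac12\, i^{\|\sigma\|}$ — with the four ends entering $D$ labeled $a,b,c,d$. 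A direct entrywise comparison of the two given $4\times 4$ matrices shows $(R - R^{-1})^{ab}_{cd} = (q - q^{-1})\,\delta^a_c\delta^b_d$, i.e. $R - R^{-1} = (q - q^{-1})\,I^{\otimes 2}$. Substituting $\delta^a_c\delta^b_d$ inside $D$ replaces the crossing by two parallel upward strands, which is exactly the oriented smoothing defining $L_0$; the resulting configurations are precisely the states of $L_0$, with the same closed components, the same rotation numbers of those components, and hence the same factor $i^{\|\sigma\|}$. Therefore the right-hand side equals $(q - q^{-1})\,\nabla_{L_0}$, and $\nabla$ satisfies the Conway skein identity with $z = q - q^{-1}$.

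The hard part is bookkeeping rather than insight: one must confirm that the only thing differing among the three diagrams is that single $4\times 4$ vertex matrix — that converting a mixed crossing to parallel form before expanding leaves $\nabla$ unchanged (Equation \ref{eqn:one}), and that the closed state components, their rotation numbers, the factor $i^{\|\sigma\|}$, and the global prefactor $(iq^{-1})^{-rot}$ genuinely match across $L_+$, $L_-$ and $L_0$ so that they factor out of the linear combination. Once these routine checks are in place the identity $R - R^{-1} = (q - q^{-1})\,I^{\otimes 2}$ does all the work; this is the verification carried out in \cite{Ka5}.
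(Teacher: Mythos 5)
Your argument is correct and is essentially the paper's own proof: the paper verifies the skein identity diagrammatically in Figure \ref{fig:identity}, which is exactly the pictorial rendering of the entrywise identity $R - R^{-1} = (q-q^{-1})\,I^{\otimes 2}$ applied locally at the distinguished crossing with all remaining state data (closed components, rotation numbers, the factor $i^{\|\sigma\|}$, and the global prefactor) factored out by multilinearity. Your additional remarks on converting a mixed-oriented crossing to parallel form and on the equality of the rotation numbers of $L_+$, $L_-$, $L_0$ only make explicit bookkeeping that the paper leaves implicit.
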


Figure \ref{fig:identity} verifies that the state model satisfies the Conway skein identity of the Alexander  polynomial. 
\begin{figure}[H]
\centering
\includegraphics[width=1\textwidth]{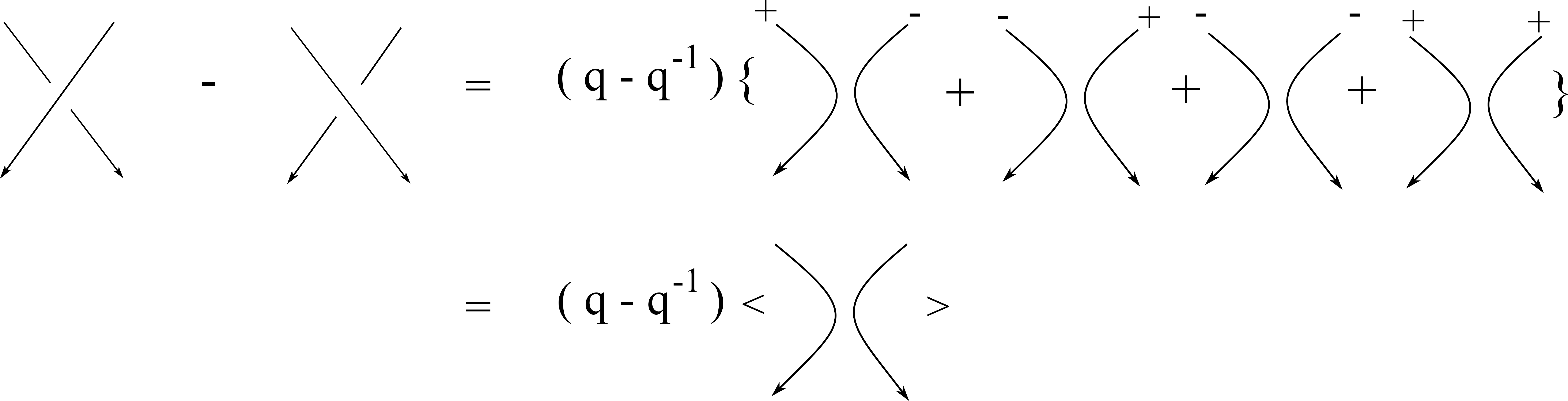}
\caption{The skein relation of the state model}
\label{fig:identity}
\end{figure}

\begin{proposition}\cite{Ka5}
The state sum polynomial becomes an ambient isotopy invariant of a classical knot/link $K$ when it is normalized with $(iq^{-1})^{-rot(K)}$. The trace of $(iq^{-1})^{-rot(K)}\nabla_{K}$ is the Alexander polynomial of $K$. 
\end{proposition}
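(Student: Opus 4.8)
\textbf{Proof strategy (following \cite{Ka5}).} The statement has two halves, and I would prove them in turn: first, that multiplying the raw state sum by $(iq^{-1})^{-rot(K)}$ turns a regular isotopy invariant into an ambient isotopy invariant; second, that the trace of the resulting operator is the Alexander--Conway polynomial. For the first half, set $W(K)=\sum_{\sigma}<K|\sigma>\frac{1}{2}(i^{||\sigma||})$. This sum is already invariant under all the Morse (equivalently, regular) isotopy moves for the underlying $(1,1)$-tangle: the type~III move is the Yang--Baxter equation for $R$ and $R^{-1}$, verified in \cite{Ka5}; the parallel type~II move is $\overline{R}=R^{-1}$; the anti-parallel type~II move and the conversion of a mixed crossing into a parallel one are the cup/cap identities (the analogues of Equations \ref{eqn:two} and \ref{eqn:one}), which one checks by substituting the explicit $R$, $R^{-1}$ and the diagonal cup/cap matrices with entries $\sqrt{\pm i}$, together with the component weight rule $i^{rot(\sigma)\,label(\sigma)}$; and the min-max move holds because an oriented cup and the oppositely oriented cap are mutually inverse. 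One also records the two normalizations built into $W$: a stand-alone trivial circle contributes $\frac{1}{2}(i^{+1}+i^{-1})=0$ (the split property, which is precisely why one works with $(1,1)$-tangles), while the trivial $(1,1)$-tangle contributes $1$ (Figure \ref{fig:alexcups}).

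It then remains to analyze the first Reidemeister move, and this is where the normalization is used. I would compute the ``curl operator'' $C\colon V\to V$ obtained by joining two legs of $R$ (or of $R^{-1}$) with an appropriately oriented cup and cap. Since $R$ is spin-preserving and the cups and caps are diagonal, $C$ is diagonal; carrying out the two-by-two multiplication, the point is that in a genuine curl the cup and cap carry the \emph{same} orientation type, so that the products of their diagonal entries satisfy $m_0n_0=-m_1n_1$, and this sign is exactly what makes the $q-q^{-1}$ defect term in $R$ cancel, leaving $C=(iq^{-1})^{\mp1}\,\mathrm{Id}$, with the exponent sign opposite to the $\pm1$ change that the curl induces in $rot(K)$. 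Hence inserting such a curl multiplies $W(K)$ by $(iq^{-1})^{\mp1}$ while changing the normalizing factor $(iq^{-1})^{-rot(K)}$ by $(iq^{-1})^{\pm1}$, so $\nabla_K$ is unchanged. Thus $\nabla_K$ is invariant under the first Reidemeister move, hence under ambient isotopy. I expect this curl computation --- in particular the bookkeeping matching all four types of curls with the corresponding change in rotation number --- to be the main technical point.

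For the second half, present the knot or link $K$ as the trace closure of a $(1,1)$-tangle $T$ (say, the plat/trace closure of a braid), and let $\nabla_T\colon V\to V$ be its normalized state-sum operator. By spin-preservation and the fact that each spin eigenspace of $V$ is one-dimensional, $\nabla_T$ is diagonal; forming the trace with respect to the pairing supplied by the cups and caps --- that is, closing $T$ back up to $K$ --- yields a scalar $T(K)$ which, by the first half together with the independence of $\nabla_T$ of the point at which a strand is cut open (cyclicity of the trace), depends only on the ambient isotopy class of $K$. By the cited fact that the model obeys the Conway skein identity (Figure \ref{fig:identity}), $T$ satisfies $T(L_+)-T(L_-)=z\,T(L_0)$ with $z=q-q^{-1}$; moreover the $(1,1)$-tangle normalization forces $T=1$ on the unknot (Figure \ref{fig:alexcups}), while $T$ vanishes on split links, a fact that is automatically consistent with the skein relation. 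By the uniqueness of the Alexander--Conway polynomial under properties (1)--(3) \cite{Conway}, $T(K)$ coincides with $\nabla_K(z)$; that is, the trace of $(iq^{-1})^{-rot(K)}\nabla_K$ is the Alexander polynomial of $K$.
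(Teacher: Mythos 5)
The paper itself gives no proof of this proposition: it is quoted from \cite{Ka5}, and the only supporting material displayed is the skein-identity verification of Figure \ref{fig:identity}. Your outline reconstructs the standard argument of \cite{Ka5}, and its first half is correct: regular-isotopy invariance follows from the Yang--Baxter equation, $\overline{R}=R^{-1}$, and the cup/cap identities, and your curl computation is the right one --- the relation $m_0n_0=-m_1n_1$ between the diagonal cup/cap entries is exactly what cancels the $q-q^{-1}$ defect entry of $R$ and makes the curl operator the scalar $(iq^{-1})^{\mp1}$, matching the change in the normalizing factor $(iq^{-1})^{-rot(K)}$.

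The gap is in your second half, in what you take the trace to be. You describe the scalar $T(K)$ as obtained by ``forming the trace with respect to the pairing supplied by the cups and caps --- that is, closing $T$ back up to $K$.'' That operation is the quantum trace: it is the partition function of the closed diagram, and by the very vanishing phenomenon you correctly invoked as the reason for passing to $(1,1)$-tangles, it is identically zero (for the trivial tangle it reproduces the unknot value $i+i^{-1}=0$ of Figure \ref{fig:alexcups}), so it cannot equal $1$ on the unknot or satisfy Conway's axioms. The trace intended in the proposition is the ordinary matrix trace of the diagonal $2\times2$ operator $\nabla_T$ --- the paper's knotoid version makes this explicit by taking half the trace of the matrix $\tilde\nabla_K$ --- and this is \emph{not} the same as closing the tangle with a cup and cap. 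Relatedly, ``cyclicity of the trace'' does not by itself give independence of the point at which $K$ is cut open: $\nabla_T$ is diagonal but need not be a scalar multiple of the identity (the paper's examples have distinct diagonal entries), so the invariance of the ordinary trace under changing the cut point requires its own (standard, but separate) argument. With the trace read correctly and that point supplied, the remainder of your second half --- the skein identity, the value $1$ on the trivial $(1,1)$-tangle, and the uniqueness theorem of \cite{Conway} --- does complete the proof.
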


\subsubsection{The Alexander polynomial for Morse knotoids}
The $R$, $R^{-1}$ and the $M$ matrices assigned to oriented cups and caps of the quantum state model given in the former section, can be utilized for defining the  Alexander polynomial  Morse knotoid diagrams. 
Let $K^{a}_{b}$ denote an oriented Morse knotoid diagram with its endpoints labeled with $a, b \in \{-, +\}$. 

As in the $(1,1)$-tangle case,  we obtain the states of an oriented Morse knotoid diagram $K^{a}_{b}$ by the crossing expansions of the model. Each state of $K^{a}_{b}$ contains exactly one open-ended component containing the endpoints of $K$ and a number of circular components. Each state component (open-ended or closed) is oriented and labeled with either $+$ or $-$, and is composed of a number of cups and caps. We will assume that each state component contributes to the Alexander polynomial as a product of the corresponding values in the oriented cups and caps matrices. See Figure \ref{fig:alexorient}, and note that  a single $+$ or $-$ means that both ends have the same labels. As a result, the value of an open-ended state component $\lambda$ is nontrivial only if  the endpoints admit the same labeling; either $+$ or $-$. Thus, the value of an open-ended state component $\lambda$ is equal to $i^ {label (\lambda)rot(\lambda)}$. 
\begin{figure}[H]
\centering
\includegraphics[width=0.35\textwidth]{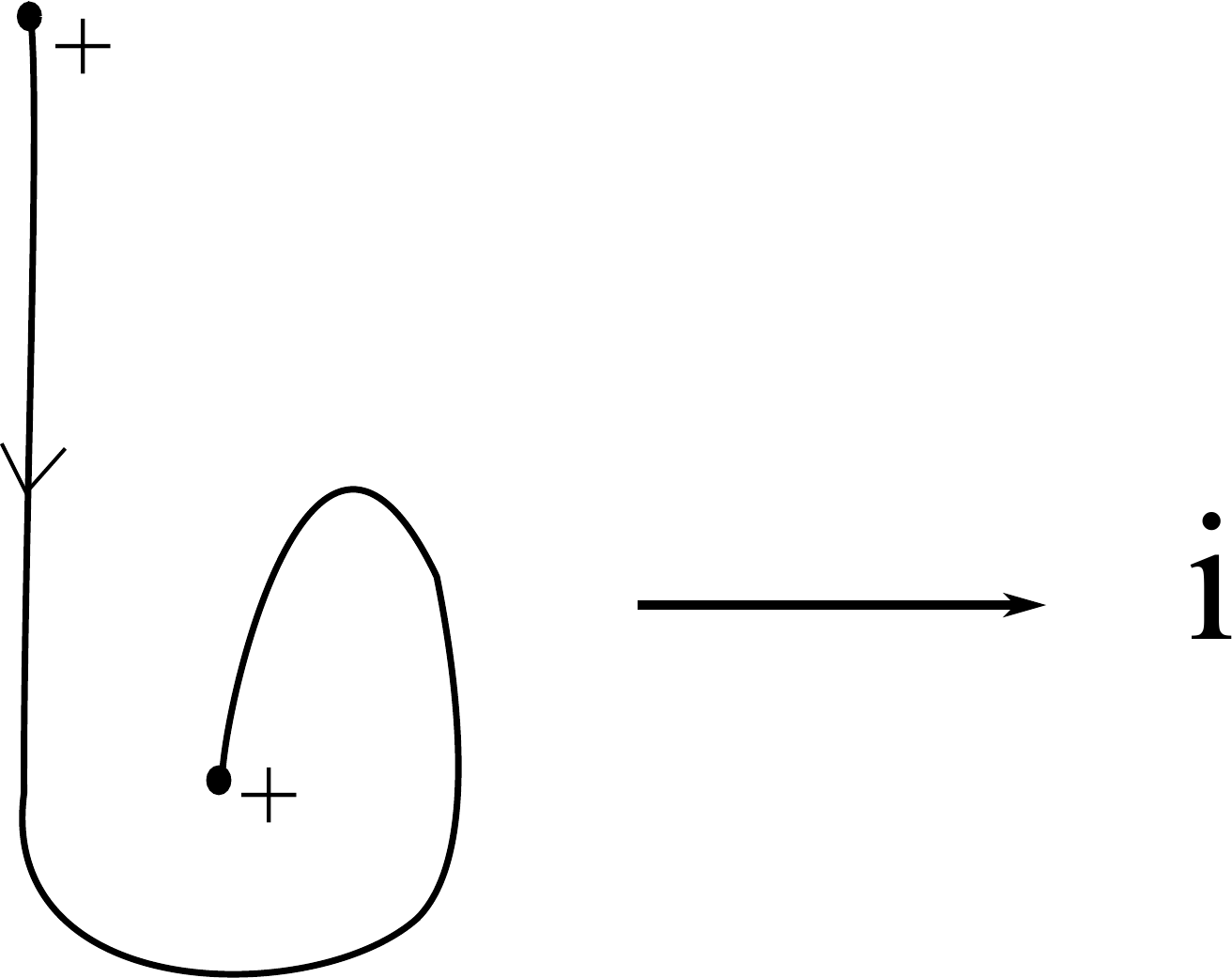}
\caption{The evaluation of an open-ended state component with $+$ label and rotation number $1$}
\label{fig:alexorient}
\end{figure}


\begin{definition}\normalfont
The state sum $\tilde{\nabla}$ of a Morse knotoid $K$ is defined as:
$$\tilde{\nabla}_
{K}= \sum _{\sigma} <K | \sigma> i ^{||\sigma||},$$
where  $rot(K)$, $< K| \sigma>$ are defined similarly as above, and $|| \sigma ||$ is defined to be the sum of labels of loop components and the long segment component in the state $\sigma$ multiplied by the rotation number assigned to the components. 
\end{definition}


\begin{proposition}
The matrix determined by the state sum $\tilde{\nabla}$ is a Morse isotopy invariant.
\end{proposition}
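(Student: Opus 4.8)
The plan is to reduce the invariance of the $2\times 2$ matrix $[\tilde{\nabla}_{K^{a}_{b}}]$ to a finite list of matrix identities, one for each generating oriented Morse isotopy move, and then to verify each of them for the specific matrices $R$, $R^{-1}$ and the four oriented cup/cap matrices introduced above. By Theorem \ref{thm:relations} together with the general schema of Section \ref{sec:orientedquantum}, $[\tilde{\nabla}_{K^{a}_{b}}]$ is a Morse knotoid invariant once we know: (i) $R$ and $R^{-1}$ are mutually inverse; (ii) $R$, hence also $R^{-1}$, solves the Yang--Baxter equation; (iii) $R$ is spin-preserving; (iv) a right-pointed (resp.\ left-pointed) oriented cup composed with the cap of the same pointing is the identity; (v) the anti-parallel Reidemeister~II identity \eqref{eqn:two} holds; and (vi) the mixed-to-parallel conversion identity \eqref{eqn:one} holds. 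Each of the oriented Morse moves --- min--max, slide, vertical Reidemeister~II in its parallel and anti-parallel forms, the Reidemeister~III braiding move, and the vertical/horizontal shiftings of arcs and endpoints --- corresponds to one of these identities (or a mirror/variation of it), exactly as the bracket partition function does in Section \ref{sec:bracpartition}.

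First I would dispatch (i)--(iii) by appeal to \cite{Ka5}: for the displayed $R$ and $R^{-1}$ these are precisely the facts established there, so all braiding-type moves are respected automatically, just as in the classical $(1,1)$-tangle model. Next I would verify (iv)--(vi) by direct computation. All four oriented cup/cap matrices are diagonal, equal to $\mathrm{diag}(\sqrt{-i},\sqrt{i})$ or $\mathrm{diag}(\sqrt{i},\sqrt{-i})$, so a cup--cap pair of matching pointing composes to $\mathrm{diag}(\sqrt{-i}\sqrt{i},\,\sqrt{i}\sqrt{-i}) = I$ for the consistent choice of branches, giving (iv); identities \eqref{eqn:two} and \eqref{eqn:one} then become finite checks in which spin-preservation forces the two sides to coincide. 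This settles the min--max move (via (iv)), the anti-parallel Reidemeister~II move (via (v)), and the slide moves together with any conversion of a mixed crossing to a parallel one (via (vi)); the remaining planar shifting moves reduce, as in Section \ref{sec:Cat}, to the trivial tensor identities $A\otimes B = AB = B\otimes A$ and $(I^{\otimes n}\otimes B)(A\otimes I^{\otimes i}) = A\otimes B$, valid for arbitrary matrices.

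The point requiring genuine care is the weight factor $i^{||\sigma||}$: the values $\sqrt{\pm i}$ on cups and caps are not mere combinatorial bookkeeping, since the product of cup/cap matrices along a state component already is a power of $i$. So I would show that this power equals exactly $i^{\,label(\cdot)\,rot(\cdot)}$ and that the Morse moves preserve it. For a signed circular component this is the computation recorded in \cite{Ka5}. For the open-ended component $\lambda$ I would argue as in Propositions \ref{prop:open} and \ref{prop:open2}, but with $\mathrm{diag}(\sqrt{-i},\sqrt{i})$ in place of $M$: by Theorem \ref{thm:rota}, $\lambda$ is Morse isotopic to the in-going spiral with the same rotation number and the same endpoint tangent directions, so the product of cup/cap matrices along $\lambda$ is the diagonal matrix (anti-diagonal when the endpoints point up and down) whose nonzero entries are $i^{\,label\,\cdot\,rot(\lambda)}$ and which vanishes unless the two endpoint labels agree. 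Moreover a min--max pair carries opposite signs, so it leaves $rot(\lambda)$, hence $i^{||\sigma||}$, unchanged while contributing the identity matrix. Assembling these pieces, each oriented Morse move either fixes every state or permutes states while preserving the product $<K | \sigma>\, i^{||\sigma||}$; summing over states then gives invariance of $[\tilde{\nabla}_{K^{a}_{b}}]$.

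I expect the last paragraph to be the real obstacle: untangling the double role of the cup/cap matrices --- carrying simultaneously the module-map data and the $i$-power rotation weight --- and confirming that converting a mixed crossing to a parallel crossing, which introduces a curl and thereby shifts the curve's rotation number, is compensated exactly by \eqref{eqn:one}. Spin-preservation of $R$ is precisely what makes this compensation work, and once it is in place the rest of the verification is routine.
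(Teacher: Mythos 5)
Your proposal is correct and, at bottom, follows the same strategy as the paper: check the state sum against each generating oriented Morse move, outsourcing the crossing identities (invertibility and the Yang--Baxter equation for the displayed $R$, $R^{-1}$) to \cite{Ka5}. The difference is one of completeness: the paper's proof is a single sentence deferring the entire verification to the classical case in \cite{Ka5}, whereas you isolate and actually address the part that is genuinely new for knotoids and is \emph{not} covered by the classical argument --- namely that the open-ended state component $\lambda$ contributes $i^{\,label(\lambda)\,rot(\lambda)}$ coherently, that this is controlled by the spiral classification of Theorem \ref{thm:rota} in the style of Propositions \ref{prop:open} and \ref{prop:open2}, and that min--max cancellation contributes the identity matrix because a cancelling pair carries opposite signs and $\sqrt{i}\sqrt{-i}=1$. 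You also correctly identify the mixed-to-parallel conversion identity \eqref{eqn:one} and spin-preservation as the mechanism compensating the curl's shift of the rotation number; the paper leaves all of this implicit. One small slip: for this model the cup/cap matrices are diagonal (supported on equal endpoint labels), so the matrix of an open component with up--down endpoints is again diagonal, not anti-diagonal as in Proposition \ref{prop:open2}, where the bracket matrix $M$ is anti-diagonal; this does not affect the argument, since the only fact you use is that the entry is the appropriate power of $i$ and vanishes unless the endpoint labels agree. Your version buys a self-contained, checkable proof of the knotoid case; the paper's buys brevity at the cost of leaving the open-component bookkeeping to the reader.
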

\begin{proof}
Showing that the state sum model is invariant under the oriented type II and type III Morse isotopy moves goes similarly with showing that the polynomial is invariant under the classical Reidemeister moves of classical knots and links. The reader is referred to \cite{Ka5} for illustrations of invariance for the classical case.
\end{proof}

Let us examine the polynomial under the Reidemeister type I knotoid isotopy moves. As shown in the following figures, the state sum polynomial changes by either $q^{-1}i$ or $-qi$ according to the rotation of the curl created by the type I moves. Therefore the state sum multiplied by the factor $iq^{-rot(K)}$ becomes invariant under all the oriented knotoid isotopy moves. Since any knotoid in $\mathbb{R}^2$ can be represented by its standard Morse diagram, the normalized polynomial is an invariant of planar knotoids.

\begin{figure}[H]
\centering
\includegraphics[width=1\textwidth]{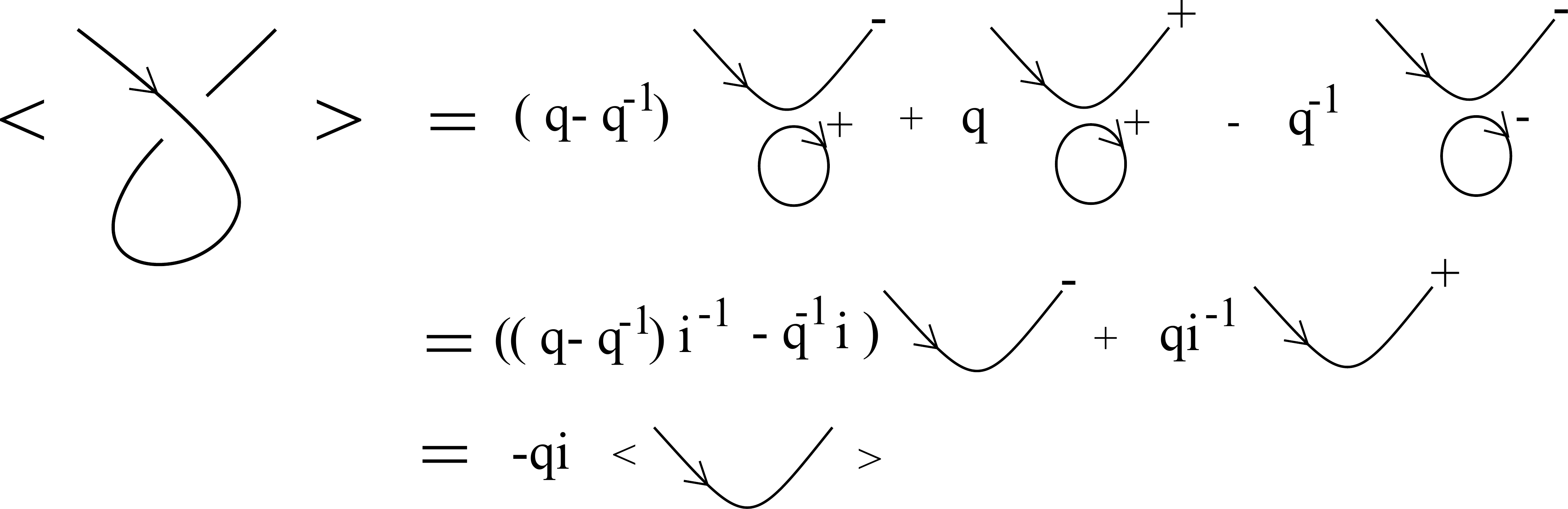}
\caption{The change of $\tilde{\nabla}$ under an oriented type I move}
\label{}
\end{figure}
 As done in Figure \ref{fig:identity}, we can verify the normalization of this state sum polynomial satisfies the Conway skein identity. Since any knotoid $\kappa$ in $\mathbb{R}^2$ has a unique standard Morse knotoid representation $K$, we can define the state sum polynomial for $\kappa$, so that  $\tilde{\nabla}_{\kappa}=  \tilde{\nabla}_{K}$. 
\begin{corollary}
Let $K$ be a knotoid in $\mathbb{R}^2$. The normalization of $\tilde{\nabla}(K)$, 
$\nabla_{K} = (iq)^{-rot(K)} \tilde{\nabla}_{K}$ and its trace, $tr(\nabla_{K})$ are both invariants of planar knotoids.  
\end{corollary}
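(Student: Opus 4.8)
The plan is to reduce the statement to two facts already in hand: that both $\tilde{\nabla}_K$ and $rot(K)$ are Morse isotopy invariants, and that a Reidemeister~I move multiplies $\tilde{\nabla}_K$ by the unit $q^{-1}i$ or $-qi$ according to the sense of the curl it introduces. First I would record the reduction. By the Proposition relating regular isotopy of knotoid diagrams to Morse isotopy of their standard Morse representatives, together with the fact that every planar knotoid admits a standard Morse diagram (unique up to Morse isotopy), and since planar knotoid isotopy is generated by regular isotopy and the Reidemeister~I move, it suffices to show that $\nabla_K$ is unchanged under (i) the Morse isotopy moves and (ii) a Reidemeister~I move performed on a standard Morse diagram. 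For (i): by Theorem~\ref{thm:rotationn} the rotation number is a Morse isotopy invariant, and by the preceding Proposition the matrix $[\tilde{\nabla}_{K^a_b}]$ is a Morse isotopy invariant; since the normalizing factor $(iq)^{-rot(K)}$ depends on $K$ only through $rot(K)$, the normalized matrix $\nabla_K = (iq)^{-rot(K)} \tilde{\nabla}_K$ is a Morse isotopy invariant. In particular $\nabla_\kappa := \nabla_K$ is well defined for a planar knotoid $\kappa$.

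For (ii) the content is a curl-by-curl computation. A Reidemeister~I move inserts (or deletes) a curl; the curl carries two new critical points whose signs add to $+1$ or $-1$, so it changes $rot(K)$ by $\pm 1$, while by the computation displayed just above (the figure showing the change of $\tilde{\nabla}$ under an oriented type~I move) it multiplies $\tilde{\nabla}_K$ by $q^{-1}i$ or $-qi$, the two cases matching the two cases for the change of $rot(K)$. Hence the change in the normalizing factor $(iq)^{-rot(K)}$ induced by the shift of the rotation number exactly absorbs the unit picked up by $\tilde{\nabla}_K$, so the two changes cancel and $\nabla_K$ is unchanged. Here I would use the same exponent convention as in the $(1,1)$-tangle model of the previous subsection; matching these conventions and doing the bookkeeping for both curl types is the only computation the proof requires.

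Finally, for the trace: none of the Morse isotopy moves and no Reidemeister~I move alters the labels $a,b$ at the endpoints of $K$ — an endpoint may neither be pushed across a strand nor rotated — so by the two points above each entry $\nabla_{K^a_b}$ of the matrix is individually a planar knotoid invariant; hence the $2\times 2$ matrix $[\nabla_{K^a_b}]$ is an invariant, and so is its trace $\sum_a \nabla_{K^a_a}$. The main obstacle is the bookkeeping in step (ii): one must verify, for each of the two curl types, that the multiplicative change of the state sum $\tilde{\nabla}_K$ is precisely cancelled by the change of the normalizing factor coming from the shift of the rotation number — everything else is formal once Morse invariance of $\tilde{\nabla}$ and of $rot$ is granted.
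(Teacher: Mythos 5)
Your proposal is correct and follows essentially the same route as the paper: the corollary is derived there from the Morse isotopy invariance of $\tilde{\nabla}_K$ and of $rot(K)$, together with the observation that a Reidemeister~I curl multiplies $\tilde{\nabla}_K$ by $q^{-1}i$ or $-qi$ while shifting $rot(K)$ by $\pm 1$, so the normalizing factor absorbs the change. Your added care about matching the exponent conventions in the two curl cases is exactly the bookkeeping the paper leaves implicit.
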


\begin{definition}\normalfont
Let $K$ be a knotoid in $\mathbb{R}^2$. The half of the trace of the matrix $\nabla_{K} = iq^{-rot(K)} \tilde{\nabla}_{K}$ is the Alexander polynomial of $K$. 

\end{definition}
\begin{example}\label{ex:multi}\normalfont
The multi-knotoid diagram given in Figure \ref{fig:compirreducible} admits no unknotting sequence. This obstacles the computation of the Alexander polynomial by the skein relation.  Nevertheless, the quantum state sum model provides us a way to compute its Alexander polynomial.  
We see from the figure that the knotoid component  of $K$ contributes to the rotation number trivially and thus the total rotation number of $K$ is $-1$ contributed by the knot component of $K$. The first two states have rotation number $1$, and the last two states have rotation number $-1$. 

The the state sum matrix is given as follows.

$$\tilde{\nabla}_K = \begin{bmatrix}
(q-1)i  &0 \\
 0&(q^{-1}+1)i \\
\end{bmatrix}.$$

By normalizing the matrix $\tilde{\nabla}_K$ and taking the half of its trace we find the Alexander polynomial of $K$ as $\frac{1}{2}( iq (qi - q^{-1} i ^{-1}+ i^{-1} + i) )= \frac{1}{2}(-q^2 - 1)$. 
\end{example}\normalfont
\begin{figure}[H]
\centering
\includegraphics[width=1\textwidth]{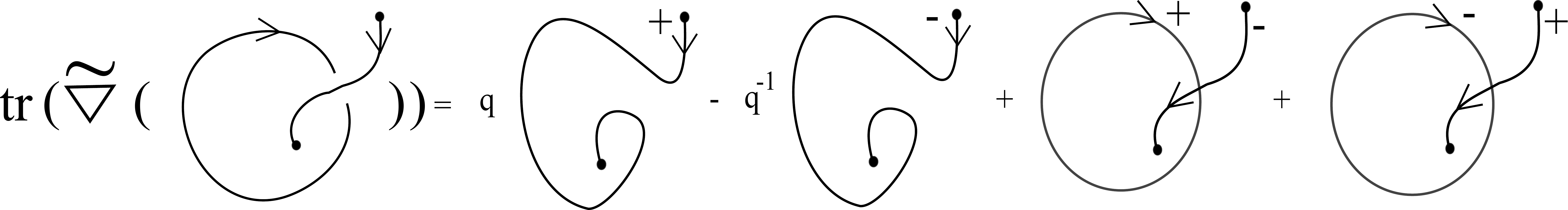}
\caption{The Alexander polynomial computation of a multi-knotoid}
\label{fig:compirreducible}
\end{figure}

\begin{example}\normalfont
In Figure \ref{fig:compirreduciblee}, we see a knotoid $K$ and its states. The states given on the top line are the states of $K$  whose open ended component is labeled $+,+$ at its endpoints, and the two states on the bottom line are the states of $K$  whose open-ended components are labeled with $-,-$ at its endpoints. Since the rotation number of its components is $-1$, the evaluation of the states on the top is given as $q^2 i^{-1}$ and $(q-q^{-1}) i$, respectively and the evaluation of the states on the bottom is given as $-q^{-2}i$ and $(q^{-1}-q) i^{-1}$, respectively. The state sum matrix is given as follows.

$$\tilde{\nabla}_K = \begin{bmatrix}
(-q^2+(q-q^{-1}))i  &0 \\
 0&( -q^{-2}+(q-q^{-1}))i \\
\end{bmatrix}.$$
The rotation number of $K$ is $-1$. Then, by normalizing $\tilde{\nabla}_K$ by $iq$, we find the Alexander polynomial of $K$ is $\frac{1}{2}(q^3+q^{-1} -2q^{2}+2)$. 

Notice that we can also apply the Conway skein identity to compute the Alexander polynomial of $K$ since we now know the Alexander polynomial of the multi-knotoid given in Example \ref{ex:multi}.

\end{example}
\begin{figure}[H]
\centering
\includegraphics[width=.5\textwidth]{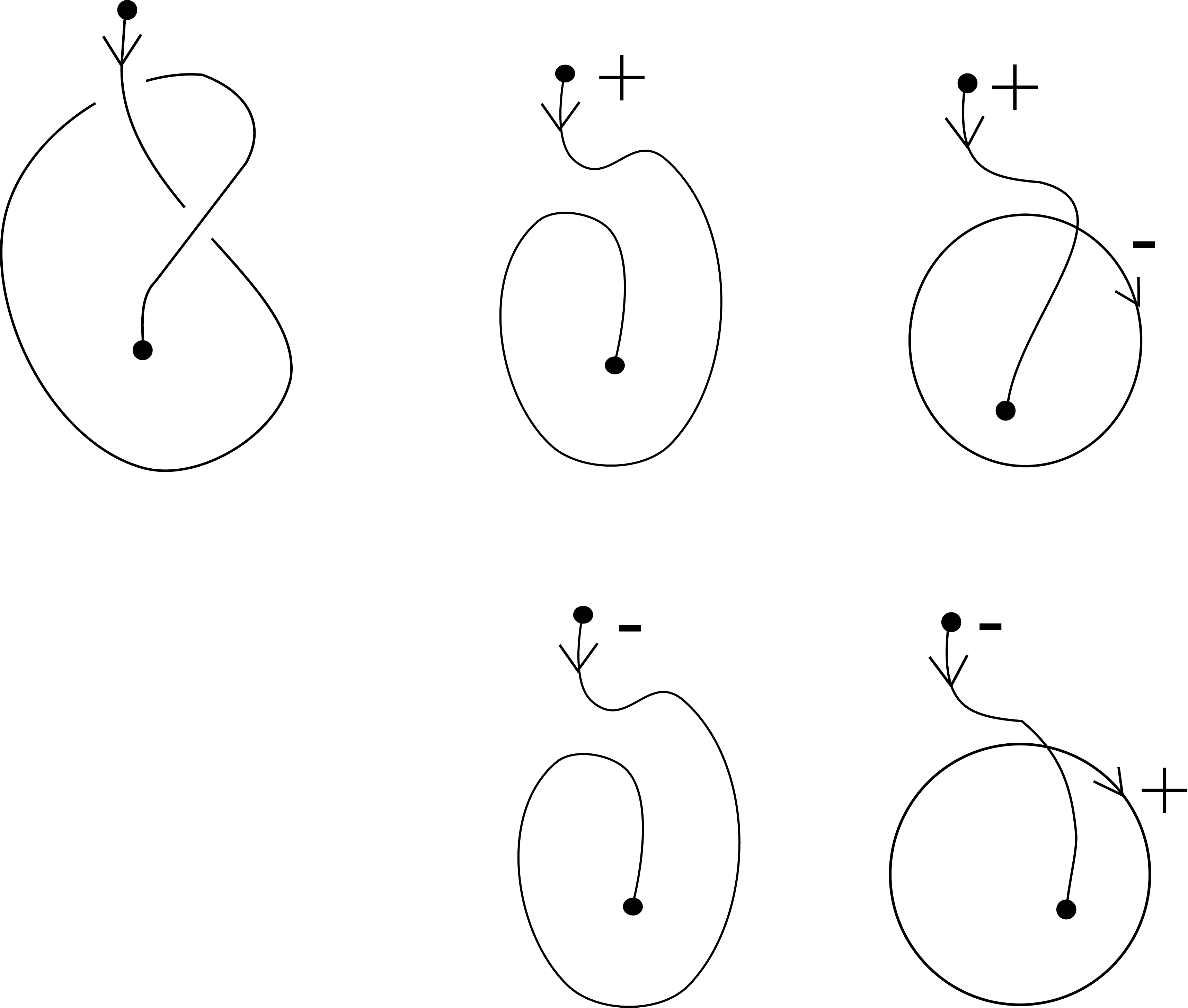}
\caption{A knotoid and its states.}
\label{fig:compirreduciblee}
\end{figure}



\subsection{A generalization of the Alexander polynomial}\normalfont

A generalization of the Alexander polynomial, namely the Sawollek polynomial was defined for virtual knots and links \cite{KaRad,Saw} as the determinant of the $2 \times 2$ matrix representing the crossing relations of a given Alexander biquandle coloring of a virtual knot/link diagram. 

A \textit{biquandle} $X$  is a set endowed with four binary operations satisfying a number of axioms that are motivated by the Reidemeister moves when the elements of $X$ is considered to be associated to the edges of a knot or link diagram (or a knotoid/multi-knotoid diagram). Satisfying the oriented Reidemeister III moves, a biquandle can be considered as a solution to the Yang-Baxter equation.  In Figure \ref{fig:alexbiqrel}, we present the four operations of a specific biquandle called the \textit{Alexander biquandle}, defined at a positive and a negative crossing, respectively. The reader is referred to \cite{FK, NG} for more on biquandles and the induced invariants of classical, virtual knots and knotoids. 

\begin{figure}[H]
\centering
\includegraphics[width=.75\textwidth]{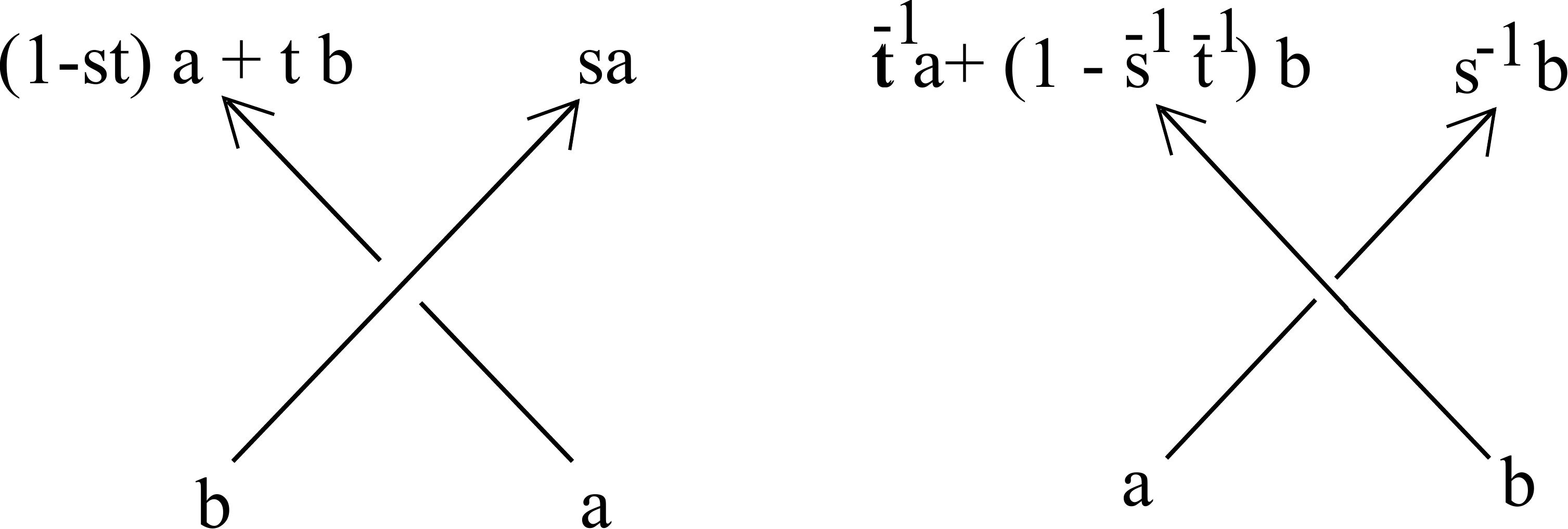}
\caption{The Alexander biquandle relations at crossings}
\label{fig:alexbiqrel}
\end{figure}

 We now adapt the quantum state sum model given in \cite{KaRad} to construct the Sawollek polynomial for Morse knotoids. This model is based on the \textit{generalized Burau matrix} $B$ that represents the \textit{Alexander biquandle} operation rule at a positive crossing. Precisely, we consider a positive crossing operation as a linear transformation $T: V \rightarrow V$ on a two dimensional module $V$ with basis $\{e_1,e_2\}$ defined by $T(e_1)= (1-st) e_1 +t e_2$, $T(e_2) = se_1$.  Thus the matrix $B$ is given as follows.
 $$B=\begin{bmatrix}
1-st & s \\
 t & 0 \\
\end{bmatrix}$$
It is clear that the inverse of $B$ is induced by the negative crossing operations and is given as follows.
 $$B^{-1}=\begin{bmatrix}
0 & t^{-1} \\
s^{-1} & 1-s^{-1}t^{-1}  \\
\end{bmatrix}.$$
We extend the linear transformation $T$ to the exterior algebra of $V$, $\Lambda^{*} V$ with basis $\{1, e_1, e_2, e_1\wedge e_2\}$. The extension $T^{*} : \Lambda^{*} V \rightarrow \Lambda^{*} V$ is determined by the rules,
$$T^*(1) = 1,$$
$$T^*(e_1) = (1-st)e_1 + te_2,$$
$$T^*(e_2) = se_1,$$
$$T^*(e_1 \wedge e_2) = Det(B) e_1 \wedge e_2 = -st e_1 \wedge e_2.$$
Thus the linear transformation  $T^*$ and its inverse $T^*{-1}$ are represented by the following matrices. The matrices $R, R^{-1}$ are solutions of the Yang-Baxter equation \cite{KaRad}.
 $$R=\begin{bmatrix}
1& 0&0 &0 \\
0 & 1-st &s&0  \\
0&t&0&0\\
0&0&0&-st\\
\end{bmatrix},
\hspace{.6cm}
R^{-1}=\begin{bmatrix}
1& 0&0 &0 \\
0 & 0 &t^{-1}&0  \\
0&s^{-1}&1-s^{-1}t^{-1}&0\\
0&0&0&-s^{-1}t^{-1}\\
\end{bmatrix}.$$

The matrices $R$ and $R^{-1}$ induce a state sum model for the generalized Alexander polynomial of oriented Morse knotoids in such a way that the entries of the matrix $R$ contribute to the state expansion at a positive crossing as local vertex weights, and the entries of the matrix $R^{-1}$ contribute to the state expansion at a negative crossing as local vertex weights, as shown in Figure \ref{fig:genexpansion}. It is convenient to do the following changes of variables, 

$$s \leftrightarrow \sigma^2 , \hspace{2cm} t \leftrightarrow \tau^{-2},$$
$$R \leftrightarrow \sigma^{-1}\tau R ,\hspace{2cm}  R^{-1} \leftrightarrow \sigma\tau^{-1} R^{-1},$$
$$z \leftrightarrow \sigma^{-1}\tau -  \sigma\tau^{-1}.$$
Thus we obtain the matrices:

 $$R=\begin{bmatrix}
\sigma^{-1}\tau& 0&0 &0 \\
0 & z&\sigma\tau^{-1}&0  \\
0&\sigma^{-1}\tau^{-1}&0&0\\
0&0&0&-\sigma\tau^{-1}\\
\end{bmatrix},
\hspace{.6cm}
R^{-1}=\begin{bmatrix}
\sigma \tau^{-1}& 0&0 &0 \\
0 & 0 &\sigma \tau &0  \\
0&\sigma^{-1}\tau^{-1}&-z&0\\
0&0&0&-\sigma^{-1}\tau\\
\end{bmatrix}.$$
 The expansion is applied at each crossing of an oriented Morse knotoid diagram. This results in collection of the  states, each containing a number of circular components and an open-ended segment component, each with an orientation and a $+$ or $-$ label on them.
\begin{figure}[H]
\centering
\includegraphics[scale=.25]{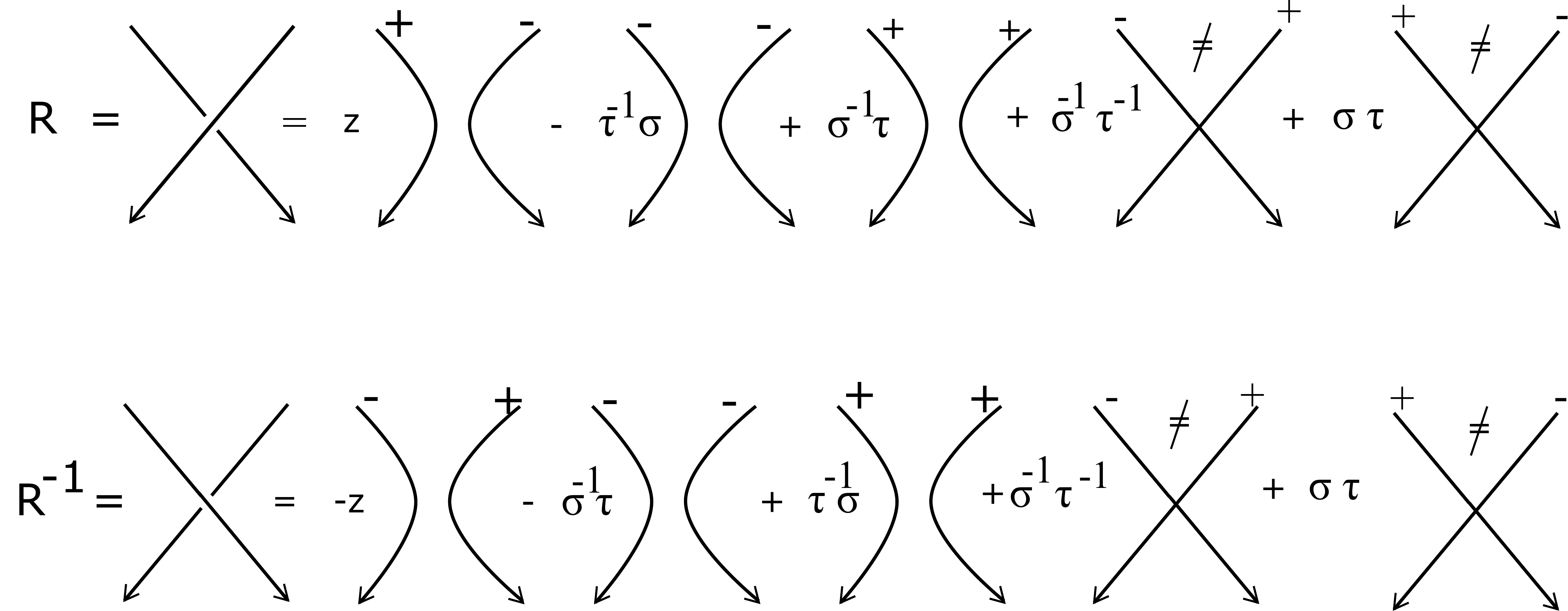}
\caption{}
\label{fig:genexpansion}
\end{figure}
Plus or minus labeled cups and caps of the state components of a Morse knotoid diagram are evaluated the same with the labeled cups and caps of the Alexander polynomial state model, shown in Figure \ref{fig:alexorient}. That is, each state component $s$ is evaluated as $i^ {rot(s)label(s)}$. 
\begin{definition}\normalfont
We define the state sum polynomial of a Morse knotoid diagram $K$ corresponding to this state expansion as follows:\\
$$Z(K)(\sigma, \tau)= \frac{1}{2} \sum_{s} <K|s> i^{||s||},$$
where the summation runs over all states obtained by smoothing the crossings of $K$ and labeling the components with $+$ and $-$,  $<K | s>$ is the product of the local vertex weights in state $s$, and $||s||$ is the sum of the product of labels of the state components with their rotation number.
\end{definition}
Note that $Z(K)$ here denotes the state sum polynomials $Z(K^{+}_{+})$ and $Z(K^{-}_{-})$, where $K^{+}_{+}, K^{-}_{-}$ denote the Morse knotoid diagram $K$ with endpoints labeled $+,+$ and $-,-$, respectively. Thus $Z(K)$ yields a $2\times2$ diagonal matrix with entries these polynomials.

It is not hard to check the invariance of $Z(K)$ under the Morse isotopy type II and type III moves, see \cite{Kauf}  where the verification is done for virtual knots and links. Therefore, the matrix determined by $Z(K)$ is a Morse isotopy invariant.
\begin{proposition}
The  polynomial $Z(K)$ is a $2$-variable generalization of the Alexander polynomial of Morse knotoids.
\end{proposition}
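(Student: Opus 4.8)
The plan is to prove the proposition in three steps: first record that $Z(K)$ is a well-defined two-variable Morse-knotoid invariant, then produce a one-variable specialization of $Z(K)$ that reproduces the Alexander polynomial of Morse knotoids of the previous subsection, and finally note why the extra variable is genuinely carried, so that ``generalization'' is not vacuous.

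\textbf{Step 1 (well-definedness).} This is essentially already in hand. The matrices $R$ and $R^{-1}$ above are, by \cite{KaRad}, mutually inverse spin-preserving solutions of the Yang--Baxter equation, and together with the $\pm$-labelled cup and cap matrices they satisfy the interrelations of the General Schema; hence, exactly as for $\tilde{\nabla}$, the matrix determined by $Z(K)$ is invariant under the oriented type~II and type~III Morse isotopy moves, while a type~I move multiplies it by $i$ times a power of $\sigma^{-1}\tau$ or $\sigma\tau^{-1}$. A normalization of the form $i\,(\sigma^{-1}\tau)^{-rot(K)}Z(K)$ is therefore an invariant of planar knotoids, and its entries lie in $\mathbb{Z}[i][\sigma^{\pm 1},\tau^{\pm 1}]$, which is what ``two-variable'' means here.

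\textbf{Step 2 (the specialization --- the heart of the matter).} First I would observe that the cups and caps of the present model are literally the same matrices $\mathrm{diag}(\sqrt{-i},\sqrt{i})$ and $\mathrm{diag}(\sqrt{i},\sqrt{-i})$ used in the Alexander state sum of the previous subsection; consequently the weight $i^{||s||}$ contributed by the circular and the long state components is computed by the same rule in both models, and only the crossing weights need comparison. The change of variables recorded just before the statement is designed precisely for this: applying the specialization under which the generalized Burau matrix $B$ degenerates to the classical Burau/Alexander matrix (and renaming the surviving variable $q$), the matrices $R$ and $R^{-1}$ collapse onto the Alexander crossing matrices $R_{\mathrm{Alex}}(q)$ and $R_{\mathrm{Alex}}(q)^{-1}$ of the previous subsection, up to a diagonal change of basis of $V\otimes V$ and an overall monomial factor per crossing. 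Because the two state sums are assembled in exactly the same way --- product of crossing weights times $i^{||s||}$, with the same $\tfrac12$ and the same $i$-normalization --- I would conclude that under this specialization the normalized $Z(K)$ equals the normalized Alexander state sum matrix, so that $\tfrac12\,\mathrm{tr}$ of it specializes to the Alexander polynomial of $K$. As an independent consistency check one verifies, as in Figure \ref{fig:identity}, that the specialized $Z$ obeys the Conway skein identity with value $1$ on the trivial $(1,1)$-tangle and $0$ on the unknot; this alone does not pin the invariant down --- there are Morse multi-knotoids with no unknotting sequence --- but combined with Yang--Baxter invariance and the local matching of weights it does.

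\textbf{Step 3 (strictness) and the main obstacle.} For strictness I would evaluate $Z$ on a few small examples (for instance the multi-knotoid of Example \ref{ex:multi} and the diagrams following it) and exhibit genuine dependence on both $\sigma$ and $\tau$, so that no single-variable substitution is lossless. The main obstacle is the matching claimed in Step~2: the specialized generalized $R$-matrix is \emph{not} conjugate to $R_{\mathrm{Alex}}(q)$ on the nose --- the product of its two ``swap'' entries is a nontrivial monomial in $\tau$ rather than $1$ --- so the collapse above is not a mere relabelling of indices. The substance of the argument is to show that these extra monomial factors, accumulated over all the crossings of a given state of a Morse knotoid diagram, recombine into a single power of $\tau$ that depends only on the writhe --- equivalently on the framing and rotation data already controlled by the normalization --- and hence cancel against $i\,(\sigma^{-1}\tau)^{-rot(K)}$. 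Carrying out this parity-and-framing bookkeeping is the delicate point; the remaining verifications are the routine ones of the $(1,1)$-tangle case, for which I would refer to \cite{Ka5,KaRad}.
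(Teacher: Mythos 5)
Your proposal is correct and follows essentially the same route as the paper, whose entire proof is the one-line observation that substituting $s=1$ (equivalently $\sigma=1$) in the $R$ matrix of $Z(K)$ recovers the $R$ matrix of the Alexander state sum ``up to a change of basis,'' with details deferred to \cite{Ka5}. The extra care you take in Step~2/Step~3 --- noting that the specialized matrix is not conjugate to $R_{\mathrm{Alex}}$ by a diagonal basis change alone and that the residual monomial factors must be absorbed into a writhe- and rotation-dependent normalization --- is precisely the bookkeeping the paper hides behind that phrase, so your write-up is a faithful (and more explicit) version of the same argument.
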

\begin{proof}
Substituting $s=1$ in the $R$ matrix of the state sum $Z(K)$ gives the $R$ matrix of the Alexander polynomial up to a change of basis. See \cite{Ka5} for details.
\end{proof}
 
In Figure \ref{fig:genalexxx}, we show that $Z(K)$ is not invariant under a type I Morse isotopy move that adds a negative curl to a vertical strand. In fact, by considering the other variations of type I moves, the reader can verify that the state sum polynomial $Z(K)$ changes by $(i\sigma^{-1}\tau)^{-rot(K)}$ under a type I Morse isotopy move. Then, $Z(K)$ can be normalized by the factor $(i\sigma\tau^{-1})^{rot(K)}$ and determines a $2\times2$ matrix invariant for knotoids in $\mathbb{R}^2$. 

\begin{figure}[H]
\centering
\includegraphics[width=.85\textwidth]{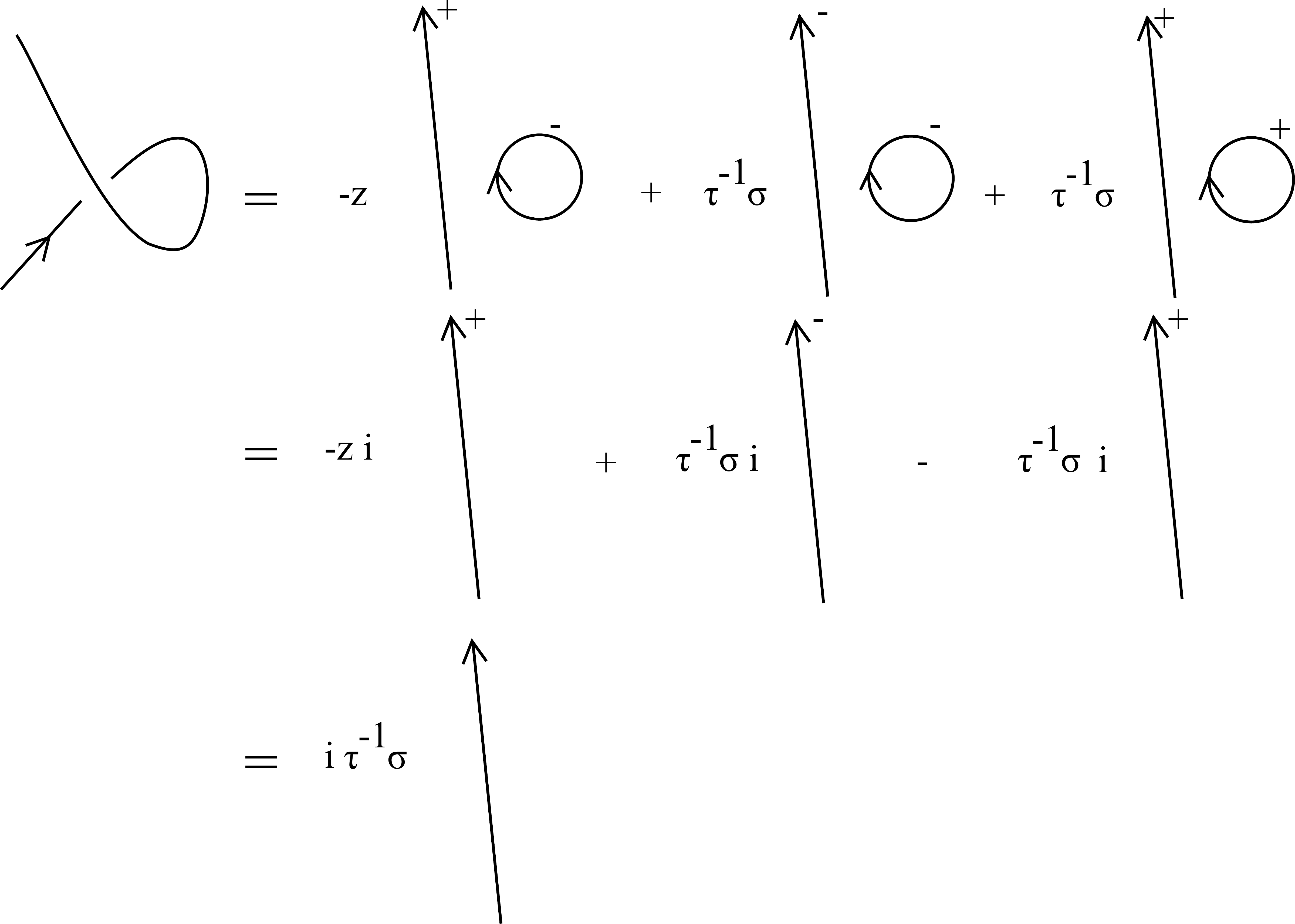}
\caption{The change under type I Morse isotopy move}
\label{fig:genalexxx}
\end{figure}

\begin{definition}\normalfont
The trace of the matrix given by $W(K) = (i\sigma\tau^{-1})^{rot(K)} Z(K)$ is called the \textit{Sawollek polynomial } of Morse knotoids in  $\mathbb{R}^2$.
\end{definition}



\begin{proposition}
The Sawollek polynomial is an invariant of knotoids in $\mathbb{R}^2$. 
\end{proposition}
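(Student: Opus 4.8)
The plan is to deduce the invariance of the Sawollek polynomial from the matrix invariance of $W(K)$ already set up above, together with the fact that each planar knotoid has an essentially unique standard Morse representative. So the proof has three ingredients: (i) identify the equivalence relation on standard Morse diagrams that corresponds to knotoid isotopy; (ii) check that $W(K)$ respects that relation; (iii) pass to the trace.

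For (i), I would invoke the proposition comparing regular isotopy and Morse isotopy: two knotoid diagrams in $\mathbb{R}^2$ are related by regular isotopy (type II and type III Reidemeister moves together with endpoint displacements and rotations) if and only if their standard Morse diagrams — the ones with both endpoint tangent vectors pointing upward — are related by Morse isotopy. Since knotoid isotopy in $\mathbb{R}^2$ is generated by regular isotopy together with the first Reidemeister move, adjoining the first Reidemeister move on the knotoid side and the corresponding type I Morse move on the Morse side shows that two knotoid diagrams represent the same planar knotoid $\kappa$ precisely when their standard Morse forms are connected by a finite sequence of Morse isotopy moves and type I Morse moves.

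For (ii), I would use that $Z(K)$, and hence $W(K)=(i\sigma\tau^{-1})^{rot(K)}Z(K)$, is unchanged under the oriented type II and type III Morse moves — the state weights $i^{rot(s)\,label(s)}$ on the circular components and on the long segment component are themselves Morse-isotopy invariant — and that under a type I Morse move $Z(K)$ is multiplied by $(i\sigma^{-1}\tau)^{\mp1}$ in lockstep with the change $rot(K)\mapsto rot(K)\pm1$, so the prefactor $(i\sigma\tau^{-1})^{rot(K)}$ exactly absorbs this change. Combining with (i), the diagonal matrix $W(K)$, with diagonal entries $W(K^{+}_{+})$ and $W(K^{-}_{-})$, depends only on $\kappa$; write $W(\kappa)$ for this common value. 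One should note that because the leg and the head of a standard Morse diagram are distinguished by the leg-to-head orientation and both point upward, the endpoint labels $+$ and $-$ are attached consistently across all standard representatives, so the two diagonal entries are never interchanged and $W(\kappa)$ is genuinely well defined as a matrix. Finally, for (iii), the trace is a function of the matrix alone, so $tr(W(\kappa)) = W(\kappa^{+}_{+}) + W(\kappa^{-}_{-})$ is a well-defined function of $\kappa$, which is the asserted invariance.

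The only delicate point is the bookkeeping of rotation numbers and scalar factors under the various type I Morse moves — and under the parallel/mixed reorientations of a curl — namely that $(i\sigma\tau^{-1})^{rot(K)}$ is exactly the needed correction on every variant of the move. This is precisely the computation recorded in Figure \ref{fig:genalexxx} and its surrounding discussion, so no new work is required beyond invoking it; the remainder of the argument is formal.
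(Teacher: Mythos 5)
Your argument is correct and follows essentially the same route as the paper: the paper's proof likewise rests on the uniqueness of the standard Morse representative, the Morse-isotopy invariance of $Z(K)$, the type~I correction absorbed by the prefactor $(i\sigma\tau^{-1})^{rot(K)}$, and passage to the trace. Your version simply spells out the reduction of knotoid isotopy to Morse isotopy plus type~I moves, and the well-definedness of the diagonal entries, in more detail than the paper does.
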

\begin{proof}
Every knotoid in $\mathbb{R}^2$ has a unique standard Morse isotopy representation. By the invariance discussion above, the trace of the matrix given by $W(K)$ is invariant under the knotoid isotopy moves.  Then the statement follows.
 \end{proof}
\begin{lemma}\label{lem:virtclos}
Let $K$ be a Morse knotoid diagram in $\mathbb{R}^2$ that is of knot-type (the endpoints lie in the same planar region of the diagram). Then, the trace of the matrix determined by the partition function $Z(K)$ is equal to $i$ times the value of the quantum state sum on the virtual closure of $K$. 

\end{lemma}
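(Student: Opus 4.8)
The plan is to reduce the trace of $Z(K)$, term by term, to the state sum on the virtual closure $\widehat{K}$. As established above, $Z(K)$ is the $2\times 2$ diagonal matrix whose diagonal entries are $Z(K^{+}_{+})$ and $Z(K^{-}_{-})$; the off-diagonal entries vanish because an open-ended state component is evaluated non-trivially only when the labels at its two endpoints agree. Hence $\mathrm{tr}\,Z(K)=Z(K^{+}_{+})+Z(K^{-}_{-})$, that is, the state sum of $K$ with the long component labeled $+$ added to the one with it labeled $-$. Since $Z(K)$ is a Morse isotopy invariant I may work with a convenient representative, and here is where the knot-type hypothesis enters: because the leg and head of $K$ lie on the boundary of a common region $\rho$ of the complement of the underlying graph, the closure arc $\alpha$ (running from the head to the leg) can be taken inside $\rho$, so it meets $K$ in no crossings at all. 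Consequently $\widehat{K}$ carries exactly the crossings of $K$ and no virtual crossings, which is precisely what makes the comparison below clean.

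Next I would set up the state correspondence. Smoothing the crossings of $K$ is the same operation as smoothing the crossings of $\widehat{K}$, giving a bijection $\sigma\leftrightarrow\widehat{\sigma}$ of states under which the local vertex weights match, $\langle K\mid\sigma\rangle=\langle\widehat{K}\mid\widehat{\sigma}\rangle$. Each state $\sigma$ of $K$ consists of circular components $c_{1},\dots,c_{m}$ together with the one open-ended component $\lambda$ through the endpoints, and the matching state $\widehat{\sigma}$ of $\widehat{K}$ has the very same $c_{1},\dots,c_{m}$ together with the single extra circle $\widehat{\lambda}=\lambda\cup\alpha$. The circular components are evaluated by the same rule $i^{\,rot(c_{j})\,label(c_{j})}$ on both sides and are summed over the same set of labels, so their total contribution is a common factor $c(\sigma)$ (which also absorbs $\langle K\mid\sigma\rangle$ and the overall $\tfrac12$). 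Thus the whole comparison collapses to the contribution of $\lambda$ against that of $\widehat{\lambda}$, and one is left with
\[
\mathrm{tr}\,Z(K)=\sum_{\sigma}c(\sigma)\bigl(i^{\,rot(\lambda)}+i^{-rot(\lambda)}\bigr),
\qquad
(\text{state sum on }\widehat{K})=\sum_{\sigma}c(\sigma)\bigl(i^{\,rot(\widehat{\lambda})}+i^{-rot(\widehat{\lambda})}\bigr),
\]
where in both expressions $rot(\widehat{\lambda})=rot(\lambda)+rot(\alpha)$ and $rot(\alpha)$ does not depend on the state.

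The decisive step, which I expect to be the main obstacle, is the rotation-number bookkeeping needed to turn these two sums into each other up to the single factor $i$. The plan is to put $\alpha$ into a standard form — using the remaining freedom in the Morse representative, the rotation-number classification of Theorem~\ref{thm:rota}, and Lemma~\ref{lem:spi} — consisting of one cap and one cup whose signs record the turning forced on any arc that closes a Morse knotoid with both endpoints directed upward, and then to check that, after the sum over the label $\ell\in\{+1,-1\}$, the quotient of $\sum_{\ell}i^{\,\ell(rot(\lambda)+rot(\alpha))}$ by $\sum_{\ell}i^{\,\ell\,rot(\lambda)}$ is the state-independent constant $i$. The subtle point is that the dependence on $rot(\lambda)$, which varies from state to state, must cancel; this is where one has to use that for a knot-type diagram every crossing is even (Section~\ref{sec:binarybracket}), so that the parity of $rot(\lambda)$ is the same across all states and only the universal contribution of the closure arc survives. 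Once this identity is verified term by term, summing over $\sigma$ gives $\mathrm{tr}\,Z(K)=i\cdot(\text{state sum on }\widehat{K})$, as claimed.
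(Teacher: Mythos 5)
Your overall reduction --- choose a representative where the closure arc $\alpha$ lies in the common region of the two endpoints, so that $\widehat{K}$ acquires no virtual crossings, the states of $K$ and $\widehat{K}$ correspond bijectively with matching vertex weights and matching circular components, and everything collapses to comparing the open component $\lambda$ with the circle $\lambda\cup\alpha$ --- is the same reduction the paper makes. The paper then finishes in one line: the closure contributes exactly one cup and one cap, each carrying a diagonal matrix ($\sqrt{\pm i}$ times a Kronecker delta), and multiplying these two scalars against each diagonal entry $Z(K^{a}_{a})$ \emph{before} summing over $a$ yields $i\cdot\mathrm{tr}(Z(K))$.

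The genuine gap is in your ``decisive step,'' which you describe but do not carry out, and which as formulated would not go through. You propose to verify that the quotient of $\sum_{\ell}i^{\ell(rot(\lambda)+rot(\alpha))}$ by $\sum_{\ell}i^{\ell\,rot(\lambda)}$ is the constant $i$. But this comparison cannot be made after summing over the label $\ell$: since $rot(\alpha)=\pm1$, the numbers $rot(\lambda)$ and $rot(\lambda)+rot(\alpha)$ have opposite parities, and $\sum_{\ell\in\{\pm1\}}i^{\ell r}=i^{r}+i^{-r}$ vanishes exactly when $r$ is odd, so in every case one of your numerator and denominator is $0$ and the quotient is either $0$ or indeterminate --- never $i$. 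The evenness of the crossings of a knot-type diagram does not repair this, and that parity observation plays no role in the paper's argument. The factor must be extracted per label: for each fixed $\ell$ the closure arc multiplies the state contribution by the single scalar $\overrightarrow{M}_{\ell\ell}\cdot\underrightarrow{M}^{\ell\ell}$ (equivalently $i^{rot(\alpha)\ell}$), and the content of the lemma is the claim that this per-label weight turns the sum of diagonal entries into $i$ times the trace. That is precisely the cup--cap matrix computation the paper performs, and it is the one point your write-up postpones. Finally, the appeal to Theorem~\ref{thm:rota} and Lemma~\ref{lem:spi} to normalize $\alpha$ is unnecessary: once $\alpha$ is drawn inside the common region it already consists, by construction, of exactly one cap and one cup.
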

\begin{proof}
It is clear that the virtual closure connects the endpoints of $K$ by creating a cup and a cap, oriented in the same direction. Thus, the contributions of the added cup and cap to the partition function of the virtual closure knot $\overline{v}(K)$ are the same and either as $\sqrt{i}$ or as $\sqrt{-i}$. The partition function of $\overline{v}(K)$ is then given as $Z(\overline{v}(K))= \sum _{a,b\in \{1,2\}}  \sqrt{\pm i} Z(K_{b}^{a}) \sqrt{\pm i} = i(Z^1_1 + Z^2_2)= i tr([Z(K)]$.

\end{proof}
\begin{theorem}\label{thm:sawollek}
Let $K$ be a Morse knotoid diagram in $\mathbb{R}^2$ that is of knot-type. Then, the Sawollek polynomial of the virtual closure of $K$ is equal to $-\sigma\tau^{-1} tr(W(K))$.

\end{theorem}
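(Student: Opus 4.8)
The plan is to unwind both sides of the asserted identity into the state sum $Z$ and the rotation numbers, and then to feed in Lemma \ref{lem:virtclos}. First recall that the same state-sum recipe that produces $Z(K)$ for a Morse knotoid produces, when applied to a closed virtual link diagram $L$, a scalar $Z(L)$, and that the Sawollek polynomial of $L$ is the normalized quantity $(i\sigma\tau^{-1})^{rot(L)}Z(L)$ (this is the virtual-link instance of the normalization defining $W$; no trace is needed since $Z(L)$ is already a scalar). Thus, writing $\overline{v}(K)$ for the virtual closure of $K$, the identity to be proved is
$$(i\sigma\tau^{-1})^{rot(\overline{v}(K))}\,Z(\overline{v}(K)) \;=\; -\,\sigma\tau^{-1}\,(i\sigma\tau^{-1})^{rot(K)}\,tr(Z(K)).$$

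Next I would invoke Lemma \ref{lem:virtclos}, which for knot-type $K$ identifies $Z(\overline{v}(K))$ with $tr(Z(K))$ up to the factor $i$ supplied by the equally oriented cup and cap that the closure inserts; using the relation $Z(\overline{v}(K))=i\,tr(Z(K))$ obtained there, the identity above collapses to a purely numerical comparison of the two normalization exponents, namely to the claim
$$rot(\overline{v}(K)) \;=\; rot(K) + 1.$$
This is the geometric heart of the argument. To prove it, put $K$ in standard Morse form, so that both endpoints point upward; the closure then joins the head to the leg by a simple planar arc, every intersection of which with $K$ is declared virtual. Since the head lies above the leg, and the arc must leave the head pointing up and re-enter the leg pointing up, the minimal such arc has exactly one maximum and one minimum; with the standard convention for drawing the closure these two critical points carry equal signs and contribute together $+1$ to the rotation number, while the (virtual) crossings it picks up do not affect $rot$. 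Hence $rot(\overline{v}(K))=rot(K)+1$, and this is consistent with the fact that $rot$ is a rotational-virtual invariant once the convention is fixed.

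Combining the two ingredients and using $i^{2}=-1$, linearity of the trace, and $W(K)=(i\sigma\tau^{-1})^{rot(K)}Z(K)$,
$$(i\sigma\tau^{-1})^{rot(\overline{v}(K))}Z(\overline{v}(K)) = (i\sigma\tau^{-1})^{rot(K)+1}\cdot i\,tr(Z(K)) = i\,(i\sigma\tau^{-1})\cdot(i\sigma\tau^{-1})^{rot(K)}tr(Z(K)) = -\sigma\tau^{-1}\,tr(W(K)),$$
which is the assertion. I expect the principal difficulty to be the sign and phase bookkeeping rather than anything structural: one must make sure that the $i$ coming from Lemma \ref{lem:virtclos} and the $+1$ (rather than $-1$) shift of the rotation number combine into precisely $-\sigma\tau^{-1}$, and not $+\sigma\tau^{-1}$ or $-\sigma^{-1}\tau$. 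I would therefore pin down the closure-arc convention from the figure defining the virtual closure and verify the rotation-number shift, the factor of $i$, and the overall constant on the trivial knotoid (whose virtual closure is the unknot) before trusting the computation in general.
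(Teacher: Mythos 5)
Your proposal is correct and follows the same route as the paper, whose entire proof is the single sentence ``This follows directly by Lemma \ref{lem:virtclos}.'' You supply the bookkeeping the paper leaves implicit — in particular the identity $rot(\overline{v}(K)) = rot(K)+1$ for the chosen closure convention, which is exactly what is needed to turn the factor $i$ from the lemma into the constant $-\sigma\tau^{-1}$ — and your caution about pinning down the side on which the closure arc is drawn is well placed, since the opposite convention would yield $\sigma^{-1}\tau$ instead.
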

\begin{proof}
This follows directly by Lemma \ref{lem:virtclos}.
\end{proof}
\begin{theorem}
The Sawollek polynomial of a knot-type knotoid is trivial.

\end{theorem}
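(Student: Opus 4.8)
The plan is to read this off from Theorem~\ref{thm:sawollek}, together with the classical fact that the Sawollek (generalized Alexander) polynomial is trivial on honest classical knots.

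First I would fix a knot-type knotoid $\kappa$. By Definition~\ref{def:defone} it has a representative diagram with both endpoints lying in one local region of $\mathbb{R}^2$, and this diagram can be brought to Morse form by performing the Morse-fication moves away from a neighbourhood of the endpoints; thus $\kappa$ has a \emph{Morse} diagram $K$ of knot type, to which Theorem~\ref{thm:sawollek} applies. Since the normalized Sawollek polynomial is an invariant of knotoids in $\mathbb{R}^2$, its value $tr(W(K))$ computed from this particular diagram equals the Sawollek polynomial of $\kappa$.

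Next I would observe that the virtual closure $\overline{v}(K)$ of a knot-type Morse diagram is a \emph{classical} knot: the arc that joins the head of $K$ back to its leg may be pushed entirely into the common region of the two endpoints, so it meets $K$ in no point and introduces no virtual crossings; hence $\overline{v}(K)$ is (virtually isotopic to) a classical knot diagram. Then I would invoke the known triviality of the generalized Alexander/Sawollek polynomial on classical knots \cite{Saw} (compare the discussion of the classical Alexander case above): this invariant takes on every classical knot the same value it takes on the unknot. Granting this, the Sawollek polynomial of $\overline{v}(K)$ is trivial.

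Finally, Theorem~\ref{thm:sawollek} gives that the Sawollek polynomial of $\overline{v}(K)$ equals $-\sigma\tau^{-1}\,tr(W(K))$; since the left-hand side is trivial and $-\sigma\tau^{-1}$ is a unit, $tr(W(K))$ — i.e.\ the Sawollek polynomial of $\kappa$ — is trivial as well. (Applying the same reasoning to the trivial knotoid, whose virtual closure is the unknot, identifies this common value, so the invariant genuinely fails to distinguish knot-type knotoids from the trivial one.) The hard part is the classical-knot input: the skein-type reduction that made the ordinary Alexander state sum vanish on classical links is \emph{not} available for the generalized polynomial, so a self-contained proof of that step would have to go through the generalized Alexander module of the diagram rather than through a recursion on crossings; everything else — that ``knot-type'' lets the closure arc avoid all crossings, and that the unit normalizations $(i\sigma\tau^{-1})^{rot}$ do not interfere — is routine. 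One may equally route the argument through Lemma~\ref{lem:virtclos} in place of Theorem~\ref{thm:sawollek}, since that lemma already equates $tr([Z(K)])$ with a unit multiple of the closed state sum on $\overline{v}(K)$.
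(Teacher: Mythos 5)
Your proposal is correct and follows essentially the same route as the paper: the virtual closure of a knot-type (Morse) knotoid diagram is a classical knot, the Sawollek polynomial vanishes on classical knots by the known result of \cite{Saw, KaRad}, and Theorem \ref{thm:sawollek} transfers this triviality back to $tr(W(K))$. The extra detail you supply (pushing the closure arc into the common region of the endpoints, and the remark that the genuinely hard input is the classical vanishing statement) is consistent with, and a reasonable elaboration of, the paper's three-line argument.
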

\begin{proof}
The virtual closure of a knot-type knotoid is a classical knot \cite{GK1}.The Sawollek polynomial vanishes on classical knots as discussed in \cite{KaRad}. Then by Theorem \ref{thm:sawollek}, it vanishes on knot type knotoids.

\end{proof}

\begin{corollary}
If the Sawollek polynomial of a knotoid in $\mathbb{R}^2$ is not trivial then the knotoid is a proper knotoid.
\end{corollary}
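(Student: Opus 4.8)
The plan is to argue by contraposition, relying entirely on the classification of knotoids in $\mathbb{R}^2$ recorded after Definition~\ref{def:defone}: every planar knotoid is either a knot-type knotoid (it admits at least one representative diagram whose two endpoints lie in the same complementary region of the underlying flat graph) or a proper knotoid, and these two alternatives are mutually exclusive and exhaustive. Consequently, to establish the corollary it suffices to prove the contrapositive statement: if a knotoid $\kappa$ in $\mathbb{R}^2$ is \emph{not} proper, then its Sawollek polynomial is trivial.

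So I would start from the assumption that $\kappa$ is not proper, which by the dichotomy above means $\kappa$ is a knot-type knotoid; hence $\kappa$ has a representative Morse knotoid diagram $K$ with both endpoints in a single region of its flat diagram. At this point I would simply invoke the preceding theorem (the Sawollek polynomial of a knot-type knotoid is trivial), whose proof runs through Theorem~\ref{thm:sawollek}: the virtual closure of a knot-type knotoid is a classical knot, and the Sawollek polynomial vanishes on classical knots, so by Theorem~\ref{thm:sawollek} it vanishes on $\kappa$. Reading the implication backwards, a knotoid in $\mathbb{R}^2$ whose Sawollek polynomial is nontrivial cannot be knot-type, and is therefore proper — which is exactly the assertion of the corollary.

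There is essentially no obstacle here: all the substantive work has already been done in Theorem~\ref{thm:sawollek} and in the preceding theorem, and the corollary is a formal contrapositive. The only point worth checking — the closest thing to a subtlety — is to make sure the word ``trivial'' is used consistently throughout: the preceding theorem shows that the Sawollek polynomial of any knot-type knotoid equals the single value that it assumes on all classical knots (equivalently, on the trivial knotoid), so ``nontrivial Sawollek polynomial'' genuinely detects the failure of $\kappa$ to be knot-type. No computation is required beyond this bookkeeping.
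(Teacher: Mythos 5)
Your proof is correct and matches the paper's intent exactly: the corollary is the contrapositive of the immediately preceding theorem (the Sawollek polynomial of a knot-type knotoid is trivial), combined with the dichotomy that every planar knotoid is either knot-type or proper. The paper gives no separate proof for this corollary precisely because it is this formal contrapositive, so your bookkeeping is all that is needed.
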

\begin{note}\normalfont
In \cite{KaRad}, it is shown that the state sum polynomial $Z(K)$ on a closed virtual knot is equal to the Sawollek polynomial of the virtual knot originally defined via the generalized Burau representation. For the Alexander and Sawollek polynomials of virtual knots, the state sum uses a nontrivial matrix at virtual crossings. The relationship of our quantum knotoid versions and the virtual closures needs further investigation.
\end{note}



\subsection{An infinity of specializations of the Homflypt polynomial}
A quantum state sum model that yields an infinite number of specializations of the Homflypt polynomial of classical knots and links is discussed in In \cite{Ka5, Jones1}. We adapt this model to the knotoid case to obtain specializations of the Homflypt polynomial.

In this state sum model, \textit{states} of a given knotoid diagram are obtained by replacing locally each tangle containing a crossing by a combination of decorated diagrams as given in Figure \ref{fig:h-expansion}.
\begin{figure}[H]
\centering
\includegraphics[width=.75\textwidth]{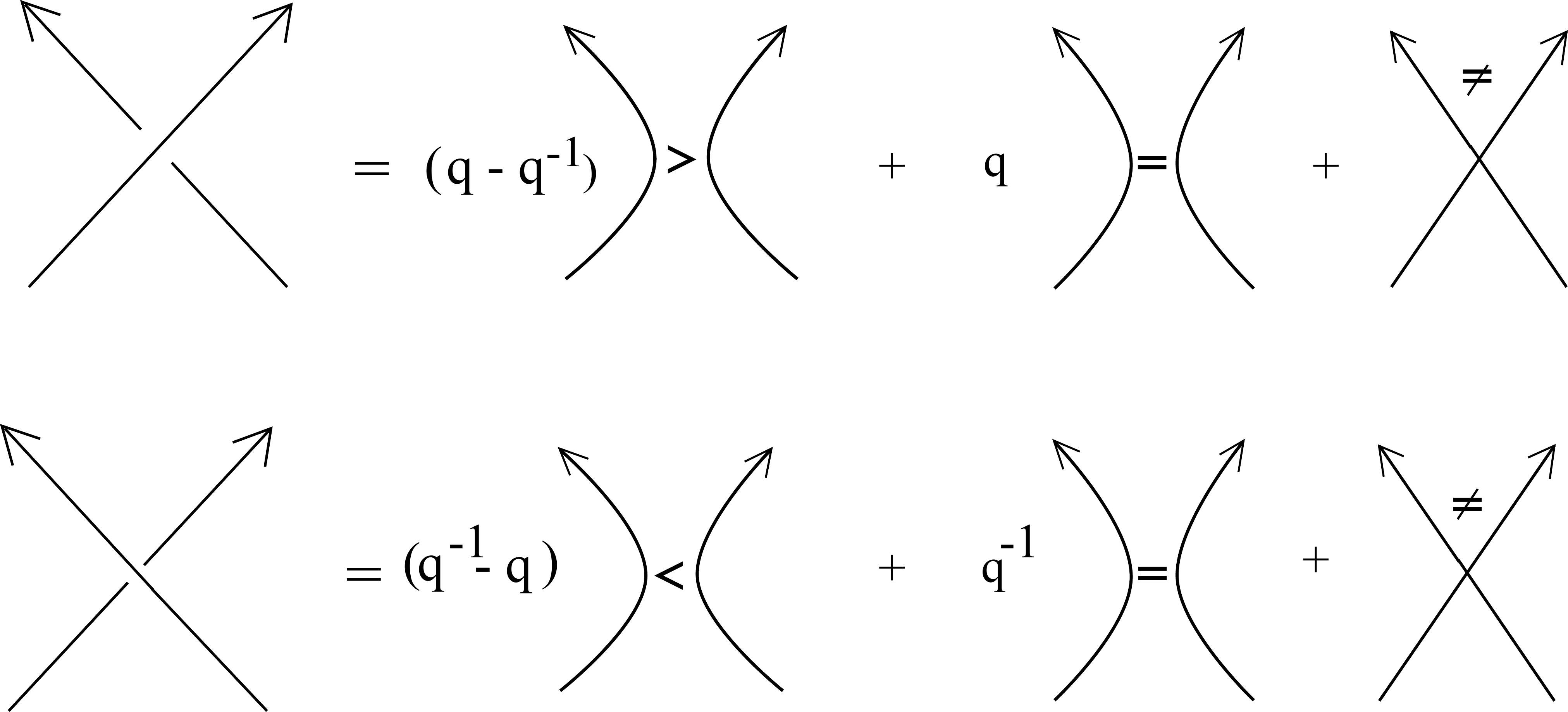}
\caption{The crossing expansions}
\label{fig:h-expansion}
\end{figure}
The $<$, $>$, $=$ and $\neq$ signs on the tangles refer to a labeling of the edges at a crossing by an index set $\mathcal{I} \subset \mathbb{Z}$. Precisely, $<$ appearing at an oriented smoothing site of a positive crossing indicates that the label of the strand on the left hand side of the sign is less than the  label of strand on the right hand side of the sign, $=$ indicates that the index labelings of strands on the left and on the right of the sign are equal, and $\neq$ indicates that labels of the crossed strands are not equal.

We pick an index set in the form $I_{n} = \{-n, -n+2, -n+4, ..., n-2, n\}$ for any positive integer $n$ to label the strands of a Morse knotoid diagram $K$. 
 We consider a state as an \textit{admissable state} if it admits a well-defined labeling from $I_{n}$, according to the signs between its components that are put in place of crossings. State components that are either a union of circular components and an open-ended component or an open-ended component may intersect with each other but none of the components can intersect itself as the labeling would not be well-defined in that case. See Figure \ref{fig:state}.

\begin{figure}[H]
\centering
\includegraphics[width=1\textwidth]{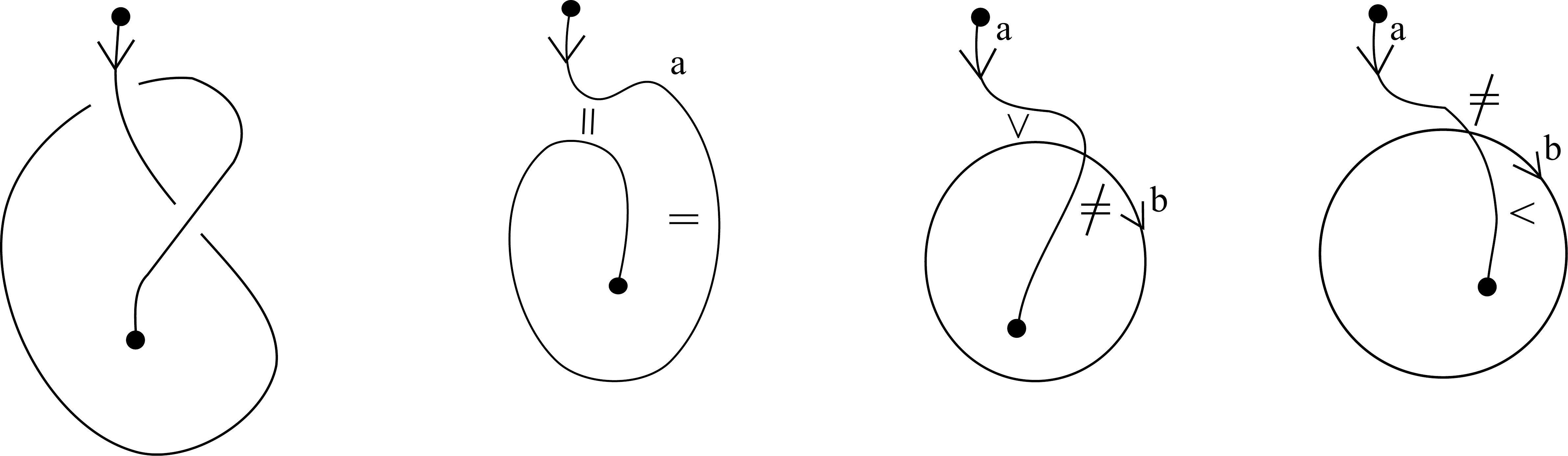}
\caption{A knotoid diagram and its admissable states}
\label{fig:state}
\end{figure}

The rotation number $rot(s)$ for a state component $s$ is defined in the same way as defined in Section \ref{sec:rotation}. Each circular state component or open-ended state component $s$ contribute to the polynomial with $q^{rot(s).label(s)}$.  

\begin{definition}\normalfont
The state sum polynomial of a knotoid diagram $K^{a}_{b}$ with fixed indices at its endpoints, denoted by $<K^{a}_{b}>$, is defined as the sum of the evaluations of all admissable state components as follows.
$$<K^{a}_{b}> = \sum_{\sigma} <K^{a}_{b}|\sigma> q^{||\sigma||},$$
where $<K^{a}_{b}|\sigma>$ is the multiplication of the coefficients at the smoothing sites of the admissable state $\sigma$, $|| \sigma || = \sum_{s \in \sigma} rot(s).label(s)$.
\end{definition}
The state sum polynomial of $K^{a}_{b}$ determines a $2n \times 2n$ matrix, for each $a, b\in I_{n}$. This matrix is diagonal since the non-diagonal entries are given by the labeling of $K$ with two different indices from $I _n$ at its endpoints. This causes a not well-defined labeling on the open-ended state components and so vanishing polynomials as there would be no admissable states.

\begin{example}\normalfont

We study the knotoid diagram given in Figure \ref{fig:state}. Assuming both endpoints are labeled with $a$, where $a \in I_n$, and summing up the contributions of all states of $K^{a}_{a}$ we find 
$$<K^{a}_{a}>= q^2 
q^{-a} + (q-q^{-1}) (\sum_{a > b} q^{-b} + \sum _{b > a} q^{-b} ) $$.



\end{example}

\begin{proposition}
The state sum model satisfies the regular isotopy version of the Homflypt skein relation:
\begin{figure}[H]
\centering
\includegraphics[width=.7\textwidth]{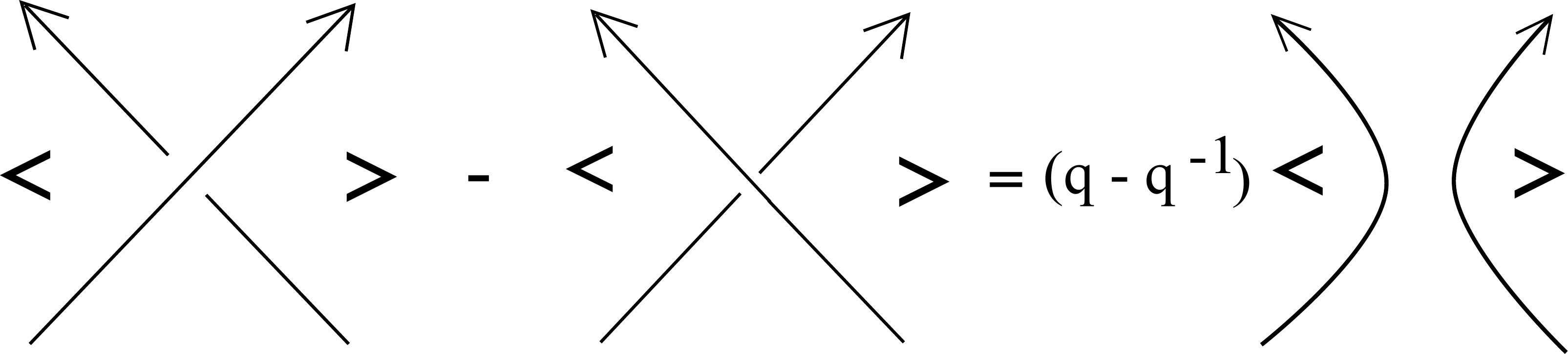}
\caption*{}
\label{fig:}
\end{figure}

\end{proposition}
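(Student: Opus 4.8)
The plan is to localize the skein identity at a single crossing, reduce it to an identity between the $R$ and $\overline R$ matrices of Figure~\ref{fig:h-expansion}, and check that the global rotation-number weight is unaffected by the localization. Let $L_+$, $L_-$, $L_0$ be oriented Morse (multi-)knotoid diagrams agreeing outside a disk, in which they carry a positive crossing, a negative crossing, and the oriented (parallel) smoothing, respectively. Recall $<K^a_b>=\sum_\sigma<K^a_b|\sigma>\,q^{||\sigma||}$, the sum running over admissible states $\sigma$ obtained by replacing each crossing by a decorated tangle of Figure~\ref{fig:h-expansion} and labeling the edges consistently from $I_n$, with $||\sigma||=\sum_{s\in\sigma}rot(s)\cdot label(s)$. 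I would first expand all crossings of $L_+$ and of $L_-$ other than the distinguished one in the same way, so that $<L_+>$ and $<L_->$ become sums over a common index set --- those outer choices together with an admissible labeling --- of terms indexed in addition by what is inserted at the disk. Only two topological shapes occur there: the \emph{parallel} (identity) smoothing, which is exactly $L_0$'s local tangle, and the \emph{transposition}, the label-switching flattened crossing. This reduces the statement to a term-by-term comparison over these two insertions.

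The comparison itself is the core computation. For a transposition insertion the attached entries of $R$ and of $\overline R$ are both $1$ and the resulting global state is the same for $L_+$ and $L_-$, so these contributions cancel in $<L_+>-<L_->$. For a parallel insertion with labels $a,b$ on the two strands at the site, the contribution to $<L_+>$ carries the coefficient of the parallel term in the expansion of the positive crossing --- namely $q$ if $a=b$, $q-q^{-1}$ if $a>b$, and $0$ if $a<b$ --- and the contribution to $<L_->$ carries that of the negative crossing --- $q^{-1}$ if $a=b$, $0$ if $a>b$, and $q^{-1}-q$ if $a<b$ --- while the resulting global state is precisely the state of $L_0$ with the same outer choices and labels. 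In each of the three cases the difference of coefficients equals $q-q^{-1}$, so summing over all outer choices and labels yields $<L_+>-<L_->=(q-q^{-1})\,<L_0>$, the regular isotopy (un-normalized) version of the Homflypt skein relation displayed above; at the level of the local tangles this is the operator identity $R-\overline R=(q-q^{-1})\,\mathrm{id}_{V\otimes V}$.

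The step I expect to be the main obstacle --- and the reason this is not a pure matrix identity as in Lemma~\ref{lem:composible} --- is the compatibility of the global weight $q^{||\sigma||}$ with the term-by-term matching. The observation that saves the argument is that every decorated tangle in Figure~\ref{fig:h-expansion} is an \emph{oriented} tangle with the same boundary orientations, contains no maxima or minima (no cups or caps), and joins its incoming strands to its outgoing strands either straight through or by a single crossover; hence neither insertion creates or destroys cups and caps, and for a parallel insertion the way the global components close up --- and therefore $rot(s)$ for every component $s$ and the value of $||\sigma||$ --- is identical to that for $L_0$. With this in hand the remaining content is the routine three-case reading of coefficients from Figure~\ref{fig:h-expansion} above, and the proposition follows for the state sum $<K^a_b>$ and the induced matrix; normalizing by the curl factor as in the preceding subsections then upgrades it to the ambient-isotopy version of the specialized Homflypt skein relation for knotoids in $\mathbb{R}^2$.
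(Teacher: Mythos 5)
Your proposal is correct and follows essentially the same route as the paper: the paper's proof is precisely the side-by-side subtraction of the positive and negative crossing expansions, observing that the transposition terms cancel and the parallel terms differ by $q-q^{-1}$ in every label case, i.e.\ $R-\overline{R}=(q-q^{-1})\,\mathrm{id}$ locally, from which the skein identity follows. Your additional remarks on why the rotation-number weight $q^{||\sigma||}$ is unaffected by the local replacement are a sound elaboration of what the paper leaves implicit.
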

\begin{proof}
Side by side subtraction of the crossing expansions results in:
\begin{figure}[H]
\centering
\includegraphics[width=.85\textwidth]{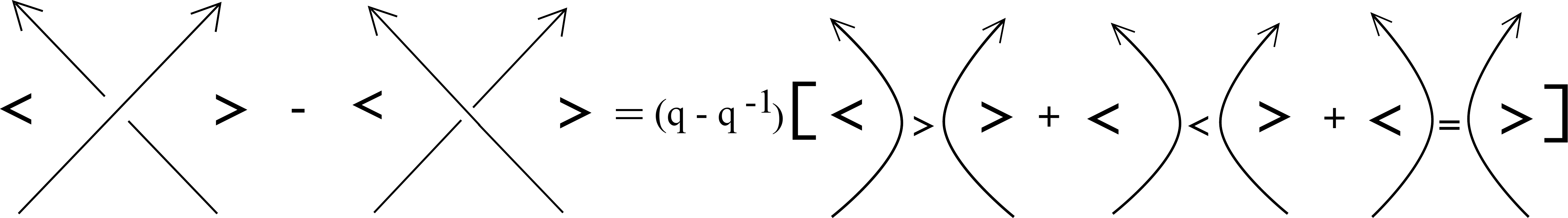}
\caption*{}
\label{fig:}
\end{figure}
Notice that the right hand side of the equality above is:
\begin{figure}[H]
\centering
\includegraphics[width=.7\textwidth]{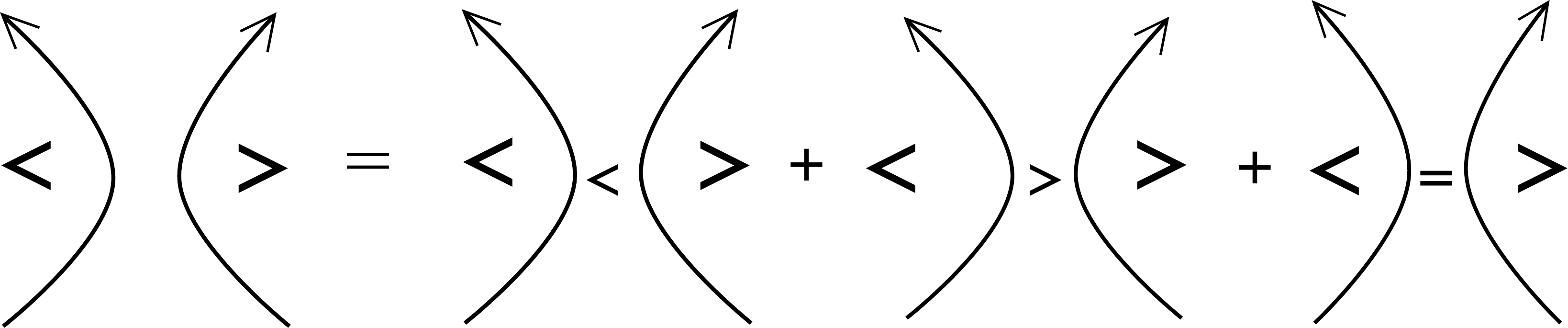}
\caption*{}
\label{fig:}
\end{figure}
Then the skein identity follows.

\end{proof}

This state sum polynomial of a Morse knotoid diagram with a fixed labeling at its endpoints can be viewed as a quantum state sum by rewriting the positive crossing expansion as $$R=  (q - q^ {-1})[i > j] \delta^{i}_{j}\delta^{j}_{l} + q [i = j] \delta^{i}_{k}\delta^{j}_{l} + [i \neq j]\delta^{i}_{l}\delta^{j}_k,$$ where 

$
[i < j ] =  \{
        \begin{array}{ll}
            1 & \quad i < j, \\
            0 & \quad otherwise
        \end{array}
  $

$
[i = j ] =  \{
        \begin{array}{ll}
            1 & \quad i = j, \\
            0 & \quad otherwise
        \end{array}
    $
\hspace{.1cm}

$[i \neq j ] =  \{
        \begin{array}{ll}
            1 & \quad i \neq j, \\
            0 & \quad otherwise
        \end{array}.
$

The matrices corresponding to cups and caps are determined by the following assignments.
\begin{figure}[H]
\centering
\includegraphics[width=.7\textwidth]{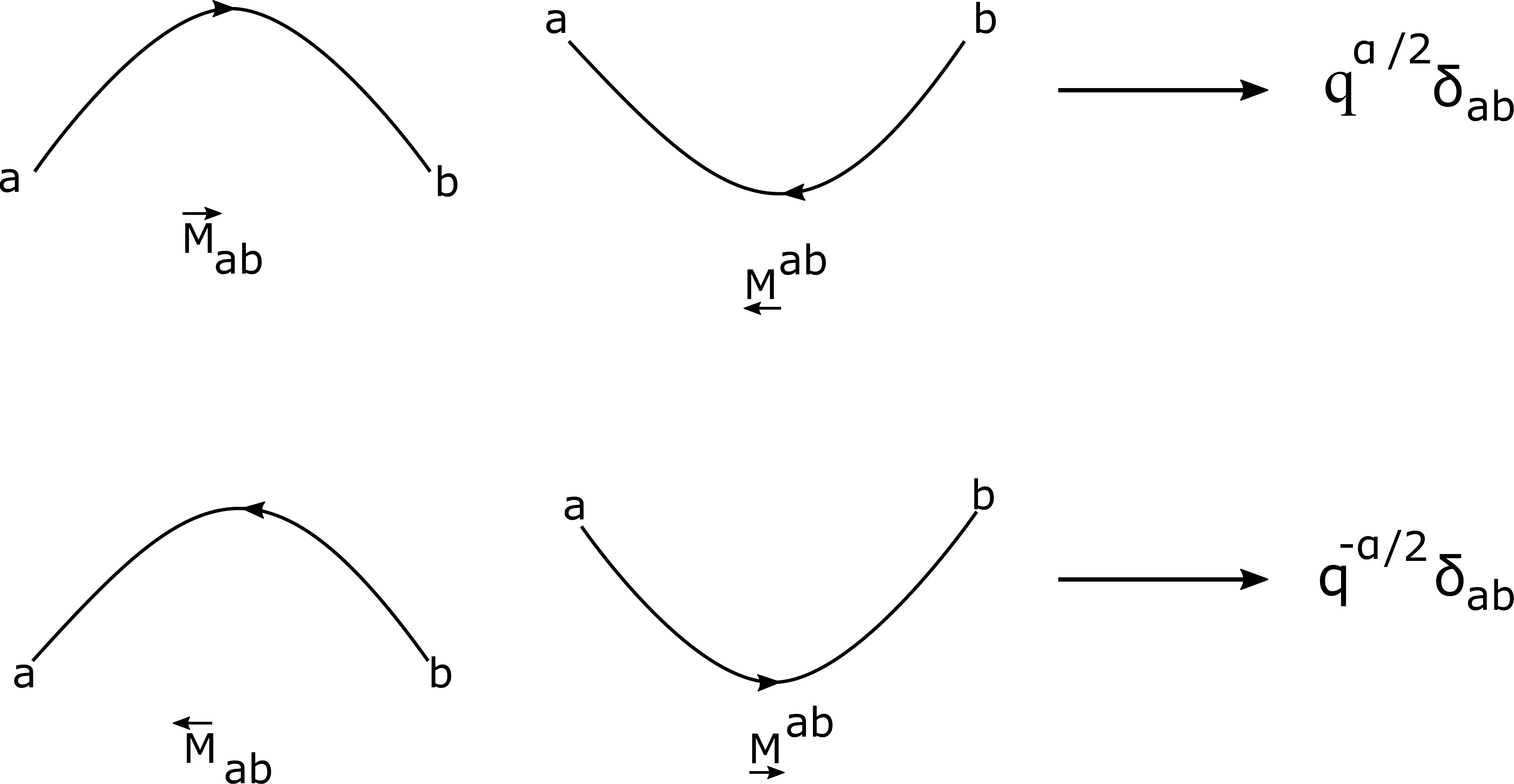}
\caption*{}
\label{fig:}
\end{figure}
where $$\delta_{ab} = \{
        \begin{array}{ll}
            1 & \quad  a= -b, \\
            0 & \quad otherwise.
        \end{array}
    $$

\begin{lemma}
The matrices induced by the positive crossing and the negative crossing expansions given in Figure \ref{fig:h-expansion} are solutions of the Yang-Baxter equation. 
\end{lemma}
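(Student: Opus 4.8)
The plan is to recast the positive-crossing expansion as a single linear operator and then to prove the Yang--Baxter equation in its braid form by reducing to a bounded list of cases. Let $V$ be the free module with basis $\{e_k\}_{k\in I_n}$. The positive-crossing expansion of Figure~\ref{fig:h-expansion} is the operator $\check{R}\colon V\otimes V\to V\otimes V$ acting on basis elements by
$$\check{R}(e_i\otimes e_j)=\begin{cases} q\,e_i\otimes e_i, & i=j,\\ e_j\otimes e_i, & i<j,\\ e_j\otimes e_i+(q-q^{-1})\,e_i\otimes e_j, & i>j,\end{cases}$$
and the negative-crossing expansion is the operator $\check{R}^{-1}$ built from the $R^{-1}$ prescription; a direct check (which is exactly the regular type~II invariance) shows $\check{R}\check{R}^{-1}=\check{R}^{-1}\check{R}=I^{\otimes 2}$. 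In these terms the Yang--Baxter equations $R^{ij}_{ab}R^{kf}_{jc}R^{de}_{ik}=R^{ij}_{bc}R^{dk}_{ai}R^{ef}_{kj}$ and its $R^{-1}$ analogue become the braid relations $(\check{R}\otimes I)(I\otimes\check{R})(\check{R}\otimes I)=(I\otimes\check{R})(\check{R}\otimes I)(I\otimes\check{R})$ on $V^{\otimes 3}$, first for $\check{R}$ and then with $\check{R}$ replaced by $\check{R}^{-1}$. Inverting each factor turns the braid relation for $\check{R}$ into the braid relation for $\check{R}^{-1}$, so it is enough to treat $\check{R}$.

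The next step is to reduce to finitely many cases. Applying either side of the braid relation to a basis vector $e_a\otimes e_b\otimes e_c$ yields a linear combination of vectors $e_{a'}\otimes e_{b'}\otimes e_{c'}$ with $(a',b',c')$ a permutation of $(a,b,c)$; and, since every branch in the definition of $\check{R}$ is decided only by comparing two of the labels $a,b,c$, the coefficient of each such vector depends only on the order type of the triple $(a,b,c)$, not on the labels themselves. Hence the braid relation need only be verified once per order type: the six strict orderings of a triple of distinct labels (represented by the permutations of $1<2<3$); the degenerate types in which exactly two coordinates coincide, with the repeated value in positions $\{1,2\}$, $\{1,3\}$ or $\{2,3\}$ and the remaining value lying below or above it (six more); and the totally degenerate type $a=b=c$. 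This is thirteen short computations.

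The equal-label and two-equal cases are immediate (for instance, $e_a\otimes e_a\otimes e_a\mapsto q^{3}\,e_a\otimes e_a\otimes e_a$ on both sides), and in the strict-ordering cases one expands both triple products and checks that the $(q-q^{-1})$-correction terms produced by the two sides cancel in pairs, leaving matching coefficients on each permuted basis vector; this is the same symmetric-group bookkeeping that lies behind the Hecke relation $(\check{R}-q)(\check{R}+q^{-1})=0$ together with the braid relation. I expect the only real difficulty to be organizational: tracking the correction terms through the six strict orderings without an arithmetic slip. Alternatively, the computation can be bypassed: after relabeling $I_n$ by $\{1,\dots,|I_n|\}$, $\check{R}$ coincides with the $R$-matrix treated in \cite{Ka5,Jones1}, where the Yang--Baxter equation is already established.
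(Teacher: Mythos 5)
Your proposal is correct in substance, but it takes a genuinely different route from the paper: the paper's entire proof of this lemma is a citation to \cite{Ka5}, where this $R$-matrix (the standard $U_q(sl_n)$-type solution underlying the Homflypt specializations) is already shown to satisfy the Yang--Baxter equation, whereas you sketch a self-contained verification. Your reduction is sound: recasting the expansion as the operator $\check{R}$, passing to the braid form, disposing of $\check{R}^{-1}$ by inverting the braid relation for $\check{R}$, and observing that every coefficient produced by either triple product depends only on the order type of $(a,b,c)$ --- so that the thirteen weak orderings of a triple exhaust all cases --- is a correct and genuinely useful organizational idea, and your identification of the quadratic (Hecke) relation $(\check{R}-q)(\check{R}+q^{-1})=0$ correctly explains why the $(q-q^{-1})$-correction terms must cancel. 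What your write-up buys is transparency about \emph{why} the equation holds and a finite checklist a reader could complete; what it does not deliver is the actual content of the six strict-ordering cases, which you describe but do not carry out, so as written the argument is a detailed plan rather than a finished verification. Your closing sentence --- that after relabeling $I_n$ by $\{1,\dots,|I_n|\}$ the operator coincides with the $R$-matrix of \cite{Ka5,Jones1} --- is precisely the paper's proof and closes that gap; either complete the six computations or lean on the citation, but the hybrid as it stands leaves the decisive step asserted rather than proved.
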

\begin{proof}
See \cite{Ka5}.
\end{proof}

\begin{proposition}
The state sum polynomial $< >$ of a Morse knotoid diagram with fixed labels at its endpoints is invariant under the second and third oriented Morse knotoid isotopy moves. 

\end{proposition}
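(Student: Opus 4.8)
The plan is to reduce the statement to a short list of algebraic identities among the vertex-weight matrices, following the general schema of Section~\ref{sec:orientedquantum} and the partition-function formalism of Section~\ref{sec:quantumone}. First I would record that, once the rotation-number weights $q^{rot(s)\cdot label(s)}$ are absorbed into the oriented cup and cap matrices $\overrightarrow{M},\overleftarrow{M}$ displayed after Figure~\ref{fig:h-expansion} (so a $\pm$-labelled cup or cap contributes exactly the corresponding diagonal matrix), the state sum $<K^{a}_{b}>$ is literally the partition function attached to the Morse diagram $K^{a}_{b}$ built from $R$, $R^{-1}$, $\overrightarrow{M}$, $\overleftarrow{M}$ and the Kronecker deltas on vertical strands. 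By Lemma~\ref{lem:composible} and Theorem~\ref{thm:relations} together with its oriented counterpart in Section~\ref{sec:orientedquantum}, such a partition function is a Morse isotopy invariant as soon as the generating matrices satisfy the local relations coming from the Morse moves. Since only the type~II and type~III oriented moves are claimed here (type~I being handled by normalisation elsewhere), it suffices to verify: (a) $R$ and $R^{-1}$ are mutually inverse (parallel type~II); (b) $R$ and $R^{-1}$ each solve the Yang--Baxter equation (parallel type~III); (c) the anti-parallel type~II identity \eqref{eqn:two} relating $R$, $R^{-1}$, $\overrightarrow{M}$, $\overleftarrow{M}$; and (d) the mixed-to-parallel conversion identities, represented by \eqref{eqn:one}, which let a mixed-oriented crossing be turned into a parallel one before (b) and (c) are applied.

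For (b) I would simply invoke the preceding Lemma (proved in \cite{Ka5}), which asserts precisely that the matrices obtained from the crossing expansions of Figure~\ref{fig:h-expansion} solve the Yang--Baxter equation; combined with the observation in Section~\ref{sec:cattwo} identifying the partition function of $(R\otimes I)(I\otimes R)(R\otimes I)$ with the corresponding matrix product, this gives invariance under the parallel type~III move, hence --- via (d) --- under every oriented type~III move, and one notes that type~III moves create or destroy no cups or caps, so $||\sigma||$ is unaffected. For (a), (c) and (d) I would run the finite matrix computations from the explicit forms of $R$, $R^{-1}$ and the diagonal cup/cap matrices: $R R^{-1}=\mathrm{Id}$ is immediate from the block structure of $R$ (only the $2\times2$ middle block is nontrivial); the identity \eqref{eqn:one} holds because $R$, and hence $R^{-1}$, is \emph{spin-preserving} (each nonzero entry $R^{kl}_{ij}$ has $k+l=i+j$), which forces $c-b=d-a$ on every nonzero term $\overline{R}^{db}_{ca}$, so that the factors $\mu^{(c-b)/2}$ and $\mu^{(d-a)/2}$ on the two sides of \eqref{eqn:one} coincide; and \eqref{eqn:two}, after the diagonal cup/cap matrices are substituted, collapses to the same spin-preservation property together with $R R^{-1}=\mathrm{Id}$. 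The classical analogues of all of these checks are carried out in \cite{Ka5,Jones1}.

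The step I expect to be the main obstacle is (c) together with the rotation-number bookkeeping through the anti-parallel type~II move, since this is the one move that genuinely introduces a maximum--minimum pair and therefore locally changes the rotation numbers --- and, after smoothing, the labels --- of the affected state components. One must be certain that the weight $q^{rot(s)\cdot label(s)}$, once distributed over the created cup and cap, recombines so that \eqref{eqn:two} is exactly the algebraic shadow of the move; this is where spin-preservation is used twice, once to propagate the edge labels consistently through the replaced crossings and once to collapse the product of $q$-powers attached to the new max/min pair. Granting these verifications, invariance under all oriented type~II and type~III Morse knotoid moves follows uniformly for both admissible endpoint labellings, and trivially, with vanishing polynomial, when the two endpoint labels differ, since then no admissible state exists.
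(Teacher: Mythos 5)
Your proposal is correct and takes essentially the same route as the paper, whose entire proof is a deferral to \cite{Ka5}: both reduce invariance under the oriented type~II and type~III moves to the finite list of local matrix identities (Yang--Baxter, the inverse condition for the parallel type~II move, the anti-parallel type~II identity, and the mixed-to-parallel conversion) verified there for virtual links. One caveat: your claim that the anti-parallel identity \eqref{eqn:two} ``collapses to'' spin-preservation together with $RR^{-1}=I$ is imprecise --- cross-channel unitarity is an independent condition that does not follow formally from channel unitarity plus spin-preservation --- but it does hold for these particular matrices, and your stated fallback of direct finite computation covers it.
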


\begin{proof}
See \cite{Ka5} for the case of virtual knots and links. The proof can be applied directly to the case of Morse knotoid diagrams. 

\end{proof}

The state sum polynomial $< >$ changes under type I Reidemeister moves moves, as shown in Figure \ref{fig:homomo}. The verification of this and the remaining variants of the move is left to the reader.

\begin{figure}[H]
\centering
\includegraphics[width=.5\textwidth]{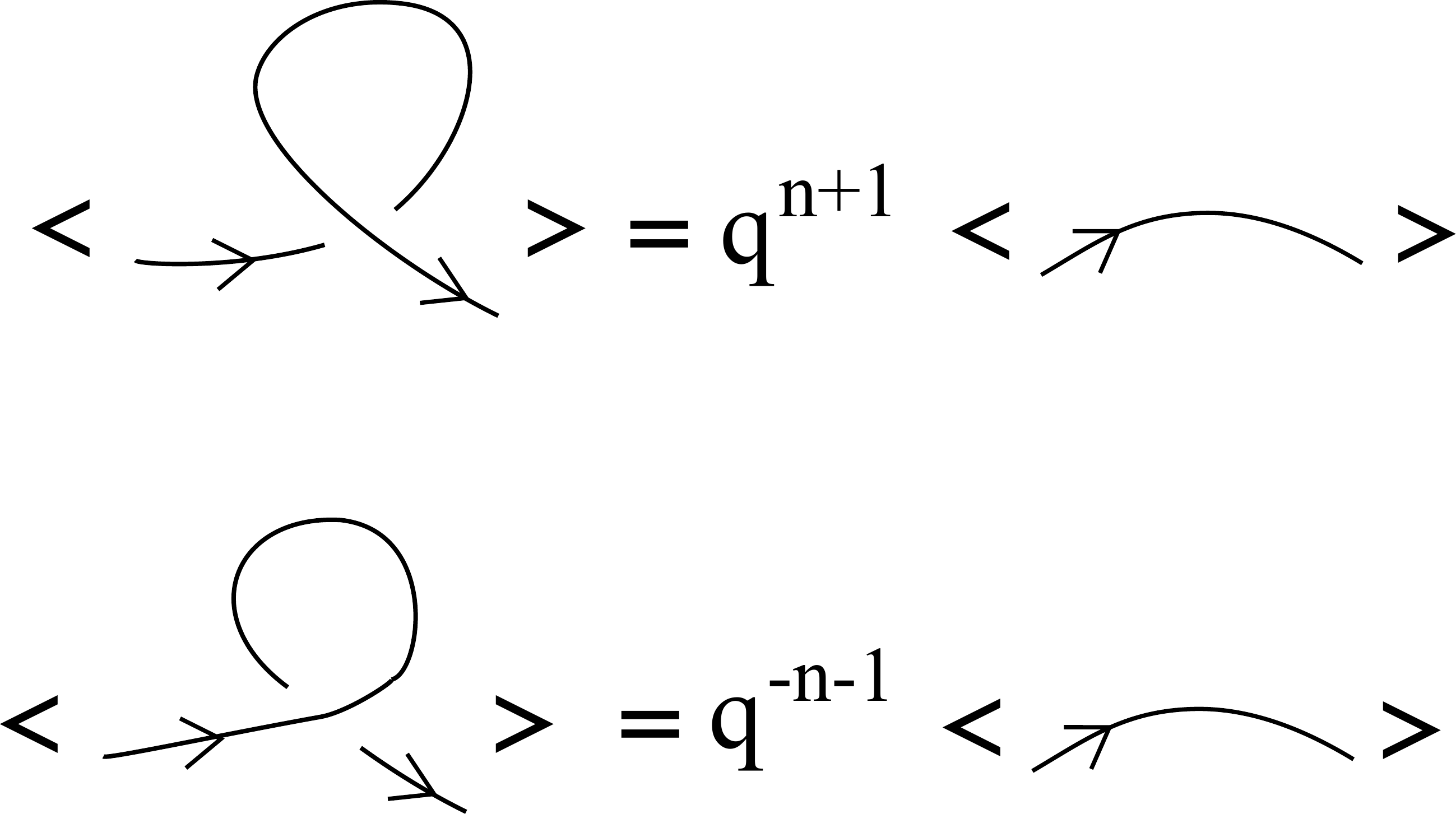}
\caption{$<>$ is not invariant under type I isotopy moves}
\label{fig:homomo}
\end{figure}

\begin{theorem}
Let $\kappa$ be a knotoid in $\mathbb{R}^2$ and $K$ be its standard Morse representation. The normalization of $< >$ with the term $(q^{n+1})^{-w(K)}$ yields a matrix invariant of $\kappa$. 
\end{theorem}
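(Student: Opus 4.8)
The plan is to combine the invariance of $\langle\,\rangle$ under the oriented type~II and type~III Morse isotopy moves, already established in the preceding proposition, with a direct computation of how $\langle\,\rangle$ changes under the variants of the oriented type~I move, and then to transfer the result from Morse knotoids to planar knotoids using the proposition of Section~\ref{sec:Morseknotoids}: two knotoid diagrams are regularly isotopic exactly when their standard Morse representatives are Morse isotopic. Thus if $\kappa$ and $\kappa'$ coincide as knotoids in $\mathbb{R}^2$, any two of their standard Morse diagrams are related by a finite sequence of Morse isotopy moves together with oriented type~I moves in Morse form. Since the min--max, slide and planar shifting moves visibly preserve $\langle\,\rangle$ (the oriented cup and cap being mutually inverse, exactly as in the unoriented case), and type~II and type~III preserve it by the preceding proposition, the whole theorem reduces to pinning down the effect of a single curl and checking that the factor $(q^{n+1})^{-w(K)}$ undoes it.

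For the curl I would treat it as a local $(1,1)$-tangle and expand its crossing by the three-term rule of Figure~\ref{fig:h-expansion}. The disoriented ($\neq$) smoothing reroutes the strand through the former loop edge; since that edge carries a single index, it forces the two indices at the ends of the loop to agree, and together with the $\delta$'s of the disoriented term this forces the incoming and outgoing indices of the curl to coincide, contradicting the inequality attached to that term. Hence the disoriented smoothing drops out entirely. Each of the two oriented smoothings leaves a straight strand of index $a$ together with one disjoint circular component, whose index $c$ is either forced equal to $a$ (with weight $q$) or runs over the indices of $I_n$ lying on one side of $a$ (with weight $q-q^{-1}$); the circular component, being a simple loop, contributes $q^{\pm c}$. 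Evaluating the resulting \emph{finite} geometric series over $I_n$, the $a$-dependence cancels; for the positive curl whose emergent circle has rotation number $-1$ this is the identity
$$q\cdot q^{-a} \;+\; (q-q^{-1})\!\!\sum_{\substack{c\in I_n\\ c<a}} q^{-c} \;=\; q^{n+1},$$
and the remaining curl variants are handled by the mirror or the wholly analogous computation. Thus a positive curl multiplies $\langle K^a_b\rangle$ by $q^{n+1}$ and a negative curl by $q^{-(n+1)}$, consistently with Figure~\ref{fig:homomo}.

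Since a type~I move changes the writhe $w(K)$ by $\pm 1$ in lockstep with this factor, the normalized quantity $(q^{n+1})^{-w(K)}\langle K^a_b\rangle$ is unchanged along the entire sequence relating the two standard Morse diagrams of $\kappa$. Moreover $\langle K^a_b\rangle$ vanishes unless $a=b$, since a mismatched labelling of the two endpoints admits no well-defined labelling of the open-ended state components; hence the normalized object is a diagonal matrix over $I_n$ that depends only on $\kappa$, and we declare it to be the invariant.

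The step I expect to be the main obstacle is the curl evaluation: one must check carefully that the disoriented smoothing really drops out for \emph{every} variant of the curl, correctly identify in each case the rotation number of the emergent circle (hence the sign in $q^{\pm c}$) together with the side of $a$ over which the circle index ranges, and verify that the resulting geometric sum genuinely collapses to a monomial independent of $a$. A secondary subtlety is that, unlike the Alexander state sum, $\langle\,\rangle$ is not separately normalized by the rotation number, so one must be sure that the rotation-number change produced by the curl is already built into the factor $q^{\pm(n+1)}$ and does not demand a further correction.
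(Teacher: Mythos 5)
Your proposal is correct and follows essentially the same route as the paper: invariance under the oriented type~II and type~III moves is quoted from the preceding proposition, the effect of a type~I curl is computed and shown to be multiplication by $q^{\pm(n+1)}$ (the paper delegates this to Figure~\ref{fig:homomo} and ``the remaining variants are left to the reader''), and the writhe normalization then cancels it. Your explicit evaluation of the curl --- the vanishing of the disoriented term and the collapse of the finite geometric sum to $q^{n+1}$ independently of the strand label $a$ --- is a correct filling-in of a step the paper only asserts.
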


\begin{proof}
It is clear that the term $(q^{n+1})^{-w(K)}$ makes $<>$ invariant under the knotoid isotopy moves. Thus, for all $i,j \in I_n$, for some index set $I_n$, $<K_{i}^{j}>$ is an invariant of $\kappa$. Let $M$ be a matrix whose $ij$ entry is $<K_{i}^{j}>$, for $i,j \in I_n$. Then it follows that $M$ is an invariant of $\kappa$. 
\end{proof}

\begin{definition}\normalfont
Let $n$ be any natural number and $I_{n}$ denote the index set for some $n \in \mathbb{N}$. We define a polynomial $P^{n}_K$ of a Morse knotoid diagram $K$ with fixed indices at its endpoints, and an orientation on it as follows.
$$P^{n}_{K}(q) =\frac{ (q^{n+1})^{-w(K)} <K> }{< O >,}$$
where $w(K)$ denoted the writhe of $K$, and $< O>$ is the value of the unknot that is given by $\sum_{a \in \mathcal{I}{n}} q^{-a} = \sum_{a \in \mathcal{I}_{n}} q^{a}$. The equality holds since the summation is taken over the index set $I_n$ that contains elements symmetric with respect to the origin.

\end{definition}
With the discussion above, it is not hard to see that $P^{n}_{K} (q)$ satisfies the following conditions
\begin{enumerate}
\item $P^{n}_K$ is invariant under the knotoid isotopy moves.
\item $P_{O}^{n} (q) =1$
\item $q^{n+1} P ^{n}_{ K_{+}} - q^{-n-1} P^{n} _{K_{-}} = (q -q^{-1}) P^{n}_{K_0}$, where $K_{+}, K_{-}, K_{0}$ refer to multi-knotoid diagrams obtained by $K$ by replacing a crossing of $K$ by tangle shown in Figure \ref{fig:skein}. 
\end{enumerate}

Therefore, given a natural number $n$, $P_{K}^{n}(q)$ is a one-variable analog of the Homflypt polynomial of classical knots now defined for Morse multi-knotoids. For knotoids in $\mathbb{R}^2$,  the $P_{K}^n(q)$ is related to the corresponding rotational invariant of the virtual knot that is the virtual closure of $K$ by taking the trace of the matrix induced by $P^{n}_K$ for all index labelings at the endpoints of $K$ over the index set $I_n$. 
Let $K$ denote the standard Morse knotoid diagram of a knotoid $\kappa$ in $\mathbb{R}^2$, with fixed labels at its endpoints.
The collection of one-variable invariants  \\
$\{ P^{n}_{K}(q) ~ |~ n \in \mathbb{Z}_+ \}$, yields a unique function in a discrete variable $n$ and a polynomial variable $q$, $P_{K}(l_n, m)$, where $l_n = q^{n+1}$ for $n \in \mathbb{Z}_+$ and $m = q^{-1} -q$. It is straightforward to verify $P_K(l_n, m)$ satisfies the Homflypt polynomial relations for every $n \in \mathbb{N}$. 

It remains a question in this category whether there is an invariant  two-variable polynomial $P_{K}(l,q)$ that specializes to all of the $P_{K}(l_n,q)$, as in the classical case. Answering this question turns on understanding better the specific state sum evaluations that support our invariants. Our invariants are not computed just from the skein relations.

\section{Further Problems}
\begin{itemize}
\item Sawollek polynomial via state sum:\\
\begin{enumerate}
 \item The Sawollek polynomial can detect invertibility of some virtual knots \cite{Saw}. How does the Sawollek polynomial behave in that regard for knotoids in $\mathbb{R}^2$? 
 \item The Sawollek polynomial for virtual knots is divisible by $G = 1-st$. The one variable polynomial obtained by the dividing the Sawollek polynomial by $G$ and setting $s=\frac{1}{t}$ results in the affine index polynomial of virtual knots \cite{Mel, Ka8}. The affine index polynomial is also defined for knotoids in $\mathbb{R}^2$ \cite{GK1}. How do analogs of this theorem work for knotoids, using the state summation models of this paper?
\end{enumerate}

\item Categorification of invariants: Is there a way to categorify the Alexander polynomial and Sawollek polynomials for knotoids in $\mathbb{R}^2$ based on the state sums given in this paper?

\item Quantum invariants and $3$-manifold invariants: 
 \begin{enumerate}
 \item Quantum invariants of classical knots and links can be used to produce invariants of three manifolds \cite{Ka5, Tu2, Oht}. 
 Quantum invariants of knotoids in $\mathbb{R}^2$ can be used to create knotoid invariants that respect the Kirby calculus. Do these invariants give information about three-manifolds?
 
 \item  We intend a sequel to the present paper on Reshsetikhin-Turaev type quantum group and Hopf algebra invariants of planar knotoids.
 
\end{enumerate}

\item Vassiliev invariants can be extracted from many quantum invariants of classical knots and links by a $t=e^{x}$ substitution. By such substitution, we obtain a power series expansion of the quantum invariant in powers of x giving Vassiliev invariants of finite type. See \cite{Ka9} for a more detailed discussion on this. We will investigate derivation of Vassiliev invariants of knotoids \cite{LKM} from the quantum invariants of knotoids presented in this paper.
\end{itemize}

\end{document}